
\documentclass[11pt, reqno]{amsart}

\usepackage[top=3.75cm, bottom=3cm, left=3cm, right=3cm]{geometry}
\frenchspacing 

\usepackage{amsmath}
\usepackage{amsthm}
\usepackage{amsfonts}
\usepackage{amssymb}
\usepackage{bm}
\usepackage{hyperref}
\usepackage{color}
\usepackage[all,cmtip]{xy}
\usepackage{enumitem}
\usepackage{tikz}  
\usepackage{graphicx}
\usepackage{scalerel}
\usepackage{todonotes}
\usepackage{comment}

\numberwithin{equation}{section}

\theoremstyle{plain}
\newtheorem{theorem}{Theorem}[section]
\newtheorem{lemma}[theorem]{Lemma}
\newtheorem{proposition}[theorem]{Proposition}
\newtheorem{corollary}[theorem]{Corollary}

\theoremstyle{definition}
\newtheorem{definition}[theorem]{Definition}

\newtheorem{remark}[theorem]{Remark}

\setlist[itemize]{leftmargin=*, itemsep={2pt}}
\setlist[enumerate]{leftmargin=*, itemsep={2pt}} 

\newcommand{\st}{\mid} 
\newcommand{\set}[1]{\left\{ \, #1 \, \right\}}


\newcommand{\Db}{\mathrm{D^b}}
\newcommand{\llangle}{\left \langle}
\newcommand{\rrangle}{\right \rangle}
\DeclareMathOperator{\Forg}{Forg}

\newcommand{\Cl}{{\mathop{\mathcal{C}\!\ell}}}

\newcommand{\Gr}{\mathrm{Gr}}
\DeclareMathOperator{\Sym}{Sym}
\DeclareMathOperator{\Proj}{Proj}
\DeclareMathOperator{\Pic}{Pic}

\newcommand{\tX}{\widetilde{X}}
\newcommand{\wtilde}{\widetilde}

\DeclareMathOperator{\Cone}{Cone}

\newcommand{\tM}{\widetilde{M}}

\newcommand{\Spec}{\mathrm{Spec}}
\newcommand{\pug}{\mathrm{pug}}

\newcommand{\cHom}{\mathcal{H}\!{\it om}}
\DeclareMathOperator{\Hom}{Hom}
\DeclareMathOperator{\Ext}{Ext}

\DeclareMathOperator{\Aut}{Aut}


\newcommand{\cUv}{\mathcal{U}^{\vee}}
\newcommand{\hcS}{\widehat{\mathcal{S}}}
\newcommand{\cSv}{\cS^{\vee}}
\newcommand{\tcK}{\wtilde{\mathcal{K}}}
\newcommand{\vV}{V^{\vee}}

\newcommand{\Coh}{\mathrm{Coh}}

\newcommand{\Ku}{\mathcal{K}u}

\DeclareMathOperator{\characteristic}{char}

\DeclareMathOperator{\Stab}{Stab}

\newcommand{\tH}{\widetilde{\rH}}

\DeclareMathOperator{\Br}{Br}

\DeclareMathOperator{\CH}{CH}

\newcommand{\usigma}{\underline{\sigma}}

\newcommand{\dP}{\mathrm{dP}}

\newcommand{\ord}{\mathrm{ord}}

\newcommand{\ch}{\mathrm{ch}}
\newcommand{\rk}{\mathrm{rk}}
\newcommand{\td}{\mathrm{td}}

\newcommand{\cO}{\mathcal{O}}
\newcommand{\cA}{\mathcal{A}}

\newcommand{\cD}{\mathcal{D}}
\newcommand{\cE}{\mathcal{E}}
\newcommand{\cF}{\mathcal{F}}
\newcommand{\cG}{\mathcal{G}}

\newcommand{\cK}{\mathcal{K}}
\newcommand{\cL}{\mathcal{L}}
\newcommand{\cM}{\mathcal{M}}
\newcommand{\cN}{\mathcal{N}}
\newcommand{\cP}{\mathcal{P}}
\newcommand{\cQ}{\mathcal{Q}}
\newcommand{\cR}{\mathcal{R}}
\newcommand{\cS}{\mathcal{S}}
\newcommand{\cT}{\mathcal{T}}
\newcommand{\cU}{\mathcal{U}}
\newcommand{\cV}{\mathcal{V}}
\newcommand{\cW}{\mathcal{W}}
\newcommand{\cX}{\mathcal{X}}
\newcommand{\cY}{\mathcal{Y}}


\newcommand{\ccP}{\mathcal{P}}


\newcommand{\rH}{\mathrm{H}}
\newcommand{\rK}{\mathrm{K}}

\newcommand{\rL}{\mathrm{L}}

\newcommand{\rR}{\mathrm{R}}


\newcommand{\bC}{\mathbf{C}}

\newcommand{\bZ}{\mathbf{Z}}
\newcommand{\bP}{\mathbf{P}}
\newcommand{\bQ}{\mathbf{Q}}
\newcommand{\bR}{\mathbf{R}}


\newcommand{\sA}{\mathsf{A}}


\begin{document}

\title[Stability conditions and moduli for Kuznetsov components of GM varieties]{Stability conditions and moduli spaces for Kuznetsov components of Gushel--Mukai varieties}

\author{Alexander Perry}
\address{Department of Mathematics, University of Michigan, Ann Arbor, MI 48109, USA \smallskip}
\email{arper@umich.edu}
\urladdr{http://www-personal.umich.edu/~arper/}

\author{Laura Pertusi} 
\address{Dipartimento di Matematica ``F.\ Enriques'' \\
Via Cesare Saldini 50 \\
Universit\`a degli Studi di Milano \\
20133 Milano, Italy \smallskip 
}
\email{laura.pertusi@unimi.it}
\urladdr{http://www.mat.unimi.it/users/pertusi/index.html}

\author{Xiaolei Zhao}
\address{Department of Mathematics \\
South Hall, Room 6607 \\
University of California \\
Santa Barbara, CA 93106, USA \smallskip
}
\email{xlzhao@math.ucsb.edu}
\urladdr{https://sites.google.com/site/xiaoleizhaoswebsite/}

\thanks{A.P. was partially supported by NSF grant DMS-1902060 and the Institute for Advanced Study. L.P. was supported by the ERC Consolidator Grant ERC-2017-CoG-771507-StabCondEn. X.Z. was partially supported by the Simons Collaborative Grant 636187.}


\begin{abstract}
We prove the existence of Bridgeland stability conditions on the Kuznetsov components of Gushel--Mukai varieties, 
and describe the structure of moduli spaces of Bridgeland semistable objects in these categories in the even-dimensional case. 
As applications, we construct a new infinite series of unirational locally complete families of polarized hyperk\"{a}hler varieties of K3 type, and characterize Hodge-theoretically when the Kuznetsov component of an even-dimensional Gushel--Mukai variety is equivalent to the derived category of a K3 surface. 
\end{abstract}

\maketitle


\section{Introduction}
\label{section-intro} 

The purpose of this paper is to prove the existence of stability conditions on a certain family of noncommutative K3 surfaces, and to use this for applications to hyperk\"{a}hler geometry, Hodge theory, and the structure of the derived categories of an associated family of Fano varieties.   

\subsection{Background on cubic fourfolds} 
The prototype for this line of thought is the example of cubic fourfolds, i.e. smooth cubic hypersurfaces in the projective space $\bP^5$. 
For such a fourfold $Y \subset \bP^5$, Kuznetsov \cite{kuznetsov-cubic} defined a subcategory $\Ku(Y) \subset \Db(Y)$ of the bounded derived category of coherent sheaves --- now known as the \emph{Kuznetsov component} of $Y$ --- by the semiorthogonal decomposition 
\begin{equation*}
\Db(Y) = \llangle \Ku(Y), \cO_Y, \cO_Y(1), \cO_Y(2) \rrangle. 
\end{equation*} 
Kuznetsov showed the category $\Ku(Y)$ can be thought of as a noncommutative K3 surface, 
in the sense that it has the same homological invariants (Serre functor and Hochschild homology) as the derived category of a K3 surface. 
Moreover, he proved that for several families of rational cubic fourfolds $\Ku(Y)$ is equivalent to the derived category of a K3 surface, and conjectured that this condition is equivalent to the rationality of $Y$. 

Stability conditions on triangulated categories were introduced by Bridgeland \cite{bridgeland}, 
and have been extremely influential due to their applications in algebraic geometry via moduli spaces and wall-crossing. 
In general, it is a very difficult problem to construct stability conditions on a given triangulated category, 
but recently Bayer, Lahoz, Macr\`{i}, and Stellari \cite{BLMS} made a breakthrough by solving this problem for the 
Kuznetsov component $\Ku(Y)$ of a cubic fourfold (as well as Kuznetsov components of many Fano threefolds). 
Motivated by the desire to study moduli spaces of stable objects in $\Ku(Y)$ by deforming to 
the case of the derived category of a K3 surface, in \cite{stability-families} a general theory of stability conditions in families was developed and 
used to show that moduli spaces of semistable objects in $\Ku(Y)$ 
are smooth projective hyperk\"{a}hler varieties of the expected dimension. 

These moduli spaces have many applications. 
For instance, they give rise to unirational locally complete families of polarized hyperk\"{a}hler varieties of arbitrarily large dimension and degree. 
Before this only several constructions of polarized hyperk\"{a}hler varieties were known, the main ones being 
moduli spaces of stable objects in the derived category of a K3 surface (which do not give locally complete families), and a few celebrated examples constructed via the classical geometry of cubic fourfolds \cite{beauville-donagi, LLSVS, LSV}. 
In \cite{LPZ, LPZ2} the latter examples are shown to also arise via moduli spaces of stable objects in $\Ku(Y)$; the category $\Ku(Y)$
thus unifies the above two constructions and can be thought of as a primordial source of hyperk\"{a}hler varieties.  
Moduli spaces of objects in $\Ku(Y)$ were also used in \cite{stability-families} to give a Hodge-theoretic characterization of when $\Ku(Y)$ is equivalent to the derived category of a K3 surface, extending a result by Addington and Thomas \cite{addington-thomas}. 

\subsection{Kuznetsov components of Gushel--Mukai varieties} 
Given the success of the example of cubic fourfolds, it is natural to look for other situations where 
the above program can be carried out. 
The main properties of the categories $\Ku(Y)$ needed for this can be abstracted as follows. 
We would like a family of smooth projective varieties, such that for each member $X$ in the family 
there is a semiorthogonal component $\Ku(X) \subset \Db(X)$ with the following properties: 
\begin{enumerate}
\item \label{category-condition} 
$\Ku(X)$ is a noncommutative K3 surface, which is equivalent to the derived category of a K3 surface for special $X$ but not equivalent to such a category for general $X$. 
\item \label{stability-condition} 
$\Ku(X)$ admits a Bridgeland stability condition. 
\end{enumerate}
Several examples of varieties $X$ with a  
noncommutative K3 surface $\Ku(X) \subset \Db(X)$ are described in \cite{kuznetsov-CY}, 
but besides the case of cubic fourfolds, so far only one class of examples is known to satisfy condition~\eqref{category-condition}: 
those coming from Gushel--Mukai varieties. 

\begin{definition}
\label{definition-GM} 
A \emph{Gushel--Mukai (GM) variety} is a smooth $n$-dimensional intersection 
\begin{equation*}
X = \Cone(\Gr(2,5)) \cap Q, \quad 2 \leq n \leq 6 , 
\end{equation*} 
where $\Cone(\Gr(2,5)) \subset \bP^{10}$ is the projective cone over the Pl\"{u}cker embedded Grassmannian 
$\Gr(2,5) \subset \bP^{9}$ and $Q \subset \bP^{10}$ is a quadric hypersurface in a linear subspace $\bP^{n+4} \subset \bP^{10}$. 
\end{definition} 

The classification results of Gushel \cite{gushel} and Mukai \cite{mukai} show that (in characteristic $0$) these varieties coincide with the class of all smooth Fano varieties of Picard number $1$, coindex $3$, and degree $10$ (corresponding to $n \geq 3$), together with the Brill--Noether general polarized K3 surfaces of degree $10$ (corresponding to $n = 2$). Recently, GM varieties have attracted attention because of the rich structure of their birational geometry, Hodge theory, and derived categories \cite{DIM3fold, IM-EPW, DIM4fold, DebKuz:birGM, DebKuz:moduli, DebKuz:periodGM, KuzPerry:dercatGM}. 

In particular, in \cite{KuzPerry:dercatGM} for any GM variety a Kuznetsov component $\Ku(X) \subset \Db(X)$ is defined by the semiorthogonal decomposition 
\begin{equation}
\label{equation-Ku}
\Db(X) = \llangle \Ku(X), \cO_X, \cU_X^{\vee}, \dots, \cO_X(\dim(X)-3), \cU_X^{\vee}(\dim(X)-3) \rrangle  , 
\end{equation} 
where $\cU_X$ and $\cO_X(1)$ denote the pullbacks to $X$ of the rank $2$ tautological subbundle and Pl\"{u}cker line bundle on $\Gr(2,5)$. 
The category $\Ku(X)$ is a noncommutative K3 or Enriques surface according to whether $\dim(X)$ is even or odd. 
Moreover, the results of \cite{KuzPerry:dercatGM, KuzPerry:cones} show that the Kuznetsov components of GM fourfolds and sixfolds satisfy property~\eqref{category-condition} above. 

\subsection{Results} 
We work over an algebraically closed field $k$ of characteristic $0$. 
Our first main result verifies property~\eqref{stability-condition} above. 

\begin{theorem}
\label{main-theorem}
If $X$ is a GM variety, 
then the category $\Ku(X)$ has a Bridgeland stability condition. 
\end{theorem} 

The crucial case of this theorem, and our main contribution, is when $\dim(X) = 4$. 
Indeed, Bridgeland's work~\cite{bridgeland-K3} gives the $\dim(X) = 2$ case, since 
then $\Ku(X) = \Db(X)$ and $X$ is a K3 surface; 
the $\dim(X) = 3$ case is proved in \cite{BLMS}; and by the duality conjecture for GM varieties proved in \cite{KuzPerry:cones}, the $\dim(X) = 5$ and $\dim(X) = 6$ cases reduce respectively to the $\dim(X) = 3$ and $\dim(X) = 4$ cases. 
Our proof when $\dim(X) = 4$ is inspired by the case of cubic fourfolds treated in \cite{BLMS}, and involves as the starting point an embedding of $\Ku(X)$ as a semiorthogonal component in the derived category of sheaves of modules over a Clifford algebra on a quadric threefold, associated to a conic fibration of $X$. 
We note that, due to the more complicated geometric setup, 
there are a number of additional difficulties that arise in the case of GM fourfolds versus the case of cubic fourfolds. 

As a first consequence of Theorem~\ref{main-theorem}, we deduce the following result. 

\begin{corollary}
\label{corollary-DbX-stability}
If $X$ is a GM variety, 
then the category $\Db(X)$ has a Bridgeland stability condition. 
\end{corollary} 

When $\Ku(X)$ is a noncommutative K3 surface, or equivalently when $X$ is even-dimensional, 
we show that Theorem~\ref{main-theorem} has many applications parallel to those in \cite{stability-families} for cubic fourfolds. 
One particularly interesting output is that moduli spaces of stable objects in $\Ku(X)$ give rise to a new infinite series of unirational locally complete families of hyperk\"{a}hler varieties of K3 type (Theorem~\ref{theorem-unirational-families}); 
this gives after \cite{stability-families} the second known infinite series of such families, 
which have long been sought-after. 
For this and the other results enumerated below, we assume $k = \bC$ is the complex numbers, as we use Hodge theory. 

\subsubsection{The stability manifold} 
\label{intro-stab-manifold}
Following Addington and Thomas \cite{addington-thomas} (see \cite{Pert} and the review in Section~\ref{section-Mukai-HS}), 
for any even-dimensional GM variety $X$ there is an associated weight $2$ Hodge structure $\tH(\Ku(X), \bZ)$,
which is equipped with a natural pairing $(-,-)$ and 
agrees with the usual Mukai Hodge structure $\tH(S, \bZ)$ when $\Ku(X) \simeq \Db(S)$ for a K3 surface $S$. 
Let $\Stab(\Ku(X))$ denote the space of full numerical stability conditions on $\Ku(X)$, i.e. the space of stability conditions which satisfy the support property with respect to the lattice of integral Hodge classes $\tH^{1,1}(\Ku(X), \bZ)$. 
The central charge of any $\sigma \in \Stab(\Ku(X))$ is given by pairing with an element 
in the complexification $\tH^{1,1}(\Ku(X), \bC)$ of the lattice of Hodge classes; this association 
gives a map $\eta \colon \Stab(\Ku(X)) \to \tH^{1,1}(\Ku(X), \bC)$. 
Following \cite{bridgeland-K3}, we define $\ccP(\Ku(X)) \subset \tH^{1,1}(\Ku(X), \bC)$ as the open subset consisting of vectors whose real and imaginary parts span positive-definite two-planes, and set 
\begin{equation}
\label{cP0}
\ccP_0(\Ku(X)) = \ccP(\Ku(X)) \setminus \bigcup_{\delta \in \Delta} \delta^{\perp} \quad \text{where} \quad 
\Delta = \set{ \delta \in \tH^{1,1}(\Ku(X), \bZ) \st (\delta, \delta) = -2}. 
\end{equation} 
We prove the following analog for the noncommutative K3 surfaces $\Ku(X)$ 
of a result of Bridgeland \cite{bridgeland-K3} for K3 surfaces. 

\begin{theorem}
\label{theorem-Stab-dagger}
Let $X$ be a GM fourfold or sixfold. 
The stability conditions on $\Ku(X)$ constructed in the proof of Theorem~\ref{main-theorem} are 
full numerical stability conditions. 
Moreover, the connected component $\Stab^{\dagger}(\Ku(X)) \subset \Stab(\Ku(X))$ containing these stability conditions is mapped by $\eta$ onto a connected component $\ccP_0^+(\Ku(X))$ of $\ccP_0(\Ku(X))$, and the induced map $\Stab^{\dagger}(\Ku(X)) \to \ccP_0^+(\Ku(X))$ is a covering map. 
\end{theorem}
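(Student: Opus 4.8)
The plan is to follow the blueprint Bridgeland established in \cite{bridgeland-K3} for K3 surfaces, transplanting each step to the noncommutative setting of $\Ku(X)$ using the family-of-stability-conditions machinery from \cite{stability-families}. The statement has three parts: (i) the constructed stability conditions are full and numerical; (ii) the connected component $\Stab^{\dagger}(\Ku(X))$ maps onto a connected component $\ccP_0^+(\Ku(X))$ of $\ccP_0(\Ku(X))$; and (iii) this map is a covering. First I would verify that the stability conditions produced in the proof of Theorem~\ref{main-theorem} satisfy the support property with respect to the full lattice $\tH^{1,1}(\Ku(X), \bZ)$ of integral Hodge classes, rather than merely the numerical Grothendieck group. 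Since the Mukai Hodge structure $\tH(\Ku(X), \bZ)$ and its pairing agree with those of a genuine K3 surface when $\Ku(X) \simeq \Db(S)$, and the construction in Theorem~\ref{main-theorem} realizes these stability conditions via the Clifford-algebra embedding, I expect fullness and numerical-ness to follow by comparing the central charge lattice to the algebraic Mukai lattice and checking that the constructed central charges factor through $\tH^{1,1}(\Ku(X), \bZ)$.

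For part (ii), the strategy is to analyze the map $\eta$ on $\Stab^{\dagger}(\Ku(X))$ directly. The defining positivity of a stability condition forces the image under $\eta$ to land in $\ccP(\Ku(X))$, the locus of vectors whose real and imaginary parts span a positive-definite two-plane; and the support property, together with the fact that any $\delta \in \Delta$ (a $(-2)$-class) would be destabilized, forces the image to avoid the hyperplanes $\delta^{\perp}$, hence to land in $\ccP_0(\Ku(X))$. The substantive point is surjectivity onto a full connected component $\ccP_0^+(\Ku(X))$: here I would argue that the image is both open (by deformation of stability conditions, i.e. the local homeomorphism property of the forgetful map to the central charge) and closed within $\ccP_0^+(\Ku(X))$, so that by connectedness it is everything. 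Closedness is the delicate direction and is handled exactly as in \cite{bridgeland-K3}: one must rule out that a sequence of stability conditions degenerates at the boundary without a limit in $\Stab^{\dagger}$, which uses the support property to control the set of possible destabilizing classes and shows no spherical-class wall is hit in the interior of $\ccP_0^+$.

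For part (iii), covering-map-ness, the plan is to combine the local homeomorphism property of $\eta$ (Bridgeland's deformation theorem, which applies to $\Ku(X)$ via the general theory since $\Ku(X)$ is a noncommutative K3 surface with the same numerical data as an ordinary K3) with a path-lifting argument. Concretely, given a path in $\ccP_0^+(\Ku(X))$ and a starting stability condition in the fiber, one lifts the path and shows the lift does not escape $\Stab^{\dagger}$ before the endpoint, again invoking the support property to guarantee that no wall of totally-semistable spherical type is crossed—this is precisely the mechanism that makes $\eta$ a covering rather than merely a local homeomorphism, since the removed hyperplanes $\bigcup_{\delta} \delta^{\perp}$ are exactly the obstructions to global lifting.

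The main obstacle I anticipate is the closedness/properness input needed for both the surjectivity in (ii) and the path-lifting in (iii): one must show that as one approaches the walls $\delta^{\perp}$ the central charge of the spherical class $\delta$ goes to zero and the stability condition genuinely leaves $\Stab^{\dagger}$, while away from these walls the limits stay inside. In the noncommutative GM setting the extra difficulty, relative to \cite{bridgeland-K3}, is that one does not have the geometric moduli-of-sheaves arguments available for an honest K3 surface, so the control on spherical objects and the verification that every $(-2)$-class is represented by a spherical object (or at least that its wall behaves as expected) must be extracted from the lattice-theoretic support property and the comparison with the K3 case under deformation, rather than from direct geometry. I expect this to be the technical heart, and to lean heavily on the deformation-invariance furnished by the family framework of \cite{stability-families} to transport the classical K3 conclusions across the family to the general GM member.
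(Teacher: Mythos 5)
Your broad strategy (Bridgeland's K3 blueprint combined with deformation to the geometric case via the families machinery of \cite{stability-families}) is indeed the one the paper follows, but your plan has a genuine gap at each of the two halves of the theorem. For fullness, your proposed mechanism --- ``check that the constructed central charges factor through $\tH^{1,1}(\Ku(X),\bZ)$'' --- does no work: the central charge automatically factors through the numerical Grothendieck group, which \emph{is} $\tH^{1,1}(\Ku(X),\bZ)$ by the results of \cite{IHC-CY2}. The real issue is the support property with respect to this rank-$24$ lattice, whereas the construction in Theorem~\ref{main-theorem} only gives it for the rank-$3$ lattice $\Lambda_{\Cl_0,\Ku(X)}$. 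The paper's argument (Proposition~\ref{proposition-full-numerical} via Lemma~\ref{lemma-eta-sigma}) has two essential ingredients that your plan omits: first, an explicit computation showing $\eta(\sigma)\in (A_1^{\oplus 2})\otimes\bC$, i.e.\ that the central charge annihilates every class orthogonal to the canonical sublattice $A_1^{\oplus 2}$, which uses the identification of $\tH(\Ku(X),\bZ)_0$ with the vanishing cohomology $\rH^4(X,\bZ)_0(1)$ (Proposition~\ref{proposition-HKu-HX}) and Chern-character computations on the blowup $\tX$; second, the non-formal fact that $(A_1^{\oplus 2})_{\bC}\cap\ccP\subset\ccP_0$, which follows from the theorem of O'Grady \cite{OG} and Debarre--Kuznetsov \cite{DebKuz:periodGM} that the image of the period map of GM fourfolds misses the divisor $\cD_8$, so there are no $(-2)$-classes orthogonal to $A_1^{\oplus 2}$. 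Only after both steps does \cite[Lemma 9.7]{BLMS} upgrade the support property to the full lattice; without the period-map input, $\eta(\sigma)$ could a priori lie on a wall $\delta^{\perp}$ and no argument of this shape can start.

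For the covering statement, you correctly identify that the missing ingredient relative to \cite{bridgeland-K3} is the existence of semistable objects in every class $v$ with $(v,v)\geq -2$, but your plan to extract this ``from the lattice-theoretic support property and the comparison with the K3 case under deformation'' inverts the logical order and glosses over the actual pitfall. The paper first proves the moduli theorem (Theorem~\ref{theorem-moduli-space}, in particular nonemptiness) by degenerating over a curve to a fourfold whose Kuznetsov component is equivalent to $\Db(S,\alpha)$ for a twisted K3 surface, and only then runs the argument of \cite[Theorem 29.1]{stability-families}; a lattice-theoretic support property can never produce objects. Moreover, the degeneration step has a subtlety your proposal never touches: the moduli results of \cite{bayer-macri-projectivity} apply only to stability conditions in the distinguished component $\Stab^{\dagger}(S,\alpha)$, so one must produce an equivalence $\Ku(X')\simeq \Db(S',\alpha')$ carrying $\Stab^{\dagger}(\Ku(X'))$ into that component (a ``$\dagger$-equivalence''). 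Arranging this requires the specialization results Proposition~\ref{proposition-specialize} and Lemma~\ref{lemma-modify-Ku-twisted}, which involve the period-map geometry of the $\dP5$ divisor and a careful choice of Hodge-generic deformations; the paper even points out (Remark~\ref{remark-dagger-oops}) that exactly this step was a gap in the cubic-fourfold literature. Finally, a small logical slip in your part (ii): the image avoids $\delta^{\perp}$ not because ``$\delta$ would be destabilized,'' but because once a $\sigma$-semistable object of class $\delta$ is known to exist, its central charge $(\eta(\sigma),\delta)$ cannot vanish.
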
 

\subsubsection{Moduli spaces} 
\label{intro-moduli}
Our next result establishes the basic properties of moduli spaces of $\sigma$-semistable objects in the setting of Theorem~\ref{theorem-Stab-dagger}. 

\begin{theorem}
\label{theorem-moduli-space}
Let $X$ be a GM fourfold or sixfold. 
Let $v \in \tH^{1,1}(\Ku(X), \bZ)$ be a nonzero primitive vector and let 
$\sigma \in \Stab^{\dagger}(\Ku(X))$ be a stability condition generic with respect to $v$. 
\begin{enumerate}
\item \label{theorem-moduli-space-nonempty}
There is a coarse moduli space $M_{\sigma}(\Ku(X), v)$ parameterizing $\sigma$-semistable objects in $\Ku(X)$ of class $v$, which is nonempty if and only if $(v,v) \geq -2$. 
\item \label{theorem-moduli-space-dim}
When nonempty, $M_{\sigma}(\Ku(X), v)$ is a smooth projective hyperk\"{a}hler variety of dimension $(v,v) + 2$, deformation equivalent to the Hilbert scheme of points on a K3 surface. 
\item \label{theorem-moduli-space-H2}
If $(v,v) \geq 0$, then there is a natural Hodge isometry 
\begin{equation*}
\rH^2(M_{\sigma}(\Ku(X), v), \bZ) \xrightarrow{\, \sim \,} 
\begin{cases}
v^{\perp}  & \text{if } (v,v) > 0 \\ 
v^{\perp}/\bZ v & \text{if } (v,v) = 0, 
\end{cases}
\end{equation*} 
where the orthogonal is taken inside $\tH(\Ku(X), \bZ)$. 
\end{enumerate}
\end{theorem}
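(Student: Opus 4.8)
The plan is to deduce all three statements from the known case of (twisted) K3 surfaces by deforming within a family, following the strategy of \cite{stability-families} for cubic fourfolds. Because $\dim(X)$ is even, $\Ku(X)$ is a noncommutative K3 surface in the sense of \cite{KuzPerry:dercatGM}: its Serre functor is the shift by $2$, and under the identification of $\Knum(\Ku(X))$ with the algebraic part of $\tH(\Ku(X),\bZ)$ the Euler form becomes $\chi(E,F)=-(v(E),v(F))$. These are the only structural inputs used below. The essential geometric mechanism is that $X$ can be placed in a connected family of GM varieties over a base $B$ containing a point $0$ at which $\Ku(X_0)\simeq \Db(S,\alpha)$ for a twisted K3 surface $(S,\alpha)$; by the family version of Theorem~\ref{main-theorem} furnished by \cite{stability-families}, the stability condition $\sigma$ of Theorem~\ref{theorem-Stab-dagger} and the class $v$ spread out over $B$, staying in the distinguished component and remaining primitive and algebraic on each fibre.

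For part~\eqref{theorem-moduli-space-nonempty} I would first realise $M_\sigma(\Ku(X),v)$ as a coarse (indeed good) moduli space of the stack of $\sigma$-semistable objects of class $v$. The required inputs are boundedness of this set of objects inside $\Db(X)$, openness of semistability in families, and the valuative criterion for properness; these are supplied in the present Clifford-module setting by the family framework of \cite{stability-families} once Theorem~\ref{main-theorem} provides $\sigma$. Projectivity then follows from the nef line bundle produced by the positivity construction of Bayer and Macr\`i. For the nonemptiness criterion I would form the relative moduli space $\pi\colon\cM\to B$ and observe that its fibre over $0$ is a moduli space of stable objects in $\Db(S,\alpha)$, which by the theorem of Bayer and Macr\`i is nonempty precisely when $(v,v)\ge -2$; since $\pi$ is proper, nonemptiness of the fibres is open and closed on $B$, and the connectedness of $B$ transports the criterion to the fibre over $X$.

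For part~\eqref{theorem-moduli-space-dim} I would argue pointwise and then deform. For a $\sigma$-stable object $E$ the K3-type Serre functor gives $\Hom(E,E)=\bC=\Ext^2(E,E)$, so from $\chi(E,E)=-(v,v)$ one gets $\dim_{\bC}\Ext^1(E,E)=(v,v)+2$; this is the tangent dimension, while the obstruction lies in the trace-free part of $\Ext^2(E,E)$, which vanishes, so $M_\sigma(\Ku(X),v)$ is smooth of dimension $(v,v)+2$ and carries the holomorphic symplectic form coming from the K3-type pairing on $\Ext^1$. Since $\pi\colon\cM\to B$ is smooth and proper with connected base, $M_\sigma(\Ku(X),v)$ is deformation equivalent to its fibre over $0$, which is a moduli space on $\Db(S,\alpha)$ and hence, by Bayer--Macr\`i and Yoshioka, a projective hyperk\"ahler variety deformation equivalent to a Hilbert scheme of points on a K3 surface; this gives the asserted deformation type. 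For part~\eqref{theorem-moduli-space-H2} I would build a (quasi-)universal family $\cE$ for $\pi$ and define the relative Mukai morphism $\theta\colon v^\perp\to \rR^2\pi_*\bZ$ sending $w$ to the degree-two component of $p_{\cM\,*}\bigl(p^*w^{\svee}\cdot\ch(\cE)\sqrt{\td}\bigr)$; this is a morphism of variations of Hodge structure over $B$. Over $0$ it restricts to the twisted Mukai isometry of Yoshioka and Huybrechts--Stellari, an isomorphism onto $\rH^2$ (respectively $v^\perp/\bZ v$ when $(v,v)=0$) respecting the pairings; being a morphism of Hodge structures that is an isometry at one point of the connected base $B$, it is a Hodge isometry on every fibre, in particular that over $X$.

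The main obstacle I expect is the deformation set-up underlying every step: one must genuinely exhibit the connected family of GM varieties linking $X$ to a member with twisted K3 Kuznetsov component, equip it with a relative stability condition lying fibrewise in the component of Theorem~\ref{theorem-Stab-dagger} together with a relative primitive algebraic class $v$, and verify that the resulting relative moduli space $\pi\colon\cM\to B$ is proper with smooth fibres. Carrying out this construction for the noncommutative Clifford-module category appearing in the proof of Theorem~\ref{main-theorem}, rather than importing it verbatim from the cubic-fourfold case of \cite{BLMS} and \cite{stability-families}, is the technical heart of the argument; once it is in place, nonemptiness, smoothness, the dimension count, the deformation type, and the Hodge-isometry property all propagate from the single K3 fibre by properness and connectedness.
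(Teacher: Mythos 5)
Your strategy coincides with the paper's: reduce to the twisted K3 case by a deformation and transport nonemptiness, smoothness, deformation type, and the Mukai--Hodge isometry along a smooth proper relative moduli space; your fibrewise arguments for parts (1)--(3) are the standard ones (with smoothness resting on the Mukai-type theorem for CY2 categories of \cite{IHC-CY2} rather than a bare $\Ext$ computation). However, the deformation set-up contains a genuine gap that you never name. Once you have a fibre $0 \in B$ with an equivalence $\Ku(\cX_0)\simeq\Db(S,\alpha)$, every result of Bayer--Macr\`i you invoke --- nonemptiness exactly when $(v,v)\geq -2$, the deformation type, the isometry on $\rH^2$ --- is known only for stability conditions in the distinguished component $\Stab^{\dagger}(S,\alpha)$ containing the geometric ones. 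Your spread-out stability condition $\sigma_0$ lies in $\Stab^{\dagger}(\Ku(\cX_0))$, and nothing guarantees that the given equivalence carries one distinguished component to the other; your phrase ``staying in the distinguished component'' conflates the two. This is precisely what the paper's notion of $\dagger$-equivalence encodes, and it is not pedantry: Remark~\ref{remark-dagger-oops} points out that exactly this step is a gap in the cubic-fourfold argument of \cite[Theorem 29.2]{stability-families} that you are following. Repairing it takes real work: Lemma~\ref{lemma-Ku-twisted-K3} shows that a moduli space of stable objects of square-zero class itself furnishes a $\dagger$-equivalence with a (possibly different) twisted K3 surface; Lemma~\ref{lemma-modify-Ku-twisted} deforms to a fourfold whose Mukai lattice contains no $(-2)$-classes, where every simple object with two-dimensional $\Ext^1$ is stable for \emph{all} stability conditions, so that such an equivalence can be manufactured; and Lemma~\ref{lemma-deform-Ku-twisted-K3} deforms back.

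The obstacle you do flag --- exhibiting the family $B$ --- is also more than ``carrying out the construction for the Clifford-module category'': the family must keep $v$ a Hodge class on every fibre \emph{and} contain a fibre whose Kuznetsov component is geometric, and these two requirements interact. This is Proposition~\ref{proposition-specialize}, whose proof requires the reduction via Theorem~\ref{theorem-duality} to ordinary fourfolds with smooth canonical quadric, the irreducibility of preimages of Hodge loci under the period map (Lemma~\ref{lemma-periods}\eqref{preimage-Z-irreducible}, which rests on the fibres of the period map being smooth loci of EPW sextics), the fact that the quintic del Pezzo locus dominates the divisor $\cD''_{10}$ \cite{DIM4fold}, and the lattice-theoretic construction in Lemma~\ref{lemma-periods}\eqref{Z-divisors}--\eqref{Z-D10} of infinitely many Hodge loci meeting $\cD''_{10}$ whose very general members carry no $(-2)$-classes (needed as input to Lemma~\ref{lemma-modify-Ku-twisted} above). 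Without these geometric and arithmetic inputs the family on which all three of your parts rest need not exist, and in particular the nonemptiness criterion cannot be transported to $X$.
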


\begin{remark}
In \cite{stability-families}, the cubic fourfold version of the nonemptiness result 
Theorem~\ref{theorem-moduli-space}\eqref{theorem-moduli-space-nonempty}
was used to reprove the integral Hodge conjecture for cubic fourfolds. 
Theorem~\ref{theorem-moduli-space}\eqref{theorem-moduli-space-nonempty} 
similarly implies the integral Hodge conjecture for GM fourfolds; 
however, as demonstrated in \cite{IHC-CY2}, this result already follows from a more basic ``Mukai theorem'' on smoothness of moduli spaces of objects in CY2 categories, which does not require the use of stability conditions and is in fact an important ingredient in the proof of Theorem~\ref{theorem-moduli-space}. 
\end{remark}

The proof of Theorem~\ref{theorem-moduli-space} is based on the 
construction of relative moduli spaces of stable objects in the categories $\Ku(X)$ for a family of GM fourfolds, together with the fact that moduli spaces of objects are well-understood in the case where $\Ku(X) \simeq \Db(S)$ for a K3 surface $S$ 
(due to results of many authors which culminated in Bayer and Macr\`{i}'s work \cite{bayer-macri-projectivity}). 
We refer to Theorem~\ref{theorem-relative-moduli} for the general statement 
on relative moduli spaces, and record here one of its consequences. 

\begin{theorem}
\label{theorem-unirational-families}
For any pair $(a,b)$ of coprime integers, there is a unirational locally complete family, over an open subset of the moduli space of GM fourfolds, of smooth polarized hyperk\"{a}hler varieties of dimension $2(a^2+b^2+1)$, degree $2(a^2+b^2)$, and divisibility $a^2+b^2$.  
\end{theorem}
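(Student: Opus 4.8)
The plan is to realize the asserted family as a relative moduli space of stable objects over the moduli of GM fourfolds, read off the invariants fiberwise from Theorem~\ref{theorem-moduli-space}, and verify local completeness Hodge-theoretically. First I would record the algebraic side of the Mukai Hodge structure: for a very general GM fourfold $X$ the lattice $\tH^{1,1}(\Ku(X),\bZ)$ of integral Hodge classes has rank $2$, say $\tH^{1,1}(\Ku(X),\bZ) = \bZ\lambda_1 \oplus \bZ\lambda_2$ with $(\lambda_i,\lambda_j) = 2\delta_{ij}$, and it is primitively embedded in the (unimodular, rank $24$) lattice $\tH(\Ku(X),\bZ)$; the case $(a,b)=(1,0)$, which must reproduce the double EPW sextic of degree $2$ and divisibility $1$, is a convenient calibration for this computation. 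Given coprime $(a,b)$, set $v = a\lambda_1 + b\lambda_2$; then $v$ is primitive and $(v,v) = 2(a^2+b^2)>0$. Choosing $\sigma \in \Stab^{\dagger}(\Ku(X))$ generic with respect to $v$, Theorem~\ref{theorem-moduli-space} shows $M_{\sigma}(\Ku(X),v)$ is a smooth projective hyperk\"{a}hler variety of dimension $(v,v)+2 = 2(a^2+b^2+1)$, deformation equivalent to a Hilbert scheme of points on a K3 surface, with a Hodge isometry $\rH^2(M_{\sigma}(\Ku(X),v),\bZ) \cong v^{\perp}$.

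Next I would globalize over the moduli of GM fourfolds. On the open locus $U$ where $\sigma$ stays generic with respect to $v$ and the Hodge lattice is as above, the relative moduli space of Theorem~\ref{theorem-relative-moduli} produces a smooth projective family $\mathcal{M} \to U$ whose fiber over $X$ is $M_{\sigma}(\Ku(X),v)$, together with a relative polarization. The polarization class is forced: over the connected base the only monodromy-invariant algebraic class in $\rH^2 \cong v^{\perp}$ is, up to scale, the primitive generator $w = b\lambda_1 - a\lambda_2$ of $v^{\perp} \cap \tH^{1,1}(\Ku(X),\bZ)$, and for generic $\sigma$ the Bayer--Macr\`{i} ample class is a positive multiple of it, so $w$ is the polarization. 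Since $w$ is primitive in the unimodular $\tH(\Ku(X),\bZ)$ it is primitive in $v^{\perp}$, giving degree $(w,w) = 2(a^2+b^2)$. For the divisibility I would use the gluing $\tH^{1,1}(\Ku(X),\bZ) \oplus T \subset \tH(\Ku(X),\bZ)$, where $T$ is the transcendental lattice: because $\gcd(a,b)=1$, the class $[w/2]$ is a nonzero element of the discriminant group of $\langle 2\rangle^{\oplus 2}$, so the gluing yields a vector $g \in v^{\perp}$ with $2g \in \bZ w \oplus T$ and $(w,g) = (w,w)/2 = a^2+b^2$; hence $\mathrm{div}(w) = \gcd\bigl((w,w),(w,g)\bigr) = a^2+b^2$.

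Finally I would establish unirationality and local completeness. Unirationality of the base is inherited from the parametrization of GM fourfolds by the rational variety of pairs consisting of a linear subspace $\bP^{8} \subset \bP^{10}$ and a quadric $Q$ cutting out $X = \Cone(\Gr(2,5)) \cap Q$, which dominates $U$. For local completeness I would compare period maps: by Theorem~\ref{theorem-moduli-space}\eqref{theorem-moduli-space-H2} the transcendental Hodge structure of $M_{\sigma}(\Ku(X),v)$ is identified with that of $\tH(\Ku(X),\bZ)$, so the $20$-dimensional period domain for the relevant polarized hyperk\"{a}hler type coincides with the GM period domain, and the classifying map of $\mathcal{M} \to U$ factors as the GM period map followed by this identification. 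Combined with Verbitsky's global Torelli theorem for hyperk\"{a}hler varieties deformation equivalent to Hilbert schemes of points, dominance of the GM period map then upgrades to dominance of the classifying map onto an open subset of the moduli of polarized hyperk\"{a}hler varieties of the stated numerical type.

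The main obstacle is this last step: one must check that the image of the classifying map is genuinely open, i.e. that the GM period map is dominant with the expected $20$-dimensional image and that no extra constraint is imposed on the polarization type as $X$ varies in $U$. The Hodge isometry of Theorem~\ref{theorem-moduli-space}\eqref{theorem-moduli-space-H2} is precisely what transports the known dominance of the GM period map to dominance of the hyperk\"{a}hler classifying map; the surrounding lattice bookkeeping, namely primitivity of $v$ and $w$, the divisibility computation, and constancy of the polarization type along $U$, is routine but must be carried out carefully to pin down the three invariants exactly.
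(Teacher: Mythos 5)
Your proposal is correct and follows essentially the same route as the paper's proof: take $v = a\lambda_1 + b\lambda_2$ in the rank-two algebraic lattice $A_1^{\oplus 2}$ of a very general fiber, apply Theorem~\ref{theorem-relative-moduli} over an open subset of the (unirational) moduli of GM fourfolds, read off the dimension and the Hodge isometry from Theorem~\ref{theorem-moduli-space}, identify the polarization with $b\lambda_1 - a\lambda_2$, compute degree $2(a^2+b^2)$ and divisibility $a^2+b^2$ by the same index-two lattice gluing computation inside $v^{\perp}$, and deduce local completeness from smoothness and dominance of the GM period map via Proposition~\ref{proposition-HKu-HX}. The only hypothesis you leave unverified is the requirement in Theorem~\ref{theorem-relative-moduli} that some $v$-generic stability condition have monodromy-invariant central charge; the paper checks this using Lemma~\ref{lemma-eta-sigma}, which places $\eta(\sigma)$ in $A_1^{\oplus 2} \otimes \bC$, together with the monodromy invariance of $A_1^{\oplus 2}$ itself (it is the projection of the image of $\rK(\Gr(2,5)) \to \rK(X)$), a fact your argument for the polarization class implicitly relies on as well.
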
 

\begin{remark}
\label{remark-moduli-cubic-comparison}
Theorem~\ref{theorem-unirational-families} gives the second known 
construction of an infinite series of unirational, locally complete families of hyperk\"{a}hler varieties. 
Note that the dimensions and degrees are indeed different than the ones arising from cubic fourfolds  in \cite[Corollary 29.5]{stability-families}. 
\end{remark}

The moduli spaces from Theorem~\ref{theorem-moduli-space} should be useful for further applications, some of which we discuss in Section~\ref{subsection_examples}. 
We expect that in low dimensions these moduli spaces  
are isomorphic to classically constructed hyperk\"{a}hler varieties. 
Namely, we conjecture the minimal dimension $4$ hyperk\"{a}hler varieties from Theorem~\ref{theorem-unirational-families} ($a^2+b^2=1$) recover O'Grady's famous double EPW sextics~\cite{OG-EPW}, and the $6$-dimensional ones ($a^2+b^2=2$) recover the recently constructed EPW cubes~\cite{IKKR}. 
In fact, in Proposition~\ref{proposition-EPW} we prove this conjecture for double EPW sextics in the very general case; we plan to remove the very general assumption in future work. 
We also observe in Proposition~\ref{prop_existinvolution} that the hyperk\"{a}hler varieties in Theorem~\ref{theorem-unirational-families} are always equipped with an antisymplectic involution, which in the $4$-dimensional case corresponds to the canonical involution of a double EPW sextic. 

Beyond dimension $6$, the hyperk\"{a}hler varieties in Theorem~\ref{theorem-unirational-families} appear to be completely new. 
The $12$-dimensional case is already quite interesting: we expect that any GM fourfold admits a (possibly only rationally defined) embedding into such a hyperk\"{a}hler. 
This would give an analog of the embedding of a cubic fourfold into the associated Lehn--Lehn--Sorger--van Straten hyperk\"{a}hler $8$-fold \cite{LLSVS}, 
and may lead to a solution to the open problem of determining the image of the period map for GM fourfolds, along the lines of the recent new proof by Bayer and Mongardi of the corresponding result for cubic fourfolds (see \cite[Proposition B.12]{Deb:survey}). 

Finally, we note that our results for moduli spaces of objects in $\Ku(X)$ are confined to the case of even-dimensional GM varieties. 
The odd-dimensional case is more subtle, because then the category $\Ku(X)$ is \emph{never} equivalent to the derived category of a variety, so we cannot directly reduce the problem to a more geometric one. 
However, using the relation between Kuznetsov components of even- and odd-dimensional GM varieties from \cite{cyclic-covers}, in future work we plan to use Theorem~\ref{theorem-moduli-space} to analyze moduli spaces in the odd-dimensional case.

\subsubsection{Associated K3 surfaces} 
\label{intro-associated-K3}
If $X$ is an even-dimensional GM variety, we say that $X$ has a \emph{homological associated K3 surface} if there exist a projective K3 surface $S$ and an equivalence
$\Ku(X) \simeq \Db(S)$. 
The importance of this property stems from the GM analog of Kuznetsov's rationality conjecture, which predicts a GM fourfold is rational if and only if it has a homological associated K3 surface. 
The following result, based on Theorem~\ref{theorem-moduli-space}, gives a Hodge-theoretic characterization of this property. 

\begin{theorem}
\label{theorem-Ku-geometric} 
Let $X$ be a GM fourfold or sixfold. 
Then $X$ has a homological associated K3 surface if and only if the lattice $\tH^{1,1}(\Ku(X), \bZ)$ contains a hyperbolic plane. 
More generally, there exists an equivalence $\Ku(X) \simeq \Db(S, \alpha)$ 
for a projective K3 surface $S$ with a Brauer class $\alpha \in \Br(S)$ 
if and only if there exists a nonzero primitive vector $v \in \tH^{1,1}(\Ku(X), \bZ)$ such that $(v,v) = 0$. 
\end{theorem}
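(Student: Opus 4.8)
The plan is to derive the statement from the moduli theory of Theorem~\ref{theorem-moduli-space}, following the argument of Addington--Thomas for cubic fourfolds as developed in \cite{stability-families}. Throughout I use that the Mukai Hodge structure $\tH(\Ku(X),\bZ)$ is intrinsic to the category $\Ku(X)$ (Section~\ref{section-Mukai-HS}), so that any exact equivalence induces a Hodge isometry on it. Both ``only if'' directions are then formal: an equivalence $\Ku(X) \simeq \Db(S,\alpha)$ induces a Hodge isometry $\tH(\Ku(X),\bZ) \cong \tH(S,\alpha,\bZ)$, and the algebraic part of the (twisted) Mukai Hodge structure of a K3 surface always contains the primitive isotropic vector $(0,0,1)$, the Mukai vector of a point, which corresponds to a nonzero primitive isotropic vector in $\tH^{1,1}(\Ku(X),\bZ)$. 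In the untwisted case $\alpha = 0$ the algebraic Mukai lattice contains the full summand $\bZ(1,0,0) \oplus \bZ(0,0,1) \cong U$, which corresponds to a hyperbolic plane in $\tH^{1,1}(\Ku(X),\bZ)$.

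For sufficiency, suppose there is a nonzero primitive isotropic $v \in \tH^{1,1}(\Ku(X),\bZ)$; note $v$ stays primitive in $\tH(\Ku(X),\bZ)$, since any integral vector dividing it is a rational multiple of $v$, hence of type $(1,1)$. Pick $\sigma \in \Stab^{\dagger}(\Ku(X))$ generic with respect to $v$. Since $(v,v) = 0 \ge -2$, Theorem~\ref{theorem-moduli-space}\eqref{theorem-moduli-space-nonempty} gives a nonempty moduli space $S := M_{\sigma}(\Ku(X),v)$, and Theorem~\ref{theorem-moduli-space}\eqref{theorem-moduli-space-dim} shows $S$ is a smooth projective hyperk\"{a}hler surface, i.e. a K3 surface. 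Using the relative moduli construction of Theorem~\ref{theorem-relative-moduli}, I would extract a universal family; it exists in general only as an $\alpha$-twisted object for some $\alpha \in \Br(S)$, and serves as the kernel of a Fourier--Mukai functor $\Phi \colon \Db(S,\alpha) \to \Ku(X)$. To see $\Phi$ is an equivalence, I would first check full faithfulness via Bridgeland's criterion: the objects $\Phi(k(s))$, $s \in S$, are precisely the $\sigma$-stable objects of class $v$ parametrized by $S$, and stability together with Serre duality in the CY2 category $\Ku(X)$ yields $\Hom(\Phi(k(s)), \Phi(k(t))[i]) = 0$ unless $s = t$ and $0 \le i \le 2$, with the extremal groups one-dimensional. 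Full faithfulness makes $\Phi(\Db(S,\alpha))$ an admissible subcategory, hence a factor of a semiorthogonal decomposition $\langle \cA, \cB \rangle$ of $\Ku(X)$; but any such decomposition of a CY2 category is completely orthogonal, since Serre duality gives $\Hom(\cA,\cB) \cong \Hom(\cB, \cA[2])^{\vee} = 0$ in addition to the vanishing $\Hom(\cB,\cA) = 0$ that defines the decomposition. Indecomposability of $\Ku(X)$ then forces the complementary factor to vanish, so $\Phi$ is an equivalence.

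It remains to identify when $\alpha$ is trivial, which upgrades the twisted statement to the untwisted one. The class $\alpha$ is the obstruction to untwisting the universal family, and it vanishes exactly when there is a class $w \in \tH^{1,1}(\Ku(X),\bZ)$ with $(v,w) = 1$; such a $w$ need not exist, because although the full lattice $\tH(\Ku(X),\bZ)$ is unimodular, the algebraic lattice $\tH^{1,1}(\Ku(X),\bZ)$ in general is not. A short lattice computation then shows that, for a primitive isotropic $v$, the existence of such a $w$ is equivalent to the existence of an embedding $U \hookrightarrow \tH^{1,1}(\Ku(X),\bZ)$: given $v$ and $w$ with $(v,w) = 1$, the vectors $v$ and $w - \tfrac{(w,w)}{2} v$ span a copy of $U$, using that the lattice is even. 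Combined with the previous paragraph, this yields the untwisted statement.

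I expect the principal obstacle to lie entirely in the input from Theorem~\ref{theorem-relative-moduli}: constructing the $\alpha$-twisted universal family in the noncommutative family setting, verifying the orthogonality and spanning conditions that make $\Phi$ fully faithful, and matching $\alpha$ precisely with the divisibility obstruction for $v$. Once these geometric ingredients are in place, the lattice-theoretic and Calabi--Yau arguments above are routine.
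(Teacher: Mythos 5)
Your proposal is correct, and it runs on the same engine as the paper's proof: both reduce everything to the nonemptiness (hence K3-ness) of a two-dimensional moduli space $M_{\sigma}(\Ku(X),v)$ for a primitive isotropic $v$ and $v$-generic $\sigma \in \Stab^{\dagger}(\Ku(X))$, both construct the equivalence from a quasi-universal family, and both untwist via the same lattice observation (a class $w$ with $(v,w)=1$ kills the Brauer obstruction and, for even lattices, is equivalent to having a hyperbolic plane through $v$). The difference is one of packaging, and it matters for the logical order: the paper proves this theorem \emph{before} Theorem~\ref{theorem-moduli-space}, so it cannot cite it; instead it reduces to an ordinary GM fourfold with smooth canonical quadric via Theorem~\ref{theorem-duality}, invokes Lemma~\ref{lemma-Ku-twisted-K3} together with \cite[Lemma 32.3]{stability-families} for the twisted equivalence and the untwisting criterion, and obtains nonemptiness directly from the specialization machinery (Propositions~\ref{proposition-specialize} and~\ref{proposition-stability-over-curve}) combined with \cite{bayer-macri-projectivity}. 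Your route through Theorem~\ref{theorem-moduli-space}\eqref{theorem-moduli-space-nonempty}--\eqref{theorem-moduli-space-dim} is legitimate and not circular, because the paper's (omitted) proof of Theorem~\ref{theorem-moduli-space} is the deformation argument of \cite[Theorem 29.2]{stability-families}, which rests on the same two specialization propositions and not on Theorem~\ref{theorem-Ku-geometric}; but you should make this non-circularity explicit, since it is the one point where your argument could collapse into assuming what it proves. Two smaller repairs: your essential-surjectivity step quietly assumes $\Ku(X)$ admits no nontrivial completely orthogonal decomposition, which needs justification (e.g. $\mathrm{HH}^0(\Ku(X)) = \bC$ from \cite{KuzPerry:dercatGM}, or the connectedness argument via \cite[Proposition A.7]{BLMS} that the paper uses inside Lemma~\ref{lemma-Ku-twisted-K3}); and the quasi-universal family is produced by the moduli problem over a point (Lemma~\ref{sigma-family-stability}, the moduli stack being a $\mathbb{G}_m$-gerbe over the coarse space), not by the relative construction of Theorem~\ref{theorem-relative-moduli}, which is irrelevant here. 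Also note that vanishing of $\mathrm{Hom}(\Phi(k(s)),\Phi(k(t))[1])$ for $s \neq t$ needs the Euler-characteristic computation $\chi = -(v,v) = 0$ in addition to stability and Serre duality.
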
 

\begin{remark}
There is also a notion of a Hodge-theoretically associated K3 surface. 
For simplicity, we explain the situation for a GM fourfold $X$. 
There is an embedding of $\rH^4(\Gr(2,5), \bZ)$ into $\rH^4(X, \bZ)$ 
whose image is a rank $2$ sublattice denoted by $L_{\Gr}$.  
We say a polarized K3 surface $(S,f)$ is \emph{Hodge-theoretically associated} to $X$ if 
there is a rank $3$ saturated sublattice $L_{\Gr} \subset L \subset \rH^4(X, \bZ)$ such that there is a Hodge isometry 
\begin{equation}
\label{HT-associated-K3} 
\rH^2(S, \bZ) \supset f^{\perp} \xrightarrow{\, \sim \,} L^{\perp}(1) \subset \rH^4(X, \bZ)(1)
\end{equation} 
where $(1)$ denotes a Tate twist. 
Debarre, Iliev, and Manivel \cite{DIM4fold} showed the existence of a Hodge-theoretic associated K3 cuts out a countable union of Noether--Lefschetz divisors in the moduli space of GM fourfolds. 
The results of \cite{Pert} combined with Theorem~\ref{theorem-Ku-geometric} show that if $X$ has a Hodge-theoretic associated K3 surface then it also has a homological associated K3 surface, but the converse does \emph{not} hold. 
There are thus two competing conjectural conditions 
(existence of a homological versus Hodge-theoretic K3) encoding the rationality of a GM fourfold. 
This should be contrasted with the case of cubic fourfolds, where these conditions are known to be equivalent \cite{addington-thomas, stability-families} (see Remark \ref{rmk-HTK3versusHomologicalK3}). 
\end{remark} 

Following~\cite{addington-thomas}, we deduce from Theorem~\ref{theorem-Ku-geometric} the algebraicity of the Hodge isometries~\eqref{HT-associated-K3} defining Hodge-theoretic associated K3s.  
Slightly more generally, we show the following. 
For simplicity we state the result for GM fourfolds, but there is an obvious analog for GM sixfolds. 

\begin{corollary}
\label{corollary-hodge-isom}
Let 
$S$ be a projective K3 surface and $X$ a GM fourfold. 
Let $K \subset \rH^{1,1}(S, \bZ)$ and $L_{\Gr} \subset L \subset \rH^{2,2}(X, \bZ)$ be sublattices such that there is a Hodge isometry $\varphi \colon K^{\perp} \xrightarrow{\sim} L^{\perp}(1)$. 
Then there is an algebraic cycle in $\CH^3(S \times X)$ which induces $\varphi$. 
\end{corollary}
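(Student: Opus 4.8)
The plan is to follow the strategy of Addington and Thomas \cite{addington-thomas} for cubic fourfolds, reducing the algebraicity of $\varphi$ to the algebraicity of a Hodge isometry of full Mukai lattices, which is then supplied by a Fourier--Mukai kernel. First I would promote $\varphi$ to a Hodge isometry of the Mukai Hodge structures. Using the natural primitive embedding $\rH^2(S,\bZ)\hookrightarrow\tH(S,\bZ)$ (with orthogonal complement a hyperbolic plane), the embedding relating $L^{\perp}(1)\subset\rH^4(X,\bZ)(1)$ to $\tH(\Ku(X),\bZ)$ recorded in \cite{Pert}, and Nikulin's results on extending isometries of primitive sublattices of even lattices, I would extend $\varphi$ to a Hodge isometry $\widetilde{\varphi}\colon\tH(S,\bZ)\xrightarrow{\sim}\tH(\Ku(X),\bZ)$. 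In particular $\widetilde{\varphi}$ identifies the transcendental lattices, so $S$ and the noncommutative K3 surface $\Ku(X)$ share a transcendental Hodge structure; by Theorem~\ref{theorem-Ku-geometric} this already yields a homological associated K3 surface, and matching transcendental lattices forces it to be a Fourier--Mukai partner of $S$, so in fact $\Db(S)\simeq\Ku(X)$.

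Next I would make this equivalence algebraic. The equivalence $\Db(S)\simeq\Ku(X)$ is of Fourier--Mukai type (by Orlov's representability theorem, and explicitly via the moduli-space construction of Theorem~\ref{theorem-moduli-space} underlying Theorem~\ref{theorem-Ku-geometric}, where $S$ is realized as $M_{\sigma}(\Ku(X),v)$ for a primitive isotropic $v\in\tH^{1,1}(\Ku(X),\bZ)$ and the universal object is the kernel). Thus it has a kernel $\cE\in\Db(S\times X)$, and its Mukai vector $v(\cE)=\ch(\cE)\sqrt{\td_{S\times X}}$ is an algebraic class inducing an algebraic Hodge isometry $\Phi_*\colon\tH(S,\bZ)\to\tH(\Ku(X),\bZ)$.

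The last, and most delicate, step is to arrange that $\Phi_*$ restricts to the given $\varphi$ on $K^{\perp}$. Since $\Phi_*$ and $\widetilde{\varphi}$ differ by a Hodge auto-isometry $h:=\Phi_*^{-1}\circ\widetilde{\varphi}\in\rO(\tH(S,\bZ))$, by the derived global Torelli theorem (Orlov, together with the orientation theorem of Huybrechts--Macr\`{i}--Stellari) any orientation-preserving such $h$ is induced by an autoequivalence of $\Db(S)$, hence is algebraic; the orientation-reversing case reduces to this after composing with a fixed explicit algebraic orientation-reversing correspondence. Precomposing $\Phi$ with the corresponding (anti-)autoequivalence, I may assume $\Phi_*=\widetilde{\varphi}$, so that $\Phi_*$ restricts to $\varphi$ on $K^{\perp}$. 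Finally, the codimension-$3$ component $[Z]:=v(\cE)_6\in\rH^6(S\times X)$ of the Mukai vector of $\cE$ is algebraic, since each graded piece of $\ch(\cE)$ is, and is therefore represented by a cycle $Z\in\CH^3(S\times X)$; a degree count shows that the correspondence $Z$ acts on $\rH^2(S,\bZ)$ exactly through this component, whence $Z_*|_{K^{\perp}}=\Phi_*|_{K^{\perp}}=\varphi$, as required. I expect the main obstacle to be this matching step together with the orientation bookkeeping, and, at the lattice level, verifying that $\varphi$ extends to the Mukai lattices using the precise description of $\tH(\Ku(X),\bZ)$ for GM fourfolds.
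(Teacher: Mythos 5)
Your proposal is correct and follows essentially the same route as the paper's proof: extend $\varphi$ to a Hodge isometry $\widetilde{\varphi} \colon \tH(S,\bZ) \xrightarrow{\sim} \tH(\Ku(X),\bZ)$, invoke Theorem~\ref{theorem-Ku-geometric} to obtain a K3 surface with $\Ku(X) \simeq \Db(S')$, and use the derived Torelli theorem as in \cite[Proposition 5.1]{addington-thomas} to produce an equivalence $\Db(S) \simeq \Ku(X)$ inducing $\widetilde{\varphi}$, whose kernel's (algebraic) Mukai vector then yields the cycle. The additional details you supply --- the Fourier--Mukai partner argument, the orientation bookkeeping, and the extraction of the codimension-$3$ component in $\CH^3(S \times X)$ --- are precisely what the paper leaves implicit in its citation of Addington--Thomas.
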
 

\subsection{Organization of the paper} 
In Section~\ref{section-conic-fibration} we show that an ordinary Gushel-Mukai fourfold $X$ is birational to a conic fibration over a quadric threefold $Y$. Then, under an appropriate genericity assumption on $X$ (smoothness of a canonically associated quadric surface), we construct in Theorem~\ref{theorem-KX-Cl} a fully faithful embedding of $\Ku(X)$ into the bounded derived category $\Db(Y, \Cl_0)$ of coherent sheaves of modules over the even part $\Cl_0$ of the sheaf of Clifford algebras arising from the conic fibration. 

Section~\ref{section-BI} is devoted to proving a generalized Bogomolov inequality for slope semistable $\Cl_0$-modules, which is necessary for the construction of weak stability conditions on $\Db(Y, \Cl_0)$. 
We first prove the inequality for a smooth linear section of $Y$, and then deduce the statement for $Y$ arguing by induction on the rank of the sheaf.

In Section~\ref{section-stability} we prove Theorem \ref{main-theorem} and Corollary \ref{corollary-DbX-stability}, as well as the first part of Theorem~\ref{theorem-Stab-dagger} asserting that in the even-dimensional case the constructed stability conditions on $\Ku(X)$ are full numerical stability conditions. 
Making use of the previous sections, we first prove the results for ordinary GM fourfolds with smooth canonical quadric, and 
then use the duality conjecture for GM varieties to reduce the general case to this one. 

Section~\ref{section_applications} is devoted to the applications. In particular, we prove the results in Sections~\ref{intro-stab-manifold}, \ref{intro-moduli}, and \ref{intro-associated-K3} concerning the stability manifold of $\Ku(X)$, the hyperk\"ahler varieties arising as moduli spaces of stable objects in $\Ku(X)$, 
and the existence of associated K3 surfaces.
We end with a discussion of the low-dimensional hyperk\"{a}hler varieties from Theorem~\ref{theorem-unirational-families}. 

\subsection{Notation and conventions} 
\label{section-notation}
We work over an algebraically closed field $k$ of characteristic $0$. 
In fact, our proof of Theorem~\ref{main-theorem} should also go through in large enough positive characteristic. Namely, we need $\characteristic{k} > 2$ to apply Kuznetsov's results \cite{kuznetsov08quadrics} on derived categories of quadric fibrations (see \cite{quadrics-char2}), and we need $\characteristic{k}$ sufficiently large so that coherent cohomology computations involving $\Gr(2,5)$ work as in characteristic $0$; we leave the details to the interested reader.  
We assume $k = \bC$ in Sections~\ref{section-support-fourfold} and~\ref{section_applications} because there we use Hodge theory. 

A vector bundle $\cE$ on a variety $X$ is a finite locally free $\cO_X$-module. The projective bundle of such an $\cE$ is 
\begin{equation*}
\bP(\cE) = \Proj(\Sym^*(\cE^{\vee})) \xrightarrow{\, \pi \,} X
\end{equation*} 
with $\cO_{\bP(\cE)}(1)$ normalized so that $\pi_* \cO_{\bP(\cE)}(1) = \cE^{\vee}$. 

If $X \to Y$ is a morphism of varieties and $\cF \in \Db(Y)$, then 
we often write $\cF_X$ for the pullback of $\cF$ to $X$. 
By abuse of notation, if $D$ is a divisor on a variety $Y$, we often 
denote still by $D$ its pullback to any variety mapping to $Y$. 

As mentioned already, we use $\Db(X)$ to denote the bounded derived category of coherent sheaves on a variety $X$. 
By convention all functors are derived. 
In particular, for a morphism $f \colon X \to Y$ of varieties we write $f_*$ and $f^*$ for the derived pushforward and pullback functors, and for $E, F \in \Db(X)$ we write $E \otimes F$ for their derived tensor product. 
All functors between triangulated categories in this paper will be given by Fourier--Mukai functors. 
In particular, when we write $\Ku(X) \simeq \Db(S)$ for a K3 surface $S$, we mean that there is an equivalence given by a Fourier--Mukai kernel on the product $X \times S$. 
Finally, the term K3 surface means a smooth projective K3 surface. 

\subsection{Acknowledgements} 
We are very grateful to Arend Bayer, Daniel Huybrechts, Sasha Kuznetsov, Chunyi Li, Zhiyu Liu, Emanuele Macr\`{i}, and Paolo Stellari for many useful conversations about this work. 
We would especially like to thank Sasha Kuznetsov for his help in proving Theorem~\ref{theorem-KX-Cl}, and for many useful comments on an earlier version of this paper. 

Part of this paper was written when the second author was visiting the Max-Planck-Institut f\"ur Mathematik in Bonn, whose hospitality is gratefully acknowledged.


\section{Conic fibrations and Kuznetsov components of GM fourfolds}
\label{section-conic-fibration}

We fix a $5$-dimensional vector space $V_5$. 
Recall that a GM variety as in Definition~\ref{definition-GM} is called \emph{ordinary} if $\bP^{n+4}$ does not intersect the vertex of $\Cone(\Gr(2,V_5))$, and \emph{special} otherwise. 
We consider an ordinary GM fourfold $X$, which by projection from the vertex of $\Cone(\Gr(2,V_5))$ 
can be expressed as an intersection 
\begin{equation}
\label{GM4fold}
X = \Gr(2, V_5) \cap \bP(W) \cap Q 
\end{equation}
where $\bP(W) \subset \bP(\wedge^2V_5)$ is a hyperplane in 
the Pl\"{u}cker space $\bP(\wedge^2V_5)$ and $Q \subset \bP(W)$ is a quadric hypersurface.  
We denote by $H$ the hyperplane class on the Pl\"{u}cker space $\bP(\wedge^2V_5)$, 
and by $\cU$ the tautological rank $2$ subbundle on $\Gr(2,V_5)$.  
Recall that the Kuznetsov component $\Ku(X)$ 
of $X$ is defined by the semiorthogonal decomposition 
\begin{equation}
\label{dbX}
\Db(X) = \llangle \Ku(X), \cO_X, \cUv_X, \cO_X(H), \cUv_X(H) \rrangle. 
\end{equation} 

In this section, we start by showing that $X$ is birationally a conic fibration  
over a quadric threefold $Y$, by elaborating on a construction from \cite[\S3]{DIM4fold}. 
Associated to this conic fibration is a sheaf $\Cl_0$ of even parts of the corresponding 
Clifford algebra on $Y$. 
Under a suitable genericity hypothesis (smoothness of a canonical quadric surface associated to $X$ in Section~\ref{subsection-conicfibr}), 
our main result expresses $\Ku(X)$ as a semiorthogonal component in the derived 
category $\Db(Y, \Cl_0)$ of coherent sheaves of $\Cl_0$-modules. 
We prove a preliminary version of this result in Section~\ref{subsection-Ku-in-Cl}, 
and then in Section~\ref{section-explicit-Ku-in-Cl} prove a refined version as Theorem~\ref{theorem-KX-Cl}. 

\subsection{The conic fibration} 
\label{subsection-conicfibr}
The hyperplane $\bP(W) \subset \bP(\wedge^2 V_5)$ is defined by a 
skew-symmetric form $\omega \in \wedge^2V_5^{\vee}$, which by the 
smoothness of $X$ has a $1$-dimensional kernel $V_1 \subset V_5$. 
Consider the $3$-dimensional projective space 
\begin{equation*}
\bP^3_W = \set{ V_2 \in \Gr(2,V_5) \st V_1 \subset V_2 } 
\cong \bP(V_1 \wedge V_5) \subset \bP(\wedge^2V_5). 
\end{equation*} 
Note that we have an inclusion 
\begin{equation*}
\bP^3_W \subset \bP(W). 
\end{equation*} 
Linear projection from $\bP^3_W$ provides $X$ with a conic bundle 
structure. 
To state this result precisely, we first introduce some notation. 

Let 
\begin{equation}
\label{eq_T}
T = \bP_W^3 \cap Q \subset X 
\end{equation} 
be the quadric surface cut out by $\bP^3_W$ in $X$, 
called the \emph{canonical quadric} of $X$. 
Note that 
$T$ is indeed a surface as $X$ does not contain $3$-planes. 
As noted in \cite[\S3]{DIM4fold}, $T$ is the unique quadric surface contained in $X$ such that, 
when regarded as a subvariety of $\Gr(2,V_5)$, its points correspond to $2$-dimensional subspaces 
of $V_5$ that all contain a fixed $1$-dimensional subspace (namely $V_1$). 

Next, set 
\begin{equation*}
V_4 = V_5/V_1 
\end{equation*} 
and let 
\begin{equation*}
W' \subset \wedge^2 V_4 = \wedge^2V_5/(V_1 \wedge V_5) 
\end{equation*} 
be the hyperplane given as the image of $W \subset \wedge^2 V_5$.  
Let $Y \subset \bP(W') \cong \bP^4$ be the quadric threefold 
given by the smooth intersection 
\begin{equation*}
Y = \Gr(2,V_4) \cap \bP(W'). 
\end{equation*} 
Let $h$ denote the hyperplane class on $\bP(\wedge^2V_4)$. 
Let $\cS \subset V_4 \otimes \cO$ denote the tautological rank $2$ subbundle on $\Gr(2,V_4)$, 
and let $\hcS \subset V_5 \otimes \cO$ denote its preimage under the surjection $V_5 \otimes \cO \to V_4 \otimes \cO$. 
Note that a choice of splitting $V_5 \cong V_1 \oplus V_4$ induces an isomorphism 
\begin{equation*}
\hcS \cong (V_1 \otimes \cO) \oplus \cS. 
\end{equation*} 

\begin{lemma}
\label{lemma-conic-bundle}
Let $b \colon \tX \to X$ be the blowup with center $T \subset X$.  
Then linear projection from $\bP^3_W \subset \bP(W)$ induces a regular 
map $\pi \colon \tX \to Y$, which is a conic fibration. 
More precisely, consider the rank $3$ vector bundle  
\begin{equation*}
\cE = \wedge^2( \hcS_Y) \cong (V_1 \otimes \cS_Y) \oplus \cO(-h) 
\end{equation*} 
on $Y$. 
Then: 
\begin{enumerate}
\item \label{E-to-W} There is an inclusion $\cE \hookrightarrow W \otimes \cO$ which induces a 
morphism $\bP_Y(\cE) \to \bP(W)$. 
\item \label{O1E} There is an isomorphism $\cO_{\bP_Y(\cE)}(1) \cong \cO(H)$. 
\item \label{X-in-PE} 
$\tX$ is a divisor in $\bP_Y(\cE)$ cut out by a section of $\cO(2H)$, namely $\tX$ 
is the preimage of $Q \subset \bP(W)$ under the morphism $\bP_Y(\cE) \to \bP(W)$. 
\item \label{disc-of-cf} 
The discriminant locus of the conic fibration $\pi \colon \tX \to Y$ is a divisor in $Y$ 
defined by a section of $\cO(4h)$. 
\end{enumerate} 
This situation is summarized by the commutative diagram 
\begin{equation}
\label{main-diagram}
\vcenter{
\xymatrix{
& E \ar[r]^{i_E} \ar[dl]_{b_E} & \tX \ar[dl]_{b} \ar[dr]^{\pi} \ar[r]^-{j} & \bP_Y(\cE) \ar[d]^{p} \\ 
T \ar[r]^{i_T} & X & & Y 
}
} 
\end{equation} 
where $E$ is the exceptional divisor of $b \colon \tX \to X$. 
We have the following linear equivalences of divisors on $\tX$: 
\begin{equation}
\label{divisors-tX} 
h = H - E \quad \text{and} \quad K_{\tX} = -2H + E = -H - h. 
\end{equation}  
In particular, the restriction $\pi|_E: E \to Y$ is flat and finite of degree $2$. Finally, if $T$ is smooth, then so are all of the other varieties in the above diagram.
\end{lemma}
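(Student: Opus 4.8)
The final statement asserts that if $T$ is smooth, then all varieties in the diagram~\eqref{main-diagram} are smooth. The plan is to propagate smoothness from $X$ and $T$ through the diagram, using the explicit descriptions of each variety established in parts~\eqref{E-to-W}--\eqref{disc-of-cf} of the lemma, which I treat as already proved. The varieties to handle are $Y$, $\bP_Y(\cE)$, $E$, and $\tX$; the source $X$ is smooth by hypothesis (it is a GM fourfold, hence smooth by definition), and $T$ is smooth by assumption.

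\textbf{Key steps in order.} First I would record that $Y = \Gr(2,V_4) \cap \bP(W')$ is smooth: this is built into the setup, since $Y$ was introduced as a \emph{smooth} intersection, so no argument beyond citing the construction is needed here. Second, since $\cE$ is a vector bundle (rank $3$) on the smooth variety $Y$, the projective bundle $\bP_Y(\cE) \to Y$ is a Zariski-locally trivial $\bP^2$-bundle, hence smooth over $Y$, and therefore $\bP_Y(\cE)$ is smooth as the total space of a smooth fibration over a smooth base. Third, for the exceptional divisor $E$: since $b \colon \tX \to X$ is the blowup of the smooth fourfold $X$ along the smooth surface $T$ (smooth by hypothesis), standard properties of blowups along smooth centers in smooth varieties guarantee that both $\tX$ and $E$ are smooth, with $E \to T$ a projective bundle ($\bP^1$-bundle on the normal bundle). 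This simultaneously settles the smoothness of $\tX$ and $E$.

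\textbf{Consistency check via the divisor $\tX \subset \bP_Y(\cE)$.} As a cross-check, one can alternatively obtain smoothness of $\tX$ from part~\eqref{X-in-PE}, which realizes $\tX$ as a divisor in the smooth variety $\bP_Y(\cE)$ cut out by a section of $\cO(2H)$; smoothness of $\tX$ then follows from smoothness of $X$ together with the fact that $b$ is an isomorphism away from $E$ and a $\bP^1$-bundle over the smooth $T$ along $E$. I would present the blowup argument as primary, since it directly yields both $\tX$ and $E$ at once, and mention the divisor description as the geometric source of the section. The restriction $\pi|_E \colon E \to Y$ being flat and finite of degree $2$ (already established in the body of the lemma) does not require smoothness but is compatible with it.

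\textbf{Main obstacle.} The single genuine point requiring care is the step for $E$ and $\tX$: one must ensure that the center $T$ is not merely smooth but is a smooth \emph{subvariety} of the smooth $X$ (i.e.\ a regularly embedded smooth closed subscheme), so that the blowup $b$ has smooth total space and smooth exceptional divisor. Since $T = \bP^3_W \cap Q \subset X$ is cut out inside the smooth $X$ and is assumed smooth of the expected dimension $2$, the embedding $T \hookrightarrow X$ is automatically regular, so this hypothesis is met; I would state this explicitly. Everything else is a formal consequence of smoothness being preserved under projective bundles and blowups of smooth centers in smooth ambient varieties.
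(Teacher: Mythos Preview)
Your proposal is correct and is precisely the kind of argument the paper has in mind: the paper dismisses this final claim (together with the divisor equalities~\eqref{divisors-tX}) as ``straightforward'' without further elaboration, and your steps---$Y$ smooth by construction, $\bP_Y(\cE)$ smooth as a projective bundle over smooth $Y$, and $\tX$, $E$ smooth because $b$ is the blowup of a smooth variety along a smooth center---are the standard verifications one would supply.
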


\begin{proof}
Claims~\eqref{E-to-W} and~\eqref{O1E} follow directly from the definitions. 

To see~\eqref{X-in-PE}, we first consider the linear projection from $\bP^3_W$ 
as a map from the entire Grassmannian $\Gr(2,V_5)$. 
This gives a diagram 
\begin{equation}
\label{Gr-blowup}
\vcenter{
\xymatrix{
& \wtilde{\Gr(2,V_5)} \ar[dl] \ar[dr] & \\ 
\Gr(2,V_5) && \Gr(2, V_4) 
}
}
\end{equation} 
where the first morphism is the blowup in $\bP^3_W$ and the second is 
induced by linear projection from $\bP^3_W \subset \bP(\wedge^2V_5)$. 
It is easy to see there is an isomorphism 
\begin{equation*}
\wtilde{\Gr(2,V_5)} \cong \Gr_{\Gr(2,V_4)}(2, \hcS)
\end{equation*} 
under which the first morphism is induced by the inclusion $\hcS \subset V_5 \otimes \cO$ 
and the second is the tautological projection. 
Passing to the intersection with $\bP(W)$, diagram~\eqref{Gr-blowup} induces 
an analogous diagram 
\begin{equation}
\label{GrPW-blowup}
\vcenter{
\xymatrix{
& \wtilde{\Gr(2,V_5) \cap \bP(W)}  \ar[dl] \ar[dr] & \\ 
\Gr(2,V_5) \cap \bP(W) && Y = \Gr(2, V_4) \cap \bP(W')
}
}
\end{equation} 
where the first morphism is the blowup in $\bP^3_W$ and the second is 
induced by linear projection from $\bP^3_W \subset \bP(W)$. 
Moreover, we have isomorphisms  
\begin{equation*}
\wtilde{\Gr(2,V_5) \cap \bP(W)} \cong \Gr_Y(2, \hcS_Y) \cong \bP_Y(\wedge^2 \hcS_Y) = \bP_Y(\cE), 
\end{equation*} 
under which the first morphism is induced by $\bP_Y(\cE) \to \bP(W)$ and the 
second is the tautological projection. 
The blowup $\tX \to X$ is given by the proper transform of $X \subset \Gr(2, V_5) \cap \bP(W)$ 
under the first morphism in~\eqref{GrPW-blowup}. 
Since $X \subset \Gr(2, V_5) \cap \bP(W)$ is the divisor defined by the quadric $Q \subset \bP(W)$ and 
$T = X \cap \bP^3_W$ is a surface, this coincides with the preimage of $Q \subset \bP(W)$ under 
$\bP_Y(\cE) \to \bP(W)$. 
This proves claim~\eqref{X-in-PE}. 

The section of $\cO_{\bP_Y(\cE)}(2H)$ defining $\tX$ corresponds to a morphism 
of vector bundles $\cE \to \cE^{\vee}$ on $Y$. 
The discriminant locus of the conic fibration $\pi \colon \tX \to Y$ is the vanishing 
locus of the determinant of this morphism, i.e. is defined by a section of 
\begin{equation*}
\det(\cE^{\vee})^2 \cong \cO(4h). 
\end{equation*}
This proves claim~\eqref{disc-of-cf}. 

The final claims about the equalities~\eqref{divisors-tX} and smoothness are straightforward. 
\end{proof} 

\subsection{The embedding of $\Ku(X)$ into $\Db(Y, \Cl_0)$} 
\label{subsection-Ku-in-Cl}
From now on, we assume the canonical quadric of $X$ is smooth. 

\begin{remark}
The smoothness of the canonical quadric of $X$ holds generically, namely 
on the complement of a divisor in moduli. This is a consequence of \cite[Lemma 2.1]{DebKuz:periodGM} and \cite[Proposition 4.5]{DebKuz:birGM}, as explained in the proof of Theorem \ref{theorem-duality}. 
\end{remark}

Recall from \cite{kuznetsov08quadrics} that associated to the conic fibration 
$\pi \colon \tX \to Y$, there are sheaves $\Cl_0$ and $\Cl_1$ of even and 
odd parts of the corresponding Clifford algebra on $Y$, which as sheaves of 
$\cO_{Y}$-modules are given by
\begin{align}
\label{Cl_0} 
\Cl_0  & = \cO_Y \oplus \wedge^2 \cE  \cong \cO_Y \oplus \cO_Y(-h) \oplus \cS_Y(-h)   \\
\label{Cl_1} 
\Cl_1  & = \cE \oplus \wedge^3 \cE \cong \cS_Y \oplus \cO_Y(-h) \oplus \cO_Y(-2h). 
\end{align}
Note that $\Cl_0$ is a sheaf of $\cO_Y$-algebras via Clifford multiplication, 
and $\Cl_1$ is a $\Cl_0$-module. Moreover, by Lemma \ref{lemma-conic-bundle}\eqref{X-in-PE}, $\wtilde{X}$ is the zero locus of a section in $\rH^0(\bP_Y(\cE),\cO(2H) \otimes \pi^* \cL^\vee)$, where $\cL \cong \cO_Y$. As a consequence, the natural $\Cl_0$-bimodules introduced in \cite[ (15)]{kuznetsov08quadrics} satisfy
\begin{equation}
\label{eq_Cl_i}
\Cl_{2i}:=\Cl_0 \otimes \cL^{-i}\cong \Cl_0 \quad \mbox{and} \quad \Cl_{2i+1}:=\Cl_1 \otimes \cL^{-i}\cong \Cl_1.  
\end{equation} 

Our goal is to realize $\Ku(X)$ as a semiorthogonal component inside the derived category 
$\Db(Y, \Cl_0)$ of coherent sheaves of right $\Cl_0$-modules. 
To state the result precisely, we need some preliminary notation. 

Again by \cite{kuznetsov08quadrics}, there is a fully faithful functor 
\begin{equation*}
\Phi \colon \Db(Y, \Cl_0) \to \Db(\tX), 
\end{equation*}
whose image fits into a semiorthogonal decomposition 
\begin{equation*}
\Db(\tX) = \llangle \Phi(\Db(Y, \Cl_0)), \pi^* \Db(Y) \rrangle
\end{equation*}
Since the quadric threefold $Y \subset \bP(W')$ is smooth by Lemma~\ref{lemma-conic-bundle}, the category 
$\Db(Y)$ admits a standard decomposition, which can be written as
\begin{equation*} 
\label{DbY}
\Db(Y) = \llangle \cO_Y(-h), \cO_Y, \cS^{\vee}_Y, \cO_Y(h) \rrangle.
\end{equation*} 
Here, we have used that $\cS_Y$ is the spinor bundle on $Y$, 
being the restriction of the spinor bundle $\cS$ on the quadric 
$\Gr(2, V_4) \subset \bP(\wedge^2V_4)$. 
All together, we obtain a semiorthogonal decomposition 
\begin{equation}
\label{sod-conic}
\Db(\tX) = \llangle \Phi(\Db(Y, \Cl_0)), \cO_{\tX}(-h), \cO_{\tX}, \cSv_{\tX}, \cO_{\tX}(h)  \rrangle . 
\end{equation}

On the other hand, by Orlov's blowup formula we have a semiorthogonal 
decomposition 
\begin{equation*}
\Db(\tX) = \llangle b^*\Db(X), i_{E*}b_{E}^* \Db(T) \rrangle , 
\end{equation*} 
where the morphisms $b, b_E,$ and $i_E$ are as in~\eqref{main-diagram}. 
Since $T \subset \bP^3_W$ is a smooth quadric surface, the category 
$\Db(T)$ admits a standard decomposition, which can be written as 
\begin{equation*}
\Db(T) = \llangle \cO_T(-c), \cO_T(-d), \cO_T, \cO_T(H) \rrangle, 
\end{equation*} 
where $c$ and $d$ are the semiample generators of $\text{Pic}(T)$ such that
\begin{equation*}
H|_T=c+d.
\end{equation*} 
Plugging this and~\eqref{dbX} into the blowup formula, we obtain 
\begin{multline}
\label{sod-blowup} 
\Db(\tX) = 
 \langle b^*\Ku(X), 
\cO_{\tX}, \cUv_{\tX}, \cO_{\tX}(H), \cUv_{\tX}(H),  \\ 
i_{E *}\cO_E(-c), i_{E *}\cO_E(-d), i_{E *}\cO_E, i_{E *}\cO_E(H)  \rangle . 
\end{multline} 

To realize $\Ku(X)$ as a semiorthogonal component inside of $\Db(Y, \Cl_0)$, 
we will find a sequence of mutations taking~\eqref{sod-blowup} into the form of~\eqref{sod-conic}. 
We freely use basic facts about mutation functors, as reviewed for instance in \cite[Section 2]{kuznetsov-cubic}. The precise result we will prove is the following. 

\begin{proposition} 
\label{proposition-KX-Cl} 
Let $X$ be an ordinary GM fourfold with smooth canonical quadric. 
Then there is a semiorthogonal decomposition 
\begin{equation*}
\Phi(\Db(Y, \Cl_0)) = 
\llangle \Ku(X)', \cU_{\tX}, \cF_c, \cF_d, \cG 
\rrangle 
\end{equation*} 
where $\cF_c$, $\cF_d$, and $\cG$ are rank 2 vector bundles on $\tX$ defined in Lemmas~\ref{lemma_defFaFb} and~\ref{lem_defG}, 
and $\Ku(X)'$ is the fully faithful image of $\Ku(X)$ under the functor 
\begin{equation*} 
\rL_{\cO_{\tX}(-h)} \circ b^* \circ \rL_{\cU_X} \colon \Db(X) \to \Db(\tX) . 
\end{equation*} 
\end{proposition}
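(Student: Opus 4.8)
The plan is to transform the blowup decomposition~\eqref{sod-blowup} by a chain of mutations into a new semiorthogonal decomposition of $\Db(\tX)$ whose four right-most components generate $\pi^*\Db(Y) = \llangle \cO_{\tX}(-h), \cO_{\tX}, \cSv_{\tX}, \cO_{\tX}(h) \rrangle$ and whose five left-most components are exactly $\Ku(X)', \cU_{\tX}, \cF_a, \cF_b, \cG$. Granting this, the proposition follows at once: by~\eqref{sod-conic} the subcategory $\Phi(\Db(Y, \Cl_0))$ is the semiorthogonal complement of $\llangle \cO_{\tX}(-h), \cO_{\tX}, \cSv_{\tX}, \cO_{\tX}(h) \rrangle$ in $\Db(\tX)$, and such a complement is unique; hence it coincides with $\llangle \Ku(X)', \cU_{\tX}, \cF_a, \cF_b, \cG \rrangle$.

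I would first reorganize the geometric block $b^*\Db(X)$ of~\eqref{sod-blowup}. Using that $X$ is a Fano fourfold of index $2$, so that $\omega_X \cong \cO_X(-2H)$, together with the identity $\cUv_X \cong \cU_X(H)$, the Serre functor sends the right-most object $\cUv_X(H)$ of~\eqref{dbX} to $\cU_X[4]$. Rotating it to the front gives $\Db(X) = \llangle \cU_X, \Ku(X), \cO_X, \cUv_X, \cO_X(H) \rrangle$, and a single left mutation through $\cU_X$ then yields $\Db(X) = \llangle \rL_{\cU_X}\Ku(X), \cU_X, \cO_X, \cUv_X, \cO_X(H) \rrangle$. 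This is the source of the functor $\rL_{\cU_X}$ in the statement. Pulling back along $b^*$ and re-inserting the exceptional block, \eqref{sod-blowup} takes the form
\begin{equation*}
\Db(\tX) = \llangle b^*\rL_{\cU_X}\Ku(X), \cU_{\tX}, \cO_{\tX}, \cUv_{\tX}, \cO_{\tX}(H), i_{E*}\cO_E(-a), i_{E*}\cO_E(-b), i_{E*}\cO_E, i_{E*}\cO_E(H) \rrangle,
\end{equation*}
in which the second slot $\cU_{\tX}$ already matches the target.

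The remaining and most involved part is to mutate the four $H$-twisted bundles together with the four exceptional-divisor sheaves $i_{E*}\cO_E(\ast)$ into the bundles $\cF_a, \cF_b, \cG$ and the conic-fibration collection $\cO_{\tX}(-h), \cO_{\tX}, \cSv_{\tX}, \cO_{\tX}(h)$. Here I would use repeatedly the structural triangle $\cO_{\tX}(-E) \to \cO_{\tX} \to i_{E*}\cO_E$ and its twists, convert between $H$-twists and $h$-twists via $h = H - E$ from~\eqref{divisors-tX}, and exploit that $\pi|_E \colon E \to Y$ is finite flat of degree $2$ (Lemma~\ref{lemma-conic-bundle}) to recover the spinor bundle $\cSv_{\tX} = \pi^*\cSv_Y$. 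The bundles $\cF_a, \cF_b, \cG$ are then \emph{defined} as the outputs of the relevant mutations of $i_{E*}\cO_E(-a)$, $i_{E*}\cO_E(-b)$, and $i_{E*}\cO_E$ through the neighboring line bundles; that these mutation cones are genuine rank-$2$ vector bundles is precisely the content of Lemmas~\ref{lemma_defFaFb} and~\ref{lem_defG}. A final left mutation through $\cO_{\tX}(-h)$, once it has been brought adjacent to the $\Ku$-component, carries $b^*\rL_{\cU_X}\Ku(X)$ to $\rL_{\cO_{\tX}(-h)}\,b^*\,\rL_{\cU_X}\Ku(X) = \Ku(X)'$, completing the identification of the first slot.

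The main obstacle is the explicit computation of the mutations involving the pushforwards $i_{E*}\cO_E(\ast)$. This requires computing the relevant $\Ext$-groups via adjunction along $i_E$ and the $\bP^1$-bundle structure $E \to T$, together with knowledge of the normal bundle $\cO_E(E)$, both to verify that semiorthogonality is preserved at each step and to pin down the mutation cones as the specific rank-$2$ bundles $\cF_a, \cF_b, \cG$ with the correct Chern data. Equally delicate is arranging the order of the mutations so that the four right-most terms become \emph{exactly} the pullback collection $\pi^*\Db(Y)$; this bookkeeping, rather than any single cohomology computation, is where the real work lies, and it is precisely what the auxiliary Lemmas~\ref{lemma_defFaFb} and~\ref{lem_defG} cited in the statement are designed to organize.
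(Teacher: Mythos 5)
Your proposal is correct and follows essentially the same route as the paper's proof: the paper likewise mutates the blowup decomposition~\eqref{sod-blowup} --- Serre-rotating $\cUv_{\tX}(H)$ into $\cU_{\tX}$ and $\cO_{\tX}(H)$ into $\cO_{\tX}(-h)$, turning the $i_{E*}$-objects into $\cF_a$, $\cF_b$, $\cG$ and $\cUv_{\tX}$ into $\cSv_{\tX}$ --- until the right-most block is $\llangle \cO_{\tX}(-h), \cO_{\tX}, \cSv_{\tX}, \cO_{\tX}(h) \rrangle$, and then concludes by comparison with~\eqref{sod-conic}, exactly as you do. The only cosmetic difference is that you apply $\rL_{\cU_X}$ inside $\Db(X)$ at the outset, whereas the paper performs this mutation (together with $\rL_{\cO_{\tX}(-h)}$) in $\Db(\tX)$ at a later step; since $b^*$ is fully faithful and $\cU_{\tX} = b^*\cU_X$, both yield the same subcategory $\Ku(X)'$.
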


The proof of Proposition~\ref{proposition-KX-Cl} is divided into steps. 

\medskip \noindent
\textbf{Step 1.} Mutate $\cUv_{\tX}(H)$ to the far left of the decomposition~\eqref{sod-blowup}. 
Since this is a mutation in $b^*\Db(X)$ and we have $K_X = -2H$ and $\cUv_{X}(-H) \cong \cU_{X}$, the result is 
\begin{equation*}
\Db(\tX) = 
\llangle \cU_{\tX}, b^*\Ku(X), 
\cO_{\tX}, \cUv_{\tX}, \cO_{\tX}(H), 
i_{E *}\cO_E(-c), i_{E *}\cO_E(-d), i_{E *}\cO_E, i_{E *}\cO_E(H)  \rrangle . 
\end{equation*} 

\medskip \noindent
\textbf{Step 2.} Left mutate the objects $i_{E *}\cO_E(-c), i_{E *}\cO_E(-d), i_{E *}\cO_E, i_{E *}\cO_E(H)$ through $\cO_{\tX}(H)$. 

\begin{lemma}
\label{lemma-mutate-thru-OH}
We have  
\begin{align*}
\rL_{\cO_{\tX}(H)}(i_{E *}\cO_E(-c)) & = i_{E *}\cO_E(-c), \\
\rL_{\cO_{\tX}(H)}(i_{E *}\cO_E(-d)) & = i_{E *}\cO_E(-d), \\
\rL_{\cO_{\tX}(H)}(i_{E *}\cO_E)  & = i_{E *}\cO_E, \\ 
\rL_{\cO_{\tX}(H)}(i_{E *}\cO_E(H)) & = \cO_{\tX}(h)[1].
\end{align*} 
\end{lemma}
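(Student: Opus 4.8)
The plan is to read off all four mutations from the evaluation triangle defining the left mutation through the exceptional line bundle $\cO_{\tX}(H)$. I use the convention of \cite[Section 2]{kuznetsov-cubic} under which $\rL_{\cO_{\tX}(H)}(F) = \Cone\!\big(\mathrm{ev}\colon \mathrm{RHom}^\bullet(\cO_{\tX}(H),F)\otimes\cO_{\tX}(H)\to F\big)$; in particular, if $\mathrm{RHom}^\bullet(\cO_{\tX}(H),F)=0$ then $\rL_{\cO_{\tX}(H)}(F)=F$. So for the first three objects it suffices to prove a vanishing, while for the fourth I must identify the evaluation map and take its cone.

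First I would reduce every $\mathrm{Hom}$-computation to the exceptional divisor. Since $\cO_{\tX}(H)$ is a line bundle, adjunction for the closed embedding $i_E$ gives, for any $\cG\in\Db(E)$,
\[
\mathrm{RHom}^\bullet_{\tX}(\cO_{\tX}(H), i_{E*}\cG)\;\cong\;\rH^\bullet\!\big(E,\,\cG\otimes\cO_E(-H)\big),
\]
where $\cO_E(H):=i_E^*\cO_{\tX}(H)$. The one geometric input I need is that $H=b^*H_X$ (consistent with $K_{\tX}=-2H+E$ coming from $K_X=-2H_X$ via the blowup formula), so that $H$ is trivial along the fibers of the $\bP^1$-bundle $b_E\colon E\to T$ and, using $H|_T=a+b$, one has $\cO_E(H)=b_E^*\cO_T(a+b)$. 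Combined with $b_{E*}\cO_E=\cO_T$ and the projection formula, this turns each of the cohomology groups above into one on $T\cong\bP^1\times\bP^1$.

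For the first three objects $\cG=b_E^*\cO_T(-a),\,b_E^*\cO_T(-b),\,\cO_E$, this reduction yields $\rH^\bullet(T,\cO_T(-2a-b))$, $\rH^\bullet(T,\cO_T(-a-2b))$, and $\rH^\bullet(T,\cO_T(-a-b))$ respectively. In the coordinates $a=\cO(1,0)$, $b=\cO(0,1)$ on $\bP^1\times\bP^1$, each of these line bundles restricts to $\cO(-1)$ on at least one of the two factors, so by the K\"unneth formula all three groups vanish. Hence the corresponding mutations are the identity, which gives the first three equalities. For the fourth object the same reduction gives $\mathrm{RHom}^\bullet(\cO_{\tX}(H),i_{E*}\cO_E(H))\cong\rH^\bullet(E,\cO_E)\cong\rH^\bullet(T,\cO_T)=k$ in degree $0$. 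The generator is the adjunction unit, i.e.\ the restriction map $r\colon\cO_{\tX}(H)\to i_{E*}\cO_E(H)$, which is therefore the evaluation map. As $E$ is a divisor with $\mathcal{I}_E=\cO_{\tX}(-E)$, the map $r$ is surjective with kernel $\cO_{\tX}(H-E)=\cO_{\tX}(h)$ (using $h=H-E$), so $\rL_{\cO_{\tX}(H)}(i_{E*}\cO_E(H))=\Cone(r)=\cO_{\tX}(h)[1]$.

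I expect the only delicate points to be fixing the mutation convention correctly, since this is what produces the shift $[1]$ in the last line, and pinning down the identification $\cO_E(H)=b_E^*\cO_T(a+b)$ together with the kernel $\cO_{\tX}(h)$ of the restriction map; once these are in place the remaining input is just the acyclicity of $\cO(-1)$ on $\bP^1$.
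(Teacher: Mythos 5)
Your proposal is correct and follows essentially the same route as the paper: the first three mutations are identities because $\Hom^{\bullet}(\cO_{\tX}(H), i_{E*}\cO_E(\ell)) \cong \rH^{\bullet}(T,\cO_T(\ell - H))$ vanishes for $\ell = -a,-b,0$ (the paper phrases the adjunction/projection-formula reduction in one line), and the last mutation comes from the restriction triangle obtained by twisting $0 \to \cO_{\tX}(-E)\to\cO_{\tX}\to i_{E*}\cO_E\to 0$ by $H$ and using $h = H-E$, which is exactly your cone-of-the-restriction-map argument. The only difference is presentational: you spell out explicitly that the Hom space for the fourth object is one-dimensional and generated by the restriction map, a point the paper leaves implicit.
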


\begin{proof}
For the first three equalities, we just need to check that there are no morphisms (in any degree) from $\cO_{\tX}(H)$ to the objects $i_{E *}\cO_E(-c), i_{E *}\cO_E(-d), i_{E *}\cO_E$. 
But for a divisor $\ell$ on $T$, we have 
\begin{equation*}
\Hom^{\bullet}(\cO_{\tX}(H), i_{E *}\cO_E(\ell)) = \rH^{\bullet}(\cO_{T}(\ell-H)), 
\end{equation*} 
which vanishes for $\ell = -c, -d, 0$. 
The final mutation follows from the exact triangle
\begin{equation*}
\cO_{\tX}(H) \to i_{E*} \cO_{E}(H) \to  \cO_{\tX}(h)[1], 
\end{equation*} 
obtained from the exact sequence 
\begin{equation*}
0 \to \cO_{\tX}(-E) \to \cO_{\tX} \to i_{E*} \cO_{E} \to 0 
\end{equation*} 
by twisting by $H$, using the equality $h = H-E$, and rotating. 
\end{proof} 

By the lemma, the result of the above mutation is 
\begin{equation*}
\Db(\tX) = 
\llangle \cU_{\tX}, b^*\Ku(X),
\cO_{\tX}, \cUv_{\tX}, 
i_{E *}\cO_E(-c), i_{E *}\cO_E(-d), i_{E *}\cO_E, \cO_{\tX}(h), \cO_{\tX}(H)  \rrangle . 
\end{equation*} 

\medskip \noindent
\textbf{Step 3.} Mutate $\cO_{\tX}(H)$ to the far left of the decomposition. 
Since $K_{\tX} = -H-h$ by Lemma~\ref{lemma-conic-bundle}, 
the result is 
\begin{equation*}
\Db(\tX) = 
\llangle 
\cO_{\tX}(-h), \cU_{\tX}, b^*\Ku(X),
\cO_{\tX}, \cUv_{\tX}, 
i_{E *}\cO_E(-c), i_{E *}\cO_E(-d), i_{E *}\cO_E, \cO_{\tX}(h) \rrangle . 
\end{equation*} 

\medskip \noindent
\textbf{Step 4.} Left mutate $b^*\Ku(X)$ through $\llangle \cO_{\tX}(-h), \cU_{\tX} \rrangle$. 
The result is 
\begin{equation*}
\Db(\tX) = 
\llangle 
\Ku(X)', \cO_{\tX}(-h),  \cU_{\tX}, 
\cO_{\tX}, \cUv_{\tX}, 
i_{E *}\cO_E(-c), i_{E *}\cO_E(-d), i_{E *}\cO_E, \cO_{\tX}(h) \rrangle . 
\end{equation*} 
where $\Ku(X)' = \rL_{\cO_{\tX}(-h)}b^*\rL_{\cU_X}(\Ku(X))$. 

\medskip \noindent 
\textbf{Step 5.} Right mutate $\cUv_{\tX}$ through the objects 
$i_{E *}\cO_E(-c), i_{E *}\cO_E(-d), i_{E *}\cO_E$. 

\begin{lemma}
We have $\rR_{\langle i_{E *}\cO_E(-c), i_{E *}\cO_E(-d) \rangle} \cUv_{\tX} = \cUv_{\tX}$. 
\end{lemma}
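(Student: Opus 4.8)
The plan is to show that $\cUv_{\tX}$ is left-orthogonal to the subcategory $\cB = \langle i_{E*}\cO_E(-a), i_{E*}\cO_E(-b) \rangle$, so that right mutation through $\cB$ acts as the identity on it. Recall that the right mutation functor $\rR_{\cB}$ fixes any object $A$ with $\Hom^{\bullet}(A, \cB) = 0$. Hence it suffices to establish the two vanishings
\[
\Hom^{\bullet}(\cUv_{\tX}, i_{E*}\cO_E(-a)) = 0 \quad\text{and}\quad \Hom^{\bullet}(\cUv_{\tX}, i_{E*}\cO_E(-b)) = 0,
\]
from which $\rR_{\cB}\cUv_{\tX} = \cUv_{\tX}$ follows immediately.

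First I would reduce these $\Hom$-spaces to cohomology on the canonical quadric $T$. Since $i_E \colon E \hookrightarrow \tX$ is a closed embedding and $\cUv_{\tX}$ is a vector bundle (so its derived restriction to $E$ is the honest restriction), adjunction gives $\Hom^{\bullet}(\cUv_{\tX}, i_{E*}\cO_E(\ell)) \cong \rH^{\bullet}(E, \cU_{\tX}|_E(\ell))$ for a divisor class $\ell$ pulled back from $T$. Using $\cU_{\tX} = b^*\cU_X$ and the relation $b \circ i_E = i_T \circ b_E$ from diagram~\eqref{main-diagram}, one has $\cU_{\tX}|_E = b_E^*(\cU_X|_T)$, and the classes $a, b$ are pulled back from $T$. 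As $b_E \colon E \to T$ is a $\bP^1$-bundle with $b_{E*}\cO_E \cong \cO_T$ and vanishing higher direct images, the projection formula yields $\rH^{\bullet}(E, b_E^*(\cU_X|_T \otimes \cO_T(\ell))) \cong \rH^{\bullet}(T, \cU_X|_T \otimes \cO_T(\ell))$. Everything thus reduces to the cohomology of $\cU_X|_T$ twisted by $\cO_T(-a)$ and $\cO_T(-b)$.

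The geometric heart is the identification of $\cU_X|_T$. Recall that $T \subset \bP^3_W \cong \bP(V_4)$ parametrizes the $2$-planes $V_2$ with $V_1 \subset V_2 \subset V_5$; the fixed line $V_1$ determines a trivial sub-line-bundle $\cO \hookrightarrow \cU$, with quotient the tautological subbundle $\cO_{\bP(V_4)}(-1)$. Restricting to the quadric surface $T \cong \bP^1 \times \bP^1$ and using $H|_T = a+b$ (so that $c_1(\cU_X|_T) = -H|_T$ forces the quotient), this yields a short exact sequence $0 \to \cO_T \to \cU_X|_T \to \cO_T(-a-b) \to 0$. Twisting by $\cO_T(-a)$ exhibits $\cU_X|_T(-a)$ as an extension of $\cO_T(-2a-b)$ by $\cO_T(-a)$, both of which have vanishing cohomology on $\bP^1 \times \bP^1$; the twist by $\cO_T(-b)$ is handled identically by the symmetry $a \leftrightarrow b$. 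Hence both $\Hom^{\bullet}$-spaces vanish, proving the lemma.

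I expect the main obstacle to be the identification of $\cU_X|_T$ via the sequence $0 \to \cO_T \to \cU_X|_T \to \cO_T(-a-b) \to 0$, since this is where the specific geometry of $T$ (its description as $2$-planes through $V_1$, together with the behavior of the Pl\"ucker class on $\bP^3_W$) enters; the remaining adjunction, pushforward, and Künneth computations are routine. As a sanity check, the analogous computation with the omitted object $i_{E*}\cO_E$ gives $\rH^{\bullet}(T, \cU_X|_T) \cong k \neq 0$, which is exactly why that object is excluded from $\cB$ and why the mutation past it must be treated separately.
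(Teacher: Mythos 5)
Your proof is correct and follows essentially the same route as the paper: both reduce the mutation statement to the vanishing of $\Hom^{\bullet}(\cUv_{\tX}, i_{E*}\cO_E(-a))$ and $\Hom^{\bullet}(\cUv_{\tX}, i_{E*}\cO_E(-b))$, and both derive this from the short exact sequence $0 \to V_1 \otimes \cO_T \to \cU_T \to \cO_T(-H) \to 0$ coming from the description of $T$ as parametrizing $2$-planes containing $V_1$ (the paper cites this sequence from Debarre--Kuznetsov and leaves the adjunction and cohomology computations implicit, which you spell out).
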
 

\begin{proof}
The claim is equivalent to showing there are no morphisms (in any degree) 
from $\cUv_{\tX}$ to $i_{E *}\cO_E(-c)$ and $i_{E *}\cO_E(-d)$. 
This orthogonality follows from the split short exact sequence  
\begin{equation}
\label{cUT}
0 \to V_1 \otimes \cO_{T} \to \cU_{T}  \to \cO_{T}(-H) \to 0 
\end{equation} 
which can easily be deduced from the definition of $T$ (\cite[Lemma 3.7]{DebKuz:periodGM}). 
\end{proof} 

\begin{lemma}
\label{lemma-RUv}
There is an exact sequence 
\begin{equation*}
0 \to \cSv_{\tX} \to \cUv_{\tX} \to \vV_1 \otimes \cO_E \to 0 . 
\end{equation*} 
and an isomorphism $\rR_{i_{E *}\cO_E}(\cUv_{\tX}) \cong \cSv_{\tX}$.
\end{lemma}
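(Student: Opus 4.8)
The plan is to derive both statements from a single morphism of vector bundles on $\tX$ comparing $\cU_{\tX}$ with $\cS_{\tX}$, combined with the splitting of $\cU$ along $E$ provided by~\eqref{cUT}. First I would record the restriction of $\cU$ to $E$. Since $\cU_{\tX} = b^*\cU_X$ and $b\circ i_E = i_T\circ b_E$, pulling~\eqref{cUT} back along $b_E\colon E\to T$ (and using $H|_E = b_E^*(H|_T)$) gives a splitting $\cU_E\cong(V_1\otimes\cO_E)\oplus\cO_E(-H)$. Dualizing the inclusion $V_1\otimes\cO_E\hookrightarrow\cU_E$ produces a surjection $\cUv_E\twoheadrightarrow\vV_1\otimes\cO_E$, and I would define $\alpha\colon\cUv_{\tX}\to\vV_1\otimes\cO_E$ as the composite of restriction to $E$ with this surjection; it is surjective.

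Next, the embedding $\tX\subset\bP_Y(\cE)=\Gr_Y(2,\hcS_Y)$ supplies the tautological inclusion $\cU_{\tX}\hookrightarrow\hcS_{\tX}=\pi^*\hcS_Y$, and composing with the projection $\hcS_{\tX}=(V_1\otimes\cO_{\tX})\oplus\cS_{\tX}\twoheadrightarrow\cS_{\tX}$ defines a map $\phi\colon\cU_{\tX}\to\cS_{\tX}$. I would check that $\phi$ is an isomorphism away from $E$ and drops to corank one along $E$ (where $\ker(\phi)=V_1$), and that $\det\phi$ is a section of $\det\cS_{\tX}\otimes\det\cU_{\tX}^{\vee}\cong\cO_{\tX}(H-h)=\cO_{\tX}(E)$ with zero divisor the reduced, smooth (hence integral) divisor $E$. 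Thus $\phi$ is injective with cokernel a line bundle $\cC$ on $E$, and dualizing yields an exact sequence $0\to\cSv_{\tX}\xrightarrow{\phi^{\vee}}\cUv_{\tX}\to\mathcal{E}xt^1(\cC,\cO_{\tX})\to0$ whose last term is again a line bundle on $E$. A pointwise computation along $E$, using that $\alpha$ kills precisely the summand $V_1\subset\cU_E=\ker(\phi|_E)$, shows $\alpha\circ\phi^{\vee}=0$ (this holds globally since $\alpha$ factors through restriction to $E$). Hence $\phi^{\vee}$ factors through $\ker\alpha$ and induces a surjection $\mathrm{coker}(\phi^{\vee})\twoheadrightarrow\vV_1\otimes\cO_E$ of line bundles on the integral scheme $E$; any such surjection is an isomorphism, so $\phi^{\vee}$ identifies $\cSv_{\tX}$ with $\ker\alpha$, giving the desired sequence $0\to\cSv_{\tX}\to\cUv_{\tX}\xrightarrow{\alpha}\vV_1\otimes\cO_E\to0$.

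For the mutation I would compute $\Hom^{\bullet}(\cUv_{\tX},i_{E*}\cO_E)=\rH^{\bullet}(E,\cU_E)$. Using the splitting of $\cU_E$ together with $\rH^{\bullet}(E,\cO_E)=\rH^{\bullet}(T,\cO_T)=k$ and $\rH^{\bullet}(E,\cO_E(-H))=\rH^{\bullet}(T,\cO_T(-a-b))=0$ (since $b_{E*}\cO_E=\cO_T$ and $T\cong\bP^1\times\bP^1$), this equals $V_1$ concentrated in degree $0$. Therefore the canonical triangle defining the right mutation reads $\rR_{i_{E*}\cO_E}(\cUv_{\tX})\to\cUv_{\tX}\to\vV_1\otimes\cO_E$, with the second arrow the co-evaluation map. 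Because $\Hom(\cUv_{\tX},\vV_1\otimes\cO_E)\cong V_1\otimes\vV_1\cong k$ is one-dimensional, this co-evaluation coincides up to a nonzero scalar with $\alpha$, hence has the same cone; by the exact sequence above that cone is $\cSv_{\tX}[1]$, so $\rR_{i_{E*}\cO_E}(\cUv_{\tX})\cong\cSv_{\tX}$.

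The main obstacle I expect is the bookkeeping in the first part: constructing $\phi$ canonically, verifying that its degeneracy divisor is the reduced $E$ so that the cokernel is an honest line bundle, and then genuinely identifying $\ker\alpha$ with $\cSv_{\tX}$ (rather than merely matching ranks and Chern classes) via the surjection of line bundles on the integral divisor $E$. Once the exact sequence is established, the mutation statement is essentially formal, requiring only the cohomology vanishing on $E$ and the one-dimensionality of the relevant $\Hom$.
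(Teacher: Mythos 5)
Your proof is correct, and it shares its skeleton with the paper's: both compute $\Hom^\bullet(\cUv_{\tX}, i_{E*}\cO_E) = V_1[0]$ from \eqref{cUT}, both work with the same surjection $\alpha\colon\cUv_{\tX}\to\vV_1\otimes\cO_E$, and both reduce the mutation statement to identifying $\ker\alpha$ with $\cSv_{\tX}$. The genuine difference is how that identification is made. The paper defines $\tcK=\ker\alpha$ by \eqref{tcK}, shows it is a rank-$2$ locally free quotient of $\vV_4\otimes\cO_{\tX}$ (via $\rH^0(X,\cK)\cong\vV_4$ and a fiberwise exactness check), so that it classifies a morphism $g\colon\tX\to\Gr(2,V_4)$ with $\tcK\cong g^*\cSv$, and then argues that $g$ coincides with $\pi$ followed by $Y\hookrightarrow\Gr(2,V_4)$ because the two maps agree on the dense open set $\tX\setminus E$. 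You go in the opposite direction: the tautological inclusion $\cU_{\tX}\hookrightarrow\hcS_{\tX}$ coming from $\tX\subset\Gr_Y(2,\hcS_Y)$, projected to $\cS_{\tX}$, gives a bundle map $\phi$ degenerating simply along $E$; dualizing and comparing cokernels (a surjection of line bundles on the integral scheme $E$ is an isomorphism) pins down $\cSv_{\tX}\cong\ker\alpha$. Your route trades the paper's soft universal-property argument for a concrete degeneracy analysis: you must verify that the zero divisor of $\det\phi\in\rH^0(\cO_{\tX}(E))$ is the reduced $E$ (which, as you indicate, does follow from irreducibility of $E$, since an effective divisor supported on $E$ and linearly equivalent to $E$ must equal $E$), while the paper must instead verify global generation and run the density argument. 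Both need the same geometric input from Lemma~\ref{lemma-conic-bundle}; yours has the mild advantage of exhibiting the inclusion $\cSv_{\tX}\to\cUv_{\tX}$ explicitly, and your scalar argument identifying the coevaluation map with $\alpha$ (via one-dimensionality of $\Hom(\cUv_{\tX},\vV_1\otimes i_{E*}\cO_E)$) makes explicit a step the paper leaves implicit when it asserts $\rR_{i_{E*}\cO_E}\cUv_{\tX}\cong\tcK$.
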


\begin{proof}
By the exact sequence~\eqref{cUT} we have
\begin{equation*}
\Hom^\bullet(\cU_{\tX}^\vee, i_{E*}\cO_E)= \rH^\bullet(T,\cU_T)= V_1[0]. 
\end{equation*} 
Thus we get the triangle
$$\rR_{i_{E*}\cO_E}\cU_{\tX}^\vee \to \cU_{\tX}^\vee \to V_1^\vee \otimes i_{E*}\cO_E.$$
Consider the surjective morphism 
\begin{equation}
\label{cUvX-to-OT}
\cUv_X \to \vV_1 \otimes i_{T*}\cO_T 
\end{equation}
of sheaves on $X$ given as the composition of the canonical surjection 
$\cUv_X \to \cUv_X \otimes i_{T*}\cO_T$ with the pushforward along $i_T \colon T \to X$ 
of the canonical surjection $\cUv_T \to V_1^{\vee} \otimes \cO_T$. 
The morphism $\cUv_{\tX} \to \vV_1 \otimes \cO_E$ we are interested in is 
the pullback of~\eqref{cUvX-to-OT} along $b \colon \tX \to X$, and in particular is also surjective. 
We define sheaves $\cK$ and $\tcK$ on $X$ and $\tX$ by the exact sequences 
\begin{align}
\label{cK} 0 \to \cK \to \cUv_X \to \vV_1 \otimes \cO_T \to 0 , \\ 
\label{tcK} 0 \to \tcK \to \cUv_{\tX} \to \vV_1 \otimes \cO_E \to 0 . 
\end{align}
By the discussion above, we have 
\begin{equation*}
\rR_{i_{E*}\cO_E}\cU_{\tX}^\vee \cong \tcK,
\end{equation*} 
so we must show there is an isomorphism $\tcK \cong \cSv_{\tX}$. 

First we claim that $\tcK$ is a quotient of $\vV_4 \otimes \cO_{\tX}$. 
For this, by the above exact sequences it is enough to show the same for $\cK$.  
From~\eqref{cK} we find that 
$\rH^0(X, \cK) \cong \vV_4$, which corresponds to a morphism 
$\vV_4 \otimes \cO_X \to \cK$. 
Working fiberwise, it is straightforward to see that the sequence 
\begin{equation*}
\vV_4 \otimes \cO_X  \to \cUv_X \to \vV_1 \otimes \cO_T \to 0 
\end{equation*}  
is exact, i.e. $\vV_4 \otimes \cO_X \to \cK$ is surjective. 

By the defining exact sequence~\eqref{tcK}, the sheaf $\tcK$ has rank $2$ 
and is locally free since $E \subset \tX$ is a divisor. 
Hence by the above $\tcK$ induces a map $g \colon \tX \to \Gr(2,V_4)$ such that 
$\tcK \cong g^* \cSv$. 
To prove $\tcK \cong \cSv_{\tX}$, it therefore suffices to show that $g$ agrees with 
the map  
\begin{equation*}
\tX \to Y \hookrightarrow \Gr(2,V_4) .
\end{equation*}
But it is easy to see from the definitions that these maps agree on the complement of $E \subset \tX$, and hence 
agree everywhere. 
\end{proof}

By the lemmas, the result of the above mutations is 
\begin{equation*}
\Db(\tX) = 
\llangle 
\Ku(X)', \cO_{\tX}(-h),  \cU_{\tX}, 
\cO_{\tX}, 
i_{E *}\cO_E(-c), i_{E *}\cO_E(-d), i_{E *}\cO_E, \cSv_{\tX}, \cO_{\tX}(h) \rrangle . 
\end{equation*} 

\medskip \noindent
\textbf{Step 6.} Left mutate the objects $i_{E *}\cO_E(-c), i_{E *}\cO_E(-d), i_{E *}\cO_E$ through $\cO_{\tX}$. 
Arguing as in Lemma~\ref{lemma-mutate-thru-OH}, we find 
\begin{align*}
\rL_{\cO_{\tX}}(i_{E *}\cO_E(-c)) & = i_{E *}\cO_E(-c), \\ 
\rL_{\cO_{\tX}}(i_{E *}\cO_E(-d)) & = i_{E *}\cO_E(-d),  \\
\rL_{\cO_{\tX}}( i_{E *}\cO_E ) & = \cO_{\tX}(h-H)[1], 
\end{align*} 
so the result is 
\begin{equation}
\label{DbtX-Es}
\hspace{-2mm} \Db(\tX) \hspace{-1mm} = \hspace{-1mm} \llangle \hspace{-1mm} \Ku(X)' \!, \cO_{\tX}(-h), \cU_{\tX}, i_{E *}\cO_E(-c),  i_{E *}\cO_E(-d),  \cO_{\tX}(h-H),  \cO_{\tX}, \cS_{\tX}^\vee, \cO_{\tX}(h) \hspace{-1mm} \rrangle .
\end{equation}

\medskip \noindent
\textbf{Step 7.} Left mutate the objects $\cU_{\tX}, i_{E *}\cO_E(-c), i_{E *}\cO_E(-d), \cO_{\tX}(h-H)$ through $\cO_{\tX}(-h)$.

\begin{lemma}
\label{lemma_mutU}
We have $\rL_{\cO_{\tX}(-h)}(\cU_{\tX}) = \cU_{\tX}$.
\end{lemma}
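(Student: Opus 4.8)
The plan is to reduce the claim to a cohomology vanishing. By definition of the left mutation, $\rL_{\cO_{\tX}(-h)}(\cU_{\tX})$ fits into the triangle
$\Hom^{\bullet}(\cO_{\tX}(-h),\cU_{\tX})\otimes\cO_{\tX}(-h)\to\cU_{\tX}\to\rL_{\cO_{\tX}(-h)}(\cU_{\tX})$,
so it suffices to prove $\Hom^{\bullet}(\cO_{\tX}(-h),\cU_{\tX})=\rH^{\bullet}(\tX,\cU_{\tX}(h))=0$. First I would rewrite the twist: since $\cU_X\cong\cUv_X(-H)$ (as $\det\cU_X=\cO_X(-H)$) and $h=H-E$ by Lemma~\ref{lemma-conic-bundle}, we get $\cU_{\tX}(h)=b^{*}\cUv_X(h-H)\cong\cUv_{\tX}(-E)$, so the goal becomes $\rH^{\bullet}(\tX,\cUv_{\tX}(-E))=0$.

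Next I would descend to $X$ along the blowup $b\colon\tX\to X$. Writing $\cUv_{\tX}=b^{*}\cUv_X$ and using that $b_{*}\cO_{\tX}(-E)=\mathcal{I}_T$ with vanishing higher direct images (as $T\subset X$ is a smooth codimension-two center and $\cO_{\tX}(-E)$ restricts to $\cO(1)$ on the $\bP^{1}$-fibers of $E$), the projection formula gives $\rH^{\bullet}(\tX,\cUv_{\tX}(-E))\cong\rH^{\bullet}(X,\cUv_X\otimes\mathcal{I}_T)$. Tensoring the ideal sheaf sequence $0\to\mathcal{I}_T\to\cO_X\to\cO_T\to 0$ by $\cUv_X$ then reduces the problem to showing that the restriction map $\rH^{\bullet}(X,\cUv_X)\to\rH^{\bullet}(T,\cUv_T)$ is an isomorphism, where $\cUv_T=\cUv_X|_T$.

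It remains to compute both sides. On the $T$ side, dualizing the split sequence~\eqref{cUT} gives $0\to\cO_T(H)\to\cUv_T\to\vV_1\otimes\cO_T\to 0$; since $\cO_T(H)=\cO_T(a+b)$ on the smooth quadric surface $T$, one reads off that $\rH^{\bullet}(T,\cUv_T)$ is concentrated in degree $0$ of dimension $5$. On the $X$ side, I would use the two Koszul resolutions for $X\subset M:=\Gr(2,V_5)\cap\bP(W)$ (a quadric section) and $M\subset\Gr(2,V_5)$ (a hyperplane section), reducing everything to the cohomology of the twists $\cUv(-jH)$ on $\Gr(2,V_5)$; by Borel--Weil--Bott these vanish for $1\le j\le 3$ while $\rH^{\bullet}(\Gr(2,V_5),\cUv)=V_5^{\vee}$ in degree $0$, so that $\rH^{\bullet}(X,\cUv_X)=V_5^{\vee}$ in degree $0$. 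Both sides are thus concentrated in degree $0$ of dimension $5$, so it is enough to check that the restriction $V_5^{\vee}=\rH^{0}(X,\cUv_X)\to\rH^{0}(T,\cUv_T)$ is injective. A global section of $\cUv_X$ is the fiberwise evaluation $V_2\mapsto\xi|_{V_2}$ of a linear form $\xi\in V_5^{\vee}$, and it dies on $T$ precisely when $\xi$ vanishes on every $2$-plane $V_2$ parametrized by $T$. The main point, and the step I expect to require the most care, is that these $2$-planes span $V_5$ (equivalently, the images $V_2/V_1$ span $V_4$ as $[V_2]$ ranges over the quadric surface $T$), which forces $\xi=0$; injectivity together with the equality of dimensions then yields the desired isomorphism, hence the vanishing and the lemma.
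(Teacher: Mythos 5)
Your proof is correct and takes essentially the same route as the paper: both reduce the mutation statement to the vanishing of $\rH^{\bullet}(\cUv_{\tX}(-E))$ and deduce it by showing that restriction to the canonical quadric $T$ induces an isomorphism on the cohomology of $\cUv$ (the paper uses the sequence $0 \to \cUv_{\tX}(-E) \to \cUv_{\tX} \to i_{E*}\cUv_{E} \to 0$ on $\tX$, which is exactly the pushforward-equivalent of your ideal-sheaf sequence on $X$). The only difference is one of detail: you make explicit the Koszul/Borel--Weil--Bott computation of $\rH^{\bullet}(X,\cUv_X) = V_5^{\vee}$ and the injectivity-via-spanning argument for the restriction map, which the paper compresses into the assertion that the isomorphism ``follows from the exact sequence~\eqref{cUT}.''
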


\begin{proof}
We must show that $\Hom^{\bullet}(\cO_{\tX}(-h), \cU_{\tX}) = \rH^{\bullet}(\cU_{\tX}(h))$ vanishes. 
Note that there is an isomorphism $\cU_{\tX}(h) \cong \cUv_{\tX}(-E)$ since $h = H - E$ and $\cU_{\tX}(H) \cong \cUv_{\tX}$. 
From the resolution of $i_{E*} \cO_{E}$ on $\tX$ we obtain an exact sequence 
\begin{equation*}
0 \to \cUv_{\tX}(-E) \to \cUv_{\tX} \to i_{E*}\cUv_{E} \to 0 . 
\end{equation*} 
It follows from the exact sequence~\eqref{cUT} that the 
morphism on cohomology induced by the right arrow 
\begin{equation*}
V_5 \cong \rH^{\bullet}(\cUv_{\tX}) \to \rH^{\bullet}(i_{E*}\cUv_{E}) \cong \rH^{\bullet}(\cUv_T)\end{equation*} 
is an isomorphism, which implies the desired vanishing. 
\end{proof}

\begin{lemma}
\label{lemma_defFaFb}
We have $\rL_{\cO_{\tX}(-h)} i_{E *}\cO_E(-c) \cong \cF_c[1]$ and 
$\rL_{\cO_{\tX}(-h)} i_{E *}\cO_E(-d) \cong \cF_d[1]$, 
where $\cF_c$ and $\cF_d$ are rank $2$ vector bundles on $\tX$ 
defined by exact sequences 
\begin{equation}
\begin{aligned}
\label{eq_defFa}
&0 \to \cF_c \to \cO_{\tX}(-h)^{\oplus 2} \to i_{E *}\cO_E(-c) \to 0,  \\
&0 \to \cF_d \to \cO_{\tX}(-h)^{\oplus 2} \to i_{E *}\cO_E(-d) \to 0. 
\end{aligned}
\end{equation}
\end{lemma}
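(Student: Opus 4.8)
The plan is to compute the left mutation $\rL_{\cO_{\tX}(-h)}$ applied to $i_{E*}\cO_E(-a)$ directly from the definition of the mutation functor, and to identify the resulting object with a shift of an explicit vector bundle $\cF_a$; the argument for $\cF_b$ is verbatim the same with $a$ replaced by $b$. Recall that the left mutation of an object $\cA$ through an exceptional object $\cO_{\tX}(-h)$ fits into the triangle
\begin{equation*}
\Hom^{\bullet}(\cO_{\tX}(-h), \cA) \otimes \cO_{\tX}(-h) \to \cA \to \rL_{\cO_{\tX}(-h)}(\cA),
\end{equation*}
where the first map is evaluation. So the first step is to compute the graded vector space $\Hom^{\bullet}(\cO_{\tX}(-h), i_{E*}\cO_E(-a)) = \rH^{\bullet}(E, \cO_E(-a+h))$, using adjunction and $\pi|_E$ finite of degree $2$ together with the divisor relations $h = H - E$ from Lemma~\ref{lemma-conic-bundle}. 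The expectation is that this cohomology is concentrated in a single degree and is two-dimensional, which is exactly what produces the rank-$2$ source $\cO_{\tX}(-h)^{\oplus 2}$ appearing in~\eqref{eq_defFa}.

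Granting that the evaluation map $\cO_{\tX}(-h)^{\oplus 2} \to i_{E*}\cO_E(-a)$ is surjective as a map of sheaves, the mutation triangle collapses to a short exact sequence of sheaves
\begin{equation*}
0 \to \cF_a \to \cO_{\tX}(-h)^{\oplus 2} \to i_{E*}\cO_E(-a) \to 0,
\end{equation*}
and rotating this triangle identifies $\rL_{\cO_{\tX}(-h)} i_{E*}\cO_E(-a)$ with $\cF_a[1]$, where $\cF_a$ is the kernel. That $\cF_a$ is a rank-$2$ vector bundle follows from the fact that $E \subset \tX$ is a divisor: the quotient $i_{E*}\cO_E(-a)$ has homological dimension one on the smooth variety $\tX$, so its kernel inside a locally free sheaf is again locally free, and a rank count (rank $2$ minus rank $0$, since $i_{E*}\cO_E(-a)$ is a torsion sheaf supported on the divisor $E$) gives rank $2$. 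I would verify surjectivity fiberwise, checking that the two global sections spanning $\rH^{\bullet}(E, \cO_E(-a+h))$ have no common zero along $E$; this is where the degree-$2$ structure of $\pi|_E \colon E \to Y$ and the explicit description of the line bundles on the quadric surface $T$ enter.

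The main obstacle I anticipate is the surjectivity (equivalently, the concentration of $\Hom^{\bullet}$ in a single degree and the nondegeneracy of the evaluation map), rather than the formal collapse of the triangle. Once one knows the Ext-computation yields a two-dimensional space in a single cohomological degree and that evaluation is surjective, everything else is bookkeeping with the mutation triangle. The delicate point is the cohomology calculation on $E$: one must carefully track the restriction $H|_T = a + b$ of the Plücker polarization to the canonical quadric $T$, use the sequence~\eqref{cUT} and the identification of $E$ as a degree-$2$ cover of $Y$, and confirm that the twist $\cO_E(h - a)$ has the expected cohomology. I would handle this by pushing forward along $b_E \colon E \to T$ (which presents $E$ as a $\bP^1$-bundle, or finite cover) and reducing to cohomology of line bundles on the smooth quadric surface $T$, where the standard Beilinson-type vanishing for $\cO_T(-a), \cO_T(-b)$ makes the count transparent.
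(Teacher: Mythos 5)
Your proposal has the same skeleton as the paper's proof (mutation triangle, a two\-/dimensional $\Hom$\-/space in degree $0$, surjectivity of evaluation, then the formal collapse to the short exact sequence~\eqref{eq_defFa}), and the purely formal part of your argument is fine: once surjectivity and the $\Hom$-computation are granted, the kernel of a surjection onto $i_{E*}\cO_E(-a)$ from a locally free sheaf is indeed locally free of rank $2$. But the two substantive steps are done quite differently in the paper, and your versions of both have genuine gaps. For the $\Hom$-computation, pushing $\cO_E(h-a)$ down along $b_E \colon E \to T$ does \emph{not} reduce to line bundles on $T$: since $\cO_{\tX}(-E)|_E$ is the relative $\cO(1)$ of $E = \bP_T(\cN_{T/X})$, one gets $b_{E*}\cO_E(h-a) \cong \cN^{\vee}_{T/X} \otimes \cO_T(b)$, a twist of the dual normal bundle of $T$ in $X$, which is a non-split rank $2$ bundle (a restriction of the null-correlation bundle attached to the form $\omega$); its cohomology is not accessible by ``Beilinson-type vanishing for $\cO_T(-a), \cO_T(-b)$''. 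The paper avoids this entirely by pushing forward in the \emph{other} direction, along $\pi$ to $Y$: the decomposition~\eqref{DbtX-Es} already established in Steps 1--6 shows that $\pi_* i_{E*}\cO_E(-a)$ is right orthogonal to $\cO_Y, \cS_Y^{\vee}, \cO_Y(h)$, hence lies in $\langle \cO_Y(-h)\rangle$, while flatness and degree $2$ of $\pi|_E$ (Lemma~\ref{lemma-conic-bundle}) show it is a rank $2$ bundle; together this forces $\pi_* i_{E*}\cO_E(-a) \cong \cO_Y(-h)^{\oplus 2}$, and the $\Hom$-space follows by adjunction. This use of the ambient semiorthogonal decomposition is the key idea missing from your plan.

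The more serious gap is surjectivity, which you rightly flag as the main obstacle but for which you have no working mechanism. The paper deduces it softly: by adjunction the evaluation map factors as $\pi^*$ of the isomorphism $\cO_Y(-h)^{\oplus 2} \xrightarrow{\sim} \pi_* i_{E*}\cO_E(-a)$ followed by the counit $\pi^*\pi_*(-) \to (-)$, and the counit is surjective on sheaves supported on $E$ because $\pi|_E$ is asserted to be finite (hence affine) in Lemma~\ref{lemma-conic-bundle}. Your plan --- check fiberwise that the two sections of $\cO_E(h-a)$ have no common zero --- is not a routine verification: two sections of a line bundle on a threefold generically \emph{do} have a common zero locus, and here base-point-freeness is equivalent to global generation of the rank $2$ bundle $\cN^{\vee}_{T/X}(b)$, again not a line-bundle computation. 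In fact, carrying out your check honestly runs into real geometry: if $T$ contains a line $\ell$ of class $b$ that is isotropic for $\omega$ (such lines exist over $\bC$, since each ruling of $T$ is a conic in $\Gr(2,V_4)$ and must meet the hyperplane $\bP(W')$ defining $Y$), then the corresponding curve of $E$ is contracted by $\pi|_E$ and $\cO_E(h-a)$ restricts on it to $\cO_{\bP^1}(-1)$, so \emph{every} global section vanishes along it. So the ``no common zeros'' statement is exactly where the whole lemma is delicate --- it is inseparable from the finiteness of $\pi|_E$, which your proposal never invokes and which your fiberwise analysis, pushed to its conclusion, would in fact call into question along these isotropic lines.
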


\begin{proof}
First we compute the pushforwards of $i_{E *}\cO_E(-c)$ and 
$i_{E *}\cO_E(-d)$ along $\pi \colon \tX \to Y$. 
Recall that the derived category of the quadric threefold $Y$ decomposes as 
\begin{equation*} 
\Db(Y) = \llangle \cO_Y(-h), \cO_Y, \cS^{\vee}_Y, \cO_Y(h) \rrangle 
\end{equation*} 
The decomposition \eqref{DbtX-Es} shows that $\pi_*i_{E*}\cO_E(-c)$ and $\pi_*i_{E*}\cO_E(-d)$ are right orthogonal to the objects $\cO_Y, \cS^{\vee}_Y, \cO_Y(h)$, and hence 
isomorphic to a sum of shifts of $\cO_Y(-h)$. 
On the other hand, as $\pi \circ i_E \colon E \to Y$ is flat and finite of degree $2$ by Lemma \ref{lemma-conic-bundle}, $\pi_*i_{E*}\cO_E(-c)$ and $\pi_*i_{E*}\cO_E(-d)$ are vector bundles of rank $2$. Together these observations prove that 
\begin{equation}
\label{piE-a}
\pi_*i_{E*}\cO_E(-c) \cong \cO_Y(-h)^{\oplus 2} \quad  \text{and} \quad \pi_*i_{E*}\cO_E(-d) \cong \cO_Y(-h)^{\oplus 2}. 
\end{equation} 

Thus we obtain 
\begin{equation*}
\Hom^\bullet(\cO_{\tX}(-h),i_{E*}\cO_E(-c)) \cong k^{2}[0] 
\cong 
\Hom^\bullet(\cO_{\tX}(-h),i_{E*}\cO_E(-d)), 
\end{equation*} 
and hence we have exact triangles 
\begin{equation*}
\begin{aligned}
&\cO_{\tX}(-h)^{\oplus 2} \to i_{E*}\cO_E(-c) \to \rL_{\cO_{\tX}(-h)}i_{E*}\cO_E(-c), \\
&\cO_{\tX}(-h)^{\oplus 2} \to i_{E*}\cO_E(-d) \to \rL_{\cO_{\tX}(-h)}i_{E*}\cO_E(-d).
\end{aligned}    
\end{equation*}
It follows from the finiteness of $\pi|_E \colon E \to Y$ (see Lemma~\ref{lemma-conic-bundle})
that the above morphisms $\cO_{\tX}(-h)^{\oplus 2} \to i_{E*}\cO_E(-c)$ and $\cO_{\tX}(-h)^{\oplus 2} \to i_{E*}\cO_E(-d)$ are surjective. This gives the sequences in \eqref{eq_defFa}. 
\end{proof}

\begin{lemma}
\label{lem_defG}
We have $\rL_{\cO_{\tX}(-h)}\cO_{\tX}(h-H) \cong \cG$, where $\cG$ is a rank $2$ vector bundle on $\tX$ defined by an exact sequence 
\begin{equation}
\label{G}
0 \to \cO_{\tX}(h-H) \to \cG \to \cO_{\tX}(-h) \to 0.   
\end{equation}
\end{lemma}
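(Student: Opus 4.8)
The left mutation is governed by the defining triangle
\[
\Hom^\bullet(\cO_{\tX}(-h), \cO_{\tX}(h-H)) \otimes \cO_{\tX}(-h) \to \cO_{\tX}(h-H) \to \rL_{\cO_{\tX}(-h)}\cO_{\tX}(h-H),
\]
so the plan is to first compute the graded vector space $\Hom^\bullet(\cO_{\tX}(-h), \cO_{\tX}(h-H)) \cong \rH^\bullet(\tX, \cO_{\tX}(2h-H))$ and then read off $\cG$ from the triangle. I expect this space to be one-dimensional and concentrated in degree $1$. Granting this, the triangle becomes $\cO_{\tX}(-h)[-1] \to \cO_{\tX}(h-H) \to \cG$, and rotating identifies $\cG$ with the middle term of the nonsplit extension corresponding to the unique (up to scalar) nonzero class in $\Ext^1(\cO_{\tX}(-h), \cO_{\tX}(h-H))$, namely the sequence~\eqref{G}. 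As an extension of line bundles, $\cG$ is then automatically locally free of rank $2$, as claimed.

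The cohomological computation I would carry out through the conic fibration $\pi \colon \tX \to Y$, using $\cO_{\tX}(h) \cong \pi^*\cO_Y(h)$ from Lemma~\ref{lemma-conic-bundle}, which reduces the problem to understanding $\pi_*\cO_{\tX}(-H)$. Since $\tX \subset \bP_Y(\cE)$ is cut out by a section of $\cO(2H)$ by Lemma~\ref{lemma-conic-bundle}\eqref{X-in-PE}, I would twist the corresponding Koszul resolution of $\cO_{\tX}$ by $\cO(-H)$ to obtain
\[
0 \to \cO_{\bP_Y(\cE)}(-3H) \to \cO_{\bP_Y(\cE)}(-H) \to \cO_{\tX}(-H) \to 0,
\]
and push forward along $p \colon \bP_Y(\cE) \to Y$. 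The fibers are $\bP^2$, so $p_*\cO(-H) = 0$ while $p_*\cO(-3H) \cong \det\cE\,[-2]$ by relative Serre duality, using $\omega_{\bP_Y(\cE)/Y} \cong \cO(-3H) \otimes p^*\det\cE^{-1}$ from the relative Euler sequence. Hence $\pi_*\cO_{\tX}(-H) \cong \det\cE\,[-1] \cong \cO_Y(-2h)[-1]$, where the last identification uses $\det\cE = \det\cS_Y \otimes \cO_Y(-h) = \cO_Y(-2h)$. Twisting by $\cO_Y(2h)$ gives $\pi_*\cO_{\tX}(2h-H) \cong \cO_Y[-1]$, and since $Y$ is Fano I conclude $\rH^\bullet(\tX, \cO_{\tX}(2h-H)) \cong \rH^\bullet(Y, \cO_Y)[-1] = k[-1]$, exactly the one-dimensional degree-$1$ answer needed above.

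The main obstacle is the bookkeeping in this pushforward step: one must place the surviving fiber cohomology in the correct homological degree (degree $2$ for $p_*\cO(-3H)$, which shifts to the degree-$1$ group $\Ext^1$ after the connecting map), and correctly pin down the twisting line bundle $R^2p_*\cO(-3H) \cong \det\cE$ together with the equality $\det\cE \cong \cO_Y(-2h)$. Once $\Hom^\bullet(\cO_{\tX}(-h), \cO_{\tX}(h-H))$ is shown to be $k$ in degree $1$, the remainder is formal, the mutation triangle matching~\eqref{G} with $\cG$ locally free of rank $2$.
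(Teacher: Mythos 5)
Your proof is correct, and its formal skeleton --- compute $\Hom^\bullet(\cO_{\tX}(-h),\cO_{\tX}(h-H)) \cong k[-1]$, then read $\cG$ off the mutation triangle as the unique nontrivial extension, automatically locally free of rank $2$ --- matches the paper's; what genuinely differs is how you obtain the key cohomological input. The paper gets it softly from the semiorthogonal decomposition \eqref{DbtX-Es} already established at that point: since $\cO_{\tX}(h-H)$ sits to the left of $\cO_{\tX}, \cS_{\tX}^\vee, \cO_{\tX}(h)$ there, adjunction shows $\pi_*\cO_{\tX}(h-H)$ is right orthogonal to $\cO_Y, \cS_Y^\vee, \cO_Y(h)$ and hence is a sum of shifts of the remaining exceptional object $\cO_Y(-h)$; restricting to a general fiber of $\pi$, where $\cO_{\tX}(h-H)$ becomes $\cO_{\bP^1}(-2)$ with cohomology $k[-1]$, pins this down to $\pi_*\cO_{\tX}(h-H) \cong \cO_Y(-h)[-1]$, which is \eqref{eq_pushforwh-H}. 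You instead compute the same pushforward by hand: the divisor sequence $0 \to \cO(-3H) \to \cO(-H) \to \cO_{\tX}(-H) \to 0$ on $\bP_Y(\cE)$, the vanishing of $p_*\cO(-H)$ on $\bP^2$-fibers, and relative Serre duality giving $R^2p_*\cO(-3H) \cong \det\cE \cong \cO_Y(-2h)$. I checked your conventions against the paper's (with $p_*\cO_{\bP_Y(\cE)}(1) = \cE^\vee$ the tautological subbundle is $\cO(-1) \subset p^*\cE$, so indeed $\omega_{\bP_Y(\cE)/Y} \cong \cO(-3H)\otimes p^*\det\cE^{-1}$), and they are consistent; the identification $\det\cE = \det\cS_Y \otimes \cO_Y(-h) = \cO_Y(-2h)$ is also correct. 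Your route buys self-containedness --- it relies only on Lemma \ref{lemma-conic-bundle}, not on the chain of mutations culminating in \eqref{DbtX-Es} --- at the cost of Euler-sequence and Serre-duality bookkeeping, a classic source of sign slips; the paper's route is shorter and avoids all such conventions, but only because the decomposition \eqref{DbtX-Es} has already been assembled. From \eqref{eq_pushforwh-H} onward the two arguments coincide.
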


\begin{proof}
By the same argument as in Lemma \ref{lemma_defFaFb}, the object $\pi_*\cO_{\tX}(h-H)$ is a sum of shifts of $\cO_Y(-h)$. On the other hand, the restriction of $\cO_{\tX}(h-H)$ to the general fiber of $\pi$ is isomorphic to $\cO_{\bP^1}(-2)$. We deduce that
\begin{equation}
\label{eq_pushforwh-H}  
\pi_*\cO_{\tX}(h-H)\cong \cO_Y(-h)[-1].
\end{equation}

Thus we obtain 
\begin{equation*}
\Hom^\bullet(\cO_{\tX}(-h),\cO_{\tX}(h-H)) \cong k[-1] , 
\end{equation*} 
and hence there is an exact triangle 
\begin{equation*} 
\cO_{\tX}(-h)[-1] \to \cO_{\tX}(h-H) \to \rL_{\cO_{\tX}(-h)}\cO_{\tX}(h-H). 
\end{equation*}
It follows that $\rL_{\cO_{\tX}(-h)}\cO_{\tX}(h-H)$ is isomorphic to the unique nontrivial extension of $\cO_{\tX}(-h)$ by $\cO_{\tX}(h-H)$. 
\end{proof}

Combining the above lemmas, we see that the result of our mutations is 
\begin{equation}
\label{eq_finalstepdec}
\Db(\tX) = \llangle \Ku(X)', \cU_{\tX}, \cF_c, \cF_d, \cG, \cO_{\tX}(-h), \cO_{\tX}, \cS_{\tX}^\vee, \cO_{\tX}(h) \rrangle .
\end{equation}
Comparing this decomposition with~\eqref{sod-conic} completes the proof 
of Proposition~\ref{proposition-KX-Cl}. \qed

\begin{remark}
As pointed out by Kuznetsov, one can prove a similar statement for $X$ with singular canonical quadric $T$, up to replacing $\llangle \cF_c, \cF_d \rrangle$ with $\Db(\Cl_0^T)$, where $\Cl_0^T$ is the even part of the 
Clifford algebra over $k$ corresponding to $T$. 
\end{remark}

\subsection{Making explicit the embedding of $\Ku(X)$ into $\Db(Y, \Cl_0)$}
\label{section-explicit-Ku-in-Cl}
Proposition~\ref{proposition-KX-Cl} shows that $\Ku(X)$ embeds into $\Db(Y, \Cl_0)$ as the semiorthogonal complement of four exceptional objects, corresponding to $\cU_{\tX}$, $\cF_c$, $\cF_d$, and $\cG$. 
The goal of this section is to explicitly describe (a mutation of) these exceptional objects as $\Cl_0$-modules on $Y$. More precisely, we prove the following. 

\begin{theorem}
\label{theorem-KX-Cl} 
Let $X$ be an ordinary GM fourfold with smooth canonical quadric. 
Then there is a semiorthogonal decomposition 
\begin{equation*}
\Db(Y, \Cl_0) = 
\llangle \Psi(\Ku(X)), \Cl_1, \Cl_0, \cR_c, \cR_d \rrangle
\end{equation*} 
where $\Psi \colon \Ku(X) \to \Db(Y, \Cl_0)$ is a fully faithful functor 
defined in \eqref{Psi} and 
$\cR_c$ and $\cR_d$ are exceptional objects of $\Db(Y, \Cl_0)$ 
defined in \eqref{Ra-Rb} which as $\cO_Y$-modules are 
locally free of rank~$4$. 
\end{theorem}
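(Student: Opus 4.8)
The plan is to transport the semiorthogonal decomposition of Proposition~\ref{proposition-KX-Cl} through $\Phi$ and then identify the resulting exceptional objects as explicit $\Cl_0$-modules. Since $\Phi \colon \Db(Y, \Cl_0) \to \Db(\tX)$ is fully faithful, it restricts to an equivalence onto its image, for which Proposition~\ref{proposition-KX-Cl} gives $\Phi(\Db(Y,\Cl_0)) = \llangle \Ku(X)', \cU_{\tX}, \cF_a, \cF_b, \cG \rrangle$. Writing $\Phi^{-1}$ for the inverse of this equivalence, I set
\begin{equation}
\label{Psi}
\Psi := \Phi^{-1} \circ \rL_{\cO_{\tX}(-h)} \circ b^* \circ \rL_{\cU_X} \colon \Db(X) \to \Db(Y, \Cl_0),
\end{equation}
so that $\Psi(\Ku(X)) = \Phi^{-1}(\Ku(X)')$ is fully faithful, and applying $\Phi^{-1}$ to the displayed decomposition yields $\Db(Y,\Cl_0) = \llangle \Psi(\Ku(X)), \Phi^{-1}(\cU_{\tX}), \Phi^{-1}(\cF_a), \Phi^{-1}(\cF_b), \Phi^{-1}(\cG) \rrangle$. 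It remains to identify these four exceptional objects and to reorder them into the stated form.

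The computational heart of the argument is to evaluate $\Phi^{-1}$ on the bundles $\cU_{\tX}, \cF_a, \cF_b, \cG$. For this I would use the explicit description of Kuznetsov's functor and its adjoint from \cite{kuznetsov08quadrics}: for an object $\cN$ in the image of $\Phi$, the module $\Phi^{-1}(\cN)$ is computed as a relative pushforward along $\pi \colon \tX \to Y$ of $\cN$ tensored with the Clifford-linear kernel defining $\Phi$, its $\Cl_0$-module structure being induced by Clifford multiplication. Feeding in the pushforward computations of Section~\ref{subsection-Ku-in-Cl}, in particular \eqref{piE-a} and \eqref{eq_pushforwh-H} together with the defining sequences \eqref{eq_defFa} and \eqref{G}, I expect to find that two of the four objects are the free modules, namely $\Phi^{-1}(\cU_{\tX}) \cong \Cl_1$ and $\Phi^{-1}(\cG) \cong \Cl_0$ with the presentations \eqref{Cl_1} and \eqref{Cl_0}, while $\Phi^{-1}(\cF_a)$ and $\Phi^{-1}(\cF_b)$ are $\Cl_0$-modules that are locally free of $\cO_Y$-rank $4$. (That $\cU_{\tX}$ must map to $\Cl_1$ and $\cG$ to $\Cl_0$, rather than the reverse, is in fact forced once one fixes the target order.)

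To reach the order $\Cl_1, \Cl_0, \cR_a, \cR_b$ of the statement, I would reorder inside $\llangle \cU_{\tX}, \cF_a, \cF_b, \cG \rrangle$ by moving $\cG$ to the left of $\cF_a$ and $\cF_b$: right-mutating first $\cF_b$ and then $\cF_a$ through $\cG$ produces $\llangle \cU_{\tX}, \cG, \rR_{\cG}\cF_a, \rR_{\cG}\cF_b \rrangle$, which accounts for the passage to a mutation of the original bundles. Accordingly I set
\begin{equation}
\label{Ra-Rb}
\cR_a := \Phi^{-1}(\rR_{\cG}\cF_a), \qquad \cR_b := \Phi^{-1}(\rR_{\cG}\cF_b).
\end{equation}
Applying $\Phi^{-1}$ to the reordered collection then gives exactly $\llangle \Psi(\Ku(X)), \Cl_1, \Cl_0, \cR_a, \cR_b \rrangle$. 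It remains to check that $\cR_a, \cR_b$ are exceptional — immediate, as mutation preserves exceptionality and $\Phi^{-1}$ is an equivalence — and that they are locally free of rank $4$ as $\cO_Y$-modules; the latter follows from a rank computation using the defining triangles of the mutations together with the pushforward formulas, exactly as $\Phi^{-1}$ carries the rank-$2$ bundles $\cU_{\tX}$ and $\cG$ to the $\cO_Y$-rank-$4$ modules $\Cl_1$ and $\Cl_0$.

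The main obstacle I anticipate is making $\Phi$ concrete enough to evaluate $\Phi^{-1}$ on the non-free bundles $\cU_{\tX}, \cF_a, \cF_b, \cG$ while keeping rigorous track of the $\Cl_0$-module structures — left versus right actions, and the twists by $\cO_Y(ih)$ recorded in \eqref{eq_Cl_i}. The relative pushforwards only locate each object, on the level of $\cO_Y$-modules, as a sum of twists of $\cO_Y$ and $\cS_Y$; upgrading this to an identification of genuine Clifford-module structures — and in particular confirming the free-module identifications $\Phi^{-1}(\cU_{\tX}) \cong \Cl_1$ and $\Phi^{-1}(\cG) \cong \Cl_0$ rather than some twist or nontrivial extension — is the delicate point, and is precisely where the explicit conic-fibration geometry of Lemma~\ref{lemma-conic-bundle} and the smoothness of the canonical quadric are needed.
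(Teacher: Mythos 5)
Your architecture does match the paper's own proof---transport the decomposition of Proposition~\ref{proposition-KX-Cl} into $\Db(Y,\Cl_0)$ after right-mutating $\cF_a,\cF_b$ through $\cG$---but your key identification step is false as stated, and the failure occurs exactly at the point you flag as delicate. The paper never uses the abstract inverse $\Phi^{-1}$: it constructs the explicit functor $\Xi = \pi_*(\cG^\vee\otimes-)$ of \eqref{Xi}, well defined because $\pi_*\cHom(\cG,\cG)\cong\Cl_0$ as $\cO_Y$-algebras (Lemma~\ref{lemma-piGG}), and Lemma~\ref{lemma-Xi-Phi} shows $\Xi\circ\Phi\simeq(-)\otimes\cO_Y(-h)$, \emph{not} the identity. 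Hence for $\cN$ in the image of $\Phi$ one has $\Phi^{-1}(\cN)\cong\Xi(\cN)\otimes\cO_Y(h)$, and since $\Xi(\cU_{\tX})\cong\Cl_1$ (Lemma~\ref{projU}) and $\Xi(\cG)=\pi_*\cHom(\cG,\cG)\cong\Cl_0$, the correct identifications are $\Phi^{-1}(\cU_{\tX})\cong\Cl_1(h)$ and $\Phi^{-1}(\cG)\cong\Cl_0(h)$. This twist cannot be argued away: $\Cl_1(h)\not\cong\Cl_1$ even as $\cO_Y$-modules (their first Chern classes differ, by \eqref{Cl_1}), and any functor computing the inverse of $\Phi$ on its image---including the adjoint-kernel description from \cite{kuznetsov08quadrics} that you invoke---is naturally isomorphic to $\Phi^{-1}$ and so must produce the twisted objects. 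Your argument therefore proves the $\cO_Y(h)$-twist of the stated decomposition; your $\Psi$, $\cR_a$, $\cR_b$ differ from \eqref{Psi} and \eqref{Ra-Rb} by $\otimes\,\cO_Y(h)$, which matters downstream, since Lemmas~\ref{lemma-Ra-Rb} and~\ref{lemma-chleq2} rely on the untwisted normalization (e.g.\ $\cR_a\cong\cO_Y^{\oplus2}\oplus\cO_Y(-h)^{\oplus2}$ and $\Delta_{\Cl_0}(\cR_a)=0$) to build the stability conditions.

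The repair is precisely the paper's construction: replace $\Phi^{-1}$ by $\Xi=(-\otimes\cO_Y(-h))\circ\Phi^{-1}$. But note this choice is not cosmetic bookkeeping---it is what makes the identifications computable, and those computations are the actual content that your proposal defers with ``I expect to find.'' Establishing that $\Xi(\cU_{\tX})\cong\Cl_1$ as a $\Cl_0$-module (not merely as an $\cO_Y$-module), describing $\rR_\cG\cF_a[1]$ and $\rR_\cG\cF_b[1]$ as explicit sheaf extensions (Lemma~\ref{lemma-cF'a}), and proving the rank-$4$ local freeness of $\cR_a,\cR_b$ (Lemma~\ref{lemma-Ra-Rb}) each require separate pushforward arguments using the conic-fibration geometry. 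Finally, your parenthetical that the assignment $\cU_{\tX}\mapsto\Cl_1$, $\cG\mapsto\Cl_0$ is ``forced once one fixes the target order'' is circular: that the images are the free-type Clifford modules at all is precisely what must be proved, not a formal consequence of the ordering.
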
 

\begin{remark}
As observed in \eqref{eq_Cl_i}, we have $\Cl_{-1} \cong \Cl_1$, which explains why $\Cl_1$ sits to the left of $\Cl_0$ in the above decomposition. Moreover, $\Cl_0$ and $\Cl_1$ are completely orthogonal, so we can also write the decomposition as
\begin{equation*}
\Db(Y, \Cl_0) = 
\llangle \Psi(\Ku(X)), \Cl_0, \Cl_1, \cR_c, \cR_d \rrangle. 
\end{equation*} 
\end{remark}

To prove the theorem we first construct an explicit functor $\Xi \colon \Db(\tX) \to \Db(Y, \Cl_0)$ in~\eqref{Xi} which is the left inverse to $\Phi$ up to twisting by a line bundle. Then we show that the objects $\Cl_1, \Cl_0, \cR_c, \cR_d$ 
are given by the values of $\Xi$ on 
$\cU_{\tX}, \cG, \rR_{\cG}\cF_c[1], \rR_{\cG}\cF_d[1]$. 

\begin{remark}
The advantage of the mutated objects $\cR_c$ and $\cR_d$ over $\Xi(\cF_c)$ and $\Xi(\cF_d)$ is that their $\Cl_0$-modified discriminant vanishes, see Lemma~\ref{lemma-chleq2}. 
This will be an important point in our construction of stability conditions in Section~\ref{section-stability}.  
\end{remark}

\begin{lemma}
\label{lemma-piGG}
There is an isomorphism $\pi_*\cHom(\cG,\cG) \cong \Cl_0$ of $\cO_Y$-algebras, where the left side is equipped with the natural algebra structure given by composition. 
\end{lemma}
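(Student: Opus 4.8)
The plan is to compute $\pi_*\cHom(\cG,\cG)$ first as an $\cO_Y$-module, matching it with the decomposition $\Cl_0 \cong \cO_Y \oplus \wedge^2\cE$ of~\eqref{Cl_0}, and then to upgrade the resulting module isomorphism to one of $\cO_Y$-algebras. For the module part I would start from the observation that $\cG$ has rank $2$ with $\wedge^2\cG \cong \cO_{\tX}(-H)$, which is immediate from the defining sequence~\eqref{G}. Hence $\cG^\vee \cong \cG(H)$, and
\[
\cHom(\cG,\cG) \cong \cG \otimes \cG(H) \cong \cO_{\tX} \oplus (\Sym^2\cG)(H),
\]
where the summand $\cO_{\tX} \cong (\wedge^2\cG)(H)$ is the trace part. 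Pushing forward, this trace part contributes the summand $\pi_*\cO_{\tX} \cong \cO_Y$ of $\Cl_0$.

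For the remaining summand I would filter $\Sym^2\cG$ by~\eqref{G}, so that $(\Sym^2\cG)(H)$ is assembled from $\cO_{\tX}(2h-H)$, $\cO_{\tX}$, and $\cO_{\tX}(H-2h)$, and then apply the conic-bundle pushforward formulas $\pi_*\cO_{\tX} \cong \cO_Y$, $\pi_*\cO_{\tX}(H) \cong \cE^\vee$, $\pi_*\cO_{\tX}(-H) = 0$, and $R^1\pi_*\cO_{\tX}(-H) \cong \det\cE \cong \cO_Y(-2h)$, all of which follow from Lemma~\ref{lemma-conic-bundle} together with the Koszul resolution of $\tX \subset \bP_Y(\cE)$. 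The key point in the bookkeeping is that the \emph{nontriviality} of the extension~\eqref{G} forces the connecting map in the pushforward of the filtration to surject onto $R^1\pi_*\cO_{\tX}(2h-H) \cong \cO_Y$, which cuts the rank down to the expected $3$ and identifies $\pi_*\bigl((\Sym^2\cG)(H)\bigr)$ with $\cE^\vee(-2h)$. Since $\wedge^2\cE \cong \cE^\vee \otimes \det\cE$ and $\det\cE \cong \cO_Y(-2h)$, this is exactly $\wedge^2\cE$, the non-trace part of $\Cl_0$ in~\eqref{Cl_0}, yielding the desired isomorphism of $\cO_Y$-modules.

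The main obstacle is promoting this to an isomorphism of $\cO_Y$-algebras, and this is the step I expect to require the most care. Here I would identify $\cG$, up to a twist by a line bundle pulled back from $Y$ (which does not affect $\cHom(\cG,\cG)$), with the relative spinor bundle of the conic fibration $\pi$ in the sense of \cite{kuznetsov08quadrics}. Over the smooth locus $U = Y \setminus D$, where $D$ is the discriminant divisor of Lemma~\ref{lemma-conic-bundle}\eqref{disc-of-cf}, each fiber is a smooth conic $\cong \bP^1$ on which $\cG$ restricts to $\cO_{\bP^1}(-1)^{\oplus 2}$; thus $\cHom(\cG,\cG)|_U$ is an Azumaya algebra canonically identified, through the Clifford-module structure of the spinor bundle, with the even Clifford algebra $\Cl_0|_U$. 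To obtain a \emph{global} comparison I would exhibit the canonical left $\pi^*\Cl_0$-module structure on $\cG$ supplied by this identification, producing an $\cO_Y$-algebra homomorphism $\Cl_0 \to \pi_*\cHom(\cG,\cG)$ which is an isomorphism over $U$ and which, by the module computation above, is a map between locally free sheaves of equal rank $4$. The delicate point is that $D$ is a \emph{divisor}, so the isomorphism over $U$ does not extend for free; one must verify that the homomorphism stays an isomorphism over $D$, i.e. that $\cG$ restricts to a faithful module over the even Clifford algebra of the degenerate conics parametrized by $D$. This faithfulness over degenerate fibers is precisely what Kuznetsov's spinor-sheaf construction in \cite{kuznetsov08quadrics} guarantees, applied to the section of $\cO(2H)$ cutting out $\tX$ in $\bP_Y(\cE)$, and it is the ingredient I would lean on to close the argument.
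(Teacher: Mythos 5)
Your computation of $\pi_*\cHom(\cG,\cG)$ as an $\cO_Y$-module is correct, but it takes a genuinely different route from the paper's. The paper never touches $\Sym^2\cG$: it dualizes \eqref{G}, observes that $\cG$ lies in $\Phi(\Db(Y,\Cl_0))$ (compare \eqref{eq_finalstepdec} with \eqref{sod-conic}), so that $R\pi_*(\cG \otimes \cO_{\tX}(h))=0$ by adjunction, and is then reduced to pushing forward $0 \to \cO_{\tX} \to \cG \otimes \cO_{\tX}(H-h) \to \cO_{\tX}(H-2h)\to 0$ and splitting the resulting extension on $Y$ by $\rH^1$-vanishing. Your trace-plus-symmetric-square decomposition with the three-step filtration reaches the same answer, and your key claim --- that nonsplitness of \eqref{G} forces the connecting map $\pi_*\cO_{\tX} \to R^1\pi_*\cO_{\tX}(2h-H)\cong\cO_Y$ to be onto --- is correct: the extension class lives in $\rH^1(\tX,\cO_{\tX}(2h-H))\cong\rH^0(Y,\cO_Y)$ and the connecting map is cup product with it. In fact the middle step of your filtration is exactly $\cG\otimes\cO_{\tX}(h)$, so your connecting-map observation is literally the statement $R\pi_*(\cG\otimes\cO_{\tX}(h))=0$: you re-derive by hand the orthogonality that the paper gets for free from the mutation construction. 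Your version is more self-contained; the paper's is shorter.

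The algebra upgrade is where your proposal is weaker than it needs to be, in two places. First, the identification of $\cG$ with a twist of Kuznetsov's sheaf $\cE'$ is asserted, not proved; to make it honest you would show that a suitable twist of $\cE'$ is a nonsplit extension of $\cO_{\tX}(-h)$ by $\cO_{\tX}(h-H)$ and invoke the uniqueness in Lemma \ref{lem_defG}. (The paper is equally terse here --- it simply asserts that $\cG$ carries a $\Cl_0$-module structure --- so I weigh this lightly.) Second, and more seriously, your way of crossing the discriminant divisor is the hard way. Faithfulness of $\cG|_{C_y}$ for $y \in D$ gives only injectivity of $\Cl_0\otimes k(y) \to \rH^0(\cHom(\cG|_{C_y},\cG|_{C_y}))$; to convert that into bijectivity of the sheaf map along $D$ you would also need cohomology-and-base-change (i.e. $R^1\pi_*\cHom(\cG,\cG)=0$, so that the fiber of $\pi_*\cHom(\cG,\cG)$ at $y$ really is this $\rH^0$) together with the dimension count $\dim \rH^0(\cHom(\cG|_{C_y},\cG|_{C_y}))=4$; moreover, faithfulness over degenerate conics is not a statement quotable from \cite{kuznetsov08quadrics} in the form you need. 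All of this can be bypassed using only what you have already established: the map $\Cl_0 \to \pi_*\cHom(\cG,\cG)$ is unital, hence injective on fibers over the complement of $D$ (a unital homomorphism out of a matrix algebra has trivial kernel), hence injective as a map of sheaves, since its kernel is a torsion subsheaf of the locally free sheaf $\Cl_0$; both sides are locally free of rank $4$ with the same $c_1$ by your module computation; so the determinant of the map is an injective, hence nowhere vanishing, hence invertible section of $\cO_Y$, and the map is an isomorphism. This determinant argument is exactly how the paper concludes, and I recommend it over the fiberwise analysis over $D$.
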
 

\begin{proof}
Consider the sequence
\begin{equation}
\label{eq_defGdual}
0 \to \cO_{\tX}(h) \to \cG^{\vee} \to \cO_{\tX}(H-h) \to 0
\end{equation}
obtained by dualizing the sequence in Lemma \ref{lem_defG}.
The decomposition~\eqref{eq_finalstepdec} shows that $\cG$ (and hence also $\cO_{\tX}(h) \otimes \cG$) is right orthogonal to 
$\pi^*\Db(Y) \subset \Db(\tX)$, 
which by adjunction implies $\pi_*(\cO_{\tX}(h) \otimes \cG) = 0$. 
Thus 
\begin{equation*}
\pi_*\cHom(\cG,\cG)  = \pi_*(\cG^{\vee} \otimes \cG) \cong \pi_*(\cO_{\tX}(H-h) \otimes \cG). 
\end{equation*} 
Now consider the sequence
\begin{equation*}
0 \to  \cO_{\tX} \to \cO_{\tX}(H-h) \otimes \cG \to \cO_{\tX}(H-2h) \to 0
\end{equation*} 
obtained by tensoring the sequence in Lemma \ref{lem_defG} by $\cO_{\tX}(H-h)$. 
By Lemma \ref{lemma-conic-bundle}, $\tX$ is a conic fibration over $Y$ in the projective bundle $\bP_Y(\cE)$, 
so we have an identification 
\begin{equation*} 
\pi_*\cO_{\tX}(H) \cong \cE^\vee \cong \cO_Y(h) \oplus \cS_Y^\vee .
\end{equation*} 
Thus, since $\cS_Y^{\vee}(-h) \cong \cS_Y$, we have 
\begin{equation*}
\pi_*\cO_{\tX}(H-2h) \cong \cO_Y(-h) \oplus \cS_Y(-h). 
\end{equation*} 
Therefore, we have an exact sequence 
\begin{equation*}
0 \to \cO_{Y} \to \pi_*\cHom(\cG,\cG) \to \cO_Y(-h) \oplus \cS_Y(-h) \to 0. 
\end{equation*} 
This extension splits because 
$\rH^1(\cO_Y(h)) = \rH^1(\cS_{Y}^{\vee}(h)) = 0$. 
This shows 
\begin{equation*} 
\pi_*\cHom(\cG,\cG) \cong \cO_Y \oplus \cO_Y(-h) \oplus \cS_Y(-h), 
\end{equation*} 
which together with~\eqref{Cl_0} shows there is an isomorphism $\pi_*\cHom(\cG,\cG) \cong \Cl_0$ of $\cO_Y$-modules. 

It remains to show that the algebra structures on $\pi_*\cHom(\cG,\cG)$ and $\Cl_0$ are compatible. Note that since $\cG$ has a $\Cl_0$-module structure, there is a morphism of algebras 
$$\pi^*\Cl_0  \to \cHom(\cG,\cG),$$
which by adjunction induces a morphism of algebras
$$\Cl_0  \to \pi_*\cHom(\cG,\cG).$$
This morphism is injective, since $\Cl_0$ is an Azumaya algebra away from the discriminant locus by \cite[Proposition 3.13]{kuznetsov08quadrics}. Moreover, $c_1(\Cl_0)=c_1(\pi_*\cHom(\cG,\cG))$, so we conclude that they are isomorphic.
\end{proof}

For any $F \in \Db(\tX)$, the object $\pi_*(\cG^{\vee} \otimes F)$ naturally has the structure of a right module over $\pi_* \cHom(\cG, \cG)$. By Lemma~\ref{lemma-piGG} we may regard this as a $\Cl_0$-module structure, and define a functor 
\begin{equation}
\label{Xi} 
\Xi = \pi_*(\cG^{\vee} \otimes -) \colon \Db(\tX) \to \Db(Y, \Cl_0). 
\end{equation} 

\begin{lemma}
\label{lemma-Xi-Phi}
There is an isomorphism of functors 
\begin{equation*}
\Xi \circ \Phi \simeq (-) \otimes \cO(-h) \colon \Db(Y, \Cl_0) \to \Db(Y, \Cl_0). 
\end{equation*} 
In particular, $\Xi \colon \Db(\tX) \to \Db(Y, \Cl_0)$ restricts to an equivalence 
$\Phi(\Db(Y, \Cl_0)) \simeq \Db(Y, \Cl_0)$. 
\end{lemma}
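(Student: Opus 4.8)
The plan is to prove the functor isomorphism by showing that both $\Xi$ and $\Phi$ are $\Db(Y)$-linear and then reducing the computation of $\Xi\circ\Phi$ to its value on the free module $\Cl_0$, viewed as a $\Cl_0$-bimodule. That $\Xi$ is $\Db(Y)$-linear is immediate from the projection formula: for $A\in\Db(Y)$ and $F\in\Db(\tX)$ one has $\Xi(\pi^*A\otimes F)=\pi_*(\cG^\vee\otimes\pi^*A\otimes F)\cong A\otimes\Xi(F)$, compatibly with the right $\Cl_0=\pi_*\cHom(\cG,\cG)$-action since $\pi^*A$ acts centrally. That $\Phi$ is $\Db(Y)$-linear is part of Kuznetsov's construction \cite{kuznetsov08quadrics}, where $\Phi$ is realized as a Fourier--Mukai functor relative to $Y$. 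Consequently $\Xi\circ\Phi$ is a $\Db(Y)$-linear endofunctor of $\Db(Y,\Cl_0)$; since $\Cl_0$ generates $\Db(Y,\Cl_0)$ under the $\Db(Y)$-action and both functors are of Fourier--Mukai type, it suffices to identify $\Xi(\Phi(\Cl_0))$ together with its two commuting $\Cl_0$-actions.

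The key geometric input I would establish is the identification of $\cG$ with Kuznetsov's universal spinor sheaf up to a twist, namely $\Phi(\Cl_0)\cong\cG\otimes\cO_Y(-h)$ as $(\cO_{\tX},\pi^*\Cl_0)$-bimodules. The object $\cG$ lies in $\Phi(\Db(Y,\Cl_0))$ by the decomposition \eqref{eq_finalstepdec}, and by Lemma~\ref{lemma-piGG} it satisfies $\pi_*\cHom(\cG,\cG)\cong\Cl_0$ as $\cO_Y$-algebras; these two properties characterize the spinor sheaf of the conic fibration up to a line-bundle twist pulled back from $Y$, and the precise twist $\cO_Y(-h)$ is pinned down by comparing with the normalization in Kuznetsov's construction (equivalently, by matching $\Xi(\cG)\cong\Cl_0$, which is exactly Lemma~\ref{lemma-piGG}). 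Granting this, the computation is direct: since $\pi^*\cO_Y(-h)$ is central and pulled back from $Y$,
\[
\Xi(\Phi(M)) = \pi_*\!\left(\cG^\vee\otimes\bigl(\cG(-h)\otimes_{\pi^*\Cl_0}\pi^*M\bigr)\right)\cong \pi_*\!\left(\cHom(\cG,\cG)\otimes_{\pi^*\Cl_0}\pi^*(M(-h))\right),
\]
and the projection formula over the sheaf of algebras $\pi^*\Cl_0$ together with Lemma~\ref{lemma-piGG} gives $\pi_*(\cHom(\cG,\cG)\otimes_{\pi^*\Cl_0}\pi^*N)\cong\pi_*\cHom(\cG,\cG)\otimes_{\Cl_0}N\cong\Cl_0\otimes_{\Cl_0}N\cong N$ for every $N\in\Db(Y,\Cl_0)$. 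Taking $N=M(-h)$ yields $\Xi(\Phi(M))\cong M\otimes\cO_Y(-h)$, naturally in $M$, which is the desired isomorphism of functors.

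For the final assertion, I would note that $-\otimes\cO_Y(-h)$ is an autoequivalence of $\Db(Y,\Cl_0)$. Since $\Phi$ is fully faithful with image $\Phi(\Db(Y,\Cl_0))$ and $\Xi\circ\Phi\simeq-\otimes\cO_Y(-h)$ is an equivalence, the restriction of $\Xi$ to $\Phi(\Db(Y,\Cl_0))$ is both essentially surjective and fully faithful, hence an equivalence onto $\Db(Y,\Cl_0)$.

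The hard part will be the bimodule identification $\Phi(\Cl_0)\cong\cG(-h)$: one must match the mutation-theoretic description of $\cG$ from Section~\ref{subsection-Ku-in-Cl} with Kuznetsov's explicit universal sheaf, verify compatibility of the two $\Cl_0$-actions (not merely an $\cO_Y$-module isomorphism), and correctly track the normalization that produces the twist $\cO_Y(-h)$ rather than some other line bundle. The relative projection formula over $\pi^*\Cl_0$ also requires a short justification using flatness of $\pi$ and local freeness of $\cG$ and $\Cl_0$, but this is routine once the bimodule identification is in hand.
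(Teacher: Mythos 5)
Your overall strategy runs parallel to the paper's: both proofs reduce the lemma, via $\Db(Y)$-linearity and the projection formula, to identifying the $\Cl_0$-bimodule $\Xi(\Phi(\Cl_0))=\pi_*(\cE'\otimes\cG^{\vee})$, where $\cE'=\Phi(\Cl_0)$ is Kuznetsov's kernel; your reduction step and your deduction of the final assertion are both fine. The genuine gap is exactly in the step you flag as the hard part, and your proposed method for it does not work. The claim that lying in $\Phi(\Db(Y,\Cl_0))$ together with $\pi_*\cHom(\cG,\cG)\cong\Cl_0$ (as $\cO_Y$-algebras) characterizes the universal sheaf up to a line-bundle twist pulled back from $Y$ is false: the object $\Phi(\Cl_1)$ is a counterexample. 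Since $\Cl_1$ is an invertible $\Cl_0$-bimodule (\cite[Corollary 3.9]{kuznetsov08quadrics}), $-\otimes_{\Cl_0}\Cl_1$ is a $\Db(Y)$-linear autoequivalence of $\Db(Y,\Cl_0)$; hence $\Phi(\Cl_1)$ is a sheaf in the image of $\Phi$, and because $Y$-linear fully faithful functors preserve relative Hom sheaves, $\pi_*\cHom(\Phi(\Cl_1),\Phi(\Cl_1))\cong\cHom_{\Cl_0}(\Cl_1,\Cl_1)\cong\Cl_0$ as algebras. Yet $\Cl_1\not\cong\Cl_0\otimes L$ for any line bundle $L$ on $Y$ (compare the $\cO_Y$-module decompositions \eqref{Cl_0} and \eqref{Cl_1}), so $\Phi(\Cl_1)$ is not a pullback twist of $\Phi(\Cl_0)$. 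Thus your two properties cannot determine which object of $\Db(Y,\Cl_0)$ the bundle $\cG$ corresponds to. Your fallback normalization---"matching $\Xi(\cG)\cong\Cl_0$"---is circular: converting knowledge of $\Xi(\cG)$ into knowledge of $\Phi^{-1}(\cG)$ requires knowing how $\Xi$ acts on the image of $\Phi$, which is precisely the statement of the lemma.

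The paper sidesteps this entirely: it never identifies $\Phi(\Cl_0)$ with $\cG(-h)$ (that identification is a \emph{consequence} of the lemma, not an input). After the same projection-formula reduction $\Xi(\Phi(F))\simeq F\otimes_{\Cl_0}\pi_*(\cE'\otimes\cG^{\vee})$, it computes the bimodule $\pi_*(\cE'\otimes\cG^{\vee})$ directly: the extension $0\to\cO(h)\to\cG^{\vee}\to\cO(H-h)\to 0$ dual to Lemma~\ref{lem_defG}, together with the vanishing $\pi_*\cE'=0$ (right orthogonality of $\cE'$ to $\pi^*\Db(Y)$), reduces the computation to $p_*(j_*\cE'\otimes\cO(H-h))$, which is then evaluated to be $\Cl_0(-h)$ using Kuznetsov's resolution $0\to\cO(-2H)\otimes\Cl_{-1}\to\cO(-H)\otimes\Cl_0\to j_*\cE'\to 0$ on $\bP_Y(\cE)$. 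To rescue your route you would need a non-circular proof that $\cE'\cong\cG(-h)$ as $(\cO_{\tX},\pi^*\Cl_0)$-bimodules, and the natural way to get one is from this same resolution---at which point you have essentially redone the paper's computation.
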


\begin{proof}
Recall that the functor $\Phi \colon \Db(Y, \Cl_0) \to \Db(\tX)$ is given by $\Phi(F) = \pi^*(F) \otimes_{\pi^*\Cl_0} \cE'$, where $\cE'$ is a certain sheaf of left $\Cl_0$-modules denoted by $\cE'_{-1,0}$ in \cite{kuznetsov08quadrics}. 
Thus 
\begin{equation*}
\Xi(\Phi(F)) = \pi_*(\pi^*(F) \otimes_{\pi^*\Cl_0} \cE' \otimes \cG^{\vee}) \simeq 
F \otimes_{\Cl_0} \pi_*(\cE' \otimes \cG^{\vee}), 
\end{equation*} 
so we must show $\pi_*(\cE' \otimes \cG^{\vee}) \simeq \Cl_0(-h)$. 
Recall that we have an exact sequence 
\begin{equation*}
0 \to \cO(h) \to \cG^\vee \to \cO(H-h) \to 0. 
\end{equation*} 
Note that $\cE' = \Phi(\Cl_0)$ is right orthogonal to $\pi^*\Db(Y)$, so  
$\pi_*(\cE') = 0$. Thus we also have $\pi_*(\cE' \otimes \cO(h)) = \pi_*(\cE') \otimes \cO(h) = 0$, and so 
\begin{equation*}
\pi_*(\cE' \otimes \cG^{\vee}) \simeq \pi_*(\cE' \otimes \cO(H-h)). 
\end{equation*} 
In terms of the notation from~\eqref{main-diagram} for the embedding of $X$ into the 
projective bundle $\bP_Y(\cE)$, we can write this as
\begin{equation*}
\pi_*(\cE' \otimes \cG^{\vee}) \simeq p_* (j_*\cE' \otimes \cO(H-h)). 
\end{equation*} 
Using the exact sequence 
\begin{equation*}
0 \to \cO(-2H) \otimes \Cl_{-1} \to \cO(-H) \otimes \Cl_0 \to j_* \cE' \to 0 
\end{equation*} 
on $\bP_Y(\cE)$ from \cite[Equation~(23)]{kuznetsov08quadrics} 
(our $\cE'$ is $\cE'_{-1,0}$ in the notation there), we conclude  that 
\begin{equation*}
\pi_*(\cE' \otimes \cG^{\vee}) \simeq \Cl_0(-h).
\end{equation*} 
As in Lemma \ref{lemma-piGG}, one can verify that it respects multiplication by $\Cl_0$ on each side. This completes the proof of the statement. 
\end{proof}

Next we compute the value of $\Xi$ on the objects $\cU_{\tX}, \rR_{\cG}\cF_c[1], \rR_{\cG}\cF_d[1]$. 

\begin{lemma}
\label{projU}
We have $\Xi(\cU_{\tX}) \cong \Cl_1$.
\end{lemma}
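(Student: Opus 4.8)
The plan is to compute $\Xi(\cU_{\tX}) = \pi_*(\cG^{\vee} \otimes \cU_{\tX})$ directly, exploiting the vanishing $\pi_*\cG = 0$ to collapse the exact sequence for $\cUv_{\tX}$ from Lemma~\ref{lemma-RUv}, and then to match the resulting object with $\Cl_1$, first as an $\cO_Y$-module and finally as a $\Cl_0$-module. First I would rewrite the sheaf being pushed forward. Since $\cG$ has rank $2$ with $\det\cG \cong \cO_{\tX}(-H)$ (Lemma~\ref{lem_defG}) we have $\cG^{\vee} \cong \cG(H)$, and since $\cU_{\tX}(H) \cong \cUv_{\tX}$ this gives a canonical isomorphism $\cG^{\vee} \otimes \cU_{\tX} \cong \cG \otimes \cUv_{\tX}$, under which the right $\Cl_0$-action (precomposition on the $\cG$-factor, via $\pi_*\cHom(\cG,\cG) \cong \Cl_0$ of Lemma~\ref{lemma-piGG}) is unchanged. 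I would then tensor the sequence
\[
0 \to \cSv_{\tX} \to \cUv_{\tX} \to \vV_1 \otimes \cO_E \to 0
\]
of Lemma~\ref{lemma-RUv} by the locally free sheaf $\cG$ and push forward along $\pi$, obtaining a triangle of right $\Cl_0$-modules. Its left term is $\pi_*(\cG \otimes \cSv_{\tX}) \cong \cSv_Y \otimes \pi_*\cG$ by the projection formula, using $\cSv_{\tX} \cong \pi^*\cSv_Y$. The decomposition~\eqref{eq_finalstepdec} shows $\cG$ is right orthogonal to $\pi^*\Db(Y)$, so $\pi_*\cG = 0$ by adjunction, and this term vanishes. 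Hence $\Xi(\cU_{\tX}) \cong \vV_1 \otimes (\pi|_E)_*(\cG|_E) \cong (\pi|_E)_*(\cG|_E)$ as $\Cl_0$-modules, using $\dim V_1 = 1$.

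Next I would compute $(\pi|_E)_*(\cG|_E)$ as an $\cO_Y$-module. Tensoring $0 \to \cO_{\tX}(-E) \to \cO_{\tX} \to \cO_E \to 0$ by $\cG$ and pushing forward, and again invoking $\pi_*\cG = 0$, gives $(\pi|_E)_*(\cG|_E) \cong \pi_*(\cG(-E))[1] = \pi_*(\cG(h-H))[1]$. I would evaluate $\pi_*(\cG(h-H))$ from the twist by $\cO_{\tX}(h-H)$ of the defining sequence of $\cG$, namely $0 \to \cO_{\tX}(2h-2H) \to \cG(h-H) \to \cO_{\tX}(-H) \to 0$. The two outer pushforwards are computed by relative Serre duality for the conic fibration: from $\det\cE \cong \cO_Y(-2h)$ and adjunction in $\bP_Y(\cE)$ one finds $\omega_{\tX/Y} \cong \cO_{\tX}(2h-H)$, whence $\pi_*\cO_{\tX}(-H) \cong \cO_Y(-2h)[-1]$ and $\pi_*\cO_{\tX}(2h-2H) \cong (\cO_Y(-h) \oplus \cS_Y)[-1]$, both concentrated in cohomological degree $1$ (here I use $\pi_*\cO_{\tX}(H) \cong \cO_Y(h) \oplus \cSv_Y$ and $\cSv_Y(-h) \cong \cS_Y$). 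The connecting map is governed by $\Ext^1(\cO_Y(-2h), \cO_Y(-h) \oplus \cS_Y) = \rH^1(\cO_Y(h)) \oplus \rH^1(\cS_Y(2h))$, which vanishes by the standard cohomology of the quadric threefold and its spinor bundle, so the extension splits and
\[
(\pi|_E)_*(\cG|_E) \cong \cO_Y(-h) \oplus \cS_Y \oplus \cO_Y(-2h),
\]
which is exactly $\Cl_1$ as an $\cO_Y$-module by~\eqref{Cl_1}.

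It remains to promote this to an isomorphism of right $\Cl_0$-modules, and I expect this to be the main obstacle, just as compatibility with multiplication was the delicate point in Lemmas~\ref{lemma-piGG} and~\ref{lemma-Xi-Phi}. Both $\Xi(\cU_{\tX})$ and $\Cl_1$ are right $\Cl_0$-modules that are locally free of rank $4$ over $\cO_Y$ with determinant $\cO_Y(-4h)$, and they agree as $\cO_Y$-modules and over the Azumaya locus $Y \setminus \disc$, where $\Cl_0$ is Azumaya of degree $2$ by \cite[Proposition 3.13]{kuznetsov08quadrics}. Since $\disc$ has codimension one, agreement there is not by itself enough, so the plan is instead to produce a nonzero $\Cl_0$-linear map $\varphi \colon \Cl_1 \to \Xi(\cU_{\tX})$ from the natural Clifford-multiplication structures carried by $\cG$ and its restriction to $E$ (the $\Cl_0$-action on $\Xi(\cU_{\tX})$ being the one transported through the reduction above), and then to argue as follows: $\det\varphi \in \Hom_{\cO_Y}(\cO_Y(-4h), \cO_Y(-4h)) = k$, and a morphism of vector bundles of equal rank whose determinant is a nonzero scalar is an isomorphism, so $\varphi$ is automatically an isomorphism of $\Cl_0$-modules.

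The step I expect to require the most care is thus the construction of $\varphi$ together with the verification that $\det\varphi \neq 0$, i.e.\ that the $\Cl_0$-action matches along $\disc$ and not merely on the Azumaya locus; this is the $\Cl_0$-linearity analogue of the algebra-structure checks already carried out in Lemmas~\ref{lemma-piGG} and~\ref{lemma-Xi-Phi}. The remaining ingredients --- the reduction via $\pi_*\cG = 0$ and the relative-duality computation of the pushforwards --- are routine once $\omega_{\tX/Y} \cong \cO_{\tX}(2h-H)$ and the spinor vanishing $\rH^1(\cS_Y(2h)) = 0$ are in hand.
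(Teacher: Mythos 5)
Your computation of the underlying $\cO_Y$-module is correct, and it follows a genuinely different route from the paper's. The paper uses the extension $0 \to \cO_{\tX}(h) \to \cG^{\vee} \to \cO_{\tX}(H-h) \to 0$ together with $\pi_*(\cU_{\tX}(h)) = 0$ to replace $\cG^{\vee}$ by $\cO_{\tX}(H-h)$, so that $\Xi(\cU_{\tX}) \cong \cO_Y(-h) \otimes \pi_*(\cUv_{\tX})$, and then computes $\pi_*(\cUv_{\tX})$ by pushing forward the sequence of Lemma~\ref{lemma-RUv} and the structure sequence of $E$, using \eqref{eq_pushforwh-H}. You instead write $\cG^{\vee} \cong \cG(H)$, tensor the sequence of Lemma~\ref{lemma-RUv} with $\cG$, and use $\pi_*\cG = 0$ to kill the spinor term, reducing everything to $(\pi|_E)_*(\cG|_E)$, which you evaluate via relative Serre duality; your identities $\omega_{\tX/Y} \cong \cO_{\tX}(2h-H)$, $\pi_*\cO_{\tX}(-H) \cong \cO_Y(-2h)[-1]$ and $\pi_*\cO_{\tX}(2h-2H) \cong (\cO_Y(-h)\oplus\cS_Y)[-1]$ all check out, and the splitting vanishings are the same ones the paper uses. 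As a computation of $\Forg(\Xi(\cU_{\tX}))$, this is a valid alternative.

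The gap is the $\Cl_0$-module structure, which is what the lemma actually asserts (the isomorphism is in $\Db(Y,\Cl_0)$, not merely of $\cO_Y$-modules). You correctly flag this as the hard point, but you do not resolve it: the $\Cl_0$-linear map $\varphi \colon \Cl_1 \to \Xi(\cU_{\tX})$ is never constructed --- ``from the natural Clifford-multiplication structures'' is not a construction --- and $\det\varphi \neq 0$ is never verified, so the determinant principle (which is sound, since both sides are locally free of rank $4$ with determinant $\cO_Y(-4h)$ and $\rH^0(\cO_Y)=k$) has nothing to apply to. Moreover, your transport of the module structure already contains an error: under $\cG^{\vee} \otimes \cU_{\tX} \cong \cG \otimes \cUv_{\tX}$ the right $\Cl_0$-action is \emph{not} unchanged. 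The evaluation action of $\cHom(\cG,\cG)$ on $\cG$ is a left action; what corresponds to precomposition by $a$ on $\cG^{\vee}$, under $\cG^{\vee}\cong\cG\otimes\det(\cG)^{-1}$, is the action of the adjugate $\mathrm{adj}(a)=\mathrm{tr}(a)\,\id - a$ on the $\cG$-factor, i.e.\ the action twisted by this anti-involution (the standard involution of $\Cl_0$). This does not affect your $\cO_Y$-module computation, but it shows the bookkeeping required for your final step is subtler than you claim, and it would have to be carried through the reduction to $(\pi|_E)_*(\cG|_E)$ and beyond (note also that your last step, which uses the defining sequence \eqref{G} of $\cG$, destroys $\Cl_0$-equivariance, since the sub and quotient line bundles of $\cG$ are not $\cHom(\cG,\cG)$-stable). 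The paper closes this point by tracing the $\Cl_0$-structures through its explicitly constructed chain of isomorphisms; that is terse, but it is an assertion that the check was done, whereas your argument, as written, ends with an isomorphism of $\cO_Y$-modules plus an unexecuted plan.
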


\begin{proof}
As $\cU_{\tX}$ is right orthogonal to $\pi^*\Db(Y)$ by~\eqref{eq_finalstepdec}, 
the same argument as in Lemma~\ref{lemma-piGG} shows that 
\begin{equation*}
\Xi(\cU_{\tX}) 
= \pi_*(\cG^{\vee} \otimes \cU_{\tX})
 \cong \pi_*(\cO_{\tX}(H-h) \otimes \cU_{\tX}). 
\end{equation*} 
Since $\cU_{\tX} \cong \cU_{\tX}^\vee(-H)$, this can be written as 
\begin{equation*}
\Xi(\cU_{\tX}) \cong \cO_Y(-h) \otimes \pi_*(\cU_{\tX}^\vee).
\end{equation*} 
Pushing forward the exact sequence of Lemma~\ref{lemma-RUv} by $\pi$, we obtain an exact sequence 
\begin{equation*}
0 \to \cS_Y^\vee \to \pi_*(\cU_{\tX}^\vee) \to \pi_* i_{E*}\cO_E \to 0. 
\end{equation*}
Pushing forward the exact sequence 
\begin{equation*}
0 \to \cO_{\tX}(-E) \to \cO_{\tX} \to i_{E*}\cO_E \to 0 
\end{equation*} 
by $\pi$ and using $-E = h - H$ and\eqref{eq_pushforwh-H}, 
we obtain an exact sequence 
\begin{equation*}
0 \to \cO_Y \to \pi_* i_{E*}\cO_E \to \cO_Y(-h) \to 0. 
\end{equation*} 
This extension splits because $\rH^1(\cO_Y(h)) = 0$, 
and so the extension describing $\pi_*(\cU_{\tX}^\vee)$ also splits 
because of the vanishings  $\rH^1(\cS_Y^{\vee}) = \rH^1(\cS_Y^{\vee}(h)) = 0$, which follow from the exact sequence $0 \to \cO(-h) \to \cS^{\vee} \to \cS_Y^{\vee} \to 0$ on $\Gr(2,V_4)$. 
Thus, using $\cS_Y^{\vee}(-h) \cong \cS_Y$, we conclude there is an isomorphism 
\begin{equation*}
\Xi(\cU_{\tX}) \cong \cS_Y \oplus \cO_Y(-h) \oplus \cO_Y(-2h). 
\end{equation*} 
Together with~\eqref{Cl_1} this shows there is an isomorphism $\Xi(\cU_{\tX}) \cong \Cl_1$ of $\cO_Y$-modules. 
Tracing through our construction of this isomorphism shows that it respects the $\Cl_0$-module structures on each side.
\end{proof}

We define objects 
\begin{equation}
\label{Ra-Rb}
\cR_c = \Xi(\rR_{\cG}\cF_c[1]) \quad \text{and} \quad \cR_d = \Xi(\rR_{\cG}\cF_d[1]) 
\end{equation} 
in $\Db(Y, \Cl_0)$. 

\begin{lemma}
\label{lemma-Ra-Rb}
As sheaves of $\cO_Y$-modules, we have isomorphisms 
\begin{equation*}
\cR_c \cong \cO_Y^{\oplus 2} \oplus \cO_Y(-h)^{\oplus 2} \cong \cR_d. 
\end{equation*} 
\end{lemma}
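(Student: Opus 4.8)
The plan is to compute $\cR_a$ and then to observe that the computation of $\cR_b$ is word-for-word identical with the two rulings of the quadric $T$ interchanged. Recall from \eqref{Ra-Rb} that $\cR_a = \Xi(\rR_{\cG}\cF_a[1])$. Since $\Xi$ is a triangulated functor, I would apply it to the defining triangle of the right mutation
\begin{equation*}
\cF_a \to \Hom^{\bullet}(\cF_a, \cG)^{\vee} \otimes \cG \to \rR_{\cG}\cF_a[1] \to \cF_a[1],
\end{equation*}
and use $\Xi(\cG) = \pi_*(\cG^{\vee} \otimes \cG) = \pi_*\cHom(\cG, \cG) \cong \Cl_0$ from Lemma~\ref{lemma-piGG} to obtain a triangle
\begin{equation*}
\Xi(\cF_a) \to \Hom^{\bullet}(\cF_a, \cG)^{\vee} \otimes \Cl_0 \to \cR_a.
\end{equation*}
Thus the problem splits into computing the graded vector space $\Hom^{\bullet}(\cF_a, \cG)$ and the $\cO_Y$-module $\Xi(\cF_a)$, and then identifying the connecting map.

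First I would compute $\Hom^{\bullet}(\cF_a, \cG)$. Applying $\Hom^{\bullet}(-, \cG)$ to the defining sequence \eqref{eq_defFa} and using the semiorthogonality $\Hom^{\bullet}(\cO_{\tX}(-h), \cG) = 0$ visible in \eqref{eq_finalstepdec} (where $\cG$ precedes $\cO_{\tX}(-h)$), this reduces to $\Hom^{\bullet}(i_{E*}\cO_E(-a), \cG)[1]$. By Grothendieck duality along the divisor $i_E \colon E \hookrightarrow \tX$, namely $i_E^{!}\cG \cong i_E^{*}\cG \otimes \cO_E(E)[-1]$, this is the cohomology on $E$ of $i_E^{*}\cG \otimes \cO_E(E+a)$, up to shift. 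Restricting the sequence \eqref{G} for $\cG$ to $E$ and pushing the two resulting line bundles forward along the $\bP^1$-bundle $b_E \colon E \to T$, the key input is $\det \cN_{T/X} \cong \cO_T$ (from adjunction, using $K_X = -2H$ and $T \cong \bP^1 \times \bP^1$ with $H|_T$ the sum of the two rulings): this forces one of the two cohomology contributions to vanish and yields $\Hom^{\bullet}(\cF_a, \cG) \cong k^{2}$ in degree $0$. Hence the triangle above becomes $\Xi(\cF_a) \to \Cl_0^{\oplus 2} \to \cR_a$.

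Next I would compute $\Xi(\cF_a) = \pi_*(\cG^{\vee} \otimes \cF_a)$. Tensoring the dual sequence \eqref{eq_defGdual} by $\cF_a$ and pushing forward, the subsheaf $\cO_{\tX}(h)$ contributes $\pi_*(\cF_a(h)) = \cO_Y(h) \otimes \pi_*\cF_a = 0$, since $\cF_a \in \Phi(\Db(Y, \Cl_0))$ is right orthogonal to $\pi^*\Db(Y)$ and hence $\pi_*\cF_a = 0$; thus $\Xi(\cF_a) \cong \pi_*(\cF_a(H-h))$. Feeding the sequence \eqref{eq_defFa} into this and using $\pi_*\cO_{\tX}(H) \cong \cE^{\vee} \cong \cO_Y(h) \oplus \cS_Y^{\vee}$ (so $\pi_*\cO_{\tX}(H-2h) \cong \cO_Y(-h) \oplus \cS_Y(-h)$) together with the pushforward along the finite degree-$2$ map $q = \pi|_E \colon E \to Y$ from Lemma~\ref{lemma-conic-bundle} of the relevant line bundle on $E$, the computation should give $\Xi(\cF_a) \cong \cS_Y(-h)^{\oplus 2}$. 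This is forced on the level of K-theory: by \eqref{Cl_0} one has $2[\Cl_0] - [\cO_Y^{\oplus 2} \oplus \cO_Y(-h)^{\oplus 2}] = 2[\cS_Y(-h)]$.

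Finally, to conclude I would identify the map $\cS_Y(-h)^{\oplus 2} \cong \Xi(\cF_a) \to \Cl_0^{\oplus 2}$ with the inclusion of the spinor summand of $\Cl_0^{\oplus 2} \cong (\cO_Y \oplus \cO_Y(-h) \oplus \cS_Y(-h))^{\oplus 2}$; equivalently, I would check that the induced map onto the $\cS_Y(-h)^{\oplus 2}$-summand is an isomorphism. The cone is then a locally free extension of copies of $\cO_Y$ and $\cO_Y(-h)$, and every such extension splits because $\rH^1(\cO_Y) = \rH^1(\cO_Y(h)) = \rH^1(\cO_Y(-h)) = 0$, giving $\cR_a \cong \cO_Y^{\oplus 2} \oplus \cO_Y(-h)^{\oplus 2}$. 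Running the same argument with $a$ and $b$ interchanged, and noting that $\cO_T(a)$ and $\cO_T(b)$ have identical cohomology, produces the same module for $\cR_b$, hence $\cR_a \cong \cR_b$. The main obstacle is the pushforward along the double cover $q = \pi|_E$: determining $q_*$ of the relevant line bundles on $E$ and verifying that the connecting map hits the full spinor summand. This is where the geometry of the degree-$2$ cover $E \to Y$—as opposed to the more accessible $\bP^1$-bundle $b_E \colon E \to T$—genuinely enters, and is the step demanding the most care.
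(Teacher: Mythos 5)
Your setup is correct and overlaps with the paper's: the triangle $\Xi(\cF_a) \to \Hom^{\bullet}(\cF_a,\cG)^{\vee}\otimes\Cl_0 \to \cR_a$ is legitimate, $\Xi(\cG)\cong\Cl_0$ is Lemma~\ref{lemma-piGG}, and your computation $\Hom^{\bullet}(\cF_a,\cG)\cong k^{2}$ is essentially the first half of the paper's Lemma~\ref{lemma-cF'a} (the paper replaces $\cG$ by $\cO_{\tX}(h-H)$ via the semiorthogonal decomposition~\eqref{DbtX-Es} instead of restricting $\cG$ to $E$, but your variant works). The two remaining steps, however, are genuine gaps, not omitted routine details. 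First, $\Xi(\cF_a)\cong\cS_Y(-h)^{\oplus 2}$ is never proved: the K-theory identity you cite presupposes $[\cR_a]=[\cO_Y^{\oplus 2}\oplus\cO_Y(-h)^{\oplus 2}]$, i.e.\ the statement of the lemma, so it is circular; and in any case a K-theory class determines neither the sheaf nor even that $\Xi(\cF_a)\cong R\pi_*(\cF_a(H-h))$ is a sheaf rather than a two-term complex (for that you need surjectivity of $\pi_*\cO_{\tX}(H-2h)^{\oplus 2}\to\pi_*i_{E*}\cO_E(b-h)$). Carrying this step out honestly requires exactly the computation you defer to the very end, namely $\pi_*i_{E*}\cO_E(b-h)\cong\cO_Y(-h)^{\oplus 2}$ --- which is the heart of the paper's own proof, done there by Grothendieck duality for the finite degree-$2$ morphism $E\to Y$ (relative canonical class $h$) combined with~\eqref{piE-a} --- plus an identification of the kernel with the spinor summand, which is the same kind of unproved ``component is an isomorphism'' claim as your last step.

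Second, and decisively: the assertion that $\Xi(\cF_a)\to\Cl_0^{\oplus 2}$ hits the $\cS_Y(-h)^{\oplus 2}$-summand isomorphically is given with no argument, and it is not a detail --- it is equivalent to the lemma. Since $\Hom(\cS_Y(-h),\cS_Y(-h))=\bC$, the spinor component of your map is a $2\times 2$ scalar matrix $M$; because $\Hom(\cO_Y,\cS_Y(-h))=\Hom(\cO_Y(-h),\cS_Y(-h))=0$, the long exact sequence of the triangle identifies $\Hom_{\cO_Y}(\cR_a,\cS_Y(-h))$ with the kernel of composition with $M$, so $\cR_a\cong\cO_Y^{\oplus 2}\oplus\cO_Y(-h)^{\oplus 2}$ holds if and only if $M$ is invertible. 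Moreover, the projection $\Cl_0\to\cS_Y(-h)$ is only $\cO_Y$-linear, not $\Cl_0$-linear, so exceptionality or semiorthogonality of $\cR_a$ in $\Db(Y,\Cl_0)$ cannot be applied directly to control $M$, and the spaces $\Hom(\cS_Y(-h),\cO_Y)$ and $\Hom(\cS_Y(-h),\cO_Y(-h))$ are nonzero, so nothing formal forces the component structure you want. The paper sidesteps both problems by never applying $\Xi$ to this triangle: Lemma~\ref{lemma-cF'a} uses a $3\times 3$ octahedral-axiom diagram combining~\eqref{eq_defFa} and~\eqref{G} to identify $\cF'_a=\rR_{\cG}\cF_a[1]$ as a sheaf extension $0\to\cO_{\tX}(h-H)^{\oplus 2}\to\cF'_a\to i_{E*}\cO_E(-a)\to 0$ on $\tX$, and then $\cR_a\cong\pi_*(\cF'_a(H-h))$ is an extension of $\pi_*i_{E*}\cO_E(b-h)\cong\cO_Y(-h)^{\oplus 2}$ by $\cO_Y^{\oplus 2}$, which splits because $\rH^1(\cO_Y(h))=0$. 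In that route the only ambiguity is an extension class, killed by a cohomology vanishing; no component of any map ever needs to be identified. If you want to rescue your plan, the realistic fix is to first establish the sheaf-theoretic description of $\rR_{\cG}\cF_a[1]$ --- at which point you are running the paper's proof.
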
 

\begin{remark}
As sheaves of $\Cl_0$-modules, $\cR_c$ and $\cR_d$ are of course different, since they are orthogonal objects in $\Db(Y, \Cl_0)$. 
\end{remark} 

Before proving Lemma~\ref{lemma-Ra-Rb}, we make a preliminary computation. 

\begin{lemma}
\label{lemma-cF'a}
Set $\cF'_c = \rR_{\cG}\cF_c[1]$ and $\cF'_d = \rR_{\cG}\cF_d[1]$. 
Then $\cF'_c$ and $\cF'_d$ are sheaves, given by extensions 
\begin{align*}
0 \to \cO_{\tX}(h-H)^{\oplus 2} \to & \cF'_c \to i_{E*}\cO_E(-c) \to 0 , \\ 
0 \to \cO_{\tX}(h-H)^{\oplus 2} \to & \cF'_d \to i_{E*}\cO_E(-d) \to 0. 
\end{align*} 
\end{lemma}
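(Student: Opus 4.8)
I only discuss $\cF'_a$, the case of $\cF'_b$ being identical. The plan is to compute $\Hom^{\bullet}(\cF_a, \cG)$, read off the defining triangle of the right mutation $\rR_{\cG}\cF_a$, and then identify the resulting cone as a sheaf by comparing it, through an octahedron, with the defining sequence~\eqref{eq_defFa} of $\cF_a$. Two vanishings drive everything: $\pi_* \cG = 0$ and $\pi_* \cF_a = 0$. Both hold because $\cG$ and $\cF_a$ are objects of $\Phi(\Db(Y, \Cl_0))$ (Proposition~\ref{proposition-KX-Cl}), which by~\eqref{sod-conic} is right orthogonal to $\pi^* \Db(Y)$; thus $\Hom^{\bullet}(\pi^* F, \cG) = \Hom^{\bullet}(\pi^* F, \cF_a) = 0$ for all $F \in \Db(Y)$, and adjunction gives the claim. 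I will also use the relative dualizing sheaf $\omega_{\tX/Y} \cong \cO_{\tX}(2h - H)$, which follows at once from $K_{\tX} = -H - h$ in~\eqref{divisors-tX} and $\omega_Y \cong \cO_Y(-3h)$.

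Turning to the computation, the projection formula and $\pi_* \cG = 0$ give $\Hom^{\bullet}(\cO_{\tX}(-h), \cG) = \rH^{\bullet}(Y, \pi_* \cG \otimes \cO_Y(h)) = 0$, so applying $\Hom^{\bullet}(-, \cG)$ to~\eqref{eq_defFa} yields $\Hom^{\bullet}(\cF_a, \cG) \cong \Hom^{\bullet}(i_{E*}\cO_E(-a), \cG)[1]$. Next I apply $\Hom^{\bullet}(i_{E*}\cO_E(-a), -)$ to~\eqref{G}. The $\cO_{\tX}(-h)$-term contributes nothing: by Serre duality on $\tX$, using $\cO_{\tX}(H)|_E \cong b_E^*\cO_T(a + b)$, the group $\Hom^{\bullet}(i_{E*}\cO_E(-a), \cO_{\tX}(-h))$ is dual to $\rH^{\bullet}(T, \cO_T(-2a - b))$, which vanishes on $T \cong \bP^1 \times \bP^1$. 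The remaining term I evaluate by relative Serre duality over $Y$: since $\cO_{\tX}(h - H) \cong \omega_{\tX/Y} \otimes \pi^* \cO_Y(-h)$, one has $\Hom^{\bullet}(i_{E*}\cO_E(-a), \cO_{\tX}(h - H)) \cong \Hom^{\bullet}_Y(\pi_* i_{E*}\cO_E(-a), \cO_Y(-h))[-1]$, which by $\pi_* i_{E*}\cO_E(-a) \cong \cO_Y(-h)^{\oplus 2}$ from~\eqref{piE-a} equals $k^{\oplus 2}[-1]$. Hence $\Hom^{\bullet}(\cF_a, \cG) \cong k^{\oplus 2}$ concentrated in degree $0$, and the right-mutation triangle is $\rR_{\cG}\cF_a \to \cF_a \xrightarrow{\mathrm{ev}} \cG^{\oplus 2}$, where the two components of the evaluation $\mathrm{ev}$ form a basis of $\Hom(\cF_a, \cG)$. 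Therefore $\cF'_a = \rR_{\cG}\cF_a[1] \cong \Cone(\mathrm{ev})$.

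It remains to identify this cone. Writing $s \colon \cG \to \cO_{\tX}(-h)$ for the surjection in~\eqref{G} and $\iota \colon \cF_a \hookrightarrow \cO_{\tX}(-h)^{\oplus 2}$ for the inclusion in~\eqref{eq_defFa}, I apply the octahedral axiom to $\cF_a \xrightarrow{\mathrm{ev}} \cG^{\oplus 2} \xrightarrow{s^{\oplus 2}} \cO_{\tX}(-h)^{\oplus 2}$. Since $\Cone(s^{\oplus 2}) = \cO_{\tX}(h - H)^{\oplus 2}[1]$, the octahedron produces a triangle
\begin{equation*}
\cO_{\tX}(h - H)^{\oplus 2} \to \cF'_a \to \Cone(s^{\oplus 2} \circ \mathrm{ev}) \to \cO_{\tX}(h - H)^{\oplus 2}[1].
\end{equation*}
The crux is that $s^{\oplus 2} \circ \mathrm{ev}$ coincides with $\iota$ up to an automorphism of $\cO_{\tX}(-h)^{\oplus 2}$, so that $\Cone(s^{\oplus 2} \circ \mathrm{ev}) \cong \Cone(\iota) = i_{E*}\cO_E(-a)$. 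As the components of $\iota$ form a basis of $\Hom(\cF_a, \cO_{\tX}(-h))$, this reduces to showing that $s_* \colon \Hom(\cF_a, \cG) \to \Hom(\cF_a, \cO_{\tX}(-h))$ is an isomorphism, which by~\eqref{G} follows from $\Hom^{\bullet}(\cF_a, \cO_{\tX}(h - H)) = 0$; and the latter holds by relative Serre duality again, since $\Hom^{\bullet}(\cF_a, \cO_{\tX}(h - H)) \cong \Hom^{\bullet}_Y(\pi_* \cF_a, \cO_Y(-h))[-1] = 0$ as $\pi_* \cF_a = 0$. Granting this, the displayed triangle becomes the asserted short exact sequence $0 \to \cO_{\tX}(h - H)^{\oplus 2} \to \cF'_a \to i_{E*}\cO_E(-a) \to 0$, and in particular $\cF'_a$ is a sheaf. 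The main obstacle is precisely this last identification: the mutation delivers $\cF'_a$ only as an abstract cone, and the real work is recognizing the composite $s^{\oplus 2} \circ \mathrm{ev}$ as the original inclusion $\iota$, which I handle by the clean vanishing $\pi_* \cF_a = 0$ rather than by an explicit matrix computation.
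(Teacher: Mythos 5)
Your proof is correct and follows essentially the same route as the paper: compute $\Hom^\bullet(\cF_a, \cG) \cong k^2$ in degree $0$, form the right-mutation triangle, and apply the octahedral axiom to the composite $\cF_a \to \cG^{\oplus 2} \to \cO_{\tX}(-h)^{\oplus 2}$ so as to identify the third term of the resulting triangle with $i_{E*}\cO_E(-a)$. The differences are minor — you derive the needed $\Hom$-vanishings from $\pi_*\cF_a = \pi_*\cG = 0$ and (relative) Serre duality where the paper quotes the semiorthogonal decompositions \eqref{eq_finalstepdec} and \eqref{DbtX-Es} together with the adjunction for $i_E^!$ — and you usefully make explicit, via the vanishing $\Hom^\bullet(\cF_a, \cO_{\tX}(h-H)) = 0$, why $s^{\oplus 2} \circ \mathrm{ev}$ agrees with $\iota$ up to an automorphism of $\cO_{\tX}(-h)^{\oplus 2}$, a point the paper compresses into the assertion that the top-left square of its diagram commutes.
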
 

\begin{proof}
We have 
\begin{alignat*}{2}
\Hom^{\bullet}(\cF_c, \cG) & \cong \Hom^{\bullet}(i_{E*}\cO_E(-c), \cG)[1]  && \qquad \text{by \eqref{eq_defFa} and \eqref{eq_finalstepdec}}  \\ 
& \cong \Hom^{\bullet}(i_{E*}\cO_E(-c), \cO_{\tX}(h-H))[1] & & \qquad \text{by \eqref{G} and \eqref{DbtX-Es}} \\ 
& \cong \Hom^{\bullet}(\cO_E(-c), \cO_{E})  && \qquad \text{since $i_E^!(F) = F(H-h)[-1]$}\\ 
& \cong \rH^\bullet(\cO_T(c)) \cong k^{2}[0],  && 
\end{alignat*} 
where $i_E^!(-)=i_E^*(-) \otimes \omega_{E/\tX}[-1]$ is the right adjoint to the pushforward functor $i_{E*}$. It follows that there is a commutative diagram 
\begin{equation*}
\xymatrix{
\cF_c \ar[r] \ar[d] & \cG^{\oplus 2}\ar[d] \ar[r] & \cF'_c \ar[d] \\
\cO_{\tX}(-h)^{\oplus 2}\ar[r] \ar[d] & \cO_{\tX}(-h)^{\oplus 2} \ar[d] \ar[r] & 0 \ar[d] \\
i_{E*}\cO_E(-c) \ar[r] & \cO_{\tX}(h-H)^{\oplus 2}[1] \ar[r] & \cF'_c[1] ,
}    
\end{equation*}
whose rows and columns are exact triangles, where the first column is given by~\eqref{eq_defFa} and the second by a direct sum of two copies of~\eqref{G}; in fact, the top row is obtained from the bottom row by applying $\rL_{\cO_{\tX(-h)}}$.
Indeed, the above computation of $\Hom^{\bullet}(\cF_c, \cG)$ implies the top left square is commutative, and then we can extend by the octahedral axiom to the full diagram. 
In particular, we see that $\cF'_c$ is an extension as claimed. 
The same argument applies to $\cF'_d$. 
\end{proof}

\begin{proof}[Proof of Lemma~\ref{lemma-Ra-Rb}]
The same argument as in Lemma~\ref{lemma-piGG} shows that 
\begin{equation*}
\cR_c = \pi_*(\cG^{\vee} \otimes \cF'_c) \cong \pi_*(\cF'_c(H-h)) . 
\end{equation*} 
By Lemma~\ref{lemma-cF'a} we have an exact sequence 
\begin{equation*}
0 \to \cO_{\tX}^{\oplus 2} \to  \cF'_c(H-h) \to i_{E*}\cO_E(d-h) \to 0 . 
\end{equation*} 
We claim that 
\begin{equation*} 
\pi_*i_{E*}\cO_E(d-h) \cong \cO_Y(-h)^{\oplus 2} . 
\end{equation*} 
This will complete the proof for $\cR_c$, since then the pushforward of the above exact sequence by $\pi$ gives an exact sequence 
\begin{equation*}
0 \to \cO_{Y}^{\oplus 2} \to  \cR_c \to \cO_Y(-h)^{\oplus 2} \to 0, 
\end{equation*} 
which splits since $\rH^1(\cO_Y(h)) = 0$. 

To prove the claim, note that by~\eqref{piE-a} we have 
\begin{equation*}
\cHom(\pi_*i_{E*}\cO_E(-d) , \cO_Y) \cong \cO_Y(h)^{\oplus 2}. 
\end{equation*} 
On the other hand, it is easy to compute that the relative canonical class of the morphism $\pi \circ i_E \colon E \to Y$ is $h$, so by Grothendieck duality the left side can also be written as
\begin{equation*}
\cHom(\pi_*i_{E*}\cO_E(-d) , \cO_Y) \cong 
\pi_*i_{E*}\cHom(\cO_E(-d), \cO_Y(h)) \cong 
\pi_*i_{E*} \cO_E(d+h). 
\end{equation*} 
Thus $\pi_*i_{E*} \cO_E(d+h) \cong \cO_Y(h)^{\oplus 2}$, which is equivalent to our claim. 
This finishes the proof for $\cR_c$, and the same argument applies for $\cR_d$. 
\end{proof}

\begin{proof}[Proof of Theorem~\ref{theorem-KX-Cl}] 
Define  
\begin{equation} 
\label{Psi} 
\Psi = \Xi \circ \rL_{\cO_{\tX}(-h)} \circ b^* \circ \rL_{\cU_X} \colon \Ku(X) \to \Db(Y, \Cl_0) . 
\end{equation} 
Then by Proposition~\ref{proposition-KX-Cl} and Lemma~\ref{lemma-Xi-Phi}, $\Psi$ is fully faithful and there is a 
semiorthogonal decomposition 
\begin{equation*}
\Db(Y, \Cl_0) = \llangle \Psi(\Ku(X)), 
\Xi(\cU_{\tX}), \Xi(\cG), \Xi(\rR_{\cG}\cF_c[1]), \Xi(\rR_{\cG}\cF_d[1])  
\rrangle . 
\end{equation*} 
By Lemmas~\ref{lemma-piGG} and \ref{projU} the first two exceptional objects 
in this decomposition are $\Cl_1$ and~$\Cl_0$. 
By definition the last two are $\cR_c$ and $\cR_d$, which by Lemma~\ref{lemma-Ra-Rb} are locally free of rank~$4$ as $\cO_Y$-modules. 
\end{proof}


\section{A Bogomolov inequality for $\Cl_0$-modules on $Y$}
\label{section-BI}

Maintaining the notation of the previous section, 
the purpose of this section is to prove a Bogomolov inequality for $\Cl_0$-modules on $Y$. 
This will allow us to define weak stability conditions on $\Db(Y,\Cl_0)$ by tilting slope stability, which we need for our construction of stability conditions on $\Ku(X)$ in the next section. 

\subsection{The main theorem} 
For an object $E \in \Db(Y, \Cl_0)$ we write $\Forg(E)$ for the underlying complex of $\cO_Y$-modules, and set  
\begin{equation*}
\ch(E) = \ch(\Forg(E)) \in \CH^*(Y) \otimes \bQ. 
\end{equation*} 
In particular, we can define the rank $\rk(E) = \ch_0(E)$ of any object $E \in \Db(Y, \Cl_0)$ and the associated slope function 
\begin{equation*}
\mu_h(E) = \frac{\ch_1(E) \cdot h^2}{\rk(E)h^3}. 
\end{equation*}
This gives rise as usual to a well-behaved notion of slope stability for objects of $\Coh(Y, \Cl_0)$. 
By definition, an object $E \in \Coh(Y, \Cl_0)$ is torsion free if $\Forg(E)$ is. 
 
\begin{theorem}
\label{thm_BI}
Let $E \in \Coh(Y,\Cl_0)$ be a slope semistable torsion free sheaf. Then 
\begin{equation*}
\label{eq_BI}
\Delta_{\Cl_0}(E):= h \cdot \emph{ch}_1(E)^2-2 \emph{rk}(E) \left( h \cdot \emph{ch}_2(E) - \frac{1}{4}\emph{rk}(E) \right)   \geq 0.
\end{equation*}
\end{theorem}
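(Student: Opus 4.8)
The plan is to reduce the three-dimensional inequality to a two-dimensional one and then settle the surface case directly. Throughout I use the hyperplane class $h$ on $Y$; note that each of the three terms of $\Delta_{\Cl_0}(E)$ lies in $\CH^3(Y)$, i.e. is a number obtained by capping against $h$. The basic compatibility is that for a smooth hyperplane section $j \colon S = Y \cap H \hookrightarrow Y$ (a smooth quadric surface) and $F = j^*E$, the projection formula gives $\int_S \ch_1(F)^2 = h \cdot \ch_1(E)^2$ and $\int_S \ch_2(F) = h \cdot \ch_2(E)$, while $\rk(F) = \rk(E)$. Hence the natural surface discriminant $\Delta_{\Cl_0|_S}(F) := \int_S \ch_1(F)^2 - 2\rk(F)\bigl(\int_S \ch_2(F) - \tfrac14 \rk(F)\bigr)$ equals $\Delta_{\Cl_0}(E)$. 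It therefore suffices to prove the inequality on the surface $S$ for the restricted algebra $\Cl_0|_S$ (for all ranks), and to relate $\mu$-semistability of $E$ on $Y$ to that of $E|_S$ on $S$.

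For the surface case I would reduce to the classical Bogomolov inequality for the underlying $\cO_S$-module $\Forg(F)$. The point is that although $\Forg(F)$ need not be $\mu$-semistable when $F$ is, its instability is bounded by the algebra: any $\cO_S$-subsheaf $A \subset \Forg(F)$ generates a genuine $\Cl_0|_S$-submodule $\Cl_0|_S \cdot A$, and $\mu$-semistability of $F$ together with the surjection $\Cl_0|_S \otimes A \twoheadrightarrow \Cl_0|_S \cdot A$ bounds the length of the interval containing the slopes of the Harder--Narasimhan factors of $\Forg(F)$ in terms of the slopes of the summands of $\Cl_0$ in \eqref{Cl_0}. Writing $\Delta(\Forg F) = \int_S \ch_1^2 - 2\rk(F)\int_S \ch_2$ as the sum of the (nonnegative, by classical Bogomolov) discriminants of the HN factors plus cross terms, and estimating the cross terms from below via the Hodge index theorem on $S$ in terms of this bounded slope spread, one obtains $\Delta(\Forg F) \geq -c\,\rk(F)^2$ for an explicit constant $c$. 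Since $\Delta_{\Cl_0|_S}(F) = \Delta(\Forg F) + \tfrac12 \rk(F)^2$, the correction is tuned precisely so that $c \leq \tfrac12$, yielding $\Delta_{\Cl_0|_S}(F) \geq 0$. Computing $c$ uses the decomposition $\Cl_0 \cong \cO_Y \oplus \cO_Y(-h) \oplus \cS_Y(-h)$ and the degree of $\cS_Y|_S$.

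For the passage from $S$ to $Y$ I would argue by induction on $\rk(E)$, realizing by hand a restriction theorem in the $\Cl_0$-setting (off-the-shelf restriction theorems are not available for $\Cl_0$-modules). Fix a general $S \in |h|$. If $E|_S$ is $\mu$-semistable, the surface case and the compatibility above finish the proof. If $E|_S$ is not $\mu$-semistable, its maximal destabilizing $\Cl_0|_S$-submodule $G \subset E|_S$ satisfies $\mu_h(G) > \mu_h(E|_S)$, and $G$ and $E|_S/G$ have rank strictly smaller than $\rk(E)$. Using that $E$ is $\mu$-semistable on $Y$ to control the slope jump of $G$ as $S$ varies in $|h|$, combined with the Hodge index theorem and the induction hypothesis applied to the smaller-rank pieces, one produces a strictly positive lower bound for $\Delta_{\Cl_0}(E)$, so that $\Delta_{\Cl_0}(E) \geq 0$ in either case.

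The main obstacle is the unstable branch of the induction: one must make sense of the destabilizing $\Cl_0|_S$-submodule of $E|_S$ uniformly as $S$ moves in the linear system $|h|$ (a relative Harder--Narasimhan filtration in the category of $\Cl_0$-modules), show that it reflects a genuine constraint coming from $\mu$-semistability of $E$ on $Y$, and track the numerical estimate so that the Clifford correction is respected at every step. A secondary but essential technical point, feeding the surface case, is the exact computation of the slope-spread constant $c$ for $\Cl_0|_S$, on which the precise value $\tfrac14$ of the correction in $\Delta_{\Cl_0}$ depends.
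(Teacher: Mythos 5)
Your overall architecture --- prove the inequality on a general hyperplane section $\Sigma \in |h|$ (a quadric surface), then handle the possible failure of semistability under restriction by an induction on the rank in the style of Langer --- is the same as the paper's (Proposition~\ref{prop_BIsurfacecase} and Theorem~\ref{thm_restriction_rankr}, following \cite[\S 8]{BLMS} and \cite{Lang:ssposchar}). The genuine gap is in your surface case, which is where all the content lies. You propose to deduce the surface inequality from classical Bogomolov applied to the Harder--Narasimhan factors of $\Forg(F)$, bounding their slope spread via the summands of $\Cl_0$ and the cross terms via Hodge index, and you assert the resulting constant satisfies $c \leq \tfrac12$. This is quantitatively impossible. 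First, the inequality you need, $\Delta(\Forg F) \geq -\tfrac12 \rk(F)^2$, is \emph{sharp}: $F = \Cl_0$ is itself a slope stable $\Cl_0$-module (torsion free of minimal rank $4$, by Lemma~\ref{lemma_rankCl0modules}), and from $\Forg(\Cl_0) \cong \cO_\Sigma \oplus \cO_\Sigma(-h_1-h_2)\oplus\cO_\Sigma(-2h_1-h_2)\oplus\cO_\Sigma(-h_1-2h_2)$ one computes $\ch_1(\Cl_0)^2 = 32$ and $\ch_2(\Cl_0) = 5$, hence $\Delta(\Forg \Cl_0) = 32 - 40 = -8 = -\tfrac12\cdot 4^2$. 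Second, your method cannot reach this constant: the spread bound provable by the submodule-generation argument (using $\mu_{\min}(\Cl_0) = -\tfrac32$) is $\mu_{\max}(\Forg F)-\mu_{\min}(\Forg F)\leq 3$, and the estimate $\Delta(\Forg F) \geq -h^2 s^2 \rk(F)^2/2$ then gives $c = s^2 = 9$ (recall $h^2 = 2$ on $\Sigma$); even granting the optimal spread $s = \tfrac32$, realized by $\Cl_0$ itself, one only gets $c = \tfrac94$. Worse, no refinement of this purely numerical scheme can work: the data of four rank-one factors with $c_1$ proportional to $h$, slopes $\mu+\tfrac32,\mu,\mu,\mu-\tfrac32$, and $\Delta = 0$ satisfies every constraint you invoke (factor-wise Bogomolov, spread $\leq 3$, Hodge index) yet has $\Delta(\Forg) = -36 < -8$ in rank $4$; so your hypotheses simply do not imply your conclusion, whether or not such a module exists.

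What the paper does on $\Sigma$ is categorical rather than numerical, and this is essential for the sharp constant $\tfrac14$ in $\Delta_{\Cl_0}$: (i) every object of $\Db(\Sigma,\Cl_0)$ has rank divisible by $4$ (Lemma~\ref{lemma_rankCl0modules}); (ii) a Riemann--Roch computation in $\cN(\Sigma,\Cl_0)$ (Lemma~\ref{lemma:HRR}) gives $\tfrac14\Delta_{\Cl_0}(E) = \tfrac1{16}\rk(E)^2 - \chi_{\Cl_0}(E,E)$; (iii) stability together with the Serre functor $S_{\Cl_0} = (-)\otimes_{\Cl_0}\Cl_1[2]$ (Lemma~\ref{lemma-serre-Cl0}) forces $\chi_{\Cl_0}(E,E)\leq 2$ (Lemma~\ref{lemma_serrefunctfixch}); and (iv) the remaining borderline case $\rk(E)=4$, $\chi_{\Cl_0}(E,E)=2$ is excluded by a parity argument pairing against the exceptional object $\cR_a|_\Sigma$ coming from Theorem~\ref{theorem-KX-Cl} (Lemma~\ref{lemma_hopeistrue}). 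Steps (i) and (iv) use structural input --- the conic fibration and the exceptional objects --- that is invisible to slope and discriminant data alone, which is exactly why your constant cannot be ``tuned'' to $\tfrac12$. Your induction step from $\Sigma$ to $Y$ is in the same spirit as the paper's (pencils of hyperplane sections, relative Harder--Narasimhan filtrations, the blow-up of the base conic), and you rightly flag it as incomplete; but as written the proposal already fails at the surface case.
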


In order to prove the theorem, we first prove the corresponding inequality on a smooth hyperplane section of $Y$. Then we deduce the result on $Y$ by induction on the rank of $E$, arguing as in \cite[\S8]{BLMS} (see also \cite[\S3]{Lang:ssposchar}).

\subsection{The case of a hyperplane section}

Let $\Sigma \subset Y$ be a generic hyperplane section of $Y$, which is a quadric surface. 
Let $h_1$ and $h_2$ be the two generators of $\Pic(\Sigma)$ such that
\begin{equation*}
h= h_1+h_2.
\end{equation*} 
Consider the restriction of the conic fibration $\pi$ to $\Sigma$, sitting in the diagram
\begin{equation}
\vcenter{
\label{diagram-Z}
\xymatrix{
Z \ar@{^{(}->}[r]^{i_Z} \ar[d]_{\pi_{Z}}& \tX \ar[d]^\pi \\
\Sigma  \ar@{^{(}->}[r]^{i_\Sigma} & Y.
}    
}
\end{equation}
Note that by Bertini's theorem, $Z$ is smooth. 
The sheaves of even and odd parts of the Clifford algebra on $\Sigma$ associated to the conic fibration $\pi_Z$ 
are the restrictions to $\Sigma$ of the corresponding sheaves on $Y$; 
in order to simplify notation, we denote these restrictions by the same symbols $\Cl_0$ and $\Cl_1$. 
Note that the restriction to $\Sigma$ of the spinor bundle $\cS$ on $Y$ splits as 
$\cS_{\Sigma} \cong \cO_{\Sigma}(-h_1) \oplus \cO_{\Sigma}(-h_2)$.  
Thus by~\eqref{Cl_0} and~\eqref{Cl_1}, as sheaves of $\cO_{\Sigma}$-modules we have isomorphisms 
\begin{align}
\label{Cl_0-Sigma}
\Cl_0 & \cong \cO_\Sigma \oplus \cO_\Sigma(-h_1-h_2) \oplus \cO_\Sigma(-2h_1-h_2) \oplus \cO_\Sigma(-h_1-2h_2) ,   \\ 
\label{Cl_1-Sigma} 
\Cl_1 & \cong \cO_{\Sigma}(-h_1) \oplus \cO_{\Sigma}(-h_2) \oplus \cO_{\Sigma}(-h_1-h_2) \oplus \cO_{\Sigma}(-2h_1 - 2h_2). 
\end{align}

As in the case of $(Y, \Cl_0)$ discussed above, for an object $E \in \Db(\Sigma, \Cl_0)$ we define its Chern character 
as that of the underlying complex of $\cO_{\Sigma}$-modules $\Forg(E) \in \Db(\Sigma)$.  
The aim of this section is to prove the following Bogomolov inequality for 
$\Cl_0$-modules on $\Sigma$.

\begin{proposition}
\label{prop_BIsurfacecase}
Let $E \in \Coh(\Sigma, \Cl_0)$ be a slope semistable torsion free sheaf. 
Then 
\begin{equation*}
\Delta_{\Cl_0}(E):= \emph{ch}_1(E)^2-2 \emph{rk}(E) \left( \emph{ch}_2(E) - \frac{1}{4}\emph{rk}(E) \right) \geq 0.
\end{equation*} 
\end{proposition}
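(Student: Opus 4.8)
The plan is to prove the inequality by a Hirzebruch--Riemann--Roch computation for the algebra $\Cl_0$ combined with Serre duality, after reducing to the slope-stable case. First I would reduce to $E$ slope-stable: any slope-semistable torsion-free $E$ has a Jordan--H\"older filtration whose factors are slope-stable and torsion-free of slope $\mu_h(E)$, and for two such factors the difference of the normalized classes $\ch_1/\rk$ is orthogonal to $h$, hence has non-positive self-intersection by the Hodge index theorem on $\Sigma$. This makes $\Delta_{\Cl_0}(-)/\rk(-)$ superadditive in short exact sequences of equal slope, so it suffices to treat the slope-stable case.

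The key input is an exact formula relating $\Delta_{\Cl_0}$ to the Euler pairing $\chi_{\Cl_0}(E,F)=\sum_i(-1)^i\dim\Ext^i_{\Cl_0}(E,F)$. Every coherent $\Cl_0$-module admits a finite resolution by induced (projective) modules $\cG\otimes_{\cO_\Sigma}\Cl_0$, for which adjunction gives $\Ext^\bullet_{\Cl_0}(\cG\otimes\Cl_0,F)=\Ext^\bullet_{\cO_\Sigma}(\cG,\Forg(F))$. Summing Grothendieck--Riemann--Roch over such a resolution yields
\[
\chi_{\Cl_0}(E,F)=\int_\Sigma\frac{\ch(E)^\vee\,\ch(F)}{\ch(\Cl_0)^\vee}\,\td(\Sigma).
\]
Inserting $\ch(\Cl_0)=(4,-4h,5)$ (from the splitting \eqref{Cl_0-Sigma}) and $\td(\Sigma)$, a direct calculation produces the clean identity
\[
\Delta_{\Cl_0}(E)=\tfrac14\rk(E)^2-4\,\chi_{\Cl_0}(E,E).
\]
Since $\chi_{\Cl_0}(E,E)\in\bZ$ and $\ch_1(E)^2$ is even on $\Sigma$, this identity forces $\rk(E)$ to be divisible by $4$; in particular every nonzero torsion-free $\Cl_0$-module has $\rk(E)\geq 4$.

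It then remains to bound $\chi_{\Cl_0}(E,E)$ from above for slope-stable $E$. Stability gives $\Hom_{\Cl_0}(E,E)=k$. For the top Ext I would invoke Serre duality for the finite $\cO_\Sigma$-algebra $\Cl_0$, whose dualizing bimodule $\omega_{\Cl_0}=\cHom_{\cO_\Sigma}(\Cl_0,\omega_\Sigma)$ is isomorphic to $\Cl_1$ as a right $\Cl_0$-module (indeed as a bimodule, reflecting the Clifford structure and the splittings \eqref{Cl_0-Sigma}--\eqref{Cl_1-Sigma}). Thus the Serre functor is the parity swap $E\mapsto(E\otimes_{\Cl_0}\Cl_1)[2]$, so
\[
\Ext^2_{\Cl_0}(E,E)\cong\Hom_{\Cl_0}(E,\,E\otimes_{\Cl_0}\Cl_1)^\vee.
\]
As $\Cl_0$ and $\Cl_1$ have the same Chern character, $E\otimes_{\Cl_0}\Cl_1$ is again slope-stable of slope $\mu_h(E)$, so this space is one-dimensional if $E\cong E\otimes_{\Cl_0}\Cl_1$ and zero otherwise. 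Hence $\chi_{\Cl_0}(E,E)\leq 2$, and $\chi_{\Cl_0}(E,E)\leq 1$ unless $E$ is fixed by the parity swap. With the identity above and $\rk(E)\geq 4$, this already gives $\Delta_{\Cl_0}(E)\geq\tfrac14\rk(E)^2-8\geq 0$ when $\rk(E)\geq 8$, and $\Delta_{\Cl_0}(E)\geq\tfrac14\rk(E)^2-4\geq 0$ whenever $\Ext^2_{\Cl_0}(E,E)=0$.

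The remaining and main obstacle is the boundary case $\rk(E)=4$ with $E\cong E\otimes_{\Cl_0}\Cl_1$, where the crude bound only gives $\Delta_{\Cl_0}(E)\geq -4$. Here I would show that no slope-stable torsion-free $\Cl_0$-module of rank $4$ can be fixed by the parity swap: such an $E$ would acquire an action of the full Clifford algebra $\Cl=\Cl_0\oplus\Cl_1$ and thereby yield a (twisted) rank-$2$ spinor sheaf on $\Sigma$, which is obstructed by the nontriviality of the Brauer class of $\Cl_0$ --- equivalently, by the divisibility $4\mid\rk$ established above, which already shows $\Cl_0$ has no torsion-free modules of rank $2$. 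This forces $\Ext^2_{\Cl_0}(E,E)=0$ in rank $4$ as well, hence $\chi_{\Cl_0}(E,E)\leq 1$ and $\Delta_{\Cl_0}(E)\geq 0$. I expect this Brauer/spinor analysis in rank $4$ to be the technical heart of the proof; note that the bound is sharp, since $\Cl_0$ itself is a slope-stable rank-$4$ module with $\Delta_{\Cl_0}(\Cl_0)=0$.
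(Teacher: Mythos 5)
Most of your outline tracks the paper's proof closely: the Jordan--H\"older reduction, the Hirzebruch--Riemann--Roch identity $\Delta_{\Cl_0}(E)=\tfrac14\rk(E)^2-4\chi_{\Cl_0}(E,E)$ (Lemma~\ref{lemma:HRR}), and the bound $\chi_{\Cl_0}(E,E)\leq 2$ via the Serre functor $-\otimes_{\Cl_0}\Cl_1[2]$ and stability of $E\otimes_{\Cl_0}\Cl_1$ (Lemmas~\ref{lemma-serre-Cl0} and~\ref{lemma_serrefunctfixch}) are exactly the paper's steps. One place where you genuinely improve on the paper: you deduce $4\mid\rk(E)$ purely from integrality of $\chi_{\Cl_0}(E,E)$ together with integrality of $\ch_1,\ch_2$ on $\Sigma$ (since $\rk^2/16$ must lie in $\tfrac12\bZ$, and squares are $\equiv 0,1,4 \pmod 8$, this forces $4\mid\rk$). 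The paper instead proves this (Lemma~\ref{lemma_rankCl0modules}) by restricting to a conic fiber of $Z\to\Sigma$ and invoking the computation of $\Pic(Z)$ (Lemma~\ref{lemma-PicZ}); your argument is more elementary and self-contained.

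The genuine gap is in the boundary case $\rk(E)=4$ with $E\cong E\otimes_{\Cl_0}\Cl_1$, which you yourself flag as the technical heart. Your proposed Brauer/spinor argument does not work as stated: a parity-fixed $E$ becomes a module over the full Clifford algebra $\Cl=\Cl_0\oplus\Cl_1$, whose center is not $\cO_\Sigma$ but the discriminant double cover $\rho\colon\wtilde\Sigma\to\Sigma$ (here a double cover of the quadric branched in a curve of class $4h|_\Sigma$). The resulting ``rank-$2$ spinor sheaf'' therefore lives on $\wtilde\Sigma$, as a module over an order Morita-related to $\rho^*\Cl_0$ --- it is \emph{not} a rank-$2$ $\Cl_0$-module on $\Sigma$. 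So the divisibility $4\mid\rk$ on $\Sigma$ is not ``equivalent'' to the obstruction you need: pullback along a double cover can kill a $2$-torsion Brauer class, so nontriviality of $[\Cl_0]$ downstairs says nothing about nontriviality upstairs, and establishing the latter is a genuinely harder question (it is essentially the question of whether the relevant twisted K3 category is geometric). Worse, your statement is stronger than what is needed: to conclude you only must exclude stable rank-$4$ objects with $\chi_{\Cl_0}(E,E)=2$, i.e.\ parity-fixed \emph{and} rigid, and stable parity-fixed rank-$4$ modules with $\Ext^1\neq 0$ may well exist. The paper's Lemma~\ref{lemma_hopeistrue} avoids all of this with an integrality trick: writing $\ch(E)=4+b_1h_1+b_2h_2+ch_1h_2$, the equality $\chi_{\Cl_0}(E,E)=2$ forces $b_1b_2\equiv 2\pmod 4$, so $b_1+b_2$ is odd; then pairing against the restriction of the exceptional object $\cR_a$ of Theorem~\ref{theorem-KX-Cl} (which has rank $4$ and $\ch_1=-2h$ by Lemma~\ref{lemma-Ra-Rb}) gives $\chi_{\Cl_0}(E,\cR_a|_\Sigma)=-1+c+c'+\tfrac12(b_1+b_2)\notin\bZ$, a contradiction. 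Note that this uses the global GM geometry (the existence of $\cR_a$) as the extra input, precisely where your sketch appeals to an unproven Brauer-class statement; to complete your proof you would need either this object or an independent proof that the Brauer class survives pullback to $\wtilde\Sigma$.
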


We give the proof at the end of this section, after some preliminary results.  
First we discuss the structure of the derived category of the conic fibration $\pi_Z \colon Z \to \Sigma$, which is parallel to that of $\pi \colon \tX \to Y$. 

\begin{lemma}
\label{lemma-DbZ}
\begin{enumerate}
\item \label{PhiSigma}
There is a fully faithful functor $\Phi_{\Sigma} \colon \Db(\Sigma, \Cl_0) \to \Db(Z)$ such that there is a semiorthogonal decomposition
\begin{equation*}
\Db(Z) = \llangle \Phi_{\Sigma}(\Db(\Sigma, \Cl_0)) , \pi_Z^*\Db(\Sigma) \rrangle. 
\end{equation*} 

\item \label{XiSigma}
Let $\cG_Z$ be the restriction to $Z$ of the vector bundle $\cG$ on $\tX$ defined in~\eqref{lem_defG}. Then there is an isomorphism of $\cO_\Sigma$-algebras $\pi_{Z*}\cHom(\cG_Z, \cG_Z) \simeq \Cl_0$ and the resulting functor 
\begin{equation*}
\Xi_{\Sigma} = \pi_{Z*}(\cG_Z^{\vee} \otimes - ) \colon \Db(Z) \to \Db(\Sigma, \Cl_0) 
\end{equation*} 
satisfies $\Xi_\Sigma \circ \Phi_\Sigma \simeq (-) \otimes \cO_{\Sigma}(-h)$. 
In particular, $\Xi_{\Sigma} \colon \Db(Z) \to \Db(\Sigma, \Cl_0)$ restricts to an equivalence 
$\Phi_{\Sigma}(\Db(\Sigma, \Cl_0)) \simeq \Db(\Sigma, \Cl_0)$. 
\end{enumerate}
\end{lemma}

\begin{proof}
\eqref{PhiSigma} holds by \cite{kuznetsov08quadrics}.
Base change gives an isomorphism $\pi_{Z*}\cHom(\cG_Z, \cG_Z) \cong i_\Sigma^* \pi_*\cHom(\cG, \cG)$ of $\cO_\Sigma$-algebras, so Lemma~\ref{lemma-piGG} then gives the first claim in~\eqref{XiSigma}. 
Then the isomorphism of functors $\Xi_\Sigma \circ \Phi_\Sigma \simeq (-) \otimes \cO_{\Sigma}(-h)$ follows as in Lemma~\ref{lemma-Xi-Phi}; 
alternatively, this is a formal consequence of 
Lemma~\ref{lemma-Xi-Phi} and the fact that 
$\Xi_\Sigma$ and $\Phi_\Sigma$ are the base changes of the $\Db(Y, \Cl_0)$-linear functors $\Xi$ and $\Phi$ along $(\Sigma, \Cl_0) \to (Y, \Cl_0)$.  
\end{proof} 

\begin{remark}
Note that $Z$ is a resolution of singularities (in fact the blowup in $T$) of a nodal GM threefold $X' \subset X$ containing $T$. 
Define $\wtilde{\Ku}(X') \subset \Db(Z)$ by the semiorthogonal decomposition 
\begin{equation*}
\Db(Z) = \llangle \wtilde{\Ku}(X'), \cO_Z, \cUv_Z \rrangle . 
\end{equation*} 
It would be interesting to study $\wtilde{\Ku}(X')$ as a categorical resolution of singularities of $\Ku(X')$ (defined by the same semiorthogonal decomposition as in the smooth case), in the spirit of~\cite{Kuz-res}. 
\end{remark}

We consider the Euler form on the Grothendieck group of $\Db(\Sigma, \Cl_0)$, defined by 
\begin{equation*}
\chi_{\Cl_0}(E, F) := \sum_{i} (-1)^i \dim \Hom^i_{\Cl_0}(E, F)  
\end{equation*} 
for $E, F \in \Db(\Sigma, \Cl_0)$, where $\Hom^i_{\Cl_0}(-,-)$ denotes the degree $i$ morphism space in 
the category $\Db(\Sigma, \Cl_0)$. 
Let $\cN(\Sigma,\Cl_0)$ be the numerical Grothendieck group of $\Db(\Sigma,\Cl_0)$, i.e. the quotient of the Grothendieck group of $\Db(\Sigma, \Cl_0)$ by the kernel of the Euler form $\chi_{\Cl_0}$. 

If $E \in \Db(\Sigma, \Cl_0)$ and $F \in \Db(\Sigma)$, then 
we write $E \otimes F$ for the usual tensor product $E \otimes_{\cO_Y} F$, 
which carries a natural $\Cl_0$-action  from the first factor, 
and hence can be regarded as an object of $\Db(\Sigma, \Cl_0)$. 
In case $F = \cO(D)$ for a divisor $D$, we write as usual $E(D) = E \otimes \cO(D)$. 

\begin{lemma}
\label{lemma_basisnGgroup}
The classes of the objects 
\begin{equation}
\label{basis_nGgroup}    
\Cl_0, \, \Cl_0(-h_1), \, \Cl_0(-h_2), \, \Cl_0(-h), 
\end{equation}
form a $\bQ$-basis for $\cN(\Sigma,\Cl_0) \otimes \bQ$.  
\end{lemma}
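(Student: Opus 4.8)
The plan is to prove the statement in two halves: that the four classes span $\cN(\Sigma,\Cl_0)\otimes\bQ$, and that they are linearly independent. Spanning will follow by showing that the objects in~\eqref{basis_nGgroup} generate $\Db(\Sigma,\Cl_0)$ as a triangulated category. For this I would use the free-module functor $\mathrm{Free} = (-)\otimes_{\cO_\Sigma}\Cl_0 \colon \Db(\Sigma)\to\Db(\Sigma,\Cl_0)$, which is left adjoint to $\Forg$ and sends $\cO(D)$ to $\Cl_0(D)$. Since $\Cl_0$ is locally free of finite rank over $\cO_\Sigma$, for every right module $M$ the action map $\Forg(M)\otimes_{\cO_\Sigma}\Cl_0 = \mathrm{Free}(\Forg(M))\to M$ is surjective, and the standard (bar) argument then shows the essential image of $\mathrm{Free}$ generates $\Db(\Sigma,\Cl_0)$. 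As $\Sigma\cong\bP^1\times\bP^1$, twisting the standard Beilinson collection by $\cO(-h)$ gives $\Db(\Sigma)=\llangle \cO(-h),\cO(-h_2),\cO(-h_1),\cO\rrangle$; applying $\mathrm{Free}$ shows that the objects in~\eqref{basis_nGgroup} generate $\Db(\Sigma,\Cl_0)$, hence their classes span $\cN(\Sigma,\Cl_0)\otimes\bQ$.

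For independence I would compute the Gram matrix of the Euler form $\chi_{\Cl_0}$ on these four classes and show its determinant is nonzero. The point is the elementary linear algebra: writing $c_i = [\Cl_0(D_i)]$ with $D_1=0$, $D_2=-h_1$, $D_3=-h_2$, $D_4=-h$, if $\sum_i a_i c_i = 0$ in $\cN(\Sigma,\Cl_0)\otimes\bQ$ then pairing on the right with each $c_j$ gives $\sum_i a_i\,\chi_{\Cl_0}(c_i,c_j)=0$, so a nondegenerate Gram matrix forces all $a_i=0$. The key reduction is the adjunction isomorphism
\begin{equation*}
\Hom^\bullet_{\Cl_0}(\Cl_0(D_i),\Cl_0(D_j)) \cong \Hom^\bullet_{\cO_\Sigma}(\cO(D_i),\Forg(\Cl_0)(D_j)) \cong \rH^\bullet(\Sigma,\Forg(\Cl_0)(D_j-D_i)),
\end{equation*}
which yields $\chi_{\Cl_0}(\Cl_0(D_i),\Cl_0(D_j)) = \chi(\Sigma,\Forg(\Cl_0)(D_j-D_i))$, reducing every entry to an ordinary Euler characteristic on $\Sigma$.

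Now $\Forg(\Cl_0)$ is the explicit sum of line bundles in~\eqref{Cl_0-Sigma}, and on $\Sigma\cong\bP^1\times\bP^1$ one has $\chi(\cO(ah_1+bh_2))=(a+1)(b+1)$. Plugging in turns each of the sixteen entries into a finite sum of such products; for instance $\chi_{\Cl_0}(\Cl_0,\Cl_0)=\chi(\Sigma,\Forg(\Cl_0))=1$. Carrying this out produces an explicit integer matrix, and the only remaining task is to verify that its determinant does not vanish. Together with the spanning statement from the first paragraph, this shows the four classes form a $\bQ$-basis of $\cN(\Sigma,\Cl_0)\otimes\bQ$.

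The conceptual core is the generation step, which gives spanning; everything after it is a bounded, mechanical computation. The point needing the most care is the reduction of $\chi_{\Cl_0}$ to an ordinary Euler characteristic on $\Sigma$ via the free–forget adjunction, together with correct bookkeeping of the sixteen Euler characteristics coming from the four summands of $\Forg(\Cl_0)$; once the explicit matrix is in hand, the nonvanishing of its determinant is immediate.
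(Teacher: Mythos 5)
Your linear-independence argument is correct and is in fact the same computation as the paper's: the free--forget adjunction reduces every pairing to $\chi(\Sigma,\Forg(\Cl_0)(D_j-D_i))$, and the resulting Gram matrix (determinant $256$) is exactly the one appearing in the paper's proof. The genuine gap is in the spanning half. The bar-type argument you invoke produces only \emph{infinite} resolutions of a coherent $\Cl_0$-module by objects in the image of $\mathrm{Free}$; truncating leaves a syzygy term, and the standard machinery for disposing of it (compact generation \`a la Neeman, plus smoothness of the category) yields generation of $\Db(\Sigma,\Cl_0)$ only as a \emph{thick} subcategory, i.e.\ up to direct summands. Thick generation does \emph{not} imply that the classes of the generators span the numerical Grothendieck group rationally. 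Counterexample: in $\Db(k\times k)$, whose objects are pairs of complexes of vector spaces, the single object $(k,k)$ generates up to summands --- the cone of the morphism $(\id,0)\colon (k,k)\to(k,k)$ is $(0,k\oplus k[1])$, whose summand $(0,k)$ then generates the second factor --- yet its class spans only the diagonal $\bZ\cdot(1,1)\subset\rK_0=\bZ^2$, which is not rationally all of $\rK_0$ even though the Euler pairing is nondegenerate. Worse, by Thomason's classification of dense triangulated subcategories, generation \emph{without} summands by the objects $\mathrm{Free}(V)$ is equivalent to the statement that their classes generate $\rK_0(\Sigma,\Cl_0)$ integrally; so making your first paragraph rigorous amounts to proving (a strengthening of) the very spanning statement you are after, and the argument is essentially circular at this point.

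This is exactly why the paper proves spanning by a rank count instead: by Lemma \ref{lemma-DbZ} there is a semiorthogonal decomposition $\Db(Z)=\llangle\Phi_\Sigma(\Db(\Sigma,\Cl_0)),\pi_Z^*\Db(\Sigma)\rrangle$, whence $\cN(Z)=\cN(\Sigma,\Cl_0)\oplus\cN(\Sigma)$ by additivity of numerical Grothendieck groups; Hirzebruch--Riemann--Roch gives that $\cN(\Sigma)$ has rank $4$ and, using $\Pic(Z)\cong\bZ^{\oplus 3}$ (Lemma \ref{lemma-PicZ}), that $\cN(Z)$ has rank $8$, so $\cN(\Sigma,\Cl_0)\otimes\bQ$ has dimension exactly $4$ and your four independent classes automatically form a basis. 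To repair your proof you should replace the generation step by this rank bound (or by some other a priori bound on $\dim_\bQ\cN(\Sigma,\Cl_0)\otimes\bQ$); your Gram-matrix computation can then be kept verbatim.
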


\begin{proof}
The numerical Grothendieck group is additive under semiorthogonal decompositions, so by Lemma~\ref{lemma-DbZ}\eqref{PhiSigma} we have
\begin{equation*}
\cN(Z) = \cN(\Sigma, \Cl_0) \oplus \cN(\Sigma), 
\end{equation*}
where $\cN(Z)$ and $\cN(\Sigma)$ denote the numerical Grothendieck groups of $\Db(Z)$ and $\Db(\Sigma)$. 
By Hirzebruch--Riemann--Roch, the numerical Grothendieck group of a smooth projective variety is rationally isomorphic to its numerical Chow ring, so $\cN(\Sigma)$ has rank $4$. 
Similarly, since by Lemma~\ref{lemma-PicZ} below the Picard rank of $Z$ is $3$, it follows that 
$\cN(Z)$ has rank $8$. 
Thus from the above direct sum decomposition we conclude 
that $\cN(\Sigma,\Cl_0)$ has rank $4$.  

Therefore, to prove the lemma it suffices to show that objects \eqref{basis_nGgroup} give linearly independent elements of $\cN(\Sigma, \Cl_0)$. 
For this, we compute the intersection matrix for these elements 
with respect to $\chi_{\Cl_0}$. 
Since these are 
exceptional objects of $\Db(\Sigma, \Cl_0)$, their self-pairing equals $1$. 
The other pairings can easily be computed using that the functor 
$\Forg: \Db(\Sigma,\Cl_0) \to \Db(\Sigma)$ 
is right adjoint to the functor  
$- \otimes \Cl_0: \Db(\Sigma) \to \Db(\Sigma, \Cl_0)$;  
for instance, using~\eqref{Cl_1-Sigma} we find 
\begin{align*}
\chi_{\Cl_0}(\Cl_0(-h_1),\Cl_0(-h_2)) & = \chi_{\Cl_0}(\Cl_0,\Cl_0(h_1-h_2)) \\
&=  \chi(\cO_\Sigma, \cO_\Sigma(h_1-h_2) \oplus \cO_\Sigma(-2h_2) \oplus \cO_\Sigma(-h_1-2h_2) \oplus \cO_\Sigma(-3h_2)) \\
& =  -3 . 
\end{align*}
In this way, we find that the matrix representing $\chi_{\Cl_0}$ on the collection \eqref{basis_nGgroup} is 
\begin{equation*}
\begin{pmatrix}
1 & 1 & 1 & 5 \\
1 & 1 & -3 & 1 \\
1 & -3 & 1 & 1 \\
5 & 1 & 1 & 1
\end{pmatrix}.
\end{equation*}
This matrix has determinant $256$, which implies the 
linear independence of the classes of the objects \eqref{basis_nGgroup} 
in $\cN(\Sigma, \Cl_0)$. 
\end{proof}

The rank of the Picard group of $Z$ was used in the above proof; for later use in this section, we prove the following more precise statement. 

\begin{lemma}
\label{lemma-PicZ}
There is an isomorphism $\Pic(Z) \cong \bZ h_1 \oplus \bZ h_2  \oplus \bZ H$. 
\end{lemma}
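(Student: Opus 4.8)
The plan is to compute $\Pic(Z)$ through the conic bundle structure $\pi_Z \colon Z \to \Sigma$ and to identify the three evident classes $h_1$, $h_2$ (pulled back from $\Sigma$) and $H$ (the relative hyperplane class) as free generators. First I would record the ambient geometry. Restricting the embedding $\tX \hookrightarrow \bP_Y(\cE)$ of Lemma~\ref{lemma-conic-bundle} over $\Sigma \subset Y$ realizes $Z$ as a divisor cut out by a section of $\cO(2H)$ in the $\bP^2$-bundle $\bP_\Sigma(\cE_\Sigma)$, so that $\pi_Z \colon Z \to \Sigma$ is a conic fibration over $\Sigma \cong \bP^1 \times \bP^1$ with $\Pic(\Sigma) = \bZ h_1 \oplus \bZ h_2$. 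Here $H|_Z$ is the relative $\cO(1)$ of this conic fibration by Lemma~\ref{lemma-conic-bundle}\eqref{O1E}, and $h|_Z = \pi_Z^*(h_1 + h_2)$. By Bertini the discriminant $\Delta_Z \subset \Sigma$ is a smooth curve in $|\cO_\Sigma(4h_1 + 4h_2)|$ (the restriction of the $|\cO_Y(4h)|$ discriminant of Lemma~\ref{lemma-conic-bundle}\eqref{disc-of-cf}), so every degenerate fibre of $\pi_Z$ is a reduced pair of lines meeting at a point.

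Next I would invoke the description of the Picard group of a standard conic bundle. Over $\Sigma \setminus \Delta_Z$ the map $\pi_Z$ is a $\bP^1$-bundle, whose Picard group is $\pi_Z^*\Pic(\Sigma) \oplus \bZ H$; the only way the degenerate fibres over $\Delta_Z$ can contribute a class outside this subgroup is if the two rulings of the singular conics can be globally distinguished over a component of $\Delta_Z$, that is, if the natural étale double cover $\widetilde{\Delta}_Z \to \Delta_Z$ parametrizing the two lines splits over that component. Thus the lemma reduces to showing that $\widetilde{\Delta}_Z$ is connected. It is worth recording alongside this that $\rH^1(Z,\cO_Z) = \rH^2(Z,\cO_Z) = 0$, since $\pi_{Z*}\cO_Z = \cO_\Sigma$ and $\mathrm{R}^{>0}\pi_{Z*}\cO_Z = 0$ for a conic fibration while $\Sigma$ is rational; this pins down $\Pic(Z) \cong \rH^2(Z,\bZ)$ as a torsion free group and rules out transcendental or torsion contributions, so that the conic bundle description indeed computes $\Pic(Z)$ on the nose.

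The heart of the argument, and the step I expect to be the main obstacle, is the connectedness of $\widetilde{\Delta}_Z$. I would deduce it from the corresponding statement one dimension up. Since $\tX$ is the blowup of a GM fourfold $X$ of Picard rank one in the smooth surface $T$, we have $\Pic(\tX) = \bZ H \oplus \bZ E = \bZ H \oplus \bZ h$ using $h = H - E$ from Lemma~\ref{lemma-conic-bundle}; comparing this rank $2$ group with the conic bundle $\pi \colon \tX \to Y$ over the Picard rank one quadric $Y$ (so $\pi^*\Pic(Y) \oplus \bZ H = \bZ h \oplus \bZ H$) shows that $\pi$ has no extra discriminant classes, i.e.\ the double cover $\widetilde{\Delta} \to \Delta$ of the surface discriminant $\Delta \subset Y$ has no split component; for $X$ generic $\Delta$ is moreover irreducible, so $\widetilde{\Delta}$ is connected. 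For generic $\Sigma$ the curve $\Delta_Z = \Delta \cap \Sigma$ is a sufficiently positive section of $\Delta$, and a Lefschetz-type connectedness theorem (surjectivity of $\pi_1(\Delta_Z) \to \pi_1(\Delta)$ for a general ample section, in its étale form to cover positive characteristic) forces the pullback $\widetilde{\Delta}_Z = \widetilde{\Delta} \times_\Delta \Delta_Z$ to remain connected.

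Granting this connectedness, the conic bundle description yields $\Pic(Z) = \pi_Z^*\Pic(\Sigma) \oplus \bZ H = \bZ h_1 \oplus \bZ h_2 \oplus \bZ H$, which is exactly the claim. I note that an alternative route avoids the double cover entirely: one checks that $\cO_{\bP_\Sigma(\cE_\Sigma)}(H)$ is base point free (as $\cE_\Sigma^\vee \cong \cO_\Sigma(h_1) \oplus \cO_\Sigma(h_2) \oplus \cO_\Sigma(h_1+h_2)$ is globally generated) and big (a Segre class computation gives $\int c_1(\cO(H))^4 = 5 > 0$), and then applies a Grothendieck--Lefschetz theorem for big base point free divisors on the smooth fourfold $\bP_\Sigma(\cE_\Sigma)$ to transport $\Pic(\bP_\Sigma(\cE_\Sigma)) = \bZ h_1 \oplus \bZ h_2 \oplus \bZ H$ to $Z$; the difficulty there is migrated into verifying that the specific member $Z$ is general enough to escape the Noether--Lefschetz locus, which again comes down to a genericity input on $X$.
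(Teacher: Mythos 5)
The first half of your proposal --- reducing everything to the fact that the fourfold conic bundle $\pi \colon \tX \to Y$ has relative Picard rank one (because $\Pic(\tX) = \bZ H \oplus \bZ E$ by the blowup description, while $\Pic(Y) = \bZ h$), and then transferring this to $Z$ by a Bertini/Lefschetz connectedness argument using the genericity of $\Sigma$ --- is correct, and it is essentially the argument the paper gives for the rank statement; your appeal to genericity of $X$ is unneeded, since the connectedness argument runs component by component of the discriminant. The genuine gap is in the integral statement. Your structural claim that over $\Sigma \setminus \Delta_Z$ the fibration is a $\bP^1$-bundle with Picard group $\pi_Z^*\Pic(\Sigma) \oplus \bZ H$, so that only the degenerate fibers could contribute further classes, is not correct: the fibers of $\pi_Z$ are conics in the $\bP^2$-fibers of $\bP_\Sigma(\cE_\Sigma)$, so $H$ has degree $2$ on every fiber, and if $\pi_Z$ were Zariski-locally trivial over $\Sigma \setminus \Delta_Z$ then its Picard group there would contain a class of relative degree $1$ and hence be strictly larger than $\pi_Z^*\Pic(\Sigma \setminus \Delta_Z) \oplus \bZ H$ (of index $2$). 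Non-splitness of the discriminant double covers controls only the vertical divisors, i.e.\ those supported over $\Delta_Z$; it does not by itself rule out a horizontal divisor class of odd fiber degree, equivalently a rational section of $\pi_Z$. If such a class existed, then $h_1, h_2, H$ would generate only an index-two sublattice of $\Pic(Z)$ and the lemma as stated would fail. This is exactly the content that matters downstream: in Lemma~\ref{lemma_rankCl0modules} the paper uses that every integral divisor on $Z$ has even intersection with a fiber conic, which is precisely the assertion your argument leaves open.

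To close the gap one must exclude a class of odd fiber degree. The paper does this by intersection theory: a line component $\ell$ of a degenerate fiber is an effective curve of numerical class $\frac{1}{2}h_1h_2$, so writing an arbitrary $D \in \Pic(Z)$ as $D = ah_1 + bh_2 + cH$ with $a,b,c \in \bQ$ (legitimate once the rank statement is known) gives $D \cdot \ell = c \in \bZ$, whence every integral divisor has even fiber degree $2c$; similarly, sections of $\pi_Z$ over lines of class $h_1$ and $h_2$ in $\Sigma$ (which exist by Tsen's theorem) represent $\frac{1}{2}h_1 H$ and $\frac{1}{2}h_2 H$ up to fiber classes and force $a, b \in \bZ$. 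Alternatively, staying within your framework, the non-split residue of the quaternion class of the generic fiber along a component of $\Delta_Z$ shows that this class is nonzero in $\Br(k(\Sigma))$, so the generic fiber is a conic with no rational point, its Picard group is generated by $H|_{Z_\eta}$, and combining this with the irreducibility of the vertical preimages yields the integral decomposition. One of these supplements is needed; without it your argument proves the lemma only after tensoring with $\bQ$. (Your alternative Grothendieck--Lefschetz route would, if applicable, give integrality, since $H$ does generate the relative Picard group of the ambient $\bP^2$-bundle; but as you acknowledge, $Z$ is a fixed, not general, member of $|2H|$ once $\Sigma$ is chosen, so that route is also incomplete.)
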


\begin{proof}
First we claim that the Picard rank of $Z$ is indeed $3$. 
Equivalently, the relative Picard rank of $\pi_Z \colon Z \to \Sigma$ is $1$, or equivalently still, 
every irreducible divisor $D$ in $\Sigma$ has irreducible preimage in $Z$. 
If $D$ is not contained in the discriminant divisor of $\pi_Z$, then $\pi_{Z}^{-1}(D) \to D$ is a flat morphism with irreducible generic fiber, hence $\pi_Z^{-1}(D)$ is irreducible. 
It remains to show that the same holds if $D$ is a component of the discriminant divisor $D(\pi_Z) \subset \Sigma$ of $\pi_Z$. 
Note that $D(\pi_Z) = D(\pi) \cap \Sigma$ where $D(\pi) \subset Y$ is the discriminant divisor of $\pi \colon \tX \to Y$. 
On the other hand, it follows from the description of $\tX$ as the blowup of $X$ that 
$\pi$ has relative Picard rank $1$, so in particular every component of $D(\pi)$ has irreducible preimage in $\tX$. 
By Bertini's theorem and the genericity of $\Sigma$, the same statement for $D(\pi_Z)$ follows. 

It follows from the previous paragraph that $\Pic(Z) \otimes \bQ$ is spanned by $h_1, h_2, H$. 
To show that this holds integrally, let $D \in \Pic(Z)$ and write 
\begin{equation*}
D = a h_1 + b h_2 + c H 
\end{equation*} 
for rational numbers $a, b, c \in \bQ$. 
Note that 
\begin{equation*}
\frac{1}{2} h_1h_2 , \quad \frac{1}{2} h_1 H, \quad  \frac{1}{2} h_2 H 
\end{equation*} 
are the classes of curves on $Z$: the first is represented by an irreducible component of a degenerate fiber of $\pi_Z$ that is a union of two lines; 
the second is represented by a section of the restriction of $\pi_Z$ to a line of class $h_1$ in $\Sigma$; and the third is similarly represented by a section of the restriction to a line of class $h_2$. 
Therefore, the following intersection numbers are integers: 
\begin{align*}
\frac{1}{2} h_1h_2 \cdot D & = c \\ 
\frac{1}{2} h_1 H \cdot D & = b + \frac{1}{2} h_1 H^2 c \\ 
\frac{1}{2} h_2 H \cdot D & = a + \frac{1}{2} h_2 H^2c. 
\end{align*} 
The coefficients of $c$ in the second two equations are similarly integers (in fact both equal $2$), so we conclude that $a,b,c$ are integers. 
\end{proof}

Next we prove a Hirzebruch--Riemann--Roch formula for $\Cl_0$-modules on $\Sigma$, which gives a topological formula for the Euler form. 

\begin{lemma}
\label{lemma:HRR}
For $E, F \in \Db(\Sigma,\Cl_0)$ we have
\begin{equation*}
\chi_{\Cl_0}(E,F)=\int_{\Sigma} \emph{ch}(E)^\vee \emph{ch}(F) \cdot \left(\frac{1}{4}  -\frac{1}{16} h_1h_2 \right) . 
\end{equation*}
In particular, we have 
\begin{equation}
\label{eq_chiSigmaCl0}
\chi_{\Cl_0}(E,E)= -\frac{1}{16} \rk(E)^2 + \frac{1}{4}\left(2\rk(E)\ch_2(E)-\ch_1(E)^2 \right).
\end{equation}
\end{lemma}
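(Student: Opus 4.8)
The plan is to interpret both sides of the claimed equality as $\bQ$-bilinear forms on $\CH^*(\Sigma)\otimes\bQ$ and to match them on the basis \eqref{basis_nGgroup} of $\cN(\Sigma,\Cl_0)\otimes\bQ$, using the Euler-form matrix already computed in the proof of Lemma~\ref{lemma_basisnGgroup}. The right-hand side $B(\alpha,\beta) = \int_\Sigma \alpha^\vee\beta\cdot(\tfrac14 - \tfrac1{16}h_1h_2)$ is manifestly bilinear in $(\ch(E),\ch(F))$; the real content is that $\chi_{\Cl_0}$ has the same dependence, and that the two forms then coincide.

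The central step is to show that $\ch = \ch\circ\Forg$ induces an isomorphism $\cN(\Sigma,\Cl_0)\otimes\bQ \xrightarrow{\sim} \CH^*(\Sigma)\otimes\bQ$. Both sides have rank $4$: the source by Lemma~\ref{lemma_basisnGgroup}, the target because $\Sigma$ is a quadric surface, so $\CH^*(\Sigma)\otimes\bQ = \bQ\langle 1, h_1, h_2, h_1h_2\rangle$. To obtain the isomorphism it is enough to know that $\ch$ descends to $\cN(\Sigma,\Cl_0)\otimes\bQ$ and sends the basis \eqref{basis_nGgroup} to linearly independent classes. For the descent I would use that $\chi_{\Cl_0}$ depends only on forgetful Chern characters: by the adjunction $(-\otimes\Cl_0)\dashv\Forg$ one has $\chi_{\Cl_0}(A\otimes\Cl_0,F) = \chi_\Sigma(A,\Forg F)$, which by ordinary Hirzebruch--Riemann--Roch on $\Sigma$ depends only on $\ch(A)$ and $\ch(\Forg F)$. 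Since the objects $A\otimes\Cl_0$ generate $\Db(\Sigma,\Cl_0)$ (as $\Forg$ is conservative), pairing in either variable against such objects via the adjunctions relating $\Forg$ and $-\otimes\Cl_0$ shows $\ker\ch\subseteq\ker\chi_{\Cl_0}$, and a rank count then forces equality and the isomorphism. Alternatively, the same factorization follows from Hirzebruch--Riemann--Roch on the smooth threefold $Z$ through the fully faithful Fourier--Mukai functor $\Phi_\Sigma$ of Lemma~\ref{lemma-DbZ}.

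Granting the isomorphism, $\chi_{\Cl_0}$ transports to a bilinear form on $\CH^*(\Sigma)\otimes\bQ$, and it suffices to check it agrees with $B$ on the basis \eqref{basis_nGgroup}. From the splitting \eqref{Cl_0-Sigma} I would compute $\ch(\Cl_0) = 4 - 4h + 5h_1h_2$ and then $\ch(\Cl_0(-h_i)) = \ch(\Cl_0)\,e^{-h_i}$, using $h_1^2 = h_2^2 = 0$ and $\int_\Sigma h_1h_2 = 1$; the determinant of the resulting coordinate matrix equals $256 \neq 0$, which also reconfirms linear independence. The matrix of $\chi_{\Cl_0}$ in this basis is exactly the one from Lemma~\ref{lemma_basisnGgroup}, and I would evaluate the matrix of $B$ on the corresponding Chern characters directly, keeping only the degree-$0$ and degree-$2$ parts of the integrand (the degree-$1$ parts integrate to zero). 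For instance the diagonal entry at $\Cl_0$ gives $\tfrac14(2\cdot 4\cdot 5 - 32) - 1 = 1$ and the entry at $(\Cl_0(-h_1),\Cl_0(-h_2))$ gives $\tfrac14(-8) - 1 = -3$, matching the entries $1$ and $-3$; verifying that all entries of the $4\times 4$ matrix agree completes the main formula.

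Finally, \eqref{eq_chiSigmaCl0} is the case $E = F$: the degree-$0$ part of $\ch(E)^\vee\ch(E)$ is $\rk(E)^2$ and the degree-$2$ part is $2\rk(E)\ch_2(E) - \ch_1(E)^2$, so pairing against $\tfrac14 - \tfrac1{16}h_1h_2$ and using $\int_\Sigma h_1h_2 = 1$ yields $-\tfrac1{16}\rk(E)^2 + \tfrac14(2\rk(E)\ch_2(E) - \ch_1(E)^2)$. I expect the main obstacle to be the first step rather than the bookkeeping: the substance of the lemma is precisely that the Euler form factors through the forgetful Chern character at all, since a general bilinear form on $\CH^*(\Sigma)\otimes\bQ$ need not have the ``Mukai'' shape $\int_\Sigma\alpha^\vee\beta\,\tau$; establishing the isomorphism $\ch\colon\cN(\Sigma,\Cl_0)\otimes\bQ\xrightarrow{\sim}\CH^*(\Sigma)\otimes\bQ$ cleanly is where the care is needed.
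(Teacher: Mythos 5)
Your overall route is the same as the paper's: reduce by bilinearity to checking the identity on the basis \eqref{basis_nGgroup} and compare Gram matrices (the paper's ``conceptual'' variant is exactly your adjunction computation $\chi_{\Cl_0}(A\otimes\Cl_0,F)=\chi_\Sigma(A,\Forg F)$ combined with the identity $\td(\Sigma)/\ch(\Cl_0)^\vee=\tfrac14-\tfrac1{16}h_1h_2$), and all of your numerics are correct: $\ch(\Cl_0)=4-4h+5h_1h_2$, the determinant $256$, the sample entries $1$ and $-3$, and the derivation of \eqref{eq_chiSigmaCl0}. The gap is in the step you yourself single out as the crux. You claim that, since the objects $A\otimes\Cl_0$ generate $\Db(\Sigma,\Cl_0)$ (because $\Forg$ is conservative), pairing against them shows $\ker\ch\subseteq\ker\chi_{\Cl_0}$. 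This inference is invalid: categorical generation (conservativity of $\Forg$, or even thick generation by the free modules) does not imply that the classes $[A\otimes\Cl_0]$ span $\rK(\Sigma,\Cl_0)\otimes\bQ$. In fact these classes span a subspace of dimension at most $4$ (the image of $\rK(\Sigma)\otimes\bQ\cong\bQ^4$), whereas $\rK(\Sigma,\Cl_0)\otimes\bQ$ is infinite-dimensional: by Lemma~\ref{lemma-DbZ} it is a direct summand of $\rK(Z)\otimes\bQ\cong\CH^*(Z)\otimes\bQ$, which contains the uncountable-dimensional group $\CH_1(Z)\otimes\bQ$ of algebraically trivial $1$-cycles on the conic-bundle threefold $Z$ (a nontrivial Prym). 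Numerical orthogonality to a $4$-dimensional space of test classes can never force a class into the kernel of the Euler form, so your argument does not establish the inclusion; the spanning statement one actually needs is spanning \emph{modulo the kernel of $\chi_{\Cl_0}$}, which is Lemma~\ref{lemma_basisnGgroup}, not a consequence of generation.

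Moreover, the inclusion your strategy needs first is the opposite one: for $\ch$ to descend to $\cN(\Sigma,\Cl_0)\otimes\bQ$ at all you need $\ker\bigl(\rK(\Sigma,\Cl_0)\to\cN(\Sigma,\Cl_0)\bigr)\subseteq\ker\ch$, and this direction does follow from your own tools. If $\kappa$ lies in the kernel of the Euler form, then $0=\chi_{\Cl_0}(A\otimes\Cl_0,\kappa)=\chi_\Sigma(A,\Forg\kappa)=\int_\Sigma\ch(A)^\vee\,\ch(\kappa)\,\td(\Sigma)$ for every $A\in\Db(\Sigma)$; since $\ch(\rK(\Sigma))$ spans $\CH^*(\Sigma)\otimes\bQ$ and the intersection pairing on $\CH^*(\Sigma)\otimes\bQ$ is perfect, this forces $\ch(\kappa)=0$. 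With descent established, your determinant computation shows the induced map $\cN(\Sigma,\Cl_0)\otimes\bQ\to\CH^*(\Sigma)\otimes\bQ$ sends a basis to independent vectors, hence is an isomorphism by the rank count; the equality $\ker\ch\otimes\bQ=\ker\chi_{\Cl_0}\otimes\bQ$ that you wanted then comes out as a corollary rather than an input. After swapping the direction of this one step, the rest of your proof goes through verbatim and coincides with the paper's.
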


\begin{proof}
By Lemma~\ref{lemma_basisnGgroup} it suffices to check the formula for $E$ and $F$ among the objects~\eqref{basis_nGgroup}, which can be verified by a direct computation. 
Alternatively, we can argue more conceptually as follows. 
By Lemma~\ref{lemma_basisnGgroup} it suffices to prove the formula for $E$ of the form $E = \Cl_0 \otimes E'$ for $E' \in \Db(\Sigma)$ (even $E'$ a line bundle would suffice). 
The functor $- \otimes E' \colon \Db(\Sigma, \Cl_0) \to \Db(\Sigma, \Cl_0)$ has a right adjoint given by $- \otimes (E')^{\vee}$, and as mentioned above $- \otimes \Cl_0 \colon \Db(\Sigma) \to \Db(\Sigma, \Cl_0)$ has as a right adjoint the forgetful functor $\Forg$; thus, we have 
\begin{equation*}
\chi_{\Cl_0}(E, F) = \chi_{\Cl_0}(\Cl_0, (E')^{\vee} \otimes F) = 
\chi(\cO_{\Sigma}, (E')^{\vee} \otimes \Forg(F)). 
\end{equation*}
Note that 
\begin{equation*}
\ch((E')^{\vee}) = \frac{\ch(E)^{\vee}}{\ch(\Cl_0)^{\vee}} , 
\end{equation*}  
so by the usual Hirzebruch--Riemann--Roch theorem, the right side is given by  
\begin{equation*}
\int_{\Sigma} \ch((E')^{\vee}) \ch(F) \td(\Sigma) = 
\int_{\Sigma} \ch(E)^{\vee} \ch(F) \frac{\td(\Sigma)}{\ch(\Cl_0)^{\vee}} . 
\end{equation*} 
Finally, a direct computation shows that 
\begin{equation*}
\frac{\td(\Sigma)}{\ch(\Cl_0)^{\vee}} = \frac{1}{4}  -\frac{1}{16} h_1h_2 , 
\end{equation*} 
which finishes the proof. 
\end{proof}

\begin{remark}
The proof of Lemma~\ref{lemma:HRR} applies more generally to give a Hirzebruch--Riemann--Roch theorem 
in the case of a pair $(M, \cA)$ where $M$ is a smooth projective variety and $\cA$ is a coherent sheaf of algebras on $M$, 
such that $\cN(M, \cA) \otimes \bQ$ is spanned by classes of objects of the form $E = \cA \otimes E'$ for $E' \in \Db(M)$. Namely, if $\chi_{\cA}$ denotes the Euler form for $\Db(M, \cA)$, then 
\begin{equation*}
\chi_{\cA}(E, F) = \int_{M}  \ch(E)^{\vee} \ch(F) \frac{\td(\Sigma)}{\ch(\cA)^{\vee}} . 
\end{equation*} 
Of course the above assumption that $\cN(M, \cA) \otimes \bQ$ is spanned by objects of the form $\cA \otimes E'$ is not always satisfied, as for example in the case of $(Y,\Cl_0)$.
\end{remark}

The following result concerns Serre duality on $(\Sigma, \Cl_0)$. 
\begin{lemma}
\label{lemma-serre-Cl0}
\begin{enumerate}
\item \label{S_Cl_0}
The category $\Db(\Sigma,\Cl_0)$ has Serre functor given by
$S_{\Cl_0}(-) = - \otimes_{\Cl_0} \Cl_1[2]$. 

\item \label{Coh-SigmaCl0-hd2}
The abelian category $\Coh(\Sigma, \Cl_0)$ has homological dimension $2$. 

\item \label{Cl1-preserve-ch} 
If $E \in \Db(\Sigma, \Cl_0)$, then $\ch(E \otimes_{\Cl_0} \Cl_1) = \ch(E)$. 

\item \label{Cl1-preserve-stability}
Let $E \in \Coh(\Sigma, \Cl_0)$. 
Then $E$ is a slope (semi)stable torsion free sheaf 
if and only if the same is true of $E\otimes_{\Cl_0} \Cl_1$. 
\end{enumerate}
\end{lemma}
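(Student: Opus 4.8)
The plan is to verify the four claims of Lemma~\ref{lemma-serre-Cl0} in sequence, using the embedding of $\Db(\Sigma, \Cl_0)$ into $\Db(Z)$ provided by Lemma~\ref{lemma-DbZ} together with the general theory of Serre functors for sheaves of algebras. For part~\eqref{S_Cl_0}, I would invoke Kuznetsov's computation of the Serre functor for derived categories of modules over sheaves of even Clifford algebras of quadric fibrations \cite{kuznetsov08quadrics}: for the conic fibration $\pi_Z \colon Z \to \Sigma$ (relative dimension $1$, with $\Sigma$ a surface, so $Z$ is a threefold), the Serre functor of $\Db(\Sigma, \Cl_0)$ twists by $\Cl_1$ and shifts by $\dim \Sigma = 2$. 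The key point is that the bimodule realizing the Serre kernel is $\Cl_1$ (up to the twist by $\cL \cong \cO_\Sigma$ recorded in~\eqref{eq_Cl_i}), reflecting the relative canonical structure of the conic bundle. Alternatively, one can transport the standard Serre functor $(-) \otimes \omega_Z[\dim Z]$ on $\Db(Z)$ through the fully faithful functor $\Phi_\Sigma$ and match it against the $\Cl_1$-twist; I would sketch this but defer the explicit bimodule identification to \cite{kuznetsov08quadrics}.

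Part~\eqref{Coh-SigmaCl0-hd2} I would deduce as a formal consequence of part~\eqref{S_Cl_0}. Since $\Cl_1$ is a locally free $\cO_\Sigma$-module and the functor $(-) \otimes_{\Cl_0} \Cl_1$ is an autoequivalence (with inverse also a twist by a locally free bimodule), Serre duality gives $\Hom^i_{\Cl_0}(E, F) \cong \Hom^{2-i}_{\Cl_0}(F, E \otimes_{\Cl_0} \Cl_1)^\vee$. For $E, F \in \Coh(\Sigma, \Cl_0)$ placed in degree $0$, the right-hand side vanishes for $2 - i < 0$, i.e. for $i > 2$, which forces $\Ext^i_{\Cl_0}(E, F) = 0$ for $i > 2$ and gives homological dimension at most $2$; the lower bound follows since $\Sigma$ is a surface. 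Part~\eqref{Cl1-preserve-ch} is the routine observation that $\Cl_1$ is locally free of rank $4$ with $c_1(\Cl_1) = c_1(\Cl_0)$ (indeed from~\eqref{Cl_0-Sigma} and~\eqref{Cl_1-Sigma} one checks $\ch(\Cl_0) = \ch(\Cl_1)$), so tensoring over $\Cl_0$ with $\Cl_1$ — being fiberwise an isomorphism of the underlying $\cO_\Sigma$-module up to the trivializing effect of the Azumaya structure away from the discriminant — preserves the full Chern character; I would reduce this to a local computation or cite the invariance of $\ch$ under the autoequivalence $(-) \otimes_{\Cl_0} \Cl_1$ on $\cN(\Sigma, \Cl_0)$.

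For part~\eqref{Cl1-preserve-stability}, the strategy is to observe that $(-) \otimes_{\Cl_0} \Cl_1$ is an exact autoequivalence of $\Coh(\Sigma, \Cl_0)$ (it is its own quasi-inverse up to the identification $\Cl_1 \otimes_{\Cl_0} \Cl_1 \cong \Cl_0$, coming from Clifford multiplication $\Cl_1 \otimes \Cl_1 \to \Cl_0$ and~\eqref{eq_Cl_i}) that preserves rank and $\ch_1$ by part~\eqref{Cl1-preserve-ch}, hence preserves the slope $\mu_h$. Since an exact, slope-preserving autoequivalence sends subobjects to subobjects and preserves torsion-freeness (as $\Cl_1$ is $\cO_\Sigma$-locally free), it carries destabilizing sequences to destabilizing sequences in both directions, giving the equivalence of (semi)stability.

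The main obstacle I anticipate is part~\eqref{S_Cl_0}: pinning down that the Serre-twisting bimodule is exactly $\Cl_1$ (rather than some other rank-$4$ $\Cl_0$-bimodule) and that the shift is by $2$ requires care with the relative dualizing complex of the conic fibration and the normalization conventions for the Clifford algebra bimodules $\Cl_i$ in~\eqref{eq_Cl_i}. Once part~\eqref{S_Cl_0} is established, parts~\eqref{Coh-SigmaCl0-hd2}--\eqref{Cl1-preserve-stability} follow by the formal arguments above, so the essential work is concentrated in correctly invoking and specializing Kuznetsov's Serre functor computation to the conic-bundle case.
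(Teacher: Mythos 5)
Your treatment of parts \eqref{Coh-SigmaCl0-hd2}--\eqref{Cl1-preserve-stability} follows the same logic as the paper, but part \eqref{S_Cl_0} --- which you yourself identify as the crux --- has a genuine gap: the fact you propose to cite is not in \cite{kuznetsov08quadrics}. That paper supplies the properties of the bimodules $\Cl_i$ used elsewhere in the lemma (notably the flatness of $\Cl_1$ over $\Cl_0$ and the involutivity $\Cl_1 \otimes_{\Cl_0} \Cl_1 \cong \Cl_0$, its Corollary 3.9), but it contains no computation of the Serre functor of $\Db(\Sigma,\Cl_0)$. The paper's actual argument starts from the general formula for the Serre functor of the derived category of modules over a sheaf of algebras, as recalled at the beginning of \cite[\S 7]{BLMS}: $S_{\Cl_0}(-) = \omega_\Sigma \otimes_{\cO_\Sigma} (-) \otimes_{\Cl_0} \Cl_0^{\vee}[2]$, where $\Cl_0^{\vee}$ is the $\cO_\Sigma$-linear dual with its natural bimodule structure; one then checks directly, using the explicit decompositions \eqref{Cl_0-Sigma} and \eqref{Cl_1-Sigma}, that $\omega_{\Sigma} \otimes \Cl_0^{\vee} \cong \Cl_1$. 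This check cannot be bypassed by appealing to the structure of conic fibrations, because the Serre bimodule is \emph{not} intrinsically $\Cl_1$: it depends on the canonical bundle of the base, as Lemma~\ref{lemma-serre-Y-Cl0} shows --- over the quadric threefold $Y$ the very same recipe yields $\Cl_1(-h)$, not $\Cl_1$. Your fallback of transporting $(-)\otimes\omega_Z[3]$ through $\Phi_\Sigma$ also does not work as stated: $\Phi_\Sigma(\Db(\Sigma,\Cl_0))$ is a proper semiorthogonal component of $\Db(Z)$, hence not preserved by the Serre functor of $\Db(Z)$; the Serre functor of an admissible subcategory involves composing with the projection onto it, which is no easier than the direct bimodule computation you are trying to avoid.

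Two smaller points. In part \eqref{Coh-SigmaCl0-hd2}, the vanishing of $\Hom^{2-i}_{\Cl_0}(F, E \otimes_{\Cl_0} \Cl_1)$ for $i>2$ requires $E \otimes_{\Cl_0} \Cl_1$ to be a sheaf, i.e.\ flatness of $\Cl_1$ as a $\Cl_0$-module (\cite[Corollary 3.9]{kuznetsov08quadrics}); local freeness of $\Cl_1$ over $\cO_\Sigma$ is not sufficient, and an autoequivalence of $\Db(\Sigma,\Cl_0)$ need not preserve the heart. In part \eqref{Cl1-preserve-ch}, your closing suggestion to ``cite the invariance of $\ch$ under the autoequivalence $(-)\otimes_{\Cl_0}\Cl_1$'' is circular, since that invariance is precisely the claim; the correct completion, as in the paper, is to combine $\ch(\Cl_0)=\ch(\Cl_1)$ with Lemma~\ref{lemma_basisnGgroup}, which says that the twists of $\Cl_0$ span $\cN(\Sigma,\Cl_0)\otimes\bQ$, so an additive invariant is determined by its values on them. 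Part \eqref{Cl1-preserve-stability} is fine once flatness and involutivity are in place.
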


\begin{proof}
By the discussion in the beginning of \cite[\S7]{BLMS}, 
the category $\Db(\Sigma,\Cl_0)$ has Serre functor given by 
$S_{\Cl_0}(-) = \omega_\Sigma \otimes_{\cO_\Sigma} (-) \otimes_{\Cl_0} \Cl_0^\vee  [2]$, 
where $\Cl_0^{\vee}$ denotes the dual of $\Cl_0$ as a coherent sheaf, with its natural $\Cl_0$-bimodule structure. 
Using~\eqref{Cl_0-Sigma} and~\eqref{Cl_1-Sigma}, it is easy to compute 
\begin{equation*}
\omega_{\Sigma}  \otimes \Cl_0^{\vee} \cong \Cl_1, 
\end{equation*} 
which proves~\eqref{S_Cl_0}. 
Then \eqref{Coh-SigmaCl0-hd2} follows from~\eqref{S_Cl_0}, because  
$\Cl_1$ is a flat $\Cl_0$-module by \cite[Corollary 3.9]{kuznetsov08quadrics}. 
By~\eqref{Cl_0-Sigma} and~\eqref{Cl_1-Sigma} we have $\ch(\Cl_0) = \ch(\Cl_1)$, 
from which~\eqref{Cl1-preserve-ch} follows from Lemma~\ref{lemma_basisnGgroup}. 
Finally, note that the functor $- \otimes_{\Cl_0} \Cl_1$ gives an involutive exact autoequivalence 
of $\Coh(\Sigma, \Cl_0)$; involutivity is a consequence of the isomorphism $\Cl_1 \otimes_{\Cl_0} \Cl_1 \cong \Cl_0$, which holds by \cite[Corollary 3.9]{kuznetsov08quadrics}. 
From this observation, \eqref{Cl1-preserve-stability} follows from~\eqref{Cl1-preserve-ch}. 
\end{proof}

Finally, we prove some numerical constraints on objects in $\Db(\Sigma, \Cl_0)$. 
\begin{lemma}
\label{lemma_rankCl0modules}
The rank of any object of $\Db(\Sigma, \Cl_0)$ is divisible by $4$.
\end{lemma}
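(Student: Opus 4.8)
The plan is to read the divisibility straight off the Hirzebruch--Riemann--Roch formula of Lemma~\ref{lemma:HRR}, using only that the self-Euler-characteristic $\chi_{\Cl_0}(E,E)$ is an integer for every object. The whole point is that on the quadric surface $\Sigma$ every term of \eqref{eq_chiSigmaCl0} except the one involving $\rk(E)^2$ is automatically a half-integer, so integrality of $\chi_{\Cl_0}(E,E)$ pins $\rk(E)$ down modulo $4$.

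Concretely, I would first record two elementary integrality facts about the Chern character of $\Forg(E)\in\Db(\Sigma)$. Since $\Pic(\Sigma)=\bZ h_1\oplus\bZ h_2$ with $h_1^2=h_2^2=0$ and $h_1h_2$ the class of a point, writing $\ch_1(E)=ah_1+bh_2$ with $a,b\in\bZ$ gives $\ch_1(E)^2=2ab\in 2\bZ$, and $\ch_2(E)\in\bZ$, being the degree of a codimension-two cycle. Hence $\tfrac14\bigl(2\,\rk(E)\,\ch_2(E)-\ch_1(E)^2\bigr)\in\tfrac12\bZ$. Substituting into \eqref{eq_chiSigmaCl0},
\[
\chi_{\Cl_0}(E,E)=-\tfrac{1}{16}\rk(E)^2+\tfrac14\bigl(2\,\rk(E)\,\ch_2(E)-\ch_1(E)^2\bigr),
\]
and using $\chi_{\Cl_0}(E,E)\in\bZ\subset\tfrac12\bZ$, I would conclude $\tfrac{1}{16}\rk(E)^2\in\tfrac12\bZ$, i.e. $8\mid\rk(E)^2$, which for an integer forces $4\mid\rk(E)$. (As a check, for $E=\Cl_0$ one has $\rk=4$, $\ch_1=-4h_1-4h_2$, $\ch_2=5$, and the formula returns $\chi_{\Cl_0}(\Cl_0,\Cl_0)=1$, consistent with $\Cl_0$ being exceptional.)

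Since Lemma~\ref{lemma:HRR} is already available, there is essentially no obstacle here; the only care needed is the $2$-adic bookkeeping isolating the $-\tfrac{1}{16}\rk(E)^2$ term, and the sole input special to $\Sigma$ is the evenness of $\ch_1(E)^2$, which rests on the intersection form $2\,h_1h_2$ of the split quadric surface. I note that one might instead argue at the generic point of $\Sigma$, where $\Cl_0$ is a quaternion algebra and divisibility by $4$ amounts to that algebra being a division algebra; but this would require showing the generic conic of the fibration is anisotropic, which is considerably more delicate. The Riemann--Roch route sidesteps that difficulty entirely, and is the approach I would take.
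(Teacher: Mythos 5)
Your proof is correct, but it takes a genuinely different route from the paper's. The paper argues geometrically: by Lemma~\ref{lemma-DbZ}\eqref{XiSigma} every object of $\Db(\Sigma,\Cl_0)$ is of the form $\pi_{Z*}(\cG_Z^{\vee}\otimes E)$ for some $E\in\Db(Z)$; its rank is computed by restricting to a generic conic fiber $i_C\colon C\hookrightarrow Z$ as $\chi(C,i_C^*(\cG_Z^{\vee}\otimes E))=4\rk(E)+2\,c_1(E)\cdot C$; and Lemma~\ref{lemma-PicZ} forces $c_1(E)\cdot C$ to be even. You instead read the divisibility off the Riemann--Roch formula \eqref{eq_chiSigmaCl0} together with the integrality of $\chi_{\Cl_0}(E,E)$ and some $2$-adic bookkeeping. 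This is legitimate and involves no circularity: Lemma~\ref{lemma:HRR} is established before the rank lemma and rests only on Lemma~\ref{lemma_basisnGgroup} (hence on Lemmas~\ref{lemma-DbZ} and~\ref{lemma-PicZ}), not on the statement being proved. What your route buys is brevity and a purely numerical argument once Lemma~\ref{lemma:HRR} is available; what the paper's route buys is a geometric explanation of the divisibility. Amusingly, the paper's argument is in essence the ``generic-point'' argument you set aside as too delicate: the evenness of $D\cdot C$ for every divisor $D$ on $Z$ is exactly the statement that the conic fibration has no odd-degree multisection, i.e.\ the geometric avatar of the generic conic being anisotropic (equivalently, of $\Cl_0$ being nonsplit at the generic point) --- and Lemma~\ref{lemma-PicZ} makes this input cheap rather than delicate.

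One repair is needed: your justification that $\ch_2(E)\in\bZ$ (``the degree of a codimension-two cycle'') is not valid as stated, since $\ch_2$ is in general only a rational class --- e.g.\ $\ch_2(\cO_{\bP^2}(1))=\tfrac12$. On $\Sigma\cong\bP^1\times\bP^1$ the claim is nonetheless true: by Riemann--Roch, $\chi(\Forg(E))=\rk(E)+\ch_1(E)\cdot(h_1+h_2)+\ch_2(E)$, and every term but $\ch_2(E)$ is visibly an integer. (The paper invokes this same integrality in the proof of Lemma~\ref{lemma_hopeistrue}.) With that fixed, your argument goes through exactly as written: $\ch_1(E)^2\in 2\bZ$ and $2\rk(E)\ch_2(E)\in 2\bZ$ give $\tfrac14\bigl(2\rk(E)\ch_2(E)-\ch_1(E)^2\bigr)\in\tfrac12\bZ$, hence $\tfrac1{16}\rk(E)^2\in\tfrac12\bZ$, so $8\mid\rk(E)^2$ and therefore $4\mid\rk(E)$.
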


\begin{proof}
By Lemma \ref{lemma-DbZ}\eqref{XiSigma} 
it suffices to show that for $E \in \Db(Z)$ the rank of 
$\pi_{Z*}(\cG_Z^{\vee} \otimes E)$ is divisible by $4$. 
If $i_C \colon C \to Z$ denotes a generic fiber of the conic fibration $\pi_Z \colon Z \to \Sigma$, then 
by base change this rank can be computed as 
\begin{equation*}
\chi(C, i_C^*(\cG_Z^\vee \otimes E)) = \rk(\cG_Z^{\vee} \otimes E) + c_1(\cG_Z^{\vee} \otimes E)\cdot C 
= 4 \rk(E) + 2 c_1(E) \cdot C, 
\end{equation*}
where for the equality we used that $\rk(\cG^{\vee}) = 2$ and $c_1(\cG^{\vee}) = H$, as follows from \eqref{G}. 
It remains to observe that the intersection number of any integral divisor on $Z$ with $C$ is even, which follows from Lemma~\ref{lemma-PicZ}. 
\end{proof}

\begin{lemma}
\label{lemma_serrefunctfixch}
If $E \in \Coh(\Sigma, \Cl_0)$ is a slope stable torsion free sheaf, then 
$\chi_{\Cl_0}(E, E) \leq 2$. 
\end{lemma}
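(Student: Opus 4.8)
The plan is to realize $\chi_{\Cl_0}(E,E)$ as an alternating sum of three $\Hom$-spaces and to bound the two extreme terms individually. Since $E$ is a sheaf and $\Coh(\Sigma,\Cl_0)$ has homological dimension $2$ by Lemma~\ref{lemma-serre-Cl0}\eqref{Coh-SigmaCl0-hd2}, we have $\Hom^i_{\Cl_0}(E,E)=0$ for $i\notin\{0,1,2\}$, so that
\begin{equation*}
\chi_{\Cl_0}(E,E) = \dim\Hom^0_{\Cl_0}(E,E) - \dim\Hom^1_{\Cl_0}(E,E) + \dim\Hom^2_{\Cl_0}(E,E).
\end{equation*}
As the middle term is nonnegative, it suffices to show $\dim\Hom^0_{\Cl_0}(E,E)=1$ and $\dim\Hom^2_{\Cl_0}(E,E)\le 1$.

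For the first equality, I would use that slope stability in $\Coh(\Sigma,\Cl_0)$ is defined through the forgetful rank and degree exactly as in the commutative case, so the standard argument applies verbatim: any nonzero endomorphism of a slope stable object is an isomorphism (its kernel and image, being $\Cl_0$-subobjects, would otherwise violate the slope inequalities), whence $\Hom^0_{\Cl_0}(E,E)$ is a finite-dimensional division algebra over the algebraically closed field $k$ and therefore equals $k$. For the top term, I would invoke the Serre functor $S_{\Cl_0}(-)=-\otimes_{\Cl_0}\Cl_1[2]$ from Lemma~\ref{lemma-serre-Cl0}\eqref{S_Cl_0}, which gives
\begin{equation*}
\Hom^2_{\Cl_0}(E,E) \cong \Hom^0_{\Cl_0}(E,\, E\otimes_{\Cl_0}\Cl_1)^{\vee}.
\end{equation*}
By Lemma~\ref{lemma-serre-Cl0}\eqref{Cl1-preserve-stability} the object $E\otimes_{\Cl_0}\Cl_1$ is again a slope stable torsion free sheaf, and by Lemma~\ref{lemma-serre-Cl0}\eqref{Cl1-preserve-ch} it has the same Chern character, hence the same slope, as $E$. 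Since a nonzero morphism between slope stable sheaves of equal slope is an isomorphism, $\Hom^0_{\Cl_0}(E,E\otimes_{\Cl_0}\Cl_1)$ is either $0$ or, in the case $E\cong E\otimes_{\Cl_0}\Cl_1$, isomorphic to $\Hom^0_{\Cl_0}(E,E)=k$; in either case its dimension is at most $1$.

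Combining the two bounds yields $\chi_{\Cl_0}(E,E)\le 1-0+1=2$, as desired. This is a standard Mukai-type estimate and I do not expect a serious obstacle; the only points requiring care are verifying that simplicity of stable objects and the equal-slope $\Hom$-vanishing transfer to the $\Cl_0$-module setting (which they do, as slope stability is defined through $\Forg$ exactly as in the untwisted case) and correctly bookkeeping the Serre-duality identification of the top $\Hom$ via the bimodule $\Cl_1$.
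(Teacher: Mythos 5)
Your proof is correct and is essentially identical to the paper's: both write $\chi_{\Cl_0}(E,E)$ as the three-term alternating sum via the homological dimension bound, get $\dim\Hom_{\Cl_0}(E,E)=1$ from stability, and bound $\Hom^2_{\Cl_0}(E,E)$ by Serre duality through $-\otimes_{\Cl_0}\Cl_1$, using that this operation preserves slope stability and the Chern character.
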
 

\begin{proof}
By Lemma~\ref{lemma-serre-Cl0}\eqref{Coh-SigmaCl0-hd2} we have 
\begin{equation*}
\chi_{\Cl_0}(E, E) = \dim \Hom_{\Cl_0}(E, E) - \dim \Hom_{\Cl_0}^1(E,E) + \dim \Hom^2_{\Cl_0}(E,E). 
\end{equation*} 
By stability the first term is $1$. 
On the other hand, by Lemma~\ref{lemma-serre-Cl0}\eqref{S_Cl_0} we have 
\begin{equation*}
\Hom^2_{\Cl_0}(E,E) \cong \Hom_{\Cl_0}(E, E \otimes_{\Cl_0} \Cl_1) .
\end{equation*}
By parts~\eqref{Cl1-preserve-ch} and~\eqref{Cl1-preserve-stability} of 
Lemma~\ref{lemma-serre-Cl0},  
the object $E \otimes_{\Cl_0} \Cl_1$ is a stable torsion free sheaf of the same slope as $E$, 
so $\Hom_{\Cl_0}(E, E \otimes_{\Cl_0} \Cl_1)$ is either $1$-dimensional or vanishes, according to whether $E \otimes_{\Cl_0} \Cl_1$ is isomorphic to $E$ or not. 
In either case, the third term in the above sum is at most $1$. 
\end{proof}

\begin{lemma}
\label{lemma_hopeistrue}
There are no objects $E \in \Db(\Sigma,\Cl_0)$ with $\rk(E)=4$ and $\chi_{\Cl_0}(E,E)=2$.
\end{lemma}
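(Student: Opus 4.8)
The plan is to reduce the statement to a parity computation via Hirzebruch--Riemann--Roch. Suppose for contradiction that $E \in \Db(\Sigma, \Cl_0)$ has $\rk(E) = 4$ and $\chi_{\Cl_0}(E,E) = 2$. Specializing the formula~\eqref{eq_chiSigmaCl0} to $\rk(E) = 4$ gives
\begin{equation*}
\chi_{\Cl_0}(E,E) = -1 + \tfrac{1}{4}\left( 8\,\ch_2(E) - \ch_1(E)^2 \right),
\end{equation*}
so it suffices to prove that the right-hand side is \emph{odd}, which contradicts $\chi_{\Cl_0}(E,E) = 2$. I would deduce this from the claim that $\ch_1(E) \in 2\Pic(\Sigma)$. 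Indeed, writing $\ch_1(E) = 2D$ we get $\ch_1(E)^2 = 4D^2$, and since $\Sigma$ is a smooth quadric surface its intersection form on $\Pic(\Sigma) = \bZ h_1 \oplus \bZ h_2$ is even, so $D^2$ is even and $\tfrac14(8\,\ch_2(E) - \ch_1(E)^2) = 2\,\ch_2(E) - D^2$ is even. Hence $\chi_{\Cl_0}(E,E)$ is odd.

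The core of the argument is therefore to show that $\ch_1(E)$ has even coefficients for every object $E \in \Db(\Sigma, \Cl_0)$; equivalently, that $\ch_1(E) \cdot h_i$ is even for $i = 1,2$. By Lemma~\ref{lemma-DbZ}\eqref{XiSigma} we may write $E = \Xi_\Sigma(F) = \pi_{Z*}(\cG_Z^\vee \otimes F)$ for some $F \in \Db(Z)$. Fix a general line $\ell \subset \Sigma$ in one of the two rulings and set $Z_\ell = \pi_Z^{-1}(\ell)$, a conic fibration over $\ell \cong \bP^1$. Since $\ch_1(E) \cdot \ell = \chi(\ell, E|_\ell) - \rk(E)\cdot \chi(\cO_\ell)$, and $\rk(E) \equiv 0 \pmod 4$ by Lemma~\ref{lemma_rankCl0modules} while $\chi(\cO_\ell) = 1$, we obtain $\ch_1(E) \cdot \ell \equiv \chi(\ell, E|_\ell) \pmod 2$. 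By base change $\chi(\ell, E|_\ell) = \chi\big(Z_\ell,\, \cG_Z^\vee|_{Z_\ell} \otimes F|_{Z_\ell}\big)$, and the rank $2$ of $\cG^\vee$, together with $c_1(\cG^\vee) = H$ from~\eqref{G} and the even intersection numbers on $Z$ recorded in Lemma~\ref{lemma-PicZ}, shows via Grothendieck--Riemann--Roch on the surface $Z_\ell$ that this Euler characteristic is even. This yields the desired parity of $\ch_1(E)$, and hence the lemma.

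The main obstacle is precisely this last parity statement for $\ch_1$. It cannot be obtained from a naive lattice argument: although the objects~\eqref{basis_nGgroup} form a $\bQ$-basis of $\cN(\Sigma,\Cl_0)\otimes\bQ$ with the integral Gram matrix of Lemma~\ref{lemma_basisnGgroup}, they do \emph{not} span $\cN(\Sigma,\Cl_0)$ over $\bZ$ --- for instance, for a point $x \in Z$ the module $\Xi_\Sigma(\cO_x)$ has $\ch_0 = 0$, $\ch_1 = 0$ and $\ch_2 = 2\,[\mathrm{pt}]$, and this class is not an integral combination of the classes~\eqref{basis_nGgroup} (integral combinations of rank $0$ have $\ch_2$-degree divisible by $4$). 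So one genuinely has to control $\ch_1$ of an \emph{arbitrary} $\Cl_0$-module rather than reason with integer coordinates in a fixed basis. The delicate point is that, while the rank-$2$ bundle $\cG$ morally forces $\ch_1$ to be even, making this rigorous requires tracking the fractional contributions in the Grothendieck--Riemann--Roch computation on $Z_\ell$ and checking that they combine to an even integer using the divisibility of intersection numbers from Lemma~\ref{lemma-PicZ}.
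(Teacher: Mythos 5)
Your proposal is correct, and it takes a genuinely different route from the paper's proof. The paper writes $\ch(E) = 4 + b_1h_1 + b_2h_2 + c\,h_1h_2$ with $b_1,b_2,c \in \bZ$, deduces from \eqref{eq_chiSigmaCl0} that $\chi_{\Cl_0}(E,E)=2$ forces $8c - 2b_1b_2 = 12$, hence that $b_1$ and $b_2$ have opposite parities, and then gets a contradiction by pairing $E$ against the restricted exceptional object $\cR_a|_\Sigma$ (rank $4$, $\ch_1 = -2h$ by Lemma \ref{lemma-Ra-Rb}): Lemma \ref{lemma:HRR} gives $\chi_{\Cl_0}(E,\cR_a|_\Sigma) = -1 + c + c' + \frac{1}{2}(b_1+b_2) \notin \bZ$. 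So the paper's mechanism is integrality of the Euler pairing with one specific object inherited from the GM fourfold, and it only needs to exclude $b_1+b_2$ odd; your mechanism is the stronger universal statement that $\ch_1(E) \in 2\Pic(\Sigma)$ for \emph{every} object of $\Db(\Sigma,\Cl_0)$, proved purely from the conic fibration $Z \to \Sigma$, which makes $\chi_{\Cl_0}(E,E)$ odd for every rank-four object. I checked that your sketched Grothendieck--Riemann--Roch step does close: for general $\ell$ in either ruling, $S = Z_\ell$ is smooth, $K_S = (h_1 - H)|_S$ by adjunction (using $K_Z = -H|_Z$), one computes $\ch_2(\cG^\vee|_S)=0$, and for $F' = F|_S$ of rank $r$,
\begin{equation*}
\chi\bigl(S,\, \cG^\vee|_S \otimes F'\bigr) \;=\; 2\ch_2(F') + H \cdot c_1(F') - c_1(F')\cdot K_S - \tfrac{r}{2}\, H\cdot K_S + 2r\chi(\cO_S).
\end{equation*}
Modulo $2$, the first and third terms combine to $c_1(F')\cdot(c_1(F') - K_S) \equiv 0$ (Riemann--Roch for divisors); $H\cdot K_S = -H^2h_1 = -4$, so the fourth term equals $2r \equiv 0$; and $H\cdot c_1(F')$ is even because $c_1(F) \in \Pic(Z) = \bZ h_1 \oplus \bZ h_2 \oplus \bZ H$ and $\frac{1}{2}Hh_1$, $\frac{1}{2}Hh_2$, $\frac{1}{2}h_1h_2$ are integral curve classes on $Z$ (proof of Lemma \ref{lemma-PicZ}). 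One caution: the value $H^2h_i = 4$ (and not merely its evenness) is needed to kill the term $\frac{r}{2}H\cdot K_S$ when $r$ is odd, so it should be cited explicitly from the proof of Lemma \ref{lemma-PicZ} rather than subsumed under ``even intersection numbers.'' As for trade-offs: the paper's argument is three lines once $\cR_a$ is available, but $\cR_a$ exists only thanks to the GM geometry behind Theorem \ref{theorem-KX-Cl}; your argument never uses $\cR_a$, is self-contained at the level of the conic fibration, and yields the sharper conclusion that no rank-four object whatsoever has even $\chi_{\Cl_0}(E,E)$. Your cautionary remark that \eqref{basis_nGgroup} is only a $\bQ$-basis of $\cN(\Sigma,\Cl_0)\otimes\bQ$, so that one cannot reason with integer coordinates in that basis, is correct and well taken.
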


\begin{proof}
Suppose for sake of contradiction that we have such an object $E$, with 
\begin{equation*}
\ch(E) = 4 + b_1h_1+b_2h_2 + ch_1h_2 .
\end{equation*}
Note that $b_1, b_2, c$ are integers, because on $\Sigma$ the second Chern character of any object is always integral. 
By \eqref{eq_chiSigmaCl0}, $\chi_{\Cl_0}(E,E)=2$ implies that $8c-2b_1b_2=12$. 
Thus $b_1b_2$ is divisible by $2$ but not divisible by $4$,
and hence $b_1$ and $b_2$ are of different parities.

On the other hand, by Lemma \ref{lemma-Ra-Rb} the object $\cR_c|_\Sigma \in \Db(\Sigma, \Cl_0)$ 
has 
\begin{equation*}
\ch(\cR_c|_\Sigma) = 4 - 2h + c'h_1h_2 
\end{equation*} 
for some $c' \in \bZ$. 
By Lemma \ref{lemma:HRR} we have
\begin{equation*}
\chi_{\Cl_0}(E,\cR_c|_\Sigma)=-1 + c+c' + \frac{1}{2}(b_1+b_2), 
\end{equation*}
which is not an integer because $b_1$ and $b_2$ are of different parities. 
This contradiction finishes the proof.
\end{proof}

\begin{proof}[Proof of Proposition~\ref{prop_BIsurfacecase}]
First assume that $E$ is stable. Using \eqref{eq_chiSigmaCl0}, we get 
$$\frac{1}{4}\Delta_{\Cl_0}(E) = \frac{1}{16}\text{rk}(E)^2-\chi_{\Cl_0}(E,E).$$
By Lemma \ref{lemma_rankCl0modules} the first term on the right is an integer 
satisfying $\frac{1}{16}\text{rk}(E)^2\geq 1$, 
and by Lemma \ref{lemma_serrefunctfixch} we have $\chi_{\Cl_0}(E,E) \leq 2$. 
Now Lemma \ref{lemma_hopeistrue} implies that the equalities cannot be achieved at the same time, so we conclude that $\Delta_{\Cl_0}(E) \geq 0$. 

If $E$ is strictly semistable, then we consider its Jordan-H\"{o}lder stable factors $E_1, \dots, E_m$. By the previous part, we have $\Delta_{\Cl_0}(E_i) \geq 0$ for every $i=0,\dots,m$. By \cite[Lemma A.6]{BMS:stabCY3s}, this implies $\Delta_{\Cl_0}(E) \geq 0$.
\end{proof}

\subsection{Induction argument}

Let us come back to the case of the threefold $Y$. Before proving Theorem \ref{thm_BI}, we need the following lemma.

\begin{lemma}
\label{lemma_BIblowupconic}
Let $q: \wtilde{Y} \to Y$ be the blowup of $Y$ in a conic $C \subset Y$. Assume that for every $\mu_h$-semistable $\Cl_0$-module $F \in \emph{Coh}(Y,\Cl_0)$, we have $\Delta_{\Cl_0}(F) \geq 0$. Then for every $E \in \emph{Coh}(\wtilde{Y},q^*\Cl_0)$ which is $\mu_{q^*h}$-semistable, we have $\Delta_{q^*\Cl_0}(E) \geq 0$. 
\end{lemma}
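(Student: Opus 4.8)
The plan is to descend the inequality from $Y$ to $\widetilde{Y}$ by pushing forward along $q$ and invoking the hypothesis. The guiding principle is that the polarization $q^*h$, being pulled back from $Y$, is trivial on the fibers of $q$ and hence only records the behaviour of a sheaf away from the exceptional divisor $\cD\subset\widetilde{Y}$. First I would reduce to the case that $E$ is torsion free (any $\mu_{q^*h}$-semistable sheaf of positive rank is, and the torsion case does not arise for the statement we are after), and set $F:=q_*E$. Using $q_*\cO_{\widetilde{Y}}=\cO_Y$ and the projection formula, $F$ is a torsion-free coherent $\Cl_0$-module on $Y$ with $\rk(F)=\rk(E)$, since $q$ is birational.

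The first key step is that $F$ is $\mu_h$-semistable. This is where the pulled-back polarization is used: for a saturated $\Cl_0$-submodule $G\subseteq F$, the image of the counit $q^*G\to q^*F\to E$ is a submodule of $E$ whose $q^*h$-slope equals $\mu_h(G)$ by the projection formula, and saturating it inside $E$ can only raise this slope because $q^*h$ is nef and the added part is effective. Semistability of $E$ then gives $\mu_h(G)=\mu_{q^*h}(\text{im})\le \mu_{q^*h}(E)=\mu_h(F)$, so $F$ is $\mu_h$-semistable and the hypothesis yields $\Delta_{\Cl_0}(F)\ge 0$.

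The second, and main, step is the comparison $\Delta_{q^*\Cl_0}(E)\ge \Delta_{\Cl_0}(F)$. Since ranks agree, the Clifford correction $\tfrac14\rk$ is common to both sides, and because the class $\ch_1^2-2\rk\cdot\ch_2$ is invariant under tensoring by a line bundle, I may first twist $E$ by a multiple of $\cO(\cD)$ — which changes neither $\mu_{q^*h}$-semistability nor $\Delta_{q^*\Cl_0}(E)$ — to normalize the $\cD$-component of $\ch_1(E)$. The comparison then follows from a Grothendieck--Riemann--Roch computation for $q$: writing $\ch_1(E)=q^*\beta+m\cD$, the projection formula gives $q^*h\cdot\ch_1(E)^2=h\cdot\beta^2+m^2\,(q^*h\cdot\cD^2)$ together with a parallel identity for $q^*h\cdot\ch_2(E)$, in which the exceptional contributions (the terms supported on $\cD$, and the sheaf $R^1q_*E$, which is supported on the conic $C$ of codimension two) assemble, after pairing with the nef class $q^*h$, into a quantity of the correct sign. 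This is the incarnation in our setting of the fact that the Bogomolov discriminant can only increase under pushforward along a blow-up with a pulled-back polarization, in the spirit of \cite[\S3]{Lang:ssposchar} and \cite[\S8]{BLMS}. Combined with $\Delta_{\Cl_0}(F)\ge 0$, this proves $\Delta_{q^*\Cl_0}(E)\ge 0$.

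The hard part is controlling the sign of these exceptional corrections. Naively the $\ch_1^2$ and $\ch_2$ contributions enter with opposite signs and do not individually have the desired behaviour, so one must combine them and use the normalization above together with the semistability of $E$ (to bound the length of $R^1q_*E$ and the twist parameter $m$); passing further to the reflexive hull of $E$, where the codimension-two defect is most transparent, is the cleanest way to make the sign manifest. I expect this GRR bookkeeping, rather than the semistability transfer in the first step, to be the real obstacle.
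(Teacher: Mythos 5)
Your overall route is the same as the paper's: push forward along $q$, transfer slope semistability to $R^0q_*E$ (your counit argument is essentially \cite[Lemma~8.5]{BLMS}, which the paper simply cites), apply the hypothesis on $Y$, and compare discriminants via Grothendieck--Riemann--Roch after a normalizing twist by the exceptional divisor. Your first step is fine. The genuine gap is that the comparison $\Delta_{q^*\Cl_0}(E)\ge\Delta_{\Cl_0}(F)$ is the entire content of the lemma, and you never prove it: you assert that the exceptional contributions ``assemble into a quantity of the correct sign'' and then explicitly set this aside as ``the real obstacle.'' Worse, the ingredients you propose for closing it point away from the actual argument: no bound on the length of $R^1q_*E$, no semistability input, and no reflexive hulls are needed in this step --- semistability of $E$ plays no role in the discriminant comparison at all.

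Here is how the bookkeeping actually closes, so you can see it is short. Write $r=\rk(E)$, $\ch_1(E)=q^*l+ae$, $\ch_2(E)=q^*m+bf$, with $e$ the exceptional divisor and $f$ the class of a fiber of $e\to C$. Since $q_*e=q_*f=0$, the projection formula gives $q^*h\cdot f=0$ (so $b$ never enters), and $e^2=-q^*[C]+(\deg\cN_{C/Y})f$ gives $q^*h\cdot e^2=-h\cdot[C]=-2$. GRR with $\td(\cT_q)=1-\tfrac{e}{2}+(\text{multiple of }f)$ yields, for the derived pushforward $q_*E$, that $\ch(q_*E)_{\le 2}=\bigl(r,\,l,\,m+\tfrac{a}{2}[C]\bigr)$, and hence
\begin{equation*}
\Delta_{q^*\Cl_0}(E)-\Delta_{\Cl_0}(q_*E)=2a(r-a).
\end{equation*}
This is exactly where the normalization is essential rather than cosmetic: after twisting by a power of $\cO_{\wtilde{Y}}(e)$ (which changes neither semistability nor $\Delta_{q^*\Cl_0}(E)$, but does change $q_*E$) one may assume $0\le a<r$, and only then is the right-hand side nonnegative --- for $a$ outside $[0,r]$ the comparison is simply false. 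You mention the twist but never feed it back into any estimate. Finally, $\Delta_{\Cl_0}(q_*E)\ge\Delta_{\Cl_0}(R^0q_*E)$ comes for free: $R^1q_*E$ is supported on the curve $C$, so it contributes an effective class to $\ch_2$ with a minus sign, i.e.\ $\Delta_{\Cl_0}(q_*E)=\Delta_{\Cl_0}(R^0q_*E)+4r\cdot(\text{nonnegative degree})$; no length bound is required. Chaining these with your first step proves the lemma. (One more slip worth flagging: your slogan that the discriminant ``can only increase under pushforward'' is backwards --- what you need, and what holds after normalization, is that it does not increase.)
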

\begin{proof}
By \cite[pag.\ 608-609]{GH:principlesofag}, we have that
$$c_1(\wtilde{Y})=q^*c_1(Y)-e \quad \text{and} \quad c_2(\wtilde{Y})=q^*(c_2(Y)+[C])-q^*c_1(Y)\cdot e,$$
where $e$ denotes the class of the exceptional divisor of $q$ and $[C] \in H^4(Y,\bZ)$ is the class of the conic $C$. The relative Todd class of $q$ is 
$$\td(\cT_q)=1-\frac{e}{2}+\frac{1}{12}(e^2+q^*[C]).$$
If $f$ is the class of a fiber of $q|_E: E \to C$, by \cite[Lemma 2.2.14]{ParShafa:algebraicgeom} we have
$$e^2=-q^*[C]+\text{deg}(\cN_{C/X})f=-q^*[C] +2f.$$
In particular, it follows that
$$\td(\cT_q)=1-\frac{e}{2}+\frac{1}{6}f.$$

Now consider a $\mu_{q^*h}$-semistable $q^*\Cl_0$-module $E$ on $\wtilde{Y}$. We can write
$$\ch_1(E)=q^*l + a e \quad \text{and} \quad \ch_2(E)=q^*m +bf,$$
where $l \in H^{1,1}(Y,\bZ)$ and $m \in H^{2,2}(Y,\bZ)$. Using Grothendieck-Riemann-Roch and $q_*e=q_*f=0$, we get
$$\ch(q^*q_*E)_{\leq 2}=q^*q_*(\ch(E)_{\leq 2} \td(\cT_q))=(\rk(E), q^*l, q^*m+\frac{a}{2}q^*[C]).$$
In particular, since $q^*h \cdot q^*[C]=2$, we have
$$\Delta_{q^*\Cl_0}(q^*q_*E)=q^*h \cdot q^*l^2-2\rk(E) \left( q^*h \cdot q^*m+a - \frac{1}{4}\rk(E) \right).$$

Up to twisting by a power of $\cO_{\wtilde{Y}}(e)$, we may assume that $0 \leq a < \rk(E)$. Then we have
$$\Delta_{q^*\Cl_0}(q^*q_*E) \leq q^*h \cdot q^*l^2 -2a^2 -2\rk(E) \left( q^*h \cdot q^*m - \frac{1}{4}\rk(E) \right)=\Delta_{q^*\Cl_0}(E),$$
since $q^*h \cdot e^2=-q^*h \cdot q^*[C]$.

On the other hand, by hypothesis and \cite[Lemma 8.5]{BLMS}, we deduce that
$$\Delta_{q^*\Cl_0}(q^*q_*E) \geq \Delta_{\Cl_0}(R^0q_*E) \geq 0.$$
This ends the proof of our claim.
\end{proof}

Now we have all the ingredients to apply the argument in \cite[Section 8]{BLMS} to prove Theorem \ref{thm_BI} in this setting. We explain here only the parts that differ from the cubic fourfolds case, for sake of completeness. 

Consider the following statement.

\begin{theorem}[$\rk(E)=r$]
\label{thm_restriction_rankr}
Let $E \in \emph{Coh}(Y,\Cl_0)$ be a rank $r$ $\mu_h$-slope semistable torsion-free sheaf. Assume that the restriction $E|_\Sigma \in \emph{Coh}(D,\Cl_0|_\Sigma)$ of $E$ to a general divisor $\Sigma \in |h|$ is not slope semistable with respect to $h|_\Sigma$. Then
$$2\sum_{i<j}r_ir_j(\mu_i-\mu_j)^2 \leq  \Delta_{\Cl_0}(E),$$
where the $\mu_i$ (resp.\ $r_i$) denote the slopes (resp.\ the ranks) of the Harder-Narasimhan factors of $E|_\Sigma$.
\end{theorem}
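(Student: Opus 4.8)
The plan is to follow Langer's effective restriction theorem, in the form implemented for Clifford modules in \cite[\S8]{BLMS}, and to run the whole argument as an induction on the rank. Thus I would assume Theorem~\ref{thm_BI} (the Bogomolov inequality $\Delta_{\Cl_0}\geq 0$) for all $\mu_h$-semistable $\Cl_0$-modules of rank $<r$. The statement above is precisely the inductive input that closes the loop: once it is known, then for $E$ of rank $r$ either $E|_\Sigma$ is semistable, in which case Proposition~\ref{prop_BIsurfacecase} gives $\Delta_{\Cl_0}(E)=\Delta_{\Cl_0}(E|_\Sigma)\geq 0$ at once, or $E|_\Sigma$ is unstable and the present theorem yields $\Delta_{\Cl_0}(E)\geq 2\sum_{i<j}r_ir_j(\mu_i-\mu_j)^2\geq 0$.

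First I would record the restriction-invariance of the Clifford discriminant: for general $\Sigma\in|h|$ the module $E|_\Sigma$ is torsion free, and since $\Delta_{\Cl_0}$ is built from Chern data in degrees $\leq 2$ intersected against $h$, one has $\Delta_{\Cl_0}(E)=\Delta_{\Cl_0}(E|_\Sigma)$ computed now on the quadric surface $\Sigma$. Taking the Harder--Narasimhan filtration of $E|_\Sigma$ with semistable factors $G_1,\dots,G_l$ of ranks $r_i$ and $(h|_\Sigma)$-slopes $\mu_1>\cdots>\mu_l$, and iterating the two-term discriminant identity (obtained by a direct Chern-character computation, in the spirit of \cite[Lemma 8.5]{BLMS} and \cite[Lemma A.6]{BMS:stabCY3s}), I would arrive at
\begin{equation*}
\Delta_{\Cl_0}(E|_\Sigma)=\sum_i \frac{r}{r_i}\,\Delta_{\Cl_0}(G_i)-\sum_{i<j}r_ir_j\,\xi_{ij}^2,\qquad \xi_{ij}=\frac{\ch_1(G_i)}{r_i}-\frac{\ch_1(G_j)}{r_j}.
\end{equation*}
By Proposition~\ref{prop_BIsurfacecase} each $\Delta_{\Cl_0}(G_i)\geq 0$, so the whole problem reduces to an upper bound on the self-intersection numbers $\xi_{ij}^2$. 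At this point I would invoke the Hodge index theorem on $\Sigma$, noting that $\Sigma$ is a quadric surface, so $(h|_\Sigma)^2=h^3=2$; the factor of $2$ appearing in the target inequality is tied to this degree.

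The crux, and the step I expect to be the main obstacle, is that the Hodge index theorem by itself bounds $\xi_{ij}^2$ only from \emph{above}, by $(\mu_i-\mu_j)^2/2$, which is the wrong direction; producing the stronger (more negative) bound on $\xi_{ij}^2$ that is actually needed forces one to exploit the $\mu_h$-semistability of $E$ on $Y$, a property \emph{not} inherited by $E|_\Sigma$. Following Bogomolov and Langer, I would spread the destabilizing subsheaves of $E|_{\Sigma_t}$ over the linear system $|h|$ and compare them on complete-intersection curves $\Sigma_t\cap\Sigma_{t'}$ via elementary modifications: semistability of $E$ on $Y$ prevents the destabilizing classes from straying far from the $h$-ray, and it is this geometric constraint that converts into the required estimate on the $\xi_{ij}^2$. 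This is exactly where the inductive hypothesis enters, through the lower-rank Bogomolov inequality applied to the relevant subquotients, and where the conic-fibration geometry of $(Y,\Cl_0)$ demands care: the curve-restriction and modification arguments of \cite[\S8]{BLMS} have to be carried out on the blowup of $Y$ along a conic, so the transfer of the Bogomolov inequality supplied by Lemma~\ref{lemma_BIblowupconic} is invoked precisely to make the inductive hypothesis available there.

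Finally I would assemble the ingredients: the discriminant identity above, the nonnegativity $\Delta_{\Cl_0}(G_i)\geq 0$, the Langer-type estimate on the $\xi_{ij}^2$, and the Hodge index computation with $(h|_\Sigma)^2=2$ combine to give $2\sum_{i<j}r_ir_j(\mu_i-\mu_j)^2\leq\Delta_{\Cl_0}(E)$. The only genuinely new inputs relative to \cite[\S8]{BLMS} are the Clifford surface Bogomolov inequality (Proposition~\ref{prop_BIsurfacecase}), the rank-divisibility and Euler-form computations established earlier in this section, and the blowup-transfer Lemma~\ref{lemma_BIblowupconic}; with these substituted for their cubic-fourfold counterparts, the remainder of the argument runs in parallel to the cited one.
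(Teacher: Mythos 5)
Your inventory of ingredients is the right one (the induction on rank, the pencil, the blowup of $Y$ along the base conic, Lemma~\ref{lemma_BIblowupconic}, the rank divisibility, and Langer's scheme from \cite[\S 8]{BLMS}), and the identity $\Delta_{\Cl_0}(E|_\Sigma)=\sum_i \frac{r}{r_i}\Delta_{\Cl_0}(G_i)-\sum_{i<j}r_ir_j\xi_{ij}^2$ is correct. The gap is in the reduction you perform right after: you discard the terms $\Delta_{\Cl_0}(G_i)$, keeping only their nonnegativity, and reduce the theorem to the estimate $\sum_{i<j}r_ir_j\xi_{ij}^2\leq -2\sum_{i<j}r_ir_j(\mu_i-\mu_j)^2$. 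That estimate is false in general, so no pencil or elementary-modification argument can deliver it: $\mu_h$-semistability of $E$ on $Y$ does \emph{not} prevent the HN deviations $\xi_{ij}$ of $E|_\Sigma$ from pointing along the ray spanned by $h|_\Sigma$, and when they do one gets $\xi_{ij}^2>0$, the opposite sign of what you need. This already happens in the classical model situation: $\Omega_{\bP^3}$ is stable, its restriction to any plane has HN factors $\cO_{\bP^2}(-1)$ and $\Omega_{\bP^2}$, and $\xi_{12}=\frac{1}{2}h$ has $\xi_{12}^2=\frac{1}{4}>0$; the restriction theorem holds there only because $\Delta(\Omega_{\bP^2})=3$ is large enough to compensate. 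So the factor discriminants cannot be decoupled from the cross-terms; the compensating positivity must stay inside the argument.

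This is precisely why the paper runs the discriminant bookkeeping on the blowup $q\colon\wtilde{Y}\to Y$ rather than on $\Sigma$. The objects fed to the inductive hypothesis are not the surface HN factors $G_i$ but the HN factors $F_i$ of $q^*E$ with respect to the mixed slope $\mu_{h,f}$; these have rank at most $r-4$ by Lemma~\ref{lemma_rankCl0modules}, so Theorem~\ref{thm_BI}($\leq r-4$), transported to $\wtilde{Y}$ by Lemma~\ref{lemma_BIblowupconic} and extended to the degenerate polarization by \cite[Proposition 8.9]{BLMS}, gives $\Delta_{q^*\Cl_0}(F_i)\geq 0$. Writing $\ch_1(F_i)=q^*l_i+a_ie$, these discriminants carry the $a_i^2$-terms, which are exactly what offsets positive cross-terms in the bad scenario above; semistability of $E$ enters only through the partial-sum bounds $\frac{\sum_{j\leq i}h^2\cdot l_j}{2\sum_{j\leq i}r_j}\leq\mu_h(E)$ coming from $R^0q_*E_i\subset E$, and Langer's summation in \cite[\S 3.9]{Lang:ssposchar} combines all three inputs at once rather than through your intermediate claim. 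Note also that Proposition~\ref{prop_BIsurfacecase} plays no role in this implication (it is used in the converse one, Theorem~\ref{thm_restriction_rankr}($r$) $\Rightarrow$ Theorem~\ref{thm_BI}($r$)): the induction is needed for the discriminants of the factors themselves, not for a cross-term bound.
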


Similar to the proof of \cite[Theorem 8.10]{BLMS}, we argue by induction on the rank; in particular, we prove that Theorem \ref{thm_restriction_rankr}($r$) implies Theorem \ref{thm_BI}($r$) and Theorem \ref{thm_BI}($r-4$) implies Theorem \ref{thm_restriction_rankr}($r$). Note that Theorem \ref{thm_restriction_rankr}($r=4$) holds since $4$ is the minimal rank by Lemma \ref{lemma_rankCl0modules}, giving the base step of the induction.

\begin{proof}[Theorem \ref{thm_restriction_rankr}($r$) implies Theorem \ref{thm_BI}($r$)]
Assume that $E \in \Coh(Y,\Cl_0)$ is $\mu_h$-semistable of rank $r$ with $\Delta_{\Cl_0}(E) <0$. Then Theorem \ref{thm_restriction_rankr}($r$) implies that $E|_\Sigma$ is semistable for any general divisor $\Sigma \in |h|$, in contradiction with Proposition \ref{prop_BIsurfacecase}. This implies the statement.
\end{proof}

\begin{proof}[Theorem \ref{thm_BI}($r-4$) implies Theorem \ref{thm_restriction_rankr}($r$)]
Let $\Pi \subset |h|$ be a general pencil of hypersurfaces and consider the incidence variety
$$\wtilde{Y}:=\lbrace (\Sigma,y) \in \Pi \times Y: y \in \Sigma \rbrace.$$
Denote by $p: \wtilde{Y} \to \Pi$ and $q: \wtilde{Y} \to Y$ the standard projections, where $q$ is the blow-up of the base locus of $\Pi$ which is a smooth conic curve in $Y$. We denote by $f$ the class of a fiber of $p$. 

Assume that $E$ is as in the statement. The fact that $E|_\Sigma$ is not $\mu_{h|_\Sigma}$-semistable, implies that $q^*E$ is not $\mu_{h,f}$-semistable. Then consider the Harder-Narasimhan filtration of $q^*E$ with respect to $\mu_{h,f}$:
$$0 \subset E_1 \subset \dots  \subset E_m=q^*E.$$
The quotients $F_i:=E_i/E_{i-1}$ are $\mu_{h,f}$-semistable of rank $\leq r-4$. By Theorem \ref{thm_BI}($\leq r-4$), Lemma \ref{lemma_BIblowupconic} and \cite[Proposition 8.9]{BLMS}, we deduce that $\Delta_{q^*\Cl_0}(F_i) \geq 0$.

On the other hand, we have $\ch_1(F_i)= q^*l_i + a_ie$, with $l_i \in H^{1,1}(Y,\bZ)$ and $a_i \in \bZ$. Since $f=h-e$, $q^*h \cdot e^2=-2$ and $h^3=2$, we deduce that
\begin{equation}
\label{eq_mui}
\mu_i:=\mu_{h,f}(F_i)=\frac{h^2 \cdot l_i+2a_i}{2r_i}.    
\end{equation}
By \cite[Lemma 8.5]{BLMS}, as $R^0q_*E_i \subset E$, we obtain that
\begin{equation}
\label{eq_boundmuE}
\frac{\sum_{j \leq i} h^2 \cdot l_j}{\sum_{j \leq i} 2r_j} \leq 
 \mu_h(E).    
\end{equation}
By \eqref{eq_mui} and \eqref{eq_boundmuE}, we deduce that
\begin{equation*}
\sum_{j \leq i} r_j(\mu_j-\mu_h(E)) \leq \sum_{j \leq i} a_j.    
\end{equation*}
Following the same computation of \cite[Section 3.9]{Lang:ssposchar} and of \cite[page 37]{BLMS}, we deduce the desired statement.
\end{proof}


\section{Stability conditions on $\Ku(X)$}
\label{section-stability}

In this section, we prove the first main result of this paper, Theorem~\ref{main-theorem}, 
asserting the existence of stability conditions on the Kuznetsov component of any GM variety, 
as well as Corollary~\ref{corollary-DbX-stability} giving stability conditions on the derived category 
of any GM variety. 
We also prove Theorem~\ref{theorem-Stab-dagger}, which asserts that 
for a GM fourfold or sixfold over $\bC$ our construction gives full numerical stability conditions. 

First we prove the theorems in the case of an ordinary GM fourfold with smooth canonical quadric, 
in Sections~\ref{section-main-theorem-fourfold} and~\ref{section-support-fourfold}. 
Then in Section~\ref{section-main-theorem-general} we handle the general case, by using the 
duality conjecture proved in \cite{KuzPerry:cones} to reduce it to the known cases. 

\subsection{Stability conditions} 
We refer to \cite[Section 2]{BLMS} for background on the notions of tilting and (weak) stability conditions. For the reader's convenience, we briefly recall here the definition of a (weak) stability condition on a triangulated category $\cT$.

\begin{definition}
The \emph{heart of a bounded t-structure} on $\cT$ is a full additive subcategory $\cA \subset \cT$ such that: 
\begin{enumerate}
\item For $E$, $F \in \cA$ and $n <0$, we have $\Hom(E,F[n])=0$. 
\item For every $E \in \cT$, there exists a sequence of morphisms
\begin{equation*}
0=E_0 \xrightarrow{\, f_1 \,} E_1 \xrightarrow{\, f_2 \,} \cdots \xrightarrow{\, f_{m-1} \,} E_{m-1} \xrightarrow{\, f_m \,} E_m=E 
\end{equation*}
such that the cone of $f_i$ is contained in the subcategory $\cA[k_i] \subset \cT$ for some sequence of integers $k_1 > k_2 > \dots > k_m$. 
\end{enumerate}
\end{definition}

It is easy to see that the heart of a bounded t-structure is in fact an abelian category. 
For an abelian category $\cA$ or a triangulated category $\cT$, we write $\rK(\cA)$ or $\rK(\cT)$ for the Grothendieck group. 

\begin{definition}
A \emph{weak stability function} on an abelian category $\cA$ is a group homomorphism 
$Z \colon \rK(\cA) \rightarrow \bC$ such that for all $E \in \cA$ the complex number $Z(E)$ is contained in the set $\set{ z \in \bC \st \Im z > 0, \text{ or } \Im z = 0 \text{ and } \Re z \leq 0 }$. 
We say $Z$ is a \emph{stability function} if for all $E \in \cA$, $Z(E)$ is contained in the smaller set 
 $\set{ z \in \bC \st \Im z > 0, \text{ or } \Im z = 0 \text{ and } \Re z < 0 }$. 
\end{definition}

\begin{definition}
Let $\Lambda$ be a finite rank free abelian group with a surjective group homomorphism $v \colon \rK(\cT) \twoheadrightarrow \Lambda$.
A \emph{weak stability condition} on $\cT$ with respect to $\Lambda$ is the data of a pair $\sigma=(\cA,Z)$, 
where $\cA$ is the heart of a bounded t-structure on $\cT$ and $Z \colon \Lambda \to \bC$ is a group homomorphism, satisfying the following properties: 
\begin{enumerate}
\item The composition $\rK(\cA)=\rK(\cT) \xrightarrow{\, v \,} \Lambda \xrightarrow{\, Z \,} \bC$ is a weak stability function on $\cA$. 
We write $Z(-)$ instead of $Z(v(-))$ for simplicity, and 
for any object $E \in \cA$ define the slope with respect to $Z$ by
\begin{equation*}
\mu_{\sigma}(E)= 
\begin{cases}
-\frac{\Re Z(E)}{\Im Z(E)} & \text{if } \Im Z(E) > 0 \\
+ \infty & \text{otherwise.}
\end{cases}
\end{equation*} 
An object $E \in \cA$ is $\sigma$-semistable (resp.\ $\sigma$-stable) if for every proper subobject $F$ of $E$, we have $\mu_{\sigma}(F) \leq \mu_{\sigma}(E)$ (resp.\ $\mu_{\sigma}(F) < \mu_{\sigma}(E/F))$. 

\item Any object of $\cA$ has a Harder-Narasimhan filtration with $\sigma$-semistable factors. 
\item (Support property) There exists a quadratic form $Q$ on $\Lambda \otimes \bR$ such that the restriction of $Q$ to $\ker Z$ is negative definite and $Q(E) \geq 0$ for all $\sigma$-semistable objects $E$ in $\cA$.
\end{enumerate}
If in addition $Z$ is a stability function, then $\sigma$ is called a \emph{Bridgeland stability condition} with respect to $\Lambda$.
\end{definition}

Below, we shall often give references to results in the literature that are stated for stability conditions on derived categories of varieties, but whose proof works the same in the setting of the derived category of $\Cl_0$-modules on $Y$.

\subsection{Theorem~\ref{main-theorem} for ordinary fourfolds with smooth canonical quadric} 
\label{section-main-theorem-fourfold} 
In this section, $X$ denotes an ordinary GM fourfold with smooth canonical quadric; 
we use the notation in this setting from Sections~\ref{section-conic-fibration} and~\ref{section-BI}. 
Recall that by Theorem~\ref{theorem-KX-Cl}, the Kuznetsov component 
$\Ku(X)$ embeds into the derived category of a noncommutative quadric threefold $(Y, \Cl_0)$ as the orthogonal to four exceptional objects. 
The methods of \cite{BLMS} show that if we can produce a suitable weak stability condition on $\Db(Y, \Cl_0)$, then there is an induced stability condition on the semiorthogonal 
component $\Ku(X)$. 
Thus most of this section is dedicated to constructing such a stability condition on $\Db(Y, \Cl_0)$, as a double tilt of slope stability. 

We define a modified Chern character for objects in $E \in \Db(Y, \Cl_0)$ by  
\begin{equation*}
\ch_{\Cl_0}(E) = \ch(E) \cdot \left( 1 -\frac{1}{8}h^2 \right), 
\end{equation*} 
so that the discriminant appearing in the Bogomolov inequality of Theorem~\ref{thm_BI} can be written in the familiar form 
\begin{equation}
\label{DeltaCl0}
\Delta_{\Cl_0}(E)= h \cdot \left( \ch_{\Cl_0,1}^2(E) -2\rk(E) \ch_{\Cl_0,2}(E) \right) . 
\end{equation} 
Here $\ch_{\Cl_0,i}(E)$ denotes the term of degree $i$ of $\ch_{\Cl_0}(E)$.

We define $\Lambda_{\Cl_0}$ as the rank $3$ lattice generated by 
the vectors 
\begin{equation*} 
(h^{3} \cdot \ch_{\Cl_0,0}(E), h^2 \cdot \ch_{\Cl_0,1}(E), h \cdot \ch_{\Cl_0,2}(E))\in \bQ^3 
\end{equation*} 
for $E \in \Db(Y, \Cl_0)$.  
Note that by definition this lattice receives a surjective homomorphism 
\begin{equation*}
\rK(Y,\Cl_0) \twoheadrightarrow \Lambda_{\Cl_0}    
\end{equation*} 
from the Grothendieck group of $\Db(Y, \Cl_0)$.

For $\beta \in \bR$, we denote by $\Coh^{\beta}(Y, \Cl_0)$ the heart of a bounded t-structure obtained by tilting $\Coh(Y, \Cl_0)$ with respect to slope stability at the slope $\mu = \beta$ (see \cite{HapReiSm}). 
We consider the twisted Chern character 
\begin{equation*}
\ch_{\Cl_0}^\beta=e^{-\beta h }\ch_{\Cl_0},  
\end{equation*}
and write $\ch_{\Cl_0,i}^\beta$ for its degree $i$ term. 

\begin{lemma}
For any $(\alpha, \beta)\in \bR_{>0}\times \bR$,
the pair 
$$\sigma_{\alpha, \beta}=(\emph{Coh}^{\beta}(Y,\Cl_0), Z_{\alpha, \beta})$$
with
$$
Z_{\alpha, \beta}(E)
:=\frac{1}{2}\alpha^2 \emph{ch}_{\Cl_0,0}^{\beta}(E)h^{3} -h\cdot \emph{ch}_{\Cl_0,2}^{\beta}(E)
+\sqrt{-1} h^2 \cdot \emph{ch}_{\Cl_0,1}^{\beta}(E)
$$
defines a weak stability condition on $\emph{D}^b(Y,\Cl_0)$ with respect to $\Lambda_{\Cl_0}$. 
The quadratic form on $\Lambda_{\Cl_0} \otimes \bQ$ required by the support property can be given by the discriminant $\Delta_{\Cl_0}$.
Moreover, these weak stability conditions vary continuously as $(\alpha, \beta) \in \bR_{>0}\times \bR$ varies.
\end{lemma}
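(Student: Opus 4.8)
The plan is to follow the double-tilting construction of Bayer--Macr\`i--Stellari and Bayer--Lahoz--Macr\`i--Stellari \cite{BMS:stabCY3s, BLMS}, with the Bogomolov inequality of Theorem~\ref{thm_BI} serving as the essential input in place of the classical inequality for sheaves. Three properties must be checked: that the composition of $Z_{\alpha,\beta}$ with $\rK(Y,\Cl_0) \twoheadrightarrow \Lambda_{\Cl_0}$ is a weak stability function on the tilted heart $\Coh^\beta(Y,\Cl_0)$, that Harder--Narasimhan filtrations exist, and that the support property holds with quadratic form $\Delta_{\Cl_0}$. Continuity in $(\alpha,\beta)$ is then immediate, since $Z_{\alpha,\beta}$ depends polynomially on $(\alpha,\beta)$ through the explicit formula above.

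First I would verify the weak stability function property. By the construction of the tilt at slope $\mu_h = \beta$, every nonzero $E \in \Coh^\beta(Y,\Cl_0)$ satisfies $\Im Z_{\alpha,\beta}(E) = h^2 \cdot \ch^\beta_{\Cl_0,1}(E) \geq 0$, so the only point needing care is that $\Re Z_{\alpha,\beta}(E) \leq 0$ when this imaginary part vanishes. Such an $E$ fits into a short exact sequence $0 \to F[1] \to E \to T \to 0$ in $\Coh^\beta(Y,\Cl_0)$, where $T = H^0(E)$ is a sheaf supported in dimension $\leq 1$ and $F = H^{-1}(E)$ is a slope-semistable sheaf of slope exactly $\beta$. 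For the torsion part one has $\Re Z_{\alpha,\beta}(T) = -\,h \cdot \ch_{\Cl_0,2}(T) \leq 0$ by effectivity, while for $F[1]$ the twist-invariant form~\eqref{DeltaCl0} together with $h^2 \cdot \ch^\beta_{\Cl_0,1}(F) = 0$ turns the Bogomolov inequality $\Delta_{\Cl_0}(F) \geq 0$ of Theorem~\ref{thm_BI} into $h \cdot \ch^\beta_{\Cl_0,2}(F) \leq 0$, whence $\Re Z_{\alpha,\beta}(F[1]) = -\Re Z_{\alpha,\beta}(F) \leq 0$. This is precisely where Theorem~\ref{thm_BI} enters the verification of the function property.

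For the existence of Harder--Narasimhan filtrations I would appeal to the standard criterion: $\Coh(Y,\Cl_0)$ is noetherian, the tilted heart $\Coh^\beta(Y,\Cl_0)$ remains noetherian in the requisite sense, and the image of $Z_{\alpha,\beta} \circ v$ is locally finite because the imaginary parts $h^2 \cdot \ch^\beta_{\Cl_0,1}$ take values in a discrete subset of $\bR$. One then invokes the general existence result for weak stability conditions of \cite[Section~2]{BLMS} (following Bridgeland \cite{bridgeland}); the arguments there are category-independent and transfer verbatim to $\Db(Y,\Cl_0)$.

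The support property is the crux and the step I expect to present the main difficulty. I would handle it in two parts. First, writing $\Delta_{\Cl_0}$ in the coordinates $(h^3\ch_{\Cl_0,0}, h^2\ch_{\Cl_0,1}, h\ch_{\Cl_0,2})$ as in~\eqref{DeltaCl0}, a direct computation shows that $\Delta_{\Cl_0}$ restricts to a negative definite form on the line $\ker Z_{\alpha,\beta} \subset \Lambda_{\Cl_0}\otimes\bR$ for every $\alpha > 0$ (one finds it proportional to $-\alpha^2$). Second, I must show $\Delta_{\Cl_0}(E) \geq 0$ for every $\sigma_{\alpha,\beta}$-semistable object $E$; this is again a consequence of Theorem~\ref{thm_BI}, via the formal arguments of \cite[Section~2]{BLMS} and \cite{BMS:stabCY3s} that deduce the inequality for tilt-semistable objects from the Bogomolov inequality for slope-semistable sheaves. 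The delicate points are the boundary cases where $\Im Z_{\alpha,\beta}$ vanishes on a semistable object and the bookkeeping of the discriminant estimates already assembled in Section~\ref{section-BI}; once these are in place the three conditions combine to give the asserted weak stability condition.
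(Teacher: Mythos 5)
Your proposal is correct and is essentially the paper's proof: the paper disposes of this lemma in one sentence, citing the Bogomolov inequality of Theorem~\ref{thm_BI} together with the standard tilting argument of \cite[Proposition 2.12]{BLMS}, which is precisely the argument you spell out (weak stability function via $\Delta_{\Cl_0}(F)\geq 0$ on the slope-$\beta$ part, HN filtrations via the general results of \cite{BLMS, BMS:stabCY3s}, support property via negative definiteness of $\Delta_{\Cl_0}$ on $\ker Z_{\alpha,\beta}$ plus nonnegativity on tilt-semistable objects). One minor caveat: your discreteness claim for the imaginary part of the central charge is only accurate for rational $\beta$ --- for irrational $\beta$ the HN property requires the limiting argument in \cite{BMS:stabCY3s} --- but since you defer to those general results anyway, this does not affect the proof.
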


\begin{proof}
This follows from the Bogomolov inequality of Theorem~\ref{thm_BI} by standard arguments, cf. \cite[Proposition 2.12]{BLMS} 
\end{proof} 

To induce a stability condition on $\Ku(X)$ from one on $(Y, \Cl_0)$, 
we will need stability properties of the exceptional objects $\Cl_0, \Cl_1, \cR_c, \cR_d$ 
appearing in Theorem~\ref{theorem-KX-Cl} and of their Serre duals; 
we prove the required result in Lemma~\ref{lemma-exceptional-stable} below after 
some preliminaries.  
\begin{lemma}
\label{lemma-serre-Y-Cl0}
The category $\Db(Y, \Cl_0)$ has a Serre functor given by
$S_{\Cl_0}(-) = - \otimes_{\Cl_0} \Cl_1(-h)[3]$. 
\end{lemma}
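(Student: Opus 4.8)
The plan is to reduce the statement to a single identification of $\Cl_0$-bimodules, exactly as in the surface case treated in Lemma~\ref{lemma-serre-Cl0}\eqref{S_Cl_0}. Since $\Cl_0$ is a sheaf of finite flat $\cO_Y$-algebras on the smooth projective threefold $Y$, I would invoke the general description of the Serre functor of a category of modules over such an algebra (the discussion at the beginning of \cite[\S7]{BLMS}), which gives
\begin{equation*}
S_{\Cl_0}(-) = \omega_Y \otimes_{\cO_Y} (-) \otimes_{\Cl_0} \Cl_0^{\vee}[3],
\end{equation*}
where $\Cl_0^{\vee}$ denotes the $\cO_Y$-dual of $\Cl_0$ equipped with its natural $\Cl_0$-bimodule structure and the shift is by $\dim Y = 3$. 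Thus everything reduces to producing a $\Cl_0$-bimodule isomorphism $\omega_Y \otimes_{\cO_Y} \Cl_0^{\vee} \cong \Cl_1(-h)$.

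Next I would compute the two sides explicitly. Since $Y \subset \bP(W') \cong \bP^4$ is a quadric threefold, adjunction gives $\omega_Y \cong \cO_Y(-3h)$, so the desired isomorphism is equivalent to $\Cl_0^{\vee} \cong \Cl_1(2h)$. On the level of underlying $\cO_Y$-modules this is immediate from \eqref{Cl_0} and \eqref{Cl_1}: dualizing \eqref{Cl_0} yields $\Cl_0^{\vee} \cong \cO_Y \oplus \cO_Y(h) \oplus \cS_Y^{\vee}(h)$, while twisting \eqref{Cl_1} gives $\Cl_1(2h) \cong \cS_Y(2h) \oplus \cO_Y(h) \oplus \cO_Y$, and the two agree because $\cS_Y^{\vee}(-h) \cong \cS_Y$ forces $\cS_Y^{\vee}(h) \cong \cS_Y(2h)$. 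This part is routine bookkeeping.

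The main obstacle is to upgrade this $\cO_Y$-module isomorphism to one of $\Cl_0$-bimodules, since it is the bimodule structure (not merely the underlying sheaf) that enters the Serre functor. Here I would appeal to the structural results on the Clifford bimodules $\Cl_i$ in \cite{kuznetsov08quadrics}: because $\cL \cong \cO_Y$ in our situation (see \eqref{eq_Cl_i}), the canonical dualizing bimodule $\Cl_0^{\vee}$ is identified with an appropriate twist of $\Cl_1$, matching the bimodule structures under the isomorphism found above. This is precisely the point that in the surface case was dispatched as an ``easy computation,'' and it is the genuinely nonformal ingredient of the proof; once it is in place, combining it with the Serre-functor formula and $\omega_Y \cong \cO_Y(-3h)$ yields $S_{\Cl_0}(-) = - \otimes_{\Cl_0} \Cl_1(-h)[3]$, completing the argument.
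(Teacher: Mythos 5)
Your proposal is correct and follows essentially the same route as the paper: the paper proves this lemma by repeating the argument of Lemma~\ref{lemma-serre-Cl0}\eqref{S_Cl_0}, i.e.\ invoking the Serre functor formula $S_{\Cl_0}(-) = \omega_Y \otimes_{\cO_Y} (-) \otimes_{\Cl_0} \Cl_0^{\vee}[3]$ from the beginning of \cite[\S7]{BLMS} and then identifying $\omega_Y \otimes \Cl_0^{\vee} \cong \Cl_1(-h)$ using \eqref{Cl_0} and \eqref{Cl_1}. If anything, you are more explicit than the paper about the bimodule-structure compatibility, which the paper dispatches as an easy computation.
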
 

\begin{proof}
This follows by the same argument as in Lemma~\ref{lemma-serre-Cl0}\eqref{S_Cl_0}. 
\end{proof} 

\begin{remark}
The isomorphism $\Cl_1 \otimes_{\Cl_0} \Cl_1 \cong \Cl_0$, given by \cite[Corollary 3.9]{kuznetsov08quadrics}, is sometimes useful for computing the action of $S_{\Cl_0}$, e.g. $S_{\Cl_0}(\Cl_1) \cong \Cl_0(-h)[3]$. 
\end{remark}

\begin{lemma}
\label{lemma-rank-YCl0}
The rank of any object of $\Db(Y, \Cl_0)$ is divisible by $4$.
\end{lemma}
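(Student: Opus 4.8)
The plan is to reduce the statement to the surface case already proved in Lemma~\ref{lemma_rankCl0modules}. The essential point is that the rank of an object of $\Db(Y,\Cl_0)$ is unaffected by restriction to a hyperplane section, so divisibility by $4$ on $Y$ follows at once from the same divisibility on a quadric surface $\Sigma \subset Y$.

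Concretely, I would first fix a generic hyperplane section $i_\Sigma \colon \Sigma \hookrightarrow Y$ as in~\eqref{diagram-Z}, chosen so that Lemma~\ref{lemma_rankCl0modules} applies (in particular so that $\Sigma$ and $Z$ are smooth and Lemma~\ref{lemma-PicZ} holds). Since $\Cl_0|_\Sigma$ is literally the restriction of $\Cl_0$ (as recorded just before~\eqref{Cl_0-Sigma}), derived pullback along $i_\Sigma$ defines a functor $i_\Sigma^* \colon \Db(Y,\Cl_0) \to \Db(\Sigma,\Cl_0)$, landing in the bounded derived category because the divisorial embedding $i_\Sigma$ is regular and hence of finite Tor-dimension. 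For $E \in \Db(Y,\Cl_0)$ one has $\Forg(i_\Sigma^* E) \cong i_\Sigma^* \Forg(E)$, so $\ch(i_\Sigma^* E) = i_\Sigma^*\ch(E)$; taking degree-$0$ parts and using that pullback acts trivially on $\CH^0$ gives $\rk(i_\Sigma^* E) = \rk(E)$. Applying Lemma~\ref{lemma_rankCl0modules} to $i_\Sigma^* E \in \Db(\Sigma,\Cl_0)$ then shows $\rk(E)$ is divisible by $4$.

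I do not expect a genuine obstacle: all the content sits in Lemma~\ref{lemma_rankCl0modules}, and the only things to check — that restriction preserves the $\Cl_0$-action, preserves boundedness, and preserves rank — are routine. If instead one wants a self-contained argument on $Y$ paralleling the surface proof, one can use the equivalence $\Xi$ of Lemma~\ref{lemma-Xi-Phi} to write every object as $\Xi(E) = \pi_*(\cG^{\vee} \otimes E)$ and compute its rank by base change along a generic fiber $i_C \colon C \hookrightarrow \tX$ of $\pi \colon \tX \to Y$, obtaining $\rk(\Xi(E)) = \chi(C, i_C^*(\cG^{\vee} \otimes E)) = 4\rk(E) + 2\,c_1(E)\cdot C$ from $\rk(\cG^{\vee}) = 2$, $c_1(\cG^{\vee}) = H$ (via~\eqref{G}), and $H \cdot C = 2$. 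In this variant the sole nontrivial input is that $c_1(E)\cdot C$ is even, which holds because $\Pic(\tX) = b^*\Pic(X) \oplus \bZ E = \bZ H \oplus \bZ E$ while $H \cdot C = E \cdot C = 2$ (equivalently $h \cdot C = 0$), so every integral divisor on $\tX$ meets $C$ in even degree; pinning down this last intersection-theoretic fact would be the main point of the self-contained route.
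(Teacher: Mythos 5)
Your main argument---restricting along a generic hyperplane section $i_\Sigma \colon \Sigma \hookrightarrow Y$, noting that derived pullback preserves the $\Cl_0$-module structure and the rank, and then invoking Lemma~\ref{lemma_rankCl0modules}---is precisely the paper's proof, which is stated in one line as "this follows from Lemma~\ref{lemma_rankCl0modules} by considering the restriction of objects to a generic hyperplane section." Your self-contained variant on $Y$ (via $\Xi$, a generic conic fiber $C$, and $\Pic(\tX)=\bZ H\oplus\bZ E$) is also sound but goes beyond what the paper does and is not needed.
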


\begin{proof}
This follows from Lemma~\ref{lemma_rankCl0modules} by considering the restriction of objects to a generic hyperplane section. 
\end{proof}

We will only be concerned with the terms of $\ch_{\Cl_0}$ of degree at most $2$; 
we denote by $\ch_{\Cl_0, \leq 2} = \ch_{\Cl_0, 0} + \ch_{\Cl_0, 1} + \ch_{\Cl_0, 2}$ the sum of these 
terms. 

\begin{lemma}
\label{lemma-chleq2}
The objects 
\begin{equation}
\label{exceptional-list-sheaves}
\Cl_0, \, \Cl_1, \, \cR_c, \, \cR_d, \, \Cl_1(-h), \, \Cl_0(-h), \, \cR_c \otimes_{\Cl_0} \Cl_1(-h), \, \cR_d \otimes_{\Cl_0} \Cl_1(-h) 
\end{equation} 
are slope stable $\Cl_0$-modules, with truncated Chern characters given by 
\begin{alignat*}{2}
& \ch_{\Cl_0, \leq 2}(\Cl_0)  =  \ch_{\Cl_0, \leq 2}(\Cl_1)  = 4 - 4h + 2h^2 ,  \\ 
& \ch_{\Cl_0, \leq 2}(\cR_c)  = \ch_{\Cl_0, \leq 2}(\cR_d)  = 4 - 2h + \frac{1}{2}h^2 , \\ 
& \ch_{\Cl_0, \leq 2}(\Cl_1(-h))  = \ch_{\Cl_0, \leq 2}(\Cl_0(-h))  = 4 - 8h + 8h^2 , \\ 
& \ch_{\Cl_0, \leq 2}(\cR_c \otimes_{\Cl_0} \Cl_1(-h))  = \ch_{\Cl_0, \leq 2}(\cR_d \otimes_{\Cl_0} \Cl_1(-h))  = 4 - 6h + \frac{9}{2}h^2. 
\end{alignat*} 
In particular, the discriminant $\Delta_{\Cl_0}$ vanishes on all of these objects. 
\end{lemma}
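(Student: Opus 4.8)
The plan is to reduce everything to the explicit descriptions of the eight objects as $\cO_Y$-modules, combined with the rank-divisibility constraint of Lemma~\ref{lemma-rank-YCl0}. I would first dispose of slope stability in one stroke. Each object is a $\Cl_0$-module which, as an $\cO_Y$-module, is torsion free of rank $4$: for $\Cl_0$ and $\Cl_1$ this is \eqref{Cl_0} and \eqref{Cl_1}, for $\cR_a$ and $\cR_b$ it is Lemma~\ref{lemma-Ra-Rb}, twisting by $\cO_Y(-h)$ preserves these properties, and $\cR_a \otimes_{\Cl_0} \Cl_1$, $\cR_b \otimes_{\Cl_0} \Cl_1$ are torsion free of rank $4$ because $-\otimes_{\Cl_0}\Cl_1$ is an exact (indeed involutive) autoequivalence of $\Coh(Y,\Cl_0)$ preserving rank and torsion-freeness, exactly as in Lemma~\ref{lemma-serre-Cl0}\eqref{Cl1-preserve-stability}. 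Now a nonzero proper \emph{saturated} $\Cl_0$-submodule $F$ of such a rank-$4$ module $E$ would have $E/F$ torsion free and nonzero, hence of rank divisible by $4$ and thus $\geq 4$ by Lemma~\ref{lemma-rank-YCl0}; this forces $\rk(F)\leq 0$, i.e. $F=0$, a contradiction. So no destabilizing subobject exists and all eight objects are slope stable.

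Next I would compute the truncated Chern characters of the four basic modules $\Cl_0,\Cl_1,\cR_a,\cR_b$ directly from their $\cO_Y$-module structures, using the Chern character of the spinor bundle ($c_1(\cS_Y)=-h$, $\ch_2(\cS_Y)=0$), the relation $h^3=2$, and the definition $\ch_{\Cl_0}(E)=\ch(E)\cdot(1-\tfrac18 h^2)$; this yields the first two displayed lines. For $\Cl_0(-h)$ and $\Cl_1(-h)$ I would then use the identity $\ch_{\Cl_0}(E(-h))=\ch_{\Cl_0}(E)\,e^{-h}$, which immediately gives the third line.

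The one genuinely delicate step is the $\Cl_1$-twist. As the remark following Lemma~\ref{lemma:HRR} warns, $\cN(Y,\Cl_0)\otimes\bQ$ is \emph{not} spanned by objects of the form $\Cl_0\otimes E'$, so I cannot compute $\ch(\cR_a\otimes_{\Cl_0}\Cl_1)$ by the Hirzebruch--Riemann--Roch bookkeeping that works on the surface $\Sigma$. Instead I would restrict to a generic hyperplane section $\Sigma\subset Y$ as in~\eqref{diagram-Z}: since $(\cR_a\otimes_{\Cl_0}\Cl_1)|_\Sigma\cong \cR_a|_\Sigma\otimes_{\Cl_0}\Cl_1|_\Sigma$, Lemma~\ref{lemma-serre-Cl0}\eqref{Cl1-preserve-ch} gives $\ch(\cR_a\otimes_{\Cl_0}\Cl_1)|_\Sigma=\ch(\cR_a)|_\Sigma$. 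Because $\Pic(Y)=\bZ h$ and $\CH^2(Y)\otimes\bQ=\bQ\,h^2$ with $h\cdot h^2\neq 0$, the classes $\ch_1$ and $\ch_2$ on $Y$ are detected by intersection with $h$, equivalently by restriction to $\Sigma$; hence $\ch_{\leq 2}(\cR_a\otimes_{\Cl_0}\Cl_1)=\ch_{\leq 2}(\cR_a)$, and likewise for $\cR_b$. Twisting by $\cO_Y(-h)$ then produces the fourth line. This restriction argument is the main obstacle, everything else being routine.

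Finally, the vanishing of the discriminant is immediate from~\eqref{DeltaCl0}. In each of the four cases the computed character has rank $4$ and takes the form $\ch_{\Cl_0,\leq 2}=4+c\,h+\tfrac{c^2}{8}h^2$, so that $\ch_{\Cl_0,1}^2=c^2h^2=2\cdot 4\cdot \ch_{\Cl_0,2}$ and therefore $\Delta_{\Cl_0}=h\cdot(\ch_{\Cl_0,1}^2-2\,\rk\,\ch_{\Cl_0,2})=0$, completing the proof.
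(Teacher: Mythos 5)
Your proof is correct and takes essentially the same route as the paper's: direct computation of the truncated Chern characters from the $\cO_Y$-module descriptions, restriction to a hyperplane section $\Sigma$ combined with Lemma~\ref{lemma-serre-Cl0}\eqref{Cl1-preserve-ch} to handle the $\Cl_1$-twisted objects, and stability from torsion-freeness together with the minimal-rank constraint of Lemma~\ref{lemma-rank-YCl0}. The only difference is that you spell out details the paper leaves implicit, namely the saturated-subobject argument for stability and the torsion-freeness of $\cR_a \otimes_{\Cl_0} \Cl_1$ and $\cR_b \otimes_{\Cl_0} \Cl_1$ via the exact involutive autoequivalence $- \otimes_{\Cl_0} \Cl_1$.
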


\begin{proof}
A direct computation gives the truncated Chern characters of all the objects except for $\cR_c \otimes_{\Cl_0} \Cl_1(-h)$ and $\cR_d \otimes_{\Cl_0} \Cl_1(-h)$. For these, we note that for any object $E \in \Db(Y, \Cl_0)$, the truncated Chern character $\ch_{\Cl_0, \leq 2}(E)$ is determined by the Chern character of the derived restriction of $E$ to $\Sigma$, where $\Sigma \subset Y$ is a smooth hyperplane section; in particular, by  Lemma~\ref{lemma-serre-Y-Cl0}\eqref{Cl1-preserve-ch} we find $\ch_{\Cl_0, \leq 2}(E \otimes_{\Cl_0} \Cl_1(-h)) = \ch_{\Cl_0, \leq 2}(E(-h))$. Using this observation, the computation for $\cR_c \otimes_{\Cl_0} \Cl_1(-h)$ and $\cR_d \otimes_{\Cl_0} \Cl_1(-h)$ is straightforward. 

Finally, the stability of the objects~\eqref{exceptional-list-sheaves} follows because they are torsion free and, by Lemma~\ref{lemma-rank-YCl0}, of minimal rank. 
\end{proof}

\begin{lemma}
\label{lemma-exceptional-stable} 
For $-\frac{3}{2} \leq \beta < -1$ the objects 
\begin{equation}
\label{exceptional-list}
\Cl_0, \, \Cl_1, \, \cR_c, \, \cR_d, \, 
S_{\Cl_0}(\Cl_0)[-2], \, S_{\Cl_0}(\Cl_1)[-2], \, S_{\Cl_0}(\cR_c)[-2], \, S_{\Cl_0}(\cR_d)[-2], 
\end{equation} 
are contained in $\Coh^{\beta}(Y, \Cl_0)$, and for $\alpha >0$ they are $\sigma_{\alpha,\beta}$-stable. 
\end{lemma}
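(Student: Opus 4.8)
The plan is to separate the statement into a slope computation (which gives membership in $\Coh^\beta(Y,\Cl_0)$ and pins down the range of $\beta$) and a \emph{no-walls} argument (which upgrades slope stability of the underlying sheaves to $\sigma_{\alpha,\beta}$-stability), the latter being powered by the vanishing $\Delta_{\Cl_0}=0$ recorded in Lemma~\ref{lemma-chleq2}.

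First I would rewrite the four Serre duals using Lemma~\ref{lemma-serre-Y-Cl0}, which gives $S_{\Cl_0}(-)=(-)\otimes_{\Cl_0}\Cl_1(-h)[3]$, together with $\Cl_1\otimes_{\Cl_0}\Cl_1\cong\Cl_0$. This identifies the last four objects of \eqref{exceptional-list} with the shifts
\[
S_{\Cl_0}(\Cl_0)[-2]\cong \Cl_1(-h)[1], \qquad S_{\Cl_0}(\Cl_1)[-2]\cong \Cl_0(-h)[1],
\]
\[
S_{\Cl_0}(\cR_a)[-2]\cong (\cR_a\otimes_{\Cl_0}\Cl_1(-h))[1], \qquad S_{\Cl_0}(\cR_b)[-2]\cong (\cR_b\otimes_{\Cl_0}\Cl_1(-h))[1].
\]
Since $\ch_{\Cl_0}$ agrees with $\ch$ in degrees $\leq 1$, the slope $\mu_h$ can be read off directly from Lemma~\ref{lemma-chleq2} using $h^3=2$: the underlying sheaves of $\Cl_0,\Cl_1$ have slope $-1$, those of $\cR_a,\cR_b$ have slope $-\tfrac12$, those of $\Cl_0(-h),\Cl_1(-h)$ have slope $-2$, and those of $\cR_a\otimes_{\Cl_0}\Cl_1(-h),\cR_b\otimes_{\Cl_0}\Cl_1(-h)$ have slope $-\tfrac32$. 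Recalling that $\Coh^\beta(Y,\Cl_0)$ consists of the complexes $E$ with $H^{-1}(E)$ a sheaf of slopes $\leq\beta$ and $H^0(E)$ a sheaf of slopes $>\beta$, for $-\tfrac32\leq\beta<-1$ the first four objects (slopes $-1,-\tfrac12>\beta$) are sheaves in the heart, while the last four are $[1]$-shifts of sheaves of slopes $-2,-\tfrac32\leq\beta$, hence also in the heart. This is exactly where the two constraints $\beta<-1$ (forced by $\Cl_0,\Cl_1$) and $\beta\geq-\tfrac32$ (forced by $\cR_a\otimes_{\Cl_0}\Cl_1(-h),\cR_b\otimes_{\Cl_0}\Cl_1(-h)$) enter.

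For stability, the key structural input is that all eight classes $v$ satisfy $\Delta_{\Cl_0}(v)=0$ (Lemma~\ref{lemma-chleq2}) and have rank $4$, which by Lemma~\ref{lemma-rank-YCl0} is the minimal positive rank. I would first show that a class with $\Delta_{\Cl_0}=0$ admits no $\sigma_{\alpha,\beta}$-walls: on a wall, a semistable object of class $v$ has a destabilizing subobject and quotient of classes $w$ and $v-w$ which are themselves $\sigma_{\alpha,\beta}$-semistable, so $\Delta_{\Cl_0}(w),\Delta_{\Cl_0}(v-w)\geq 0$ by the support property (Theorem~\ref{thm_BI}); the superadditivity of $\Delta_{\Cl_0}$ along a fixed ray (\cite[Lemma~A.6]{BMS:stabCY3s}, used exactly as in the proof of Proposition~\ref{prop_BIsurfacecase}) combined with $\Delta_{\Cl_0}(v)=0$ forces $\Delta_{\Cl_0}(w)=0$ and $w$ proportional to $v$. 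As $\rk$ is divisible by $4$ and $\rk v=4$, no proportional class with $0<\rk w<4$ exists, so only $\rk w\in\{0,4\}$ survive. Having excluded genuine walls, it then suffices to verify stability at a single value of $\alpha$, which I would do in the large-volume limit $\alpha\gg0$, where $\sigma_{\alpha,\beta}$-stability of a sheaf reduces to slope stability refined by $\ch_2$; minimality of the rank together with the slope-stability from Lemma~\ref{lemma-chleq2} yields stability there, and the shifted objects are handled by the Serre-dual (equivalently, quotient) version of the same analysis.

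\textbf{The main obstacle} is the rank-boundary bookkeeping in the last step. One must pass from a subobject $A$ in the tilted heart $\Coh^\beta$ to honest sheaves: for a sheaf $F$ among the first four, any subobject has $H^{-1}(A)\hookrightarrow H^{-1}(F)=0$, so $A$ is a sheaf in $\cT_\beta$; a rank-$0$ subobject is torsion and hence maps to $0$ into the torsion-free $F$, while a rank-$4$ subobject forces $H^{-1}(F/A)=0$ (since $\cF_\beta$ is torsion-free), so $A\hookrightarrow F$ is an injection of sheaves with torsion quotient, and slope-stability of $F$ gives $\mu_h(A)<\mu_h(F)$, hence $\mu_{\alpha,\beta}(A)<\mu_{\alpha,\beta}(F)$ for $\alpha\gg0$. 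Making this dichotomy precise — ruling out the degenerate rank-$0$ and rank-$4$ destabilizers by combining $\Delta_{\Cl_0}=0$, divisibility of the rank by $4$, and torsion-freeness — and carrying out the dual version for the shifted objects, is the delicate part of the argument.
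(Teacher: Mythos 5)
Your first half --- rewriting the Serre duals via Lemma~\ref{lemma-serre-Y-Cl0} as $\Cl_1(-h)[1]$, $\Cl_0(-h)[1]$, $(\cR_a\otimes_{\Cl_0}\Cl_1(-h))[1]$, $(\cR_b\otimes_{\Cl_0}\Cl_1(-h))[1]$, computing the slopes $-1,-\tfrac12,-2,-\tfrac32$ from Lemma~\ref{lemma-chleq2}, and deducing membership in $\Coh^{\beta}(Y,\Cl_0)$ exactly for $-\tfrac32\leq\beta<-1$ --- coincides with the paper's argument. For the six objects whose underlying $\cO_Y$-modules are visibly vector bundles ($\Cl_0,\Cl_1,\cR_a,\cR_b$ and $\Cl_0(-h),\Cl_1(-h)$, by \eqref{Cl_0}, \eqref{Cl_1} and Lemma~\ref{lemma-Ra-Rb}), your no-walls-plus-large-volume scheme is in substance a re-derivation of the result the paper simply cites, namely \cite[Proposition 7.4.1]{BMT:3folds-BG} (a slope-stable \emph{vector bundle} with $\Delta_{\Cl_0}=0$ is, up to shift, $\sigma_{\alpha,\beta}$-stable for all $\alpha>0$); that is acceptable in spirit, though one detail is wrong: a rank-$4$ subobject $A\subset F$ in $\Coh^{\beta}$ does not force $H^{-1}(F/A)=0$ merely because $\cF_\beta$ is torsion-free, since $H^{-1}(F/A)$ is the kernel of the sheaf map $A\to F$ and can a priori be a nonzero torsion-free sheaf in $\cF_\beta$.

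The genuine gap is in the last two objects. All of your numerical tools ($\Delta_{\Cl_0}=0$, divisibility of the rank by $4$, proportionality via \cite[Lemma A.6]{BMS:stabCY3s}) see only the class in $\Lambda_{\Cl_0}$, and there are nonzero objects of the heart with class \emph{zero}: the $0$-dimensional torsion modules $T$. These have slope $+\infty$ and are precisely the potential destabilizers that survive your wall analysis. For the unshifted sheaves they are harmless (a torsion subsheaf of a torsion-free sheaf is zero, as you note), but for a shifted object $E[1]$ with $E\in\cF_\beta$ such a $T$ embeds into $E[1]$ in the heart whenever $\Ext^1_{\Cl_0}(T,E)\neq 0$ --- for instance whenever $Q=E^{\vee\vee}/E$ is nonzero and $0$-dimensional, via the non-split extension $0\to E\to E^{\vee\vee}\to Q\to 0$ --- and then $E[1]$ is not $\sigma_{\alpha,\beta}$-stable. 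Excluding this requires knowing that the underlying $\cO_Y$-module of $E$ is \emph{locally free}, not merely torsion-free; but Lemma~\ref{lemma-chleq2} gives only slope stability and the Chern character, hence torsion-freeness. For $\Cl_0(-h)$, $\Cl_1(-h)$ local freeness is clear, while for $\cR_a\otimes_{\Cl_0}\Cl_1(-h)$ and $\cR_b\otimes_{\Cl_0}\Cl_1(-h)$ it is not, and your proposal offers no mechanism to establish it. This is exactly where the paper does its real work: tensoring the decomposition of Theorem~\ref{theorem-KX-Cl} by $\Cl_1$ and using the complete orthogonality of $\Cl_0$ and $\Cl_1$, it deduces $\llangle \cR_a,\cR_b\rrangle=\llangle \cR_a\otimes_{\Cl_0}\Cl_1,\cR_b\otimes_{\Cl_0}\Cl_1\rrangle$, so that $-\otimes_{\Cl_0}\Cl_1$ either fixes or swaps $\cR_a$ and $\cR_b$; hence their underlying $\cO_Y$-modules are the vector bundles of Lemma~\ref{lemma-Ra-Rb}. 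Without this (or an equivalent) step, your ``Serre-dual version of the same analysis'' for $S_{\Cl_0}(\cR_a)[-2]$ and $S_{\Cl_0}(\cR_b)[-2]$ does not go through.
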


\begin{proof}
Note that by Lemma~\ref{lemma-serre-Y-Cl0}, the last four objects in~\eqref{exceptional-list} are the sheaves 
\begin{equation*}
\Cl_1(-h)[1], \, \Cl_0(-h)[1], \, \cR_c \otimes_{\Cl_0} \Cl_1(-h)[1], \, \cR_d \otimes_{\Cl_0} \Cl_1(-h)[1]
\end{equation*} 
 and by Lemma~\ref{lemma-chleq2} we have 
\begin{align*}
-2=\mu_h(\Cl_1(-h)) = \mu_h(\Cl_0(-h))\  
& < \mu_h(\cR_c \otimes_{\Cl_0} \Cl_1(-h)) = \mu_h(\cR_d \otimes_{\Cl_0} \Cl_1(-h)) = -\frac{3}{2} \\ 
-1 = \mu_h(\Cl_0) =  \mu_h(\Cl_1) & < \mu_h(\cR_c) = \mu_h(\cR_d)= -\frac{1}{2} . 
\end{align*}  
Therefore, by the slope stability of the objects~\eqref{exceptional-list-sheaves} from Lemma~\ref{lemma-chleq2}, 
the objects~\eqref{exceptional-list} are contained in $\Coh^{\beta}(Y, \Cl_0)$ as claimed. 

Note that the $\cO_Y$-modules underlying the objects $\Cl_0, \Cl_1, \cR_c, \cR_d, \Cl_1(-h), \Cl_0(-h)$ are vector bundles, by the definitions~\eqref{Cl_0} and~\eqref{Cl_1} and Lemma~\ref{lemma-Ra-Rb}. 
Since by Lemma~\ref{lemma-chleq2} they also have discriminant $\Delta_{\Cl_0}=0$, 
we conclude by \cite[Proposition 7.4.1]{BMT:3folds-BG} that the objects 
$\Cl_0, \Cl_1, \cR_c, \cR_d, \Cl_1(-h)[1], \Cl_0(-h)[1]$ are $\sigma_{\alpha,\beta}$-stable for $\alpha > 0$, as claimed. 

By the same argument, to show that $\cR_c \otimes_{\Cl_0} \Cl_1(-h)[1]$ and $\cR_d \otimes_{\Cl_0} \Cl_1(-h)[1]$ are $\sigma_{\alpha,\beta}$-stable for $\alpha > 0$, 
it suffices to show that the $\cO_Y$-modules underlying $\cR_c \otimes_{\Cl_0} \Cl_1$ and $\cR_d \otimes_{\Cl_0} \Cl_1$ are vector bundles. 
In fact, we claim that tensoring by $\Cl_1$ over $\Cl_0$ either fixes $\cR_c$ and $\cR_d$ or swaps them. 
Indeed, tensoring by $\Cl_1$ the semiorthogonal decomposition of Theorem \ref{theorem-KX-Cl}, we get
\begin{equation*}
\Db(Y, \Cl_0) = 
\llangle \Psi(\Ku(X)), \Cl_0, \Cl_1, \cR_c \otimes_{\Cl_0} \Cl_1, \cR_d \otimes_{\Cl_0} \Cl_1 \rrangle    . 
\end{equation*}
Since $\Cl_0, \Cl_1$ are completely orthogonal, comparing the two semiorthogonal decompositions we get $\llangle \cR_c,\cR_d \rrangle=\llangle \cR_c \otimes_{\Cl_0} \Cl_1, \cR_d \otimes_{\Cl_0} \Cl_1 \rrangle$. This implies our claim and finishes the proof of the lemma.
\end{proof}

We need to consider one further tilt of the weak stability condition $\sigma_{\alpha, \beta}$. 
Let $\Coh^{0}_{\alpha, \beta}(Y, \Cl_0)$ be the heart of a bounded t-structure obtained by tilting $\Coh^{\beta}(Y, \Cl_0)$ with respect to $\sigma_{\alpha, \beta}$-stability at the slope $\mu = 0$. 

\begin{lemma}[{\cite[Proposition 2.15]{BLMS}}]
For any $(\alpha, \beta)\in \bR_{>0}\times \bR$, the pair 
\begin{equation*}
\sigma_{\alpha, \beta}^0 = (\Coh^{0}_{\alpha, \beta}(Y, \Cl_0), Z_{\alpha, \beta}^0) 
\end{equation*}
with $Z^0_{\alpha, \beta} = -\sqrt{-1}  \cdot Z_{\alpha,\beta}$ defines a weak stability condition on $\Db(Y, \Cl_0)$ with respect to $\Lambda_{\Cl_0}$. 
\end{lemma}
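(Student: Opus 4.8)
The plan is to invoke the general second-tilt construction of \cite[Proposition 2.15]{BLMS}, whose proof only uses the axioms of a weak stability condition together with the finite rank of the lattice $\Lambda_{\Cl_0}$, and hence applies verbatim to $\Db(Y,\Cl_0)$. Concretely, starting from the weak stability condition $\sigma_{\alpha,\beta}=(\Coh^{\beta}(Y,\Cl_0),Z_{\alpha,\beta})$ constructed above, I would form the torsion pair $(\cT^{0},\cF^{0})$ inside $\Coh^{\beta}(Y,\Cl_0)$, in which $\cT^{0}$ (resp.\ $\cF^{0}$) is generated by the $\sigma_{\alpha,\beta}$-semistable objects of slope $\mu_{\alpha,\beta}>0$ (resp.\ $\mu_{\alpha,\beta}\le 0$) with respect to $\sigma_{\alpha,\beta}$. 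By Happel--Reiten--Smal\o{} \cite{HapReiSm} the tilt $\Coh^{0}_{\alpha,\beta}(Y,\Cl_0)=\llangle \cF^{0}[1],\cT^{0}\rrangle$ is then automatically the heart of a bounded t-structure, so it remains to check that $Z^{0}_{\alpha,\beta}$ defines a weak stability function satisfying the Harder--Narasimhan and support properties.

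First I would verify that $Z^{0}_{\alpha,\beta}$ is a weak stability function on this heart. Writing $Z_{\alpha,\beta}(E)=x+\sqrt{-1}\,y$ with $y=h^{2}\cdot\ch^{\beta}_{\Cl_0,1}(E)\ge 0$ for $E\in\Coh^{\beta}(Y,\Cl_0)$, the rotation gives $\Re Z^{0}_{\alpha,\beta}(E)=y$ and $\Im Z^{0}_{\alpha,\beta}(E)=-x$. For $E\in\cT^{0}$ the condition $\mu_{\alpha,\beta}>0$ forces $x<0$ and hence $\Im Z^{0}_{\alpha,\beta}(E)>0$; for $E\in\cF^{0}$ one has $y>0$ and $x\ge 0$, so the shift $E[1]$ satisfies $\Im Z^{0}_{\alpha,\beta}(E[1])=x\ge 0$, with the boundary case $x=0$ giving $\Re Z^{0}_{\alpha,\beta}(E[1])=-y<0$. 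In all cases the value lies in the required semiclosed upper half plane, so $Z^{0}_{\alpha,\beta}$ is a weak stability function.

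It then remains to establish the Harder--Narasimhan and support properties, which I would deduce from those of $\sigma_{\alpha,\beta}$ exactly as in \cite[Proposition 2.15]{BLMS}. The HN property for the tilt follows from the corresponding property for $\sigma_{\alpha,\beta}$ together with the finiteness of $\Lambda_{\Cl_0}$, via the standard Noetherianity argument. For the support property, the key observation is that rotation by $-\sqrt{-1}$ does not change the kernel, $\ker Z^{0}_{\alpha,\beta}=\ker Z_{\alpha,\beta}$, so the quadratic form $\Delta_{\Cl_0}$ remains negative definite there; and since every $\sigma^{0}_{\alpha,\beta}$-semistable object is, up to shift, an extension of $\sigma_{\alpha,\beta}$-semistable objects lying in one of the two slope ranges defining the torsion pair, the inequality $\Delta_{\Cl_0}\ge 0$ is inherited. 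The argument is entirely formal, and the only step requiring any care is this last verification that the support property survives the tilt, which works precisely because both the quadratic form $\Delta_{\Cl_0}$ and the kernel of the central charge are left unchanged by the rotation.
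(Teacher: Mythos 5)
Your proposal matches the paper's proof exactly: the paper establishes this lemma purely by citation to \cite[Proposition 2.15]{BLMS}, relying on its blanket remark that such tilting arguments carry over verbatim from derived categories of varieties to $\Db(Y,\Cl_0)$, which is precisely your strategy of invoking the formal second-tilt construction together with the finite-rank lattice $\Lambda_{\Cl_0}$. Your sketched verifications are consistent with that cited proof; the one loose point is your final claim that $\Delta_{\Cl_0}\ge 0$ is ``inherited'' under extensions of semistable objects, which, since $\Delta_{\Cl_0}$ is quadratic rather than additive, requires the central charges of the pieces to lie on a common ray (possibly zero) — exactly the condition that $\sigma^0_{\alpha,\beta}$-semistability forces and that the BLMS argument uses.
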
 

\begin{lemma}
\label{lemma-double-tilt}
For $\beta = -\frac{5}{4}$ and $0 < \alpha < \frac{1}{4}$, the weak stability condition 
$\sigma_{\alpha, \beta}^0$ satisfies the following properties: 
\begin{enumerate}
\item \label{Ei-double-tilt}
$\Cl_0, \Cl_1, \cR_c, \cR_d \in \Coh_{\alpha, \beta}^0(Y, \Cl_0)$. 
\item \label{SEi-double-tilt}
$S_{\Cl_0}(\Cl_0), S_{\Cl_0}(\Cl_1), S_{\Cl_0}(\cR_c), S_{\Cl_0}(\cR_d) \in \Coh_{\alpha, \beta}^0(Y, \Cl_0)[1]$. 
\item \label{Z0-nonzero}
$Z^0_{\alpha, \beta}(\Cl_0), Z^0_{\alpha, \beta}(\Cl_1), Z^0_{\alpha, \beta}(\cR_c) , Z^0_{\alpha, \beta}(\cR_d)$ are all nonzero. 
\end{enumerate}
\end{lemma}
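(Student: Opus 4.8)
The plan is to read off all three claims from the truncated Chern characters recorded in Lemma~\ref{lemma-chleq2} together with the stability statements of Lemma~\ref{lemma-exceptional-stable}, using only the standard description of the second tilt. Recall that by construction $\Coh^0_{\alpha,\beta}(Y,\Cl_0)$ is the tilt of $\Coh^{\beta}(Y,\Cl_0)$ at $\sigma_{\alpha,\beta}$-slope $0$: it is generated by those $\sigma_{\alpha,\beta}$-semistable objects of $\Coh^{\beta}(Y,\Cl_0)$ of slope $>0$, together with the shifts by $[1]$ of those of slope $\leq 0$. Since $\beta=-\tfrac54$ lies in the interval $[-\tfrac32,-1)$, Lemma~\ref{lemma-exceptional-stable} guarantees that for $\alpha>0$ all eight objects in the list~\eqref{exceptional-list} are $\sigma_{\alpha,\beta}$-stable objects of $\Coh^{\beta}(Y,\Cl_0)$. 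Thus each of the three claims reduces to computing the relevant $\sigma_{\alpha,\beta}$-slope and checking its sign, for which I write $\mu_{\alpha,\beta}(-)$ for the slope with respect to $\sigma_{\alpha,\beta}$.

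First I would specialize the twisted character $\ch^{\beta}_{\Cl_0}=e^{-\beta h}\ch_{\Cl_0}$ to $\beta=-\tfrac54$, using $h^3=2$ and the characters of Lemma~\ref{lemma-chleq2}. For the four objects $\Cl_0,\Cl_1,\cR_a,\cR_b$ this gives $\Im Z_{\alpha,\beta}=h^2\cdot\ch^{\beta}_{\Cl_0,1}>0$ together with strictly positive slopes once $0<\alpha<\tfrac14$; concretely $\mu_{\alpha,\beta}(\Cl_0)=\mu_{\alpha,\beta}(\Cl_1)=\tfrac{1/4-4\alpha^2}{2}$ and $\mu_{\alpha,\beta}(\cR_a)=\mu_{\alpha,\beta}(\cR_b)=\tfrac{9/4-4\alpha^2}{6}$, both positive exactly when $4\alpha^2<1/4$. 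Being $\sigma_{\alpha,\beta}$-stable of positive slope, these objects lie in the positive-slope part of the tilt, which proves~\eqref{Ei-double-tilt}. Note that $\alpha=\tfrac14$ is precisely the critical value where $\mu_{\alpha,\beta}(\Cl_0)$ vanishes.

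For~\eqref{SEi-double-tilt}, Lemma~\ref{lemma-serre-Y-Cl0} identifies the objects $S_{\Cl_0}(\Cl_0)[-2],\dots,S_{\Cl_0}(\cR_b)[-2]$ with the sheaves $\Cl_1(-h)[1],\Cl_0(-h)[1],\cR_a\otimes_{\Cl_0}\Cl_1(-h)[1],\cR_b\otimes_{\Cl_0}\Cl_1(-h)[1]$, which are exactly the last four entries of~\eqref{exceptional-list} and hence $\sigma_{\alpha,\beta}$-stable in $\Coh^{\beta}(Y,\Cl_0)$. The same twisted-character computation shows that for $0<\alpha<\tfrac14$ their slopes are strictly negative, e.g. $\mu_{\alpha,\beta}(\Cl_1(-h)[1])=-\tfrac{9/4-4\alpha^2}{6}<0$. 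An object $A\in\Coh^{\beta}(Y,\Cl_0)$ that is $\sigma_{\alpha,\beta}$-stable of slope $\leq 0$ satisfies $A[1]\in\Coh^0_{\alpha,\beta}(Y,\Cl_0)$; since $S_{\Cl_0}(\Cl_0)=(\Cl_1(-h)[1])[2]$, I conclude $S_{\Cl_0}(\Cl_0)\in\Coh^0_{\alpha,\beta}(Y,\Cl_0)[1]$, and identically for the other three. Finally, part~\eqref{Z0-nonzero} is immediate: since $\Im Z_{\alpha,\beta}\neq 0$ for each of $\Cl_0,\Cl_1,\cR_a,\cR_b$ and $Z^0_{\alpha,\beta}=-\sqrt{-1}\,Z_{\alpha,\beta}$, each value $Z^0_{\alpha,\beta}(-)$ is nonzero.

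The computations are routine; the only genuine content is numerical. The point requiring care is that the single window $\beta=-\tfrac54$, $0<\alpha<\tfrac14$ must \emph{simultaneously} push the four exceptional objects to strictly positive slope and their four Serre duals to strictly negative slope. I expect this bookkeeping — verifying that both families of slope inequalities hold for the same $(\alpha,\beta)$, with $\alpha=\tfrac14$ serving as the common critical value at which the slopes of $\Cl_0$ and its Serre dual cross $0$ — to be the only step that is not entirely formal.
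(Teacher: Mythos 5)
Your proposal is correct and is essentially the paper's own proof: the paper likewise reads off the slopes from Lemma~\ref{lemma-chleq2}, obtains the sign pattern (Serre duals shifted by $[-2]$ of negative slope, the four exceptional objects of positive slope), and invokes Lemma~\ref{lemma-exceptional-stable} to place them in the tilted heart. One small correction to your closing remark: the object on the negative side whose slope becomes critical at $\alpha = \tfrac14$ is $S_{\Cl_0}(\cR_a)[-2] = \cR_a \otimes_{\Cl_0} \Cl_1(-h)[1]$, with slope $(4\alpha^2 - \tfrac14)/2$, not the Serre dual of $\Cl_0$, whose slope $(4\alpha^2 - \tfrac94)/6$ only vanishes at $\alpha = \tfrac34$.
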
 

\begin{proof}
A direct computation using Lemma~\ref{lemma-chleq2} shows
\begin{align*}
\mu_{\alpha,\beta}(S_{\Cl_0}\Cl_0[-2]) =\mu_{\alpha,\beta}(S_{\Cl_0}\Cl_1[-2]) & <\mu_{\alpha,\beta}(S_{\Cl_0}\cR_c[-2])=\mu_{\alpha,\beta}(S_{\Cl_0}\cR_d[-2]) < 0 \\
0 < \mu_{\alpha,\beta}(\Cl_0)=\mu_{\alpha,\beta}(\Cl_1) & < \mu_{\alpha,\beta}(\cR_c)=\mu_{\alpha,\beta}(\cR_d).
\end{align*}
In particular~\eqref{Z0-nonzero} holds, 
and~\eqref{Ei-double-tilt} and~\eqref{SEi-double-tilt} follow from Lemma~\ref{lemma-exceptional-stable}. 
\end{proof} 

Finally, we are ready to produce stability conditions on $\Ku(X)$. 
Note that there is a homomorphism 
\begin{equation*}
\rK(\Ku(X)) \to \rK(Y, \Cl_0) \to \Lambda_{\Cl_0} 
\end{equation*}
where the first arrow is the injection on Grothendieck groups induced by the embedding $\Psi \colon \Ku(X) \to \Db(Y, \Cl_0)$ from Theorem~\ref{theorem-KX-Cl} and the second arrow is the canonical surjection. 
We define $\Lambda_{\Cl_0, \Ku(X)}$ to be the image of this homomorphism, so that there is a surjection 
\begin{equation}
\label{LambdaCl0Ku}
\rK(\Ku(X)) \twoheadrightarrow \Lambda_{\Cl_0, \Ku(X)} . 
\end{equation} 

\begin{theorem}
\label{theorem-stab-Ku-good-4fold}
Let $X$ be an ordinary GM fourfold with smooth canonical quadric. 
Then $\Ku(X)$ has a Bridgeland stability condition with respect to the lattice $\Lambda_{\Cl_0, \Ku(X)}$. 
\end{theorem}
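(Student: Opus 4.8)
The plan is to apply the criterion of Bayer--Lahoz--Macr\`i--Stellari \cite[Proposition 5.1]{BLMS} for inducing a Bridgeland stability condition on a semiorthogonal component out of a weak stability condition on the ambient category. By Theorem~\ref{theorem-KX-Cl} the functor $\Psi$ realizes $\Ku(X)$ as the left component of the semiorthogonal decomposition
\[
\Db(Y, \Cl_0) = \llangle \Psi(\Ku(X)), \Cl_1, \Cl_0, \cR_a, \cR_b \rrangle ,
\]
so it suffices to exhibit a weak stability condition on $\Db(Y, \Cl_0)$ relative to which the four exceptional objects $\Cl_1, \Cl_0, \cR_a, \cR_b$ generating the complement satisfy the hypotheses of the criterion. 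The candidate is the double-tilted weak stability condition $\sigma^0_{\alpha, \beta} = (\Coh^0_{\alpha, \beta}(Y, \Cl_0), Z^0_{\alpha, \beta})$ constructed above, for the special parameters $\beta = -\tfrac{5}{4}$ and $0 < \alpha < \tfrac{1}{4}$.

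Concretely, I would verify the three hypotheses of the criterion for these four objects. First, each of $\Cl_1, \Cl_0, \cR_a, \cR_b$ lies in the heart $\Coh^0_{\alpha, \beta}(Y, \Cl_0)$, and each of their images under the Serre functor $S_{\Cl_0}$ lies in $\Coh^0_{\alpha, \beta}(Y, \Cl_0)[1]$; these are precisely the statements of Lemma~\ref{lemma-double-tilt}\eqref{Ei-double-tilt} and \eqref{SEi-double-tilt}, which rest in turn on the $\sigma_{\alpha,\beta}$-stability established in Lemma~\ref{lemma-exceptional-stable} and on the numerical window dictated by the truncated Chern characters of Lemma~\ref{lemma-chleq2}. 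Second, the central charges $Z^0_{\alpha, \beta}$ of all four objects are nonzero, which is Lemma~\ref{lemma-double-tilt}\eqref{Z0-nonzero}. Since $\Cl_0$ and $\Cl_1$ are completely orthogonal (as noted after Theorem~\ref{theorem-KX-Cl}), the order in which they appear in the collection is immaterial and the criterion applies as stated.

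Granting the hypotheses, the criterion produces a Bridgeland stability condition on $\Ku(X)$ whose heart is $\Psi^{-1}\bigl(\Coh^0_{\alpha, \beta}(Y, \Cl_0) \cap \Psi(\Ku(X))\bigr)$ and whose central charge is the restriction of $Z^0_{\alpha, \beta}$ along $\rK(\Ku(X)) \to \Lambda_{\Cl_0}$. By the very definition of \eqref{LambdaCl0Ku} this restricted central charge factors through the sublattice $\Lambda_{\Cl_0, \Ku(X)}$, and the support property transfers to this sublattice because the quadratic form $\Delta_{\Cl_0}$ witnessing the support property on $\Db(Y, \Cl_0)$ restricts to one that is negative definite on the kernel of the restricted central charge. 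This yields the desired stability condition with respect to $\Lambda_{\Cl_0, \Ku(X)}$. The genuine content of this circle of ideas lies not in the present deduction, which is a bookkeeping application of the criterion, but in arranging that its hypotheses can hold at all: the Bogomolov inequality of Theorem~\ref{thm_BI} underlying the weak stability conditions, and the precise computation of the truncated Chern characters in Lemma~\ref{lemma-chleq2}, which is what allows a single interval of parameters $(\alpha, \beta)$ to place all four exceptional objects together with all four of their Serre duals on the correct sides of the two tilts simultaneously. The one point still requiring a moment's care is the transfer of the support property to $\Lambda_{\Cl_0, \Ku(X)}$, which is immediate once one observes that $\Delta_{\Cl_0}$ already serves as the witnessing form upstairs.
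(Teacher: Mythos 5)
Your proposal follows exactly the paper's route (the semiorthogonal decomposition of Theorem~\ref{theorem-KX-Cl}, the weak stability condition $\sigma^0_{\alpha,\beta}$ of Lemma~\ref{lemma-double-tilt}, and \cite[Proposition 5.1]{BLMS}), but it has a genuine gap: you verify only the three hypotheses of the criterion that concern the exceptional objects $\Cl_1, \Cl_0, \cR_a, \cR_b$, and omit the remaining hypothesis, namely that $Z^0_{\alpha,\beta}(F) \neq 0$ for \emph{every} nonzero object $F$ of the induced heart $\Psi(\Ku(X)) \cap \Coh^0_{\alpha,\beta}(Y,\Cl_0)$. This condition is not automatic and cannot be skipped: $\sigma^0_{\alpha,\beta}$ is only a \emph{weak} stability condition precisely because its central charge vanishes on objects whose underlying $\cO_Y$-module is a torsion sheaf with $0$-dimensional support (for such $F$ one has $\ch_{\Cl_0,0}(F) = \ch_{\Cl_0,1}(F) = \ch_{\Cl_0,2}(F) = 0$), and these objects do lie in the heart $\Coh^0_{\alpha,\beta}(Y,\Cl_0)$. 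Without ruling them out of $\Psi(\Ku(X))$, the pair you construct on $\Ku(X)$ need not have a genuine stability function, which is the whole point of upgrading from a weak stability condition to a Bridgeland one; indeed \cite[Proposition 5.1]{BLMS} lists this non-vanishing as a separate standing assumption, not as a consequence of the other three.

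The paper closes this gap with a short but essential argument: if $0 \neq F \in \Coh^0_{\alpha,\beta}(Y,\Cl_0)$ satisfies $Z^0_{\alpha,\beta}(F) = 0$, then $\Forg(F)$ is a torsion sheaf with $0$-dimensional support, so by the adjunction between $- \otimes \Cl_0$ and $\Forg$ one gets
\begin{equation*}
\Hom_{\Cl_0}(\Cl_0, F) = \Hom(\cO_Y, \Forg(F)) \neq 0 ,
\end{equation*}
and since $\Psi(\Ku(X))$ is right orthogonal to $\Cl_0$ in the decomposition of Theorem~\ref{theorem-KX-Cl}, such an $F$ cannot lie in $\Psi(\Ku(X))$. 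Your remarks on transferring the support property to $\Lambda_{\Cl_0,\Ku(X)}$ are fine (the restriction of $\Delta_{\Cl_0}$ to a subspace of $\ker Z^0_{\alpha,\beta}$ stays negative definite, and this is built into the cited proposition), but as written your argument establishes at most the hypotheses for a weak stability condition on $\Ku(X)$, not the theorem.
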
 

\begin{proof}
By Theorem~\ref{theorem-KX-Cl} we have a semiorthogonal decomposition 
\begin{equation*}
\Db(Y,\Cl_0)= \llangle \Psi(\Ku(X)), \Cl_1, \Cl_0, \cR_c, \cR_d \rrangle . 
\end{equation*} 
We claim that any weak stability condition $\sigma^{0}_{\alpha, \beta}$ as in Lemma~\ref{lemma-double-tilt} induces a stability condition on $\Ku(X)$ with respect to $\Lambda_{\Cl_0, \Ku(X)}$, with heart given by $\Psi^{-1}(\Ku(X) \cap \Coh_{\alpha, \beta}^0(Y, \Cl_0))$  
and central charge $Z_{\alpha, \beta}^0 \circ \Psi \colon \rK(\Ku(X)) \to \bC$. 
Indeed, it suffices to apply \cite[Proposition 5.1]{BLMS}. 
The hypotheses of the cited proposition are satisfied due to Lemma~\ref{lemma-double-tilt} and the following observation: 
if $0 \neq F \in \Coh_{\alpha, \beta}^0(Y, \Cl_0)$ and $Z_{\alpha, \beta}^0(F) = 0$, then $\Forg(F)$ is a torsion sheaf with $0$-dimensional support, 
hence $\Hom_{\Cl_0}(\Cl_0, F) = \Hom(\cO_Y, \Forg(F)) \neq 0$ and in particular $F \notin \Psi(\Ku(X))$. 
\end{proof}

\subsection{Full numerical stability conditions} 
\label{section-support-fourfold}

In this section, we show the stability conditions constructed in Theorem~\ref{theorem-stab-Ku-good-4fold} 
satisfy the support property with respect to a larger lattice, the numerical Grothendieck group of $\Ku(X)$. 
This can be formulated in terms of the Mukai Hodge structure of $\Ku(X)$, which we review first. 
For this section, we work over $k = \bC$. 

\subsubsection{The Mukai Hodge structure} 
\label{section-Mukai-HS}
Following Addington--Thomas \cite{addington-thomas}, for any GM fourfold or sixfold $X$ we define the abelian subgroup
$$\tH(\Ku(X),\bZ):= \lbrace \kappa \in \rK_{\text{top}}(X) \st \chi([\cO_X(i)], \kappa)= \chi([\cU_X^\vee], \kappa)=0, \, \forall i=0, \dots, \dim (X)-3  \rbrace$$
of the topological K-theory $\rK_{\text{top}}(X)$ of $X$, where $\chi$ denotes the Euler pairing. It is equipped with a nondegenerate symmetric pairing $(-,-):=-\chi(-,-)$ and there is a canonical homomorphism 
\begin{equation*} 
v \colon \rK(\Ku(X)) \to \tH(\Ku(X), \bZ) , 
\end{equation*} 
called the Mukai vector. 
By pulling back the Hodge structure on the cohomology ring of $X$, $\tH(\Ku(X), \bZ)$ can be endowed with a weight $2$ Hodge structure with Hodge numbers  
\begin{equation*}
h^{2,0} = 1, ~ h^{1,1} = 22, ~ h^{0,2} = 1. 
\end{equation*} 
In case $\Ku(X) \simeq \Db(S)$ for a K3 surface $S$, then $\tH(\Ku(X), \bZ)$ is isomorphic to the 
usual Mukai Hodge structure of $S$. 
The Mukai Hodge structure $\tH(\Ku(X), \bZ)$ was defined and studied for a GM fourfold in \cite{Pert}; more generally, in \cite{IHC-CY2} a Hodge structure is associated to any (suitably enhanced) category, which agrees with the Mukai Hodge structure in the case of $\Ku(X)$. 

The Mukai vector $v$ factors through the lattice of integral Hodge classes 
\begin{equation*} 
\tH^{1,1}(\Ku(X), \bZ) \subset \tH(\Ku(X), \bZ) . 
\end{equation*} 
In fact, we have the following result, which should be thought of as asserting a version of the integral Hodge conjecture for $\Ku(X)$. 

\begin{theorem}[\cite{IHC-CY2}]
Let $X$ be a GM fourfold or sixfold. Then $v \colon \rK(\Ku(X)) \to \tH(\Ku(X), \bZ)$ induces an isomorphism of lattices $\cN(\Ku(X))(-1) \cong \tH^{1,1}(\Ku(X), \bZ)$, where $\cN(\Ku(X))(-1)$ denotes the numerical Grothendieck group of $\Ku(X)$ equipped with the pairing given by the negative of the Euler form. 
\end{theorem}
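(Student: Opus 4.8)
The plan is to separate the assertion into a formal part --- that $v$ induces an isometric embedding $\cN(\Ku(X))(-1) \hookrightarrow \tH^{1,1}(\Ku(X),\bZ)$ --- and the essential part, namely that this embedding is surjective, which is the integral Hodge conjecture for the CY2 category $\Ku(X)$. For the formal part, recall that the categorical Euler form on $\Ku(X)$ is computed topologically: for $E, F \in \Ku(X)$ one has $\chi(E,F) = \chi_{\rtop}(v(E), v(F))$ with $\chi_{\rtop}$ the Euler pairing on $\rK_{\rtop}(X)$. Since $(-,-) = -\chi_{\rtop}(-,-)$ by definition, this reads $(v(E), v(F)) = -\chi(E,F)$, which is precisely the statement that $v$ is an isometry from $\cN(\Ku(X))(-1)$ (the Euler form negated) to $(\tH(\Ku(X),\bZ), (-,-))$. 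Moreover, if $v(E) = 0$ then $\chi(E,-) = 0$, so $E$ lies in the radical of the Euler form and is zero in $\cN(\Ku(X))$; hence the induced map on $\cN(\Ku(X))$ is injective. Combined with the already-recorded fact that $v$ factors through $\tH^{1,1}(\Ku(X),\bZ)$, this yields an injective isometry $\cN(\Ku(X))(-1) \hookrightarrow \tH^{1,1}(\Ku(X),\bZ)$, and everything reduces to showing that every integral Hodge class is the Mukai vector of an object.

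For surjectivity I would argue by deformation, reducing to the geometric case where the statement is classical. Fix a nonzero $\lambda \in \tH^{1,1}(\Ku(X),\bZ)$. The deformation theory of the CY2 category $\Ku(X)$ is unobstructed and produces a versal family $\{\cC_b\}_{b \in B}$ of such categories over a smooth connected base, with $\cC_{b_1} \simeq \Ku(X)$ for some $b_1$, whose Mukai Hodge structures $\tH(\cC_b, \bZ)$ form a polarized variation of Hodge structure of K3 type and whose period map is a local isomorphism onto the period domain of the Mukai Hodge structure. Inside $B$ the Hodge locus $\mathrm{HL}(\lambda)$ on which $\lambda$ stays of type $(1,1)$ is a positive-dimensional sub-period-domain, and the points $b$ at which the Mukai Hodge structure admits a primitive isotropic Hodge class --- at which, by the surjectivity of the period map and the derived Torelli theorem for twisted K3 surfaces, $\cC_b \simeq \Db(S,\alpha)$ for a twisted K3 surface --- are dense in it. Hence $\mathrm{HL}(\lambda)$ contains a point $b_0$ with $\cC_{b_0} \simeq \Db(S,\alpha)$, and $\lambda$ corresponds there to an integral Hodge class of the twisted Mukai lattice of $(S,\alpha)$.

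On this geometric fiber the result is known: by Mukai's theorem and its twisted extension, every integral Hodge class of $\tH^{1,1}(\Db(S,\alpha),\bZ)$ is the Mukai vector of an object, so $\lambda$ is realized by some $\cE_0 \in \cC_{b_0}$. It remains to transport $\cE_0$ back to $\cC_{b_1} \simeq \Ku(X)$ along a path in $\mathrm{HL}(\lambda)$. This is where the genuine CY2 input enters: the \emph{basic Mukai theorem} that relative moduli of objects in a family of CY2 categories are smooth over the base --- equivalently, that the obstruction to deforming an object $\cE$ with $v(\cE)$ of Hodge type $(1,1)$ vanishes, by the Serre-duality computation of the obstruction space in a CY2 category --- guarantees that $\cE_0$ deforms as an object of the fibers $\cC_b$ along $\mathrm{HL}(\lambda)$ to an object $\cE_1 \in \Ku(X)$, whose Mukai vector is the parallel transport of $\lambda$, namely $\lambda$ itself. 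Thus $\lambda$ lies in the image of $v$, which proves surjectivity and hence the theorem. Note that this argument uses only the CY2 deformation theory of objects and not the existence of a Bridgeland stability condition, which is exactly why the result is logically prior to, and an ingredient in, Theorem~\ref{theorem-moduli-space}; the full argument is carried out in \cite{IHC-CY2}, building on the relative moduli formalism of \cite{stability-families}.

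The main obstacle is precisely this last deformation step: one must know that $\cE_0$ deforms as an object of the family of categories --- staying inside the fibers $\cC_b$ rather than merely as a complex on an ambient variety --- and that its deformations are unobstructed along the Hodge locus of $\lambda$. This rests on the CY2 computation of the relevant obstruction group together with the construction of relative moduli spaces of objects in families of categories. A secondary point requiring care is the surjectivity of the period map and the density of the twisted-K3 fibers inside $\mathrm{HL}(\lambda)$, which is what ensures that a geometric fiber can be reached without ever leaving the Hodge locus of $\lambda$, so that the realized class is transported back to $\lambda$ exactly.
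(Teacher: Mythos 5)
First, a structural point: the paper does not prove this statement at all --- it is quoted from \cite{IHC-CY2}, so your proposal can only be measured against that external argument and against the way the present paper invokes it (in Lemma~\ref{sigma-family-stability}, Lemma~\ref{lemma-modify-Ku-twisted}, and Proposition~\ref{proposition-specialize}). Your formal part is correct: the Euler pairing on $\Ku(X)$ is computed through $v$ by the topological pairing, so the kernel of $v$ is exactly the radical of the Euler form, and $v$ induces an injective isometry $\cN(\Ku(X))(-1) \hookrightarrow \tH^{1,1}(\Ku(X),\bZ)$. Your reduction of surjectivity to the twisted-K3 case by deforming inside the Hodge locus, with the CY2 ``Mukai smoothness'' theorem as the transport mechanism, is also the right overall strategy, consistent with \cite{IHC-CY2}.

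Two steps, however, are genuinely gapped. First, there is no ``versal family of CY2 categories'' with period map a local isomorphism onto the period domain of $\tH(\Ku(X),\bZ)$: abstract deformations of $\Ku(X)$ are not known to exist, algebraize, or remain CY2 categories. The argument must instead use geometric families $\Ku(\cX) \subset \Db(\cX)$ over (Hodge loci in) the moduli stack of GM fourfolds, whose period map is smooth and dominant onto the $20$-dimensional subdomain where $A_1^{\oplus 2}$ stays algebraic --- not a local isomorphism, and not surjective (its image misses $\cD_8$). Hence the density of twisted-geometric fibers inside a given Hodge locus is not automatic; it requires exactly the period-map and lattice-theoretic analysis carried out in Lemma~\ref{lemma-periods} and Proposition~\ref{proposition-specialize}. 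Second, and more seriously: smoothness of the relative moduli space of simple objects only shows that the locus in $\mathrm{HL}(\lambda)$ where $\lambda$ is realized is \emph{open}; it does not ``guarantee that $\cE_0$ deforms along $\mathrm{HL}(\lambda)$'' to an object of $\Ku(X)$ when $X$ lies outside that open set, and without stability conditions there is no properness of moduli to appeal to. Closedness must come from a separate specialization argument: over a curve in $\mathrm{HL}(\lambda)$, extend the object from the generic point to a perfect complex on the total space of the family, apply the base-linear projection onto the relative Kuznetsov component (which fixes the object over the generic point and preserves the fiberwise Mukai vector), and restrict to the special fiber. Openness from the Mukai theorem plus this specialization step along an irreducible Hodge locus is the actual content of the variational argument in \cite{IHC-CY2}; your appeal to unobstructedness alone, which you yourself flag as the main obstacle, does not close it.
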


Now let $X$ be a GM fourfold. 
We recall a relation which we will need between $\tH(\Ku(X), \bZ)$ and the usual 
Hodge structure on $X$. 
By \cite[Lemma 2.27]{KuzPerry:dercatGM} there are two classes 
$\lambda_1$ and $\lambda_2$ in $\tH^{1,1}(\Ku(X), \bZ)$ generating a canonical sublattice with intersection form
\begin{equation*}
A_1^{\oplus 2} =\begin{pmatrix}
 2 & 0 \\ 
 0 & 2
 \end{pmatrix}.
\end{equation*} 
By \cite[Equation (4)]{Pert}, the Chern characters of $\lambda_1$ and $\lambda_2$ in $\text{CH}(X) \otimes \bQ$ are
\begin{equation}
\label{deflambda12}    
\ch(\lambda_1)=-2 + \gamma_X^*\sigma_{1,1} -\frac{1}{2} \quad \text{and} \quad \ch(\lambda_2)=-4 + 2H -\frac{1}{6}H^3
\end{equation}
where $\gamma_X^*\sigma_{1,1}$ is the pullback to $X$ of the Schubert cycle $\sigma_{1,1}$ along the canonical morphism $\gamma_X \colon X \to \Gr(2,V_5)$. We denote by $\tH(\Ku(X), \bZ)_{0}$ the orthogonal to $A_1^{\oplus 2} \subset \tH(\Ku(X), \bZ)$. 
There is also a related Hodge structure, 
the vanishing cohomology $\rH^4(X, \bZ)_{0}$, defined as the orthogonal to the sublattice $\gamma_X^*\rH^4(\Gr(2,5), \bZ) \subset \rH^4(X, \bZ)$ 
with respect to the intersection pairing. 

\begin{proposition}[{\cite[Proposition 3.1]{Pert}}]
\label{proposition-HKu-HX}
There is an isometry of weight $2$ Hodge structures 
\begin{equation*} 
\tH(\Ku(X), \bZ)_0 \cong \rH^4(X, \bZ)_{0}(1) , 
\end{equation*} 
where $(1)$ on the right side denotes a Tate twist. 
The isometry is induced by the second Chern class $c_2 \colon \tH(\Ku(X), \bZ) \to \rH^4(X, \bZ)$, 
which is also equal to the full Chern character $\ch$ on $\tH(\Ku(X), \bZ)_0$. 
\end{proposition}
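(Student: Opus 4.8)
The plan is to realize the asserted isometry as a graded component of the Chern character and to show that on $\tH(\Ku(X),\bZ)_0$ this single component carries all the information. I would start from the Hirzebruch--Riemann--Roch description of the Euler pairing: the Chern character gives an isomorphism $\rK_{\rtop}(X)\otimes\bQ \xrightarrow{\sim}\rH^{*}(X,\bQ)$ under which $\chi(\kappa,\kappa')=\int_X \ch(\kappa)^{\vee}\ch(\kappa')\td_X$. In these terms $\tH(\Ku(X),\bZ)\otimes\bQ$ is cut out inside $\rH^{*}(X,\bQ)$ by the four linear conditions of orthogonality to the Chern characters of the exceptional objects $\cO_X,\cUv_X,\cO_X(H),\cUv_X(H)$, and $\tH(\Ku(X),\bZ)_0\otimes\bQ$ by the two further conditions of orthogonality to $\lambda_1$ and $\lambda_2$.

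The first key step is to show that for $\kappa\in\tH(\Ku(X),\bZ)_0$ the Chern character is concentrated in middle degree, i.e. $\ch(\kappa)=\ch_2(\kappa)\in\rH^4(X,\bQ)$. For a GM fourfold the groups $\rH^0,\rH^2,\rH^6,\rH^8$ are each of rank one, so the projection $\rho$ of $\rH^{*}(X,\bQ)$ onto $\rH^0\oplus\rH^2\oplus\rH^6\oplus\rH^8$ has a four-dimensional target. Using the explicit Chern characters \eqref{deflambda12} of $\lambda_1$ and $\lambda_2$ --- which have nonzero components in degrees $0,4,8$ and $0,2,6$ respectively --- together with those of the exceptional objects, a direct HRR computation shows that $\rho$ restricted to $\tH(\Ku(X),\bZ)\otimes\bQ$ has image exactly the plane spanned by $\rho(\lambda_1),\rho(\lambda_2)$ and kernel exactly $\tH(\Ku(X),\bZ)_0\otimes\bQ$; the latter uses that the restriction of the pairing to $\langle\lambda_1,\lambda_2\rangle\cong A_1^{\oplus2}$ is nondegenerate. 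Thus $\ch(\kappa)=\ch_2(\kappa)$, and since $\ch_1(\kappa)=0$ this middle component agrees (with the conventions of \cite{Pert}) with the second Chern class $c_2(\kappa)$, which already yields the last sentence of the proposition and exhibits the candidate map as $\kappa\mapsto c_2(\kappa)=\ch(\kappa)$.

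The second step is to identify the image with the vanishing cohomology and to check the isometry property. Feeding $\ch(\kappa)=\ch_2(\kappa)$ into the orthogonality conditions against $\lambda_1$ and $\lambda_2$, and using that the degree-$4$ parts of $\ch(\lambda_1)\td_X$ and $\ch(\lambda_2)\td_X$ span $L_{\Gr}=\gamma_X^{*}\rH^4(\Gr(2,5),\bZ)$, these conditions become the statement that $\ch_2(\kappa)$ is orthogonal to $L_{\Gr}$ for the cup product; hence $c_2$ maps $\tH(\Ku(X),\bZ)_0$ into $\rH^4(X,\bZ)_0$, and it is injective since $\ch$ is rationally injective. The same HRR formula shows $\chi(\kappa,\kappa')=\int_X\ch_2(\kappa)\,\ch_2(\kappa')$ with no Todd correction, so that $(-,-)=-\chi$ corresponds, up to sign, to the cup-product pairing on $\rH^4(X,\bZ)_0$, the Tate twist $(1)$ recording the shift of weight from $4$ to $2$; in particular $c_2$ is an isometric embedding. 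Since both $\tH(\Ku(X),\bZ)_0$ and $\rH^4(X,\bZ)_0$ have rank $22$, the map is a rational isomorphism, and comparing the two integral lattices (via their discriminants, or by exhibiting an integral inverse to $c_2$ on the vanishing part) upgrades it to an isomorphism of lattices. Finally, since the Hodge structure on $\tH(\Ku(X),\bZ)$ is by definition pulled back from $\rH^{*}(X)$ and $c_2$ is a component of the Chern character, the map automatically respects the Hodge decompositions; after the Tate twist it is a morphism of weight $2$ Hodge structures, hence a Hodge isometry.

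The main obstacle I anticipate is controlling the \emph{integral} structure rather than merely the rational one: the HRR computations readily give a rational Hodge isometry, but showing that $c_2$ identifies the full integral lattice $\tH(\Ku(X),\bZ)_0$ with $\rH^4(X,\bZ)_0(1)$ --- and pinning down the exact sign and normalization reconciling $c_2$ with $\ch$ --- requires precise information about the integral cohomology ring of the GM fourfold and the integral Chow classes underlying \eqref{deflambda12}. This is where the intersection-theoretic bookkeeping on $X$ must be carried out carefully, and it is the step most likely to hide a subtlety.
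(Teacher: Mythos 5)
First, a point of order: the paper does not prove this proposition at all --- it is quoted, with attribution, from \cite[Proposition 3.1]{Pert}, so there is no internal argument to compare yours against. The proof in that reference adapts Addington--Thomas \cite{addington-thomas} from cubic to GM fourfolds, and your sketch follows the same overall strategy: concentration of $\ch$ in degree $4$ on $\tH(\Ku(X),\bZ)_0$, identification of the image with vanishing cohomology, HRR for the pairings, and then an integral refinement.

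That said, your sketch has a genuine gap at its pivotal step. The claim in your Step 1, that $\ker\bigl(\rho|_{\tH(\Ku(X),\bZ)\otimes\bQ}\bigr)$ equals $\tH(\Ku(X),\bZ)_0\otimes\bQ$, is exactly the statement to be proved, and the justification you offer --- nondegeneracy of the pairing on $\langle\lambda_1,\lambda_2\rangle\cong A_1^{\oplus 2}$ --- does not yield it: nondegeneracy only gives the rational splitting $\tH(\Ku(X),\bZ)\otimes\bQ=(A_1^{\oplus 2}\otimes\bQ)\oplus(\tH(\Ku(X),\bZ)_0\otimes\bQ)$, and says nothing about where $\rho$ sends the second summand; asserting $\rho(\tH(\Ku(X),\bZ)_0)=0$ is circular as written. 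The gap can be closed, in fact more cheaply than by the determinant-style computation you gesture at. Under the rational isomorphism $\ch\colon\rK_{\rtop}(X)\otimes\bQ\xrightarrow{\sim}\rH^{\mathrm{even}}(X,\bQ)$, the subspace $\tH(\Ku(X),\bZ)_0\otimes\bQ$ is cut out by the six functionals $\chi(\kappa',-)$ with $\kappa'$ ranging over $[\cO_X],[\cUv_X],[\cO_X(H)],[\cUv_X(H)],\lambda_1,\lambda_2$; these functionals are linearly independent (the Euler pairing on $\rK_{\rtop}(X)$ is unimodular, by Poincar\'e duality and torsion-freeness of $\rH^*(X,\bZ)$, and the six classes are independent), so this subspace has dimension $22$. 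On the other hand, every class $y_0\in\rH^4(X,\bQ)_0$, regarded as concentrated in degree $4$, satisfies all six conditions, because for each $\kappa'$ the degree-$4$ component of $\ch(\kappa')^{\vee}\td_X$ lies in $\gamma_X^*\rH^4(\Gr(2,5),\bQ)$ (use \eqref{deflambda12} and the fact that $c_i(T_X)$, $H$, and $c_i(\cU_X)$ are all pulled back from the Grassmannian), hence pairs to zero with $y_0$. Since $\rH^4(X,\bQ)_0$ also has dimension $22$, the two subspaces coincide, which gives simultaneously the degree-$4$ concentration and the identification of the rational image. Two further corrections: when $\ch_0=\ch_1=0$ one has $\ch_2=-c_2$, so ``$c_2$ equals $\ch$'' holds only up to sign (harmless, since $-\id$ is an isometry, but your sentence should say so); and the integral surjectivity you flag as the main obstacle is indeed where the cited proof does real work --- the discriminant route you mention does finish it, since $\tH(\Ku(X),\bZ)$ and $\rH^4(X,\bZ)$ are unimodular while $A_1^{\oplus 2}$ and $L_{\Gr}$ both have discriminant group $(\bZ/2\bZ)^2$, so the isometric inclusion $c_2\bigl(\tH(\Ku(X),\bZ)_0\bigr)\subseteq\rH^4(X,\bZ)_0$ must have index one; but this step, too, is only named in your proposal, not carried out.
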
 

\subsubsection{The support property} 
Now we turn to the support property for the stability conditions constructed in Theorem~\ref{theorem-stab-Ku-good-4fold}. 

\begin{definition}
Let $X$ be a GM fourfold or sixfold. 
A \emph{full numerical stability condition on $\Ku(X)$} is a Bridgeland stability condition on $\Ku(X)$ with respect to the lattice $\wtilde{\rH}^{1,1}(\Ku(X),\bZ)$ and the surjective morphism $v: \rK(\Ku(X)) \twoheadrightarrow \tH^{1,1}(\Ku(X),\bZ)$ induced by the Mukai vector.
\end{definition} 

Let $X$ be a GM fourfold with smooth canonical quadric. 
Note that the homomorphism~\eqref{LambdaCl0Ku} factors as 
\begin{equation*}
\rK(\Ku(X)) \xrightarrow{\, v \,}  \cN(\Ku(X)) \xrightarrow{\, u \,} \Lambda_{\Cl_0, \Ku(X)}, 
\end{equation*} 
where $u$ is the surjection given by the map induced by $\Psi \colon \Ku(X) \to \Db(Y, \Cl_0)$ on numerical Grothendieck groups followed by the projection to $\Lambda_{\Cl_0}$. 

\begin{proposition}
\label{proposition-full-numerical}
The stability conditions constructed in  Theorem~\ref{theorem-stab-Ku-good-4fold} are 
full numerical stability conditions on $\Ku(X)$. 
\end{proposition}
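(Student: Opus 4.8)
The plan is to upgrade the support property from the rank-three lattice $\Lambda_{\Cl_0,\Ku(X)}$, with respect to which the stability condition $\sigma=\sigma^0_{\alpha,\beta}$ of Theorem~\ref{theorem-stab-Ku-good-4fold} was constructed, to the full numerical lattice $\tH^{1,1}(\Ku(X),\bZ)\cong\cN(\Ku(X))(-1)$. First I would reinterpret the central charge Hodge-theoretically: using the isometry $\cN(\Ku(X))(-1)\cong\tH^{1,1}(\Ku(X),\bZ)$ recorded above (the integral Hodge conjecture for $\Ku(X)$ from \cite{IHC-CY2}) together with the nondegeneracy of the Mukai pairing $(-,-)$, the central charge $Z$, which factors as $\rK(\Ku(X))\xrightarrow{\,v\,}\cN(\Ku(X))\xrightarrow{\,u\,}\Lambda_{\Cl_0,\Ku(X)}\to\bC$, can be written as $Z(-)=(v(-),\Omega)$ for a unique vector $\Omega\in\tH^{1,1}(\Ku(X),\bC)$. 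Since $Z$ factors through $u$, the vector $\Omega$ lies in the image of the transpose of $u$ with respect to the Mukai pairing, so that $\Re\Omega$ and $\Im\Omega$ lie in a real subspace $W\subset\tH^{1,1}(\Ku(X),\bR)$ of dimension at most three.

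The key step is to show $\Omega\in\ccP(\Ku(X))$, i.e.\ that $\Re\Omega$ and $\Im\Omega$ span a positive-definite two-plane for the Mukai pairing, or equivalently that $(-,-)$ is negative definite on the codimension-two subspace $\ker Z\subset\tH^{1,1}(\Ku(X),\bR)$ (which makes sense as $(-,-)$ has signature $(2,\rho-2)$ on $\tH^{1,1}(\Ku(X),\bR)$, where $\rho=\mathrm{rk}\,\cN(\Ku(X))$). Because $\Re\Omega,\Im\Omega\in W$, this is a finite-dimensional computation: using the explicit values of $Z^0_{\alpha,\beta}$ on the generators coming from $\Lambda_{\Cl_0,\Ku(X)}$ — controlled by the truncated Chern characters of Lemma~\ref{lemma-chleq2} and the Euler form through the Hirzebruch--Riemann--Roch formula of Lemma~\ref{lemma:HRR} — one evaluates the three Gram numbers $(\Re\Omega,\Re\Omega)$, $(\Im\Omega,\Im\Omega)$, $(\Re\Omega,\Im\Omega)$ and checks that the resulting $2\times 2$ Gram matrix is positive definite, locating the positive two-plane relative to the distinguished classes $\lambda_1,\lambda_2$ of~\eqref{deflambda12} whose span is the positive-definite $A_1^{\oplus 2}$. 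I expect this verification, rather than any formal manipulation, to be the main obstacle, since it is the one place where the particular normalization $\beta=-\tfrac54$, $0<\alpha<\tfrac14$ and the concrete geometry of $(Y,\Cl_0)$ enter.

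Granting $\Omega\in\ccP(\Ku(X))$, so that $(-,-)$ is negative definite on $\ker Z$, I would conclude by the standard argument for K3-type categories. As $\Ku(X)$ is a CY2 category, every $\sigma$-stable object $E$ is simple with $\Ext^2(E,E)\cong\Hom(E,E)^{\vee}$ by Serre duality, whence $\chi(E,E)\le 2$ and $(v(E),v(E))=-\chi(E,E)\ge -2$; this bound propagates to $\sigma$-semistable objects through their Jordan--H\"older factors. Writing $v(E)=p+n$ for the Mukai-orthogonal decomposition along $\langle\Re\Omega,\Im\Omega\rangle$ and $\ker Z$, positive-definiteness of the two-plane gives $|p|^2\le c\,|Z(E)|^2$, while $(v(E),v(E))\ge -2$ forces $-(n,n)=|p|^2-(v(E),v(E))\le c\,|Z(E)|^2+2$. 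Since $\sigma$ is a genuine stability condition, $Z(E)\ne 0$ for every nonzero $\sigma$-semistable $E$, and only finitely many classes have $|Z|$ bounded; this removes the additive constant and yields a homogeneous bound $\lVert v(E)\rVert\le C\,|Z(E)|$ for all $\sigma$-semistable $E$ and a fixed positive-definite norm $\lVert\cdot\rVert$. Taking the quadratic form $Q(-)=C^2|Z(-)|^2-\lVert-\rVert^2$ on $\tH^{1,1}(\Ku(X),\bR)$ then verifies the support property with respect to $\tH^{1,1}(\Ku(X),\bZ)$: it is negative definite on $\ker Z$ and nonnegative on all $\sigma$-semistable objects, so $\sigma$ is a full numerical stability condition. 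Alternatively, once $\Omega\in\ccP(\Ku(X))$ is established, one may invoke the corresponding argument for cubic fourfolds in \cite{BLMS, stability-families} essentially verbatim.
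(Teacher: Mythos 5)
Your overall skeleton agrees with the paper's (write $Z(-)=(\Omega,v(-))$ for $\Omega=\eta(\sigma)\in\tH^{1,1}(\Ku(X),\bC)$, show $\Re\Omega,\Im\Omega$ span a positive-definite two-plane, deduce the support property), but the middle step --- which is the entire mathematical content of the proposition --- is left as a ``verification'' whose proposed method cannot work. Knowing the values of $Z^0_{\alpha,\beta}\circ\Psi$ on the rank-three quotient $\Lambda_{\Cl_0,\Ku(X)}$ of $\cN(\Ku(X))$ does not determine the Gram numbers $(\Re\Omega,\Re\Omega)$, $(\Im\Omega,\Im\Omega)$, $(\Re\Omega,\Im\Omega)$: the Mukai pairing does not descend to this quotient (the kernel of $u$ is not in its radical), and Lemma~\ref{lemma:HRR} computes Euler pairings of explicit $\Cl_0$-modules on $\Sigma$, not self-pairings of the a priori unknown vector $\Omega$. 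Concretely, write $\Omega=p+n$ with $p\in A_1^{\oplus 2}\otimes\bC$ and $n\in\tH^{1,1}(\Ku(X),\bZ)_0\otimes\bC$; the latter summand is negative definite. Evaluating $Z$ on $\lambda_1,\lambda_2$ (which is all your generators see, and is exactly the computation the paper does to get linear independence) controls only $p$; if $n\neq 0$, the plane $\langle\Re\Omega,\Im\Omega\rangle$ can fail to be positive definite, and nothing in your setup rules this out. For very general $X$ one has $\tH^{1,1}(\Ku(X),\bZ)=A_1^{\oplus 2}$ and there is nothing to prove, but the proposition is needed precisely for special $X$ carrying extra Hodge classes (this is what all the deformation arguments in Section~\ref{section_applications} rest on). The missing idea is the paper's Lemma~\ref{lemma-eta-sigma}: one proves $n=0$, i.e.\ $Z_\alpha(F)=0$ for every $F\in\Ku(X)$ with $v(F)\perp A_1^{\oplus 2}$, and this is done geometrically, not lattice-theoretically --- via Proposition~\ref{proposition-HKu-HX} (on $\tH(\Ku(X),\bZ)_0$ the Mukai vector is the middle-degree class $\ch_2(F)\in\rH^4(X,\bZ)_{0}$), orthogonality of $b^*\ch_2(F)$ to all powers of $H,\gamma_X^*\sigma_{1,1},h,E$ on $\tX$, invariance of the K-theory class under the mutations $\rL_{\cO_{\tX}(-h)}\rL_{\cU_{\tX}}$, and the numerical triviality of $i_{Z*}i_Z^*b^*F=b^*F-b^*F(-h)$, which kills $\ch_{\Cl_0,\leq 2}(\Psi F)$ and hence $Z_\alpha(F)$.

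Two secondary points. First, your deduction of the support property from $\Omega\in\ccP(\Ku(X))$ contains fixable errors: $(v,v)\geq -2$ holds for $\sigma$-\emph{stable} objects but does not propagate to semistable ones through Jordan--H\"older factors (e.g.\ $A^{\oplus 2}$ with $v(A)$ spherical has square $-8$), and ``only finitely many classes have $|Z|$ bounded'' is false, since $\ker Z$ contains a sublattice of rank $\rho-2$; the correct route is to bound classes of stable objects, obtain a homogeneous inequality there, and pass to semistable objects by the convexity statement \cite[Lemma A.6]{BMS:stabCY3s} --- which is essentially the content of the analog of \cite[Lemma 9.7]{BLMS} that the paper invokes. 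Second, the paper establishes the stronger membership $\eta(\sigma)\in\ccP_0(\Ku(X))$ using the period-map input (\cite[Theorem 1.3]{OG} and \cite[Theorem 5.1]{DebKuz:periodGM}: no $(-2)$-classes in $\rH^4(X,\bZ)_{0}(1)$); this is what the cited lemma requires and is also needed later for Theorem~\ref{theorem-Stab-dagger}, so even with your direct support argument the $\ccP_0$ statement should not be discarded.
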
 

Let $\sigma$ be a stability condition on $\Ku(X)$ 
constructed in Theorem~\ref{theorem-stab-Ku-good-4fold}, with central charge $Z \colon \Lambda_{\Cl_0, \Ku(X)} \to \bC$. 
Define $\eta(\sigma) \in \tH^{1,1}(\Ku(X), \bC)$ to be the element in the complexification of the Mukai Hodge structure such that 
\begin{equation*}
(Z \circ u)(-) = (\eta(\sigma), -) . 
\end{equation*} 
Recall the subsets $\cP_0(\Ku(X)) \subset \cP(\Ku(X)) \subset \tH^{1,1}(\Ku(X), \bC)$ defined in~\eqref{cP0}. 
By \cite[Lemma 9.7]{BLMS} (or rather its direct analog in our setup), 
if $\eta(\sigma)$ is in $\cP_0(\Ku(X))$, 
then $\sigma$ is a full numerical stability condition on $\Ku(X)$. 
Therefore, Proposition~\ref{proposition-full-numerical} 
is a consequence of the next lemma, similar to \cite[Proposition 9.10]{BLMS}. 

\begin{lemma}
\label{lemma-eta-sigma}
If $\sigma$ is a stability condition on $\Ku(X)$ as
constructed in Theorem~\ref{theorem-stab-Ku-good-4fold}, then 
\begin{equation*}
\eta(\sigma) \in (A_1^{\oplus 2} \otimes \bC) \cap \cP(\Ku(X)) \subset \cP_0(\Ku(X)). 
\end{equation*}
\end{lemma}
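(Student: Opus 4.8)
The plan is to pin down $\eta(\sigma)$ by exploiting that the central charge, which by construction factors through the lattice $\Lambda_{\Cl_0,\Ku(X)}$ of rank at most $3$ via $u\circ v$, in fact only detects the $A_1^{\oplus2}$-part of the Mukai Hodge structure; once this is known, positivity and the avoidance of the $(-2)$-walls are comparatively formal.

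First I would set up the reduction. Recall $\eta(\sigma)\in\tH^{1,1}(\Ku(X),\bC)$, and use the orthogonal decomposition $\tH^{1,1}(\Ku(X),\bC) = (A_1^{\oplus2}\otimes\bC)\oplus(\tH^{1,1}(\Ku(X),\bZ)_0\otimes\bC)$, where $\tH^{1,1}(\Ku(X),\bZ)_0 := \tH(\Ku(X),\bZ)_0\cap\tH^{1,1}$ and the two summands are orthogonal because $A_1^{\oplus2}\perp\tH(\Ku(X),\bZ)_0$. Since the pairing on the second summand is negative definite, hence nondegenerate, to prove $\eta(\sigma)\in A_1^{\oplus2}\otimes\bC$ it suffices to show $(\eta(\sigma),\kappa)=0$ for every $\kappa\in\tH^{1,1}(\Ku(X),\bZ)_0$. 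By the integral Hodge conjecture for $\Ku(X)$ (the cited theorem of \cite{IHC-CY2}), such a $\kappa$ equals $v(E)$ for some $E\in\cN(\Ku(X))$, and then $(\eta(\sigma),\kappa)=Z(u(v(E)))=Z^0_{\alpha,\beta}(\Psi(E))$ by the definition of $\eta(\sigma)$. Thus everything reduces to showing that the linear map $u\circ v$ annihilates $\tH(\Ku(X),\bZ)_0$.

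The main step, and what I expect to be the chief obstacle, is this vanishing, which I would establish by transporting Chern characters through the explicit functor $\Psi = \Xi\circ\rL_{\cO_{\tX}(-h)}\circ b^*\circ\rL_{\cU_X}$ of Theorem~\ref{theorem-KX-Cl}. By Proposition~\ref{proposition-HKu-HX}, a class in $\tH(\Ku(X),\bZ)_0$ has Chern character on $X$ concentrated in codimension $2$ (it equals $c_2$). Tracking such a class through the blow-up $b$, the mutations, and the pushforward $\Xi=\pi_*(\cG^{\vee}\otimes-)$ along the conic fibration $\pi$ (whose fibers are $1$-dimensional) via Grothendieck--Riemann--Roch, I expect the truncated invariant $(h^3\ch_{\Cl_0,0},\,h^2\ch_{\Cl_0,1},\,h\ch_{\Cl_0,2})(\Psi(E))$ to vanish; equivalently, since $\ch_{\Cl_0,\le2}$ is determined by the restriction $\Psi(E)|_\Sigma$ to a hyperplane section $\Sigma\subset Y$ (as in the proof of Lemma~\ref{lemma-chleq2}), the restriction of a codimension-$2$ class becomes numerically trivial there. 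In practice the cleanest route is to compute $u\circ v$ directly on the two generators $\lambda_1,\lambda_2$ using \eqref{deflambda12}, verify that its image has rank $2$, and then check that it kills a complementary set of generators of $\tH(\Ku(X),\bZ)_0$. This Chern-character bookkeeping through $\Psi$ is the only genuinely computational input.

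Finally, once $\eta(\sigma)\in A_1^{\oplus2}\otimes\bC$ is known, the remaining claims follow formally. For $\eta(\sigma)\in\cP(\Ku(X))$: its real and imaginary parts lie in the rank-$2$ positive-definite space $A_1^{\oplus2}\otimes\bR$, so it suffices that they are linearly independent, which I read off from the explicit shape of $Z_{\alpha,\beta}$ by checking that the $2\times2$ real matrix recording $(\Re,\Im)$ of $Z$ on $\lambda_1,\lambda_2$ is invertible. For the inclusion $(A_1^{\oplus2}\otimes\bC)\cap\cP(\Ku(X))\subset\cP_0(\Ku(X))$: if some $w$ in the left-hand side lay on a wall $\delta^{\perp}$ with $\delta\in\Delta$, then $\delta$ would be orthogonal to the plane spanned by $\Re w,\Im w$, which is all of $A_1^{\oplus2}\otimes\bR$, forcing $\delta\in\tH^{1,1}(\Ku(X),\bZ)_0$. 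It therefore remains to verify that this negative-definite lattice contains no $(-2)$-class --- the GM analog of the exclusion of the non-admissible discriminant for cubic fourfolds --- which I would deduce from the explicit structure of the Mukai lattice of a GM fourfold.
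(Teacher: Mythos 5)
Your skeleton matches the paper's proof: the same reduction (write $\eta(\sigma)$ against the orthogonal decomposition of $\tH^{1,1}(\Ku(X),\bC)$, use negative-definiteness of the complement of $A_1^{\oplus 2}$ and surjectivity of the Mukai vector onto Hodge classes to reduce to showing $Z$ kills every $F$ with $v(F)\perp A_1^{\oplus 2}$), the same positivity check via the explicit values $Z(\lambda_1), Z(\lambda_2)$ computed from \eqref{deflambda12}, and the same reduction of wall-avoidance to the absence of $(-2)$-Hodge classes orthogonal to $A_1^{\oplus 2}$. However, two of your steps, as proposed, would fail. The most serious is the last one: the non-existence of $(-2)$-classes in $\tH^{1,1}(\Ku(X),\bZ)_0$ is \emph{not} deducible from ``the explicit structure of the Mukai lattice.'' The lattice $\tH(\Ku(X),\bZ)_0 \cong \rH^4(X,\bZ)_0(1)$ has signature $(2,20)$ and contains plenty of vectors of square $-2$; correspondingly, the divisor $\cD_8$ of the period domain, where such a vector \emph{is} a Hodge class, is nonempty. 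What is needed, and what the paper invokes, is the geometric theorem that the period map of smooth GM fourfolds misses $\cD_8$ (\cite[Theorem 1.3]{OG} combined with \cite[Theorem 5.1]{DebKuz:periodGM}) --- the GM analog of the exclusion of $\cC_2$ for cubic fourfolds. This is a statement about the image of the period map, not about the abstract lattice, so your final inclusion $(A_1^{\oplus 2}\otimes\bC)\cap\cP(\Ku(X))\subset\cP_0(\Ku(X))$ is left unproven.

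The second gap is in your ``cleanest route'' for the main vanishing. Computing $u\circ v$ on $\lambda_1,\lambda_2$ and verifying its image has rank $2$ says nothing about the kernel containing $\tH^{1,1}(\Ku(X),\bZ)_0$; and one cannot ``check it on a complementary set of generators of $\tH(\Ku(X),\bZ)_0$,'' since classes there are generally not Hodge (hence not in the image of $v$ at all), while the Hodge part $\tH^{1,1}(\Ku(X),\bZ)_0$ has no explicit generators --- it varies with $X$ and is nonzero precisely for the special $X$ where there is something to prove. The only workable route is the structural one you sketch alongside it (and which the paper carries out): for $F$ with $v(F)\perp A_1^{\oplus 2}$ one has $\ch(b^*F)=b^*\ch_2(F)$, one shows it is orthogonal to all monomials in $H$, $\gamma_X^*\sigma_{1,1}$, $h$, $E$ (using that $[T]$ is a combination of $H^2$ and $\gamma_X^*\sigma_{1,1}$, and that $b_*E^2=2\gamma_X^*\sigma_{1,1}-H^2$), then that the mutations $\rL_{\cO_{\tX}(-h)}\rL_{\cU_{\tX}}$ do not change the numerical class, and finally that $i_{Z*}i_Z^*b^*F \equiv b^*F - b^*F(-h)$ is numerically trivial. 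You correctly identify this bookkeeping as the chief obstacle, but you leave it at ``I expect the invariant to vanish,'' so the central step of the lemma remains open in your write-up.
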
 

\begin{proof}
Let $V \subset \wtilde{\rH}^{1,1}(X,\bR)$ be the subspace generated by the real and imaginary parts of $\eta(\sigma)$. 
We will show that $V = A_1^{\oplus 2}$, which in particular means $\eta(\sigma) \in \cP(\Ku(X))$. 

First we claim that $V$ has real dimension $2$. 
By the construction of Theorem~\ref{theorem-stab-Ku-good-4fold}, the central charge of $\sigma$ is given by $Z_\alpha:=Z^0_{\alpha,-\frac{5}{4}} \circ \Psi$ for some $0 < \alpha < \frac{1}{4}$. 
A computation using \eqref{deflambda12} shows 
\begin{equation*} 
\ch_{\Cl_0,\leq 2}(\Psi(\lambda_1))= 8 + 2h -8h^2 \quad \text{and} \quad \ch_{\Cl_0,\leq 2}(\Psi(\lambda_2))= -8 -4h +h^2, 
\end{equation*} 
and therefore 
\begin{equation*} 
Z_\alpha({\lambda}_1)=24+\sqrt{-1} \left(-8\alpha^2+\frac{3}{2} \right) \quad \text{and} \quad Z_\alpha({\lambda}_2)=-28+\sqrt{-1} \left(8\alpha^2-\frac{29}{2} \right).
\end{equation*} 
Since $Z_\alpha({\lambda}_1)$ and $Z_\alpha({\lambda}_2)$ are linearly independent, we deduce our claim. 

To show $V = A_1^{\oplus 2}$, it remains to show 
$\eta(\sigma) \in (A_1^{\oplus 2})_{\bC}$. 
This is equivalent to showing that for any $F \in \Ku(X)$ with $v(F) \in (A_1^{\oplus 2})^\perp \subset \tH(\Ku(X), \bZ)$, 
we have $Z_{\alpha}(F) = 0$. 
By definition, $Z_{\alpha}$  only depends on $\ch_i(\Psi(F))$ for $0 \leq i \leq 2$, so 
if $i_{\Sigma} \colon \Sigma \hookrightarrow Y$ denotes a smooth hyperplane section, 
it suffices to show the Chern classes of $i_{\Sigma*}i^*_{\Sigma} \Psi(F)$ vanish. 

Recall that by definition~\eqref{Psi} we have $\Psi = \Xi \circ \rL_{\cO_{\tX}(-h)} \circ b^* \circ \rL_{\cU_X} \colon \Ku(X) \to \Db(Y, \Cl_0)$. 
As in~\eqref{diagram-Z}, let $i_{Z} \colon Z \hookrightarrow \tX$ be the preimage of $\Sigma$ under the conic fibration $\pi \colon \tX \to Y$. Then using the definition~\eqref{Xi} of $\Xi \colon \Db(\tX) \to \Db(Y, \Cl_0)$, base change, and the projection formula, we find 
\begin{equation*}
i_{\Sigma*} \circ i^*_{\Sigma} \circ \Xi \simeq \Xi \circ i_{Z*} \circ i_{Z}^* , 
\end{equation*} 
and thus 
\begin{equation*}
i_{\Sigma*}i^*_{\Sigma} \Psi(F) \cong \Xi (i_{Z*}i_Z^* \rL_{\cO_{\tX}(-h)}  b^*  \rL_{\cU_X}(F)). 
\end{equation*} 
Therefore, to prove the Chern classes of $i_{\Sigma*}i^*_{\Sigma} \Psi(F)$ vanish, it suffices 
to show the class 
\begin{equation}
\label{iZiZmutationF}
i_{Z*}i_Z^* \rL_{\cO_{\tX}(-h)}  b^*  \rL_{\cU_X}(F)
\end{equation}
vanishes in the numerical Grothendieck group of $\tX$. 

First we claim that the product of the $\ch(b^*F)$ with any positive power of the classes $H, \gamma_X^* \sigma_{1,1}, h, E$ vanishes, where $\gamma_X^*\sigma_{1,1}$ is the pullback of the Schubert cycle $\sigma_{1,1}$ via the canonical map $\gamma_X \colon X \to \Gr(2,V_5)$. 
Indeed, by Proposition~\ref{proposition-HKu-HX} we have $\ch(b^*F) = b^*\ch_2(F)$ and $\ch_2(F)$ is orthogonal 
to any power of $H, \gamma_X^* \sigma_{1,1}$. 
Since $h = H - E$, it remains to show that $b^*\ch_2(F)$ is orthogonal to any power of $E$. 
We have 
\begin{equation*}
E \cdot b^*\ch_2(F)= i_{E*}(b_{E}^* i_{T}^*\ch_2(F)) ,  
\end{equation*} 
where $i_E, b_E$, and $i_T$ are as in diagram~\eqref{main-diagram}; this vanishes because $i_T^* \ch_2(F) = 0$, 
as the class of $T$ is a combination of $H^2$ and $\gamma_X^*\sigma_{1,1}$. 
Similarly, we have 
\begin{equation*}
E^2 \cdot b^*\ch_2(F)=b_*E^2 \cdot \ch_2(F)=(2 \gamma_X^*\sigma_{1,1}-H^2)\cdot \ch_2(F)=0. 
\end{equation*} 

Next we claim that $\rL_{\cO_{\tX}(-h)}  b^*  \rL_{\cU_X}(F) \cong \rL_{\cO_{\tX}(-h)} \rL_{\cU_{\tX}} b^*F $ has the same class as $b^*F$ in the Grothendieck group of $\tX$. 
Indeed, it is enough to show that 
\begin{equation*}
\chi(\cU_{\tX}, b^*F) = \chi(\cO_{\tX}(-h), b^*F) = 0, 
\end{equation*} 
which follows from Hirzebruch--Riemann--Roch and the previous paragraph,   
because all of $\ch(\cU_{\tX}), \ch(\cO_{\tX}(-h)),$ and $\td(\tX)$ are combinations of powers of $H, \gamma_X^* \sigma_{1,1}, h, E$. 

Now we can prove \eqref{iZiZmutationF} vanishes in the numerical Grothendieck. 
By the above, it is enough to show the class of $i_{Z*}i_Z^*b^*F$, which equals $b^*F - b^*F(-h)$, vanishes; 
this holds because $\ch(b^*F) = \ch(b^*F(-h))$ by the orthogonality of $\ch(b^*F)$ to powers of $h$. 

Finally, by \cite[Theorem 1.3]{OG} and \cite[Theorem 5.1]{DebKuz:periodGM}, the image of the period map of smooth GM fourfolds does not intersect the divisor $\cD_8$, i.e.\ there do not exist classes of square $-2$ in $\rH^4(X,\bZ)_{0}(1)$. We conclude that  $(A_1^{\oplus 2})_\bC \cap \cP \subset \cP_0$, as stated.
\end{proof}

\subsection{The general case}
\label{section-main-theorem-general}

The key ingredient in reducing Theorem~\ref{main-theorem} for arbitrary GM varieties to the case of 
GM fourfolds considered above is the duality conjecture proved in \cite{KuzPerry:cones}, 
which gives equivalences between Kuznetsov components of GM varieties of varying dimensions. 
In particular, we have the following. 

\begin{theorem}
\label{theorem-duality} 
\begin{enumerate}
\item \label{GM4-dual}
If $X$ is a GM fourfold or sixfold, then there exists an ordinary GM fourfold $X'$ with smooth canonical quadric and an equivalence $\Ku(X) \simeq \Ku(X')$. 
\item \label{GM5-dual}
If $X$ is a GM fivefold, then there exists an ordinary GM threefold $X'$ and an equivalence $\Ku(X) \simeq \Ku(X')$. 
\end{enumerate}
\end{theorem}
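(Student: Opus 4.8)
The plan is to derive both parts directly from the duality conjecture for GM varieties proved in \cite{KuzPerry:cones}, supplemented by the Debarre--Kuznetsov classification of GM varieties and a genericity argument. Recall that a GM variety $X$ determines Lagrangian data $(V_6, V_5, A)$, with $A \subset \wedge^3 V_6$ a Lagrangian subspace for the wedge form and $V_5 \subset V_6$ a hyperplane, and that two GM varieties are \emph{generalized partners} if their data can be identified so that the Lagrangians $A$ agree, and \emph{generalized duals} if one Lagrangian is identified with the annihilator $A^{\perp}$ of the other. The duality conjecture of \cite{KuzPerry:cones} asserts that generalized partners and duals of dimension at least $3$ have equivalent Kuznetsov components; the dimension is allowed to vary, but the resulting equivalences connect GM varieties whose dimensions share the same parity, consistent with $\Ku(X)$ being of K3 type when $\dim X$ is even and of Enriques type when $\dim X$ is odd. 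Granting this, the proof amounts to producing, inside the generalized partner/dual class of $X$, a GM variety of the dimension and genericity demanded by the statement.

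For part~\eqref{GM4-dual}, first suppose $X$ is a GM sixfold. Replacing $A$ by $A^{\perp}$ and choosing the hyperplane appropriately, the classification produces a generalized partner or dual $X''$ of $X$ which is a GM fourfold, and \cite{KuzPerry:cones} gives $\Ku(X) \simeq \Ku(X'')$; this reduces us to the case of a fourfold. It then remains to move within the fourfold locus of generalized partners. Fixing $A$ and varying the hyperplane $V_5$ over the appropriate stratum of the dual EPW stratification of $\bP(V_6^{\vee})$ gives a positive-dimensional family of GM fourfolds, all generalized partners of $X$, and hence all with Kuznetsov component equivalent to $\Ku(X)$. I would then invoke \cite[Lemma 2.1]{DebKuz:periodGM} and \cite[Proposition 4.5]{DebKuz:birGM} to show that being special, and having a singular canonical quadric in the sense of Section~\ref{subsection-conicfibr}, are each closed conditions cutting out a proper closed (in fact divisorial) subset of this family; a general member $X'$ is therefore an ordinary GM fourfold with smooth canonical quadric satisfying $\Ku(X) \simeq \Ku(X')$, as required.

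Part~\eqref{GM5-dual} is handled in the same way, working within the odd-parity class: a GM fivefold $X$ admits a generalized partner or dual that is a GM threefold, realized by the members of the family of fixed $A$ (or $A^{\perp}$) lying over the relevant EPW stratum, and a general such threefold is ordinary. No condition on a canonical quadric intervenes here, since the threefold case of Theorem~\ref{main-theorem} is supplied directly by \cite{BLMS}. In both parts the equivalence of Kuznetsov components is formal once the duality conjecture is cited, so the substance of the argument lies in the existence-and-genericity step. I expect this to be the main obstacle: one must verify that the fixed-dimension stratum in the partner/dual family is nonempty and positive-dimensional, and, in the even case, translate the smoothness of the canonical quadric from Section~\ref{subsection-conicfibr} into an open condition on the Lagrangian data by means of the cited results of Debarre--Kuznetsov.
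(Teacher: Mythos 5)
Your proposal is correct and follows essentially the same route as the paper: both parts are deduced from the duality conjecture proved in \cite{KuzPerry:cones}, and for part~\eqref{GM4-dual} the paper does exactly what you outline --- it fixes the Lagrangian $\sA(X)$ and chooses a Pl\"{u}cker point in the top stratum $Y^1_{\sA(X)^{\perp}}$ of the dual EPW sextic avoiding the projective dual $(Y_{\sA(X)}^{\geq 2})^{\vee}$, using precisely the results you cite (\cite[Proposition 4.5]{DebKuz:birGM} to characterize smoothness of the canonical quadric via the stratum $Y^1_{\sA(X)}$, and \cite[Lemma 2.1]{DebKuz:periodGM} together with Remark B.4 and Equation (3) of Section 2.3 there to convert this into the open condition on the Pl\"{u}cker point). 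The only difference is cosmetic: the paper handles fourfolds and sixfolds in a single step by taking the resulting ordinary fourfold $X'$ as a generalized partner directly, rather than first reducing sixfolds to fourfolds as you do, and it makes the genericity step explicit as a choice of point rather than a divisor-avoidance argument in a family.
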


\begin{proof}
The description of generalized partners and duals in \cite[Lemma 3.8]{KuzPerry:dercatGM} together with the duality conjecture \cite[Conjecture 3.7]{KuzPerry:dercatGM} proved in \cite[Theorem 1.6]{KuzPerry:cones} shows that claim~\eqref{GM5-dual} holds, and that~\eqref{GM4-dual} holds modulo the condition that $X'$ has smooth canonical quadric. 

We will prove the full claim~\eqref{GM4-dual} by a careful choice of a generalized partner $X'$ for $X$.  
To explain this, we freely use the notation and terminology on EPW sextics 
introduced in \cite[\S 3]{DebKuz:birGM} (see also \cite[\S 3]{KuzPerry:dercatGM}). 
First we observe that by \cite[Proposition 4.5]{DebKuz:birGM} a smooth ordinary GM fourfold $X$ as in~\eqref{GM4fold} has smooth canonical quadric if and only if the point $\Sigma_1(X) := \bP(V_1) \in \bP(V_5)$ (where $V_1$ is as defined in Section~\ref{subsection-conicfibr}) lies in the EPW stratum $Y^1_{\sA(X)}$. 
In turn, by \cite[Lemma 2.1, Remark B.4, and Equation (3) of Section 2.3]{DebKuz:periodGM}, this condition on $\Sigma_1(X)$ holds if the Pl\"{u}cker point $\mathbf{p}_X$ of $X$ does not lie in the projective dual $(Y_{\sA(X)}^{\geq 2})^{\vee}$ of the EPW stratum $Y_{\sA(X)}^{\geq 2}$. 

Now let $X$ be any GM fourfold or sixfold. 
Choose a point $\mathbf{p} \in Y_{\sA(X)^{\perp}}^1$ in the top stratum of the dual EPW sextic 
which does not lie in $(Y_{\sA(X)}^{\geq 2})^{\vee}$. 
Let $X'$ be the ordinary GM fourfold corresponding to the pair $(\sA(X), \mathbf{p})$ (see \cite[Theorem 3.10]{DebKuz:birGM} or \cite[Theorem 3.1]{KuzPerry:dercatGM}). 
Then by construction $X'$ is a period partner of $X$ whose Pl\"{u}cker point $\mathbf{p}_{X'}$ does not lie in $(Y_{\sA(X')}^{\geq 2})^{\vee}$, and hence $X'$ has smooth canonical quadric by the previous paragraph. 
This finishes the proof, since there is an equivalence $\Ku(X) \simeq \Ku(X')$ by the duality conjecture (\cite[Theorem 1.6]{KuzPerry:cones}).  
\end{proof}

\begin{proof}[Proof of Theorem \ref{main-theorem}]
By Theorem~\ref{theorem-duality}, 
Theorem~\ref{main-theorem} is an immediate consequence of Theorem~\ref{theorem-stab-Ku-good-4fold} and the construction of stability conditions on Kuznetsov components of GM threefolds \cite[Theorem 6.9]{BLMS}. 
Similarly, it follows from Proposition~\ref{proposition-full-numerical} that over $\bC$, this gives full numerical stability conditions on the Kuznetsov components of GM sixfolds.
\end{proof}
 
Finally, we note that as a consequence of these results and \cite{CollPoli}, similarly to \cite[Proposition 5.13]{BLMS}, we obtain Corollary~\ref{corollary-DbX-stability} from the introduction:  

\begin{corollary}
Let $X$ be a GM variety over $k$. 
Then the category $\Db(X)$ has a Bridgeland stability condition. 
Moreover, if $k = \bC$, then Bridgeland stability conditions exist which satisfy the support property with respect to the image of the Chern character in $\rH^*(X,\bQ)$. 
\end{corollary}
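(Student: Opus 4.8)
The plan is to obtain the stability condition on $\Db(X)$ by gluing the one on $\Ku(X)$ produced in Theorem~\ref{main-theorem} with standard stability conditions on the exceptional objects of the semiorthogonal decomposition~\eqref{equation-Ku}, following the method of Collins--Polishchuk \cite{CollPoli} in exactly the same way as \cite[Proposition 5.13]{BLMS}. Recall that for any GM variety we have
\begin{equation*}
\Db(X) = \llangle \Ku(X), \cO_X, \cU_X^{\vee}, \dots, \cO_X(\dim(X)-3), \cU_X^{\vee}(\dim(X)-3) \rrangle,
\end{equation*}
in which every component other than $\Ku(X)$ is generated by an exceptional object $E$ and is therefore equivalent to $\Db(k)$. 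Each such $\langle E \rangle$ carries a tautological stability condition whose heart is the extension closure of $E$ and whose central charge sends $E$ to a prescribed point of the upper half-plane.

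First I would invoke the Collins--Polishchuk gluing criterion: given Bridgeland stability conditions on each component of a semiorthogonal decomposition, they glue to a stability condition on the ambient category once a compatibility condition relating the phases in the different components is met. For a decomposition consisting of the K3- or Enriques-type component $\Ku(X)$ together with a finite exceptional collection, this compatibility can be arranged by rescaling the central charges of the exceptional pieces so that their phases are concentrated in a narrow window positioned appropriately relative to the phases coming from $\Ku(X)$; this is precisely the verification performed in \cite[Proposition 5.13]{BLMS}, and it applies verbatim here. This produces a Bridgeland stability condition on $\Db(X)$ and establishes the first assertion for every GM variety over $k$.

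For the support property when $k = \bC$, I would pass to the numerical Grothendieck group $\cN(X)$ of $\Db(X)$. The semiorthogonal decomposition induces a direct sum
\begin{equation*}
\cN(X) \otimes \bQ \cong \big(\cN(\Ku(X)) \otimes \bQ\big) \oplus \bigoplus_i \bQ[E_i],
\end{equation*}
and since the Euler pairing is nondegenerate on $\cN(X)$, the Chern character identifies $\cN(X) \otimes \bQ$ rationally with the image of $\ch$ in $\rH^*(X,\bQ)$; hence the support property with respect to $\cN(X)$ is equivalent to the one with respect to $\mathrm{im}(\ch)$. On the $\Ku(X)$ summand the constructed stability condition already satisfies the support property by Theorem~\ref{main-theorem}: in the even-dimensional case this is the full numerical statement of Proposition~\ref{proposition-full-numerical}, extended to sixfolds via Theorem~\ref{theorem-duality}, while in the odd-dimensional case it is the support property with respect to $\cN(\Ku(X))$ coming from the construction in \cite{BLMS}. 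On each exceptional summand the property is automatic. I would then take as quadratic form on $\cN(X) \otimes \bR$ a suitable scaled direct sum of the quadratic forms on the summands, and check the required inequality on glued semistable objects via the gluing-with-support machinery of \cite{CollPoli}, again as in \cite[Proposition 5.13]{BLMS}.

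The main obstacle is this last support-property step: one must guarantee that the glued quadratic form is negative definite on the kernel of the glued central charge and nonnegative on every glued semistable object, which requires controlling how semistable objects of $\Db(X)$ decompose into semistable pieces along the semiorthogonal decomposition. The essential input specific to our setting — supplied exactly by Theorem~\ref{main-theorem} — is that the stability condition on $\Ku(X)$ already satisfies the support property with respect to a lattice that maps isomorphically, after tensoring with $\bQ$, onto the $\Ku(X)$-part of the Chern character lattice, so that the forms on the pieces assemble into a form controlling all of $\mathrm{im}(\ch)$.
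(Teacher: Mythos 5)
Your overall route is the same as the paper's: both glue a stability condition on $\Db(X)$ from the one on $\Ku(X)$ given by Theorem~\ref{main-theorem} and the exceptional objects of \eqref{equation-Ku}, via Collins--Polishchuk \cite{CollPoli} in the style of \cite[Proposition 5.13]{BLMS}. However, there is a gap at the one step that carries the actual mathematical content. The compatibility required for gluing is not something that can be ``arranged by rescaling the central charges'' of the exceptional pieces: before any choice of phases can work, one must establish a categorical vanishing that is \emph{uniform over the heart}, namely that for each exceptional object $E$ in \eqref{equation-Ku} there is a single integer $j$ with $\Hom^{\leq j}(A,E)=0$ for every $A$ in the heart $\cA$ of the chosen stability condition on $\Ku(X)$; this is the hypothesis of \cite[Proposition 3.3]{CollPoli} in the form of \cite[Corollary 3.8]{BMMS:categoricalcubic3}. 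Only once this uniform bound is known can the exceptional objects be shifted so that the glued heart is the heart of a bounded t-structure; no rescaling of phases produces it otherwise. Your claim that the verification in \cite[Proposition 5.13]{BLMS} ``applies verbatim'' does not close this: there the heart on the Kuznetsov component is built by tilting inside $\Db(X)$ itself, whereas here the heart of $\sigma$ is defined through the embedding $\Psi$ into $\Db(Y,\Cl_0)$, so that verification cannot be quoted literally without an argument bridging the two settings.

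The paper closes exactly this gap with a short, completely general argument, which is what your write-up is missing: let $i \colon \Ku(X) \to \Db(X)$ be the inclusion and $i^!$ its right adjoint (it exists since $\Ku(X)$ is admissible); by adjunction $\Hom^k(A,E) = \Hom^k_{\Ku(X)}(A, i^!E)$, and since $\cA$ is the heart of a bounded t-structure, $i^!E$ has finitely many cohomology objects $A_s[k_s]$ with $k_1 > \cdots > k_m$, so any $j < -k_1$ gives the desired uniform vanishing because a heart admits no morphisms in negative degrees. Note that this argument is independent of how the stability condition on $\Ku(X)$ was constructed and of $\dim X$, which is precisely what lets the corollary cover all GM varieties (including the odd-dimensional ones) at once. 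Your discussion of the support property for $k=\bC$ --- identifying the relevant lattice rationally with the image of the Chern character and feeding in Proposition~\ref{proposition-full-numerical} on the $\Ku(X)$ factor --- is in the right spirit, and is in fact more explicit than what the paper itself records for that part of the statement.
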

\begin{proof} 
Let $\sigma=(\cA,Z)$ be a stability condition on $\Ku(X)$. By \cite[Proposition 3.3]{CollPoli}, as explained for instance in \cite[Corollary 3.8]{BMMS:categoricalcubic3}, if for every exceptional object $E$ in the semiorthogonal decomposition \eqref{equation-Ku}, there exists an index $j$ such that 
$\Hom^{\leq j}(A,E)=0$ for every $A \in \cA$, then 
a stability condition exists on $\Db(X)$. 
In order to check this condition, we denote by $i: \Ku(X) \to \Db(X)$ the inclusion functor of $\Ku(X)$ in $\Db(X)$ and by $i^!$ its right adjoint, which exists since $\Ku(X)$ is admissible. By adjunction, we have
\begin{equation*}
\Hom^k(A,E) = \Hom^k(i(A),E)= \Hom^k_{\Ku(X)}(A,i^!E).
\end{equation*}
Since $\cA$ is the heart of a bounded t-structure, $i^!E$ has a finite number of cohomology objects $A_1[k_1], \dots ,A_m[k_m]$ with $A_s[k_s] \in \cA[k_s]$ for $1 \leq s \leq m$ and $k_1 > \cdots >k_m$. Then we can choose $j<-k_1$, so that
$\Hom(A,A_s[k_s+k])=0$ for every $1 \leq s \leq m$ and $k \leq j$. This gives the desired vanishing and implies the statement.
\end{proof}


\section{Applications}
\label{section_applications}
In this section, we prove the results stated in Sections~\ref{intro-stab-manifold}, \ref{intro-moduli}, and \ref{intro-associated-K3}. 
The proofs are deformation theoretic and require understanding Kuznetsov components of GM varieties and stability conditions on them in families. 
Thus we start in Section~\ref{families-Ku} by defining a version of the Kuznetsov component for a family of GM varieties, and proving a relative version of the key derived category result 
Theorem~\ref{theorem-KX-Cl}
from our construction of stability conditions on a fixed Kuznetsov component. 
In Section~\ref{section-stab-over-curve}, which contains the bulk of our work, we construct well-behaved relative stability conditions on the Kuznetsov components of GM fourfolds over a curve, where a special fiber is equivalent to the derived category of a twisted K3 surface. 
In Section~\ref{section-proofs-applications}, we combine this with the arguments of \cite[Part VI]{stability-families} to prove the promised applications. 
Finally, in Section~\ref{subsection_examples} we discuss some low-dimensional examples of the hyperk\"{a}hler varieties given by Theorem~\ref{theorem-unirational-families}. 

We assume $k = \bC$ is the complex numbers throughout this section. 

\subsection{Families of Kuznetsov components} 
\label{families-Ku}
In this section, we discuss a family version of Theorem~\ref{theorem-KX-Cl}, 
which gave an embedding of the Kuznetsov 
component of an ordinary GM fourfold with smooth canonical quadric into the 
derived category of Clifford modules on a quadric threefold. 

First, we need to define the Kuznetsov component in families. 
By a family of GM varieties over a base scheme $S$, 
we mean a smooth proper morphism $f \colon \cX \to S$ equipped with a line bundle on 
$\cO_{\cX}(1)$ on $\cX$,  
such that for every geometric point $s \in S$ the pair $(\cX_s, \cO_{\cX_s}(1))$ is a polarized GM variety, 
i.e. $\cX_s$ is isomorphic to an intersection as in Definition~\ref{definition-GM} and $\cO_{\cX_s}(1)$ corresponds 
to the Pl\"{u}cker line bundle.
\begin{remark}
There is a slightly more general notion of a family of GM varieties, 
where instead of a line bundle $\cO_{\cX}(1)$ one considers an element of 
$\Pic_{\cX/S}(S)$. 
This definition is better suited for defining the moduli stack of GM varieties, see \cite{KuzPerry:dercatGM, DebKuz:moduli}, 
but in practice there is little difference, since \'{e}tale locally on $S$ any element of $\Pic_{\cX/S}(S)$ is a line bundle.
\end{remark} 

The results of \cite{DebKuz:birGM} show that for any family of GM varieties, there is a canonically determined 
rank $5$ vector bundle $\cV_{5}$ on $S$ and a morphism $\cX \to \Gr_S(2, \cV_5)$ which 
fiberwise recovers the usual map from $\cX_s$ to $\Gr(2,5)$.  
We denote by $\cU_{\cX}$ the pullback to $\cX$ of the tautological rank $2$ subbundle on $\Gr_S(2, \cV_5)$. 

In the following result we use the notion 
of a strong $S$-linear semiorthogonal decomposition of finite cohomological amplitude, 
and the base change of such a decomposition along a point $s \in S$, 
see \cite[Section 3]{stability-families}. 
Note that for any integer $i \in \bZ$, the objects $\cO_{\cX}(i), \cU_{\cX}(i)$ are relative exceptional objects on $\cX$, 
i.e. they are fiberwise exceptional objects. 
In general, if $E \in \Db(\cX)$ is a perfect complex which is relatively exceptional, then 
the functor $\Db(S) \to \Db(\cX)$ given by $F \mapsto f^*(F) \otimes E$ is fully faithful with image 
$f^*(\Db(S)) \otimes E \subset \Db(\cX)$ an admissible $S$-linear subcategory \cite[Lemma 3.23]{stability-families}. 

\begin{lemma}
Let $f \colon \cX \to S$ be a family of $n$-dimensional GM varieties over a noetherian base scheme $S$. 
Then there is an admissible $S$-linear subcategory $\Ku(\cX) \subset \Db(\cX)$ and a strong $S$-linear semiorthogonal 
decomposition of finite cohomological amplitude 
\begin{multline*}
\Db(\cX)  = \langle \Ku(\cX), f^*(\Db(S)) \otimes \cO_{\cX}, f^*(\Db(S)) \otimes \cU_{\cX}^{\vee},  
\dots \\ 
\dots, f^*(\Db(S)) \otimes \cO_{\cX}(n-3), f^*(\Db(S)) \otimes \cU_{\cX}^{\vee}(n-3) \rangle . 
\end{multline*} 
Moreover, if $S$ has affine diagonal, then for any point $s \in S$ the base change of 
$\Ku(\cX)$ satisfies $\Ku(\cX)_s \simeq \Ku(\cX_s)$ where the right side is defined by~\eqref{equation-Ku}. 
\end{lemma}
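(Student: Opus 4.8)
The plan is to construct $\Ku(\cX)$ directly as an orthogonal complement to the subcategories generated by the relative exceptional objects $\cO_{\cX}(i)$ and $\cUv_{\cX}(i)$, $0 \le i \le n-3$, and then to deduce the base change statement from the compatibility of strong $S$-linear semiorthogonal decompositions of finite cohomological amplitude with derived base change, as developed in \cite{stability-families}.

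First I would observe that each of $\cO_{\cX}(i)$ and $\cUv_{\cX}(i)$ is a perfect complex on $\cX$ which is relatively exceptional, since on each geometric fiber $\cX_s$ these objects are precisely the exceptional objects occurring in the decomposition~\eqref{equation-Ku}. Hence by \cite[Lemma 3.23]{stability-families} the functors $F \mapsto f^*(F) \otimes \cO_{\cX}(i)$ and $F \mapsto f^*(F) \otimes \cUv_{\cX}(i)$ are fully faithful with admissible $S$-linear images $f^*(\Db(S)) \otimes \cO_{\cX}(i)$ and $f^*(\Db(S)) \otimes \cUv_{\cX}(i)$. Next I would verify that, in the order given, these subcategories are semiorthogonal. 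For perfect complexes $E, E'$ on $\cX$ the projection formula and adjunction give, for all $G, G' \in \Db(S)$, an identification of $\Hom_{\Db(\cX)}(f^*(G') \otimes E', f^*(G) \otimes E)$ with $\Hom_{\Db(S)}(G', G \otimes f_*\cHom(E', E))$; thus the piece generated by $E'$ is orthogonal to the piece generated by $E$ exactly when $f_*\cHom(E', E) = 0$. As $f$ is smooth and proper and $\cHom(E', E)$ is $S$-perfect, the complex $f_*\cHom(E', E)$ is perfect on $S$ and its derived fiber over any $s$ computes $\Hom^{\bullet}_{\cX_s}(E'_s, E_s)$ by cohomology and base change; the required vanishings therefore follow from the fact that $\cO_{\cX_s}, \cUv_{\cX_s}, \dots, \cO_{\cX_s}(n-3), \cUv_{\cX_s}(n-3)$ is an exceptional collection, established in \cite{KuzPerry:dercatGM}.

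I would then define $\Ku(\cX)$ to be the orthogonal complement of the span of all these pieces. Since that span is admissible and $S$-linear, so is $\Ku(\cX)$, and together they yield the asserted $S$-linear semiorthogonal decomposition. The decomposition is strong of finite cohomological amplitude because its nonzero right-hand components are generated by perfect relatively exceptional objects, whose projection functors are explicit (given by kernels built from $E$, $(E')^{\vee}$ and the structure sheaf of the diagonal) and have finite cohomological amplitude, $f_*$ being of finite cohomological amplitude for $f$ smooth and proper; these are exactly the conditions recorded in \cite[Section 3]{stability-families}.

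For the final assertion I would invoke that, when $S$ has affine diagonal, a strong $S$-linear semiorthogonal decomposition of finite cohomological amplitude is compatible with derived base change along any point $s \in S$ \cite[Section 3]{stability-families}. Under base change along $\Spec \kappa(s) \to S$, the component $f^*(\Db(S)) \otimes \cO_{\cX}(i)$ restricts to the subcategory $\llangle \cO_{\cX_s}(i) \rrangle$ of $\Db(\cX_s)$, and similarly for the $\cUv$-components, while $\Ku(\cX)$ restricts to an admissible subcategory $\Ku(\cX)_s$. The resulting decomposition of $\Db(\cX_s)$ therefore has the same right-hand components as the defining decomposition~\eqref{equation-Ku} of $\Ku(\cX_s)$, and by uniqueness of the orthogonal complement of a fixed semiorthogonal collection we conclude $\Ku(\cX)_s \simeq \Ku(\cX_s)$. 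I expect the genuinely delicate point to be this last paragraph: both the identification of the base change of each exceptional piece $f^*(\Db(S)) \otimes E$ with the subcategory $\llangle E_s \rrangle$, and the verification that the decomposition is strong of finite cohomological amplitude so that the base change theorem applies, rest entirely on the formalism of \cite{stability-families}, and constitute the crux of the argument; the construction of the decomposition itself is then a routine application of the relative exceptional-object machinery.
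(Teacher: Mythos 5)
Your proposal is correct and takes essentially the same route as the paper: the paper's proof consists precisely of citing the fiberwise decomposition~\eqref{equation-Ku} together with \cite[Lemma 3.25 and Theorem 3.17]{stability-families}, and your argument simply unfolds the content of those results (fully faithfulness of the relative exceptional pieces via \cite[Lemma 3.23]{stability-families}, semiorthogonality by cohomology and base change from the fiberwise exceptional collection of \cite{KuzPerry:dercatGM}, and the base change theorem for strong $S$-linear decompositions of finite cohomological amplitude). The only glossed point is the admissibility of the orthogonal complement, which follows from the existence of a relative Serre functor for the smooth proper morphism $f$ and is handled by the cited machinery.
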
 

\begin{proof}
This follows from the semiorthogonal decomposition~\eqref{equation-Ku} on fibers 
together with \cite[Lemma 3.25 and Theorem 3.17]{stability-families}. 
\end{proof} 

The results of \cite{DebKuz:moduli} give a precise description of families of 
GM varieties in terms of certain collections of vector bundles and morphisms, called GM data. 
In the case of a family of ordinary GM fourfolds $f \colon \cX \to S$, there is a 
canonically associated collection $(\cW, \cV_5, \cL, \mu, q)$ where: 
\begin{itemize}
\item $\cW$, $\cV_5$, and $\cL$ are vector bundles of ranks $9, 5$, and $1$ on $S$; 
\item $\mu \colon \cW \to ( \wedge^2\cV_5 ) \otimes \cL$ is an embedding of vector bundles; and 
\item $q \in \rH^0(M_{\cX}, \cO_{\bP_S(\cW)}(2) \otimes \det(\cV_5) \otimes \cL)$ where 
$M_{\cX} =  \Gr_S(2, \cV_5)  \times_{\bP_S(\wedge^2 \cV_5)} \bP_S(\cW)$; 
\end{itemize} 
such that $\cX$ is isomorphic to the zero locus of $q$ in $M_{\cX}$ and $\cO_{\cX}(1)$ is 
the restriction of $\cO_{\bP_S(\cW)}(1)$. 
This is a relative version of the description~\eqref{GM4fold} of a fixed GM fourfold. 
Using this, the results of Section~\ref{section-conic-fibration} can be upgraded to the family setting as follows. 

Let $\cQ$ be the line bundle defined by the exact sequence 
\begin{equation*} 
0 \to \cW \to ( \wedge^2\cV_5 ) \otimes \cL \to \cQ \to 0 . 
\end{equation*} 
The surjective arrow corresponds to a morphism $\cV_5 \to \cV_5^{\vee} \otimes \cL^{\vee} \otimes \cQ$. 
It follows from the smoothness of $f \colon \cX \to S$ that the kernel of this morphism is a 
line subbundle $\cV_1 \subset \cV_5$. 
Let $\bP_{\cW}^3 = \bP_S(\cV_5/\cV_1)$, and note that there is a natural embedding $\bP_{\cW}^3 \hookrightarrow M_{\cX}$. 
Let 
\begin{equation*}
\cT = \bP_{\cW}^3  \times_{M_{\cX}} \cX \to S 
\end{equation*} 
be the family of quadric surfaces over $S$ cut out by the restriction to $\bP_{\cW}^3$ of the 
section $q$ defining $\cX$ in $M_{\cX}$. 
We call $\cT \to S$ the \emph{family of canonical quadrics} of $\cX \to S$. 
Further, let $\cV_4 = \cV_5/\cV_1$, let $\cW' \subset ( \wedge^2\cV_4 ) \otimes \cL$ be the subbundle given by the image of $\cW$, and let 
\begin{equation*} 
g \colon \cY = \Gr_S(2,\cV_4) \times_{\bP(\wedge^2\cV_4 )} \bP_S(\cW') \to S, 
\end{equation*}
which is a family of quadric threefolds over $S$. 
Let $b \colon \wtilde{\cX} \to \cX$ be the blowup of $\cX$ in $\cT$. 
Then, as in Lemma~\ref{lemma-conic-bundle}, linear projection from $\bP_{\cW}^{3} \subset \bP_S(\cW)$ 
induces a conic fibration $\pi \colon \wtilde{\cX} \to \cY$, and if $\cT \to S$ is smooth then so are $\wtilde{\cX} \to S$ and $\cY \to S$. 

As in Section~\ref{subsection-Ku-in-Cl}, there are sheaves $\Cl_0$ and $\Cl_1$ on $\cY$ of even and odd parts of the Clifford algebra associated to the conic fibration $\pi \colon \wtilde{\cX} \to \cY$. We note that $\Cl_0$ and $\Cl_1$ are relative exceptional objects in the $S$-linear category $\Db(\cY, \Cl_0)$. 
Repeating the proof of Theorem~\ref{theorem-KX-Cl} in this setting shows the following. 

\begin{proposition}
\label{proposition-Ku-Cl-relative}
Let $f \colon \cX \to S$ be a family of ordinary GM fourfolds over a noetherian base $S$ with affine diagonal. 
Assume:
\begin{enumerate} 
\item \label{assumption-cT-smooth}
The family of canonical quadrics $\cT \to S$ is smooth. 
\item \label{assumption-Pic-lt}
The relative Picard group of $\cT \to S$ is a trivial rank $2$ local system.  
\end{enumerate}
Then there is a fully faithful $S$-linear functor $\Psi \colon \Ku(\cX) \to \Db(\cY, \Cl_0)$ and relative exceptional objects 
$\cR_{c}, \cR_{d} \in \Db(\cY, \Cl_0)$,  
such that there is a 
strong $S$-linear semiorthogonal decomposition of finite cohomological amplitude 
\begin{equation*}
\Db(\cY, \Cl_0) \hspace{-1mm}  = \hspace{-1mm} \llangle \Psi(\Ku(\cX)) , g^*(\Db(S)) \otimes \Cl_1, g^*(\Db(S)) \otimes \Cl_0, g^*(\Db(S)) \otimes \cR_{c}, g^*(\Db(S)) \otimes \cR_{d} \rrangle 
\end{equation*} 
whose base change along any point $s \in S$ recovers the decomposition of $\Db(\cY_s, \Cl_{0,s})$ given by Theorem~\ref{theorem-KX-Cl}. 
\end{proposition}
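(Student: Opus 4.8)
The plan is to relativize over $S$ the entire construction of Section~\ref{section-conic-fibration}. The geometric scaffolding is already assembled in the discussion preceding the statement: under assumption~\eqref{assumption-cT-smooth} the relative analogue of Lemma~\ref{lemma-conic-bundle} provides a conic fibration $\pi \colon \wtilde{\cX} \to \cY$ with $\wtilde{\cX}$, $\cY$, and $\cT$ all smooth over $S$, and Kuznetsov's theory of quadric fibrations \cite{kuznetsov08quadrics} applied in families yields the Clifford sheaves $\Cl_0, \Cl_1$ on $\cY$, a fully faithful $S$-linear functor $\Phi \colon \Db(\cY, \Cl_0) \to \Db(\wtilde{\cX})$, and an $S$-linear semiorthogonal decomposition $\Db(\wtilde{\cX}) = \langle \Phi(\Db(\cY, \Cl_0)), \pi^* \Db(\cY) \rangle$. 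Each of these is, fiberwise, one of the ingredients used in Section~\ref{section-conic-fibration}.

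First I would rerun the mutation argument of Proposition~\ref{proposition-KX-Cl} word for word in the relative setting. Its input is the $S$-linear form of Orlov's blowup formula, $\Db(\wtilde{\cX}) = \langle b^*\Db(\cX), i_{E*}b_E^*\Db(\cT)\rangle$, together with the relative standard decomposition of $\Db(\cX)$ through $\Ku(\cX)$ and the relative standard decomposition of the family of quadric surfaces $\Db(\cT)$. This last point is where assumption~\eqref{assumption-Pic-lt} enters decisively: to write $\Db(\cT) = \langle \cO_\cT(-a), \cO_\cT(-b), \cO_\cT, \cO_\cT(H)\rangle$ as an $S$-linear decomposition I need the two rulings $a, b$ to be globally defined classes, i.e. no monodromy of $S$ may interchange them, which is exactly the triviality of the rank $2$ local system $\Pic_{\cT/S}$. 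Every object appearing in Section~\ref{subsection-Ku-in-Cl} (the twists of $\cO_{\wtilde{\cX}}$, the bundles $\cU_{\wtilde{\cX}}$ and $\cSv_{\wtilde{\cX}}$, and the pushforwards $i_{E*}\cO_E(\ell)$) now becomes a \emph{relative} exceptional object, each mutation is performed through an admissible $S$-linear subcategory $g^*(\Db(S)) \otimes E$ generated by such an object, and by \cite[Lemma 3.23]{stability-families} these subcategories are admissible and the mutation functors are $S$-linear and preserve strongness and finite cohomological amplitude.

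Next I would construct the relative functor $\Xi = \pi_*(\cG^\vee \otimes -)$, with $\cG$ the relative rank $2$ bundle output by these mutations. The relative forms of Lemma~\ref{lemma-piGG} (the isomorphism $\pi_*\cHom(\cG,\cG) \cong \Cl_0$ of $\cO_\cY$-algebras) and of Lemma~\ref{lemma-Xi-Phi} (identifying $\Xi \circ \Phi$ with $(-)\otimes \cO(-h)$) show that $\Xi$ restricts to an $S$-linear equivalence on $\Phi(\Db(\cY,\Cl_0))$. Defining $\Psi = \Xi \circ \rL_{\cO_{\wtilde{\cX}}(-h)} \circ b^* \circ \rL_{\cU_{\cX}}$ as in~\eqref{Psi}, and $\cR_a, \cR_b$ by applying $\Xi$ to the mutated bundles as in~\eqref{Ra-Rb}, the relative analogues of Lemma~\ref{projU} and Lemma~\ref{lemma-Ra-Rb} identify the four exceptional components with $\Cl_1, \Cl_0, \cR_a, \cR_b$ and show that $\cR_a, \cR_b$ are relative exceptional, locally free of rank $4$ over $\cO_\cY$. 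Since $\Psi$ is a composite of $S$-linear fully faithful functors and $S$-linear mutation equivalences it is itself fully faithful, yielding the asserted $S$-linear decomposition.

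The step I expect to be the real obstacle is compatibility with base change. I must know that every functor used above --- the pushforwards $\pi_*$, $b^*$, $i_{E*}$, and each mutation --- commutes with derived base change along a geometric point $s \in S$, so that the base change of the relative decomposition recovers the fiberwise one of Theorem~\ref{theorem-KX-Cl}. This reduces to checking that all the $\Ext$-computations driving the mutations in Section~\ref{subsection-Ku-in-Cl} produce sheaves on $S$ that are \emph{locally free of constant rank}; cohomology and base change then forces the relative mutations to specialize to the fiberwise ones. The required constancy holds because those dimensions were computed uniformly on an arbitrary ordinary GM fourfold with smooth canonical quadric. Feeding this into the base-change formalism for strong $S$-linear semiorthogonal decompositions of \cite[Section 3]{stability-families}, I conclude that the decomposition base-changes as claimed and that $\Psi_s$ agrees with the functor of Theorem~\ref{theorem-KX-Cl} on each fiber, which completes the argument.
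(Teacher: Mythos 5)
Your proposal is correct and takes essentially the same approach as the paper: the paper's entire proof is the single sentence preceding the statement, asserting that repeating the proof of Theorem~\ref{theorem-KX-Cl} in the relative setting (built on the family GM data, the relative conic fibration $\pi \colon \wtilde{\cX} \to \cY$, and the $S$-linear formalism of \cite{stability-families}) yields the result. Your write-up simply makes explicit what that sentence leaves implicit --- in particular the correct observations that assumption~\eqref{assumption-Pic-lt} is exactly what makes the rulings $a, b$ globally defined so that the standard decomposition of $\Db(\cT)$ is $S$-linear, and that base change compatibility follows from the uniformity of the $\Ext$ computations across fibers together with \cite[Section 3]{stability-families}.
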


\begin{remark}
If $f \colon \cX \to S$ is a family of ordinary GM fourfolds for which assumption~\eqref{assumption-cT-smooth} in Proposition~\ref{proposition-Ku-Cl-relative} holds, then the relative Picard group of $\cT \to S$ is a rank $2$ local system on $S$. 
In general, this local system may not be trivial, but there always exists a degree $2$ \'{e}tale cover $S' \to S$ so that the base changed family $\cX' \to S'$ satisfies assumptions~\eqref{assumption-cT-smooth} and~\eqref{assumption-Pic-lt}. 
\end{remark}

\subsection{Stability conditions over a curve} 
\label{section-stab-over-curve}
This section contains the key technical ingredient for the proofs of the applications: 
a specialization over a curve from any GM fourfold to one whose Kuznetsov component is geometric, 
and the construction of a relative stability condition on the Kuznetsov component over the curve 
with well-behaved relative moduli spaces of stable objects. 

In this section, for an ordinary GM fourfold $X$ with smooth canonical quadric, we consider stability conditions on $\Ku(X)$ contained in an open subset $\Stab^{\dagger}(\Ku(X)) \subset \Stab(\Ku(X))$ of the space of full numerical stability conditions on $\Ku(X)$, defined as follows. 
In the notation of Section~\ref{intro-stab-manifold}, the map $\eta \colon \eta^{-1}(\cP_0(\Ku(X))) \subset \Stab(\Ku(X)) \to \cP_0(\Ku(X))$ is a covering map, 
by the argument of \cite[Proposition 8.3]{bridgeland-K3}. 
By Lemma~\ref{lemma-eta-sigma}, the stability conditions on $\Ku(X)$ constructed in the proof of Theorem~\ref{theorem-stab-Ku-good-4fold} lie in $\eta^{-1}(\cP_0(\Ku(X)))$; we let $\Stab^{\dagger}(\Ku(X))$ be the connected component of $\eta^{-1}(\cP_0(\Ku(X)))$ containing them. 
In the proof of Theorem~\ref{theorem-Stab-dagger} in the next section, we will see that $\Stab^{\dagger}(\Ku(X))$ actually forms a connected component of $\Stab(\Ku(X))$. 

Given a family of categories $\cD$ over a base $S$ 
(e.g. $\cD = \Ku(\cX)$ for $\cX \to S$ a family of GM fourfolds), the notion of a 
\emph{stability condition on $\cD$ over $S$} was introduced in \cite{stability-families}. 
This consists of a collection $\usigma = (\sigma_s)_{s \in S}$ of stability conditions $\sigma_s$ 
on the fibers $\cD_s$, $s \in S$, satisfying certain axioms, and comes with a notion of 
relative moduli spaces of stable objects. 
We emphasize that in the case where the base is a point $S = \Spec(\bC)$, a stability condition on $\cD$ over $S$ is specified by a usual stability condition on $\cD$ satisfying certain properties --- roughly, the existence of bounded moduli spaces --- which a priori may not be satisfied by an arbitrary stability condition on $\cD$. 
On the other hand, the results of \cite{stability-families} show that the known constructions of stability conditions via tilting actually give stability conditions over $\Spec(\bC)$. In particular, we have the following result (which is actually a special case of Proposition~\ref{proposition-stability-over-curve} proved below, but which we state first for psychological reasons). 

\begin{lemma}
\label{sigma-family-stability}
Let $X$ be an ordinary GM fourfold $X$ with smooth canonical quadric. 
Then any $\sigma \in \Stab^{\dagger}(\Ku(X))$ is a stability condition on $\Ku(X)$ over $\Spec(\bC)$. 
In particular, if $v \in \tH^{1,1}(\Ku(X), \bZ)$, then the moduli stack of $\sigma$-semistable objects in $\Ku(X)$ of class $v$ (see \cite[Definition 21.11]{stability-families}) admits a good moduli space $M_{\sigma}(\Ku(X), v)$, which is a proper algebraic space over $\bC$. 
In case $\sigma$ is $v$-generic, then $M_{\sigma}(\Ku(X), v)$ is a coarse moduli space which is smooth and proper over $\bC$. 
\end{lemma}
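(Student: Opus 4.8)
The plan is to verify directly that $\sigma$ meets the axioms of a stability condition over $\Spec(\bC)$ in the sense of \cite{stability-families}, and then to read off the moduli-theoretic consequences from the general machinery developed there. Since $\sigma \in \Stab^{\dagger}(\Ku(X))$ is by construction a full numerical stability condition, the support property is already in hand; the only additional content is the \emph{boundedness} of the family of $\sigma$-semistable objects of any fixed class $v \in \tH^{1,1}(\Ku(X),\bZ)$, together with openness of semistability in families. Once these hold, the existence of a good moduli space $M_{\sigma}(\Ku(X),v)$ as a proper algebraic space, and its upgrade to a smooth proper coarse moduli space when $\sigma$ is $v$-generic, follow from the general moduli theory of \cite{stability-families} for CY2 categories, with smoothness coming from the Mukai-type result of \cite{IHC-CY2} and properness and the absence of strictly semistable objects from the genericity of $\sigma$.

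First I would treat the stability conditions produced in the proof of Theorem~\ref{theorem-stab-Ku-good-4fold}. These are induced, via the embedding $\Psi \colon \Ku(X) \hookrightarrow \Db(Y,\Cl_0)$ of Theorem~\ref{theorem-KX-Cl} and the criterion \cite[Proposition 5.1]{BLMS}, from the double-tilted weak stability conditions $\sigma^0_{\alpha,\beta}$ on $\Db(Y,\Cl_0)$. For such tilt-type weak stability conditions the Bogomolov inequality of Theorem~\ref{thm_BI} bounds the discriminant of semistable objects, and combined with the noetherianity of the tilted hearts this yields boundedness of semistable objects in $\Db(Y,\Cl_0)$; passing to the admissible component $\Psi(\Ku(X))$ preserves boundedness. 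This shows that the explicitly constructed $\sigma$ are stability conditions over $\Spec(\bC)$.

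To pass from these to an arbitrary $\sigma \in \Stab^{\dagger}(\Ku(X))$, I would propagate boundedness across the connected component using the wall-and-chamber structure. Fixing $v$, the support property guarantees that the walls for $v$ are locally finite in $\Stab^{\dagger}(\Ku(X))$, cutting it into chambers on which the set of $\sigma$-semistable objects of class $v$ is locally constant; crossing a wall modifies this set only through extensions involving semistable factors of strictly smaller mass, whose Mukai vectors are uniformly constrained by the support property, so boundedness is preserved. Since $\Stab^{\dagger}(\Ku(X))$ is by definition connected and contains the constructed stability conditions, where boundedness was just verified, boundedness holds throughout; the $\widetilde{\mathrm{GL}}^+(2,\bR)$-action, under which $\Stab^{\dagger}$ is invariant, only reparametrizes the data and changes none of this. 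Hence every $\sigma \in \Stab^{\dagger}(\Ku(X))$ is a stability condition over $\Spec(\bC)$.

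The main obstacle is precisely the boundedness statement and its persistence over the whole connected component: boundedness for the explicit $\sigma^0_{\alpha,\beta}$ is quantitative and rests on the Bogomolov inequality, but propagating it requires the wall-crossing analysis together with the uniform control afforded by the support property. For this reason the cleanest route, which I would ultimately adopt, is to realize the statement as the fiber over a point of the relative stability condition constructed in Proposition~\ref{proposition-stability-over-curve}, so that boundedness, openness, and the moduli-theoretic conclusions are all deduced simultaneously from the family framework of \cite{stability-families}.
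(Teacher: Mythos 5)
Your first two paragraphs are, in substance, the paper's own proof: the paper establishes that every $\sigma \in \Stab^{\dagger}(\Ku(X))$ is a stability condition over $\Spec(\bC)$ by combining the explicit tilting construction of Theorem~\ref{theorem-stab-Ku-good-4fold} (resting on the Bogomolov inequality of Theorem~\ref{thm_BI}) with the general machinery of \cite[Parts IV and V]{stability-families} --- the analogues of \cite[Proposition 30.4]{stability-families} (boundedness for the constructed stability conditions) and \cite[Remark 30.5]{stability-families} (propagation over the connected component), which is exactly your two-step plan. The moduli-theoretic conclusions then come from \cite[Theorem 21.24]{stability-families}, and smoothness in the $v$-generic case from the Mukai theorem of \cite{IHC-CY2}, via the observation that $M_{\sigma}(\Ku(X),v)$ is an open subspace of the smooth algebraic space $sM_{\pug}(\Ku(X))$ of simple universally gluable objects; your sketch of this part is also correct.

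The problem is the route you say you would \emph{ultimately adopt}: deducing the lemma as the fiber of Proposition~\ref{proposition-stability-over-curve}. That proposition cannot deliver the full statement. Its hypotheses require $v$ to be primitive and $\sigma$ to be $v$-generic, whereas the first assertion of the lemma concerns an arbitrary $\sigma \in \Stab^{\dagger}(\Ku(X))$ with no class $v$ in sight, and the good-moduli-space assertion is for arbitrary $v \in \tH^{1,1}(\Ku(X),\bZ)$ (not necessarily primitive) and arbitrary, possibly non-generic, $\sigma$. Moreover, even under its hypotheses, Proposition~\ref{proposition-stability-over-curve} only produces a stability condition over the curve whose fiber $\sigma_0$ is a \emph{small deformation} of $\sigma$ (with $M_{\sigma_0} = M_{\sigma}$); it does not assert that $\sigma$ itself is a stability condition over $\Spec(\bC)$, which is precisely the content you need. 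The deduction is not circular --- in the paper the proposition's proof is independent of this lemma, being run directly at the level of families via Proposition~\ref{proposition-Ku-Cl-relative} and the argument of \cite[Corollary 32.1]{stability-families} --- but it is insufficient, and it saves nothing, since the proposition relies on exactly the same family machinery. (The paper's parenthetical remark that the lemma ``is actually a special case'' of the proposition is an expository comment about the underlying machinery, not a statement-level implication.) So you should commit to the direct argument of your first two paragraphs and drop the shortcut.
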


\begin{proof}
The claim that any $\sigma \in \Stab^{\dagger}(\Ku(X))$ is a stability condition on $\Ku(X)$ over $\Spec(\bC)$ follows from our construction of stability conditions in $\Stab^{\dagger}(\Ku(X))$ from Section~\ref{section-stability} and the results of \cite[Parts IV and V]{stability-families}, cf. the proof of \cite[Proposition 30.4]{stability-families} and \cite[Remark 30.5]{stability-families}. 
Then \cite[Theorem 21.24]{stability-families} gives the rest of the lemma, except the smoothness of $M_{\sigma}(\Ku(X), v)$ in the case $\sigma$ is $v$-generic. 
For smoothness, we use the algebraic space $sM_{\pug}(\Ku(X))$ parameterizing simple universally gluable objects in $\Ku(X)$ (see \cite[Section 9.3]{stability-families} for the precise definition). 
By Mukai's theorem in the form proved in \cite{IHC-CY2}, the space $sM_{\pug}(\Ku(X))$ is smooth.  
Since $\sigma$ is a stability condition on $\Ku(X)$ over $\bC$, we have that $M_{\sigma}(\Ku(X), v)$ is an open subspace of $sM_{\pug}(\Ku(X))$, and so smooth as well. 
\end{proof} 

Next we prove a $1$-parameter version of Lemma~\ref{sigma-family-stability}. 
In Theorem~\ref{theorem-relative-moduli} of the next section, we will discuss the case of higher-dimensional bases. 

\begin{proposition}
\label{proposition-stability-over-curve} 
Let $X$ be an ordinary GM fourfold with smooth canonical quadric, 
let $v$ be a primitive vector in $\tH^{1,1}(\Ku(X), \bZ)$, and 
let $\sigma \in \Stab^{\dagger}(\Ku(X))$ be a $v$-generic stability condition. 
Let $X'$ be another ordinary GM fourfold with smooth canonical quadric which is deformation equivalent to $X$ within the Hodge locus for $v$, 
i.e. there is smooth family of GM fourfolds over a connected quasi-projective base with fibers $X$ and $X'$ along which $v$ remains a Hodge class.  
Then there exists a family $\cX \to C$ of ordinary GM fourfolds over a  
smooth connected quasi-projective curve $C$ along which $v$ remains a Hodge class, 
complex points $0, 1 \in C(\bC)$, 
and a stability condition $\underline{\sigma}$ on $\Ku(\cX)$ over $C$, 
such that: 
\begin{enumerate}
\item \label{defo-fibers} 
$\cX_{0} = X$ and $\cX_{1} = X'$. 
\item $v$ is a primitive vector in $\tH^{1,1}(\Ku(\cX_c), \bZ)$ for all $c \in C$. 
\item $\sigma_c \in \Stab^{\dagger}(\Ku(\cX_c))$ is $v$-generic for all $c \in C$, 
and $\sigma_0$ is a small deformation of $\sigma$ so that $M_{\sigma_0}(\Ku(X), v) = M_{\sigma}(\Ku(X), v)$. 
\item \label{defo-moduli} 
The relative moduli space $M_{\underline{\sigma}}(\Ku(\cX), v)$ (given as the good moduli space for the moduli stack of $\usigma$-semistable objects in $\Ku(\cX)$ of class $v$, see \cite[Theorem 21.24]{stability-families}) is a smooth and proper algebraic space over $C$. 
\end{enumerate}
\end{proposition}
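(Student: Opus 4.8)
The plan is to realize this as the Gushel--Mukai analogue of the relative construction for cubic fourfolds in \cite[Part VI]{stability-families}, built on top of the relative embedding of $\Ku(\cX)$ into Clifford modules furnished by Proposition~\ref{proposition-Ku-Cl-relative}. First I would produce the curve. The hypothesis gives a smooth family of GM fourfolds over a connected quasi-projective base $B$ with two fibers isomorphic to $X$ and $X'$ and along which $v$ remains a Hodge class. Since having a smooth canonical quadric is an open condition, failing only on a divisor in moduli (by \cite[Lemma 2.1]{DebKuz:periodGM} and \cite[Proposition 4.5]{DebKuz:birGM}, as recalled in Section~\ref{subsection-Ku-in-Cl}), the points of $B$ parametrizing $X$ and $X'$ lie in a dense open $B^{\circ} \subset B$ over which the family of canonical quadrics is smooth. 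I would then choose a smooth connected quasi-projective curve $C \subset B^{\circ}$ through both points and, after passing to a degree-$2$ \'etale cover if necessary (see the remark after Proposition~\ref{proposition-Ku-Cl-relative}), arrange that the relative Picard group of the canonical quadric family is a trivial rank-$2$ local system. Writing $\cX \to C$ for the restricted family and $0, 1 \in C(\bC)$ for the marked points, this secures condition~\ref{defo-fibers} and condition (2), and places us in the hypotheses of Proposition~\ref{proposition-Ku-Cl-relative}.

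Next I would construct the relative stability condition. Applying Proposition~\ref{proposition-Ku-Cl-relative} embeds $\Ku(\cX)$ as the left orthogonal to $\Cl_1, \Cl_0, \cR_a, \cR_b$ inside the $C$-linear category $\Db(\cY, \Cl_0)$. I would then run the construction of Section~\ref{section-main-theorem-fourfold} in families: the relative slope function attached to $h$ together with the fiberwise Bogomolov inequality of Theorem~\ref{thm_BI} produces, via the family-tilting formalism of \cite[Parts IV and V]{stability-families}, the double-tilted weak stability conditions $\sigma^0_{\alpha,\beta}$ on $\Db(\cY,\Cl_0)$ over $C$. Inducing these on $\Ku(\cX)$ by the relative version of the criterion \cite[Proposition 5.1]{BLMS}---whose fiberwise hypotheses are exactly Lemmas~\ref{lemma-exceptional-stable} and~\ref{lemma-double-tilt}, and which hold uniformly because $\cR_a, \cR_b, \Cl_0, \Cl_1$ are relative exceptional objects---yields a stability condition $\usigma$ on $\Ku(\cX)$ over $C$ restricting fiberwise to the stability conditions of Theorem~\ref{theorem-stab-Ku-good-4fold}, which lie in $\Stab^{\dagger}(\Ku(\cX_c))$ by Lemma~\ref{lemma-eta-sigma}. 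This gives condition (3), up to genericity and the identification of $\sigma_0$ with a small deformation of $\sigma$.

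Finally I would address genericity and the moduli space, which is where the main obstacle lies. The delicate point is to guarantee that $\sigma_c$ is $v$-generic \emph{simultaneously for every} $c \in C$, not merely at $c = 0$: walls for $v$ are cut out fiberwise by the finitely many numerical classes $w$ of bounded discriminant that remain Hodge along the family, and I would show that after a generic choice of the curve $C$ and a small perturbation of the constructed central charge---keeping $\sigma_0$ a small deformation of $\sigma$, so that $M_{\sigma_0}(\Ku(X),v) = M_{\sigma}(\Ku(X),v)$---these walls are avoided on all fibers, using that the non-generic locus is a real-codimension-$\geq 1$ subset of the relative stability manifold and that $v$ is primitive. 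Granting fiberwise $v$-genericity, condition~\ref{defo-moduli} follows from \cite[Theorem 21.24]{stability-families}, which provides the relative good moduli space $M_{\usigma}(\Ku(\cX), v)$ and its properness over $C$; smoothness over $C$ then holds because $M_{\usigma}(\Ku(\cX),v)$ is an open subspace of the relative space $sM_{\pug}(\Ku(\cX)/C)$ of simple universally gluable objects, which is smooth over $C$ by the relative form of Mukai's theorem in \cite{IHC-CY2}, exactly as in the proof of Lemma~\ref{sigma-family-stability}.
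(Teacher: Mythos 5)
Your route is the paper's route. The paper's own proof is deliberately brief: take a curve in the given base (shrinking to the open locus where all fibers are ordinary with smooth canonical quadric), invoke Proposition~\ref{proposition-Ku-Cl-relative} together with the fiberwise construction of Section~\ref{section-main-theorem-fourfold} run in families via \cite[Parts IV and V]{stability-families}, and then follow the argument of \cite[Corollary 32.1]{stability-families} with one modification: the smoothness input \cite[Theorem 31.1]{stability-families} is replaced by the Mukai theorem for families of CY2 categories from \cite{IHC-CY2}. Your three steps --- the curve choice plus the degree-$2$ \'etale cover trivializing the Picard local system, the relative double tilt induced on $\Ku(\cX)$ by the relative form of \cite[Proposition 5.1]{BLMS}, and properness from \cite[Theorem 21.24]{stability-families} with smoothness via openness in the relative space of simple universally gluable objects --- unpack exactly this.

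The one place where you supply detail beyond the citation is the simultaneous $v$-genericity over all of $C$, and there your justification has a genuine gap. You say the walls for $v$ are cut out fiberwise by ``the finitely many numerical classes $w$ of bounded discriminant that remain Hodge along the family.'' Classes that remain Hodge along the whole family are precisely the harmless ones; the danger comes from the (possibly infinitely many, typically dense) special points $c \in C$ where $\tH^{1,1}(\Ku(\cX_c),\bZ)$ jumps and acquires extra Hodge classes, hence extra walls. A ``generic choice of the curve'' cannot dispose of these, since $C$ is constrained to pass through $0$ and $1$ and to lie in the Hodge locus of $v$; and a single small perturbation cannot a priori avoid a countable union of walls spread over infinitely many fibers unless one knows that the total collection of walls is finite on the parameter slice. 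That finiteness is exactly what Lemma~\ref{lemma-eta-sigma} provides, and it is the mechanism the cited argument in \cite{stability-families} uses: every central charge produced by the construction pairs through the monodromy-invariant sublattice $A_1^{\oplus 2}$, so a Hodge class $w$ on \emph{any} fiber can define a wall only through its orthogonal projection $w'$ to $A_1^{\oplus 2}\otimes\bQ$. These projections lie in the discrete set $\frac{1}{2}A_1^{\oplus 2}$, and the support property bounds the norm of the relevant $w'$ uniformly on compact subsets of the slice, so the union over all $c \in C$ of all walls meets the slice in only finitely many walls; a small perturbation of $(\alpha,\beta)$, keeping $\sigma_0$ a small deformation of $\sigma$, then avoids all of them at once. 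With that substitution in your third step, your proposal coincides with the paper's proof.
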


\begin{proof}
By assumption, there exists a family $\cX \to C$ of smooth GM fourfolds over a smooth connected 
quasi-projective curve $C$, such that $\cX_0 = X$, $\cX_1 = X'$ for some points $0, 1 \in C(\bC)$, and $v$ remains a Hodge class along $C$. 
We may assume all fibers of $\cX \to C$ are ordinary with smooth canonical quadric, since these properties 
are open in families of GM fourfolds. 
Thus our construction from Section~\ref{section-stability} gives stability conditions on any fiber of $\cX \to C$. 
Using this and Proposition~\ref{proposition-Ku-Cl-relative}, 
up to possibly replacing $C$ by a finite cover we obtain a 
family of stability conditions $\usigma$ on $\Ku(\cX)$ over $C$ satisfying properties~\eqref{defo-fibers}-\eqref{defo-moduli}, 
by the same argument as in the proof of \cite[Corollary 32.1]{stability-families} with the following modification: 
instead of \cite[Theorem 31.1]{stability-families} we appeal to the general Mukai theorem on smoothness 
of moduli spaces of simple universally gluable objects for families of CY2 categories proved in \cite{IHC-CY2}. 
\end{proof}

Proposition~\ref{proposition-stability-over-curve} shows that to prove deformation 
invariant properties about the moduli space $M_{\sigma}(\Ku(X), v)$, we may specialize 
the GM fourfold $X$ within the Hodge locus for $v$. 
We will use this observation by specializing to the case where $\Ku(X)$ is equivalent to the derived category of a twisted K3 surface $(S, \alpha)$, where many results about moduli spaces of stable objects are already known \cite{bayer-macri-projectivity}. 
There is a subtlety that moduli spaces of $\sigma$-stable objects in $(S, \alpha)$ are only well-understood when $\sigma$ lies in the connected component $\Stab^{\dagger}(S, \alpha)$ containing geometric stability conditions, i.e. those for which skyscraper sheaves of points are stable of the same phase. 

\begin{definition}
Let $X$ be an ordinary GM fourfold with smooth canonical quadric and let $(S, \alpha)$ be a 
twisted K3 surface. 
Then a \emph{$\dagger$-equivalence} $\Ku(X) \simeq \Db(S, \alpha)$ is an equivalence 
under which $\Stab^{\dagger}(\Ku(X))$ maps to $\Stab^{\dagger}(S, \alpha)$. 
\end{definition}

The following result thus gives a specialization of GM fourfolds of the type we require. 

\begin{proposition}
\label{proposition-specialize} 
Let $X$ be an ordinary GM fourfold and let $v \in \tH^{1,1}(\Ku(X), \bZ)$ be a Hodge class. 
Then there exists an ordinary GM fourfold $X'$ with smooth canonical quadric such that: 
\begin{enumerate}
\item $X$ is deformation equivalent to $X'$ within the Hodge locus for $v$. 
\item \label{Ku-twisted-K3-stability}
There exists a twisted K3 surface $(S', \alpha')$ and $\dagger$-equivalence $\Ku(X') \simeq \Db(S', \alpha')$. 
\end{enumerate}
\end{proposition}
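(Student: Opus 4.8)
The plan is to produce $X'$ by a Hodge-theoretic specialization inside the Hodge locus for $v$, paralleling the cubic fourfold analysis of \cite{stability-families}: I would deform $X$ until the lattice of integral Hodge classes $\tH^{1,1}(\Ku(X'),\bZ)$ acquires a primitive isotropic vector, and then invoke the criterion of Theorem~\ref{theorem-Ku-geometric} to obtain a twisted K3 surface $(S',\alpha')$ together with an equivalence $\Ku(X')\simeq\Db(S',\alpha')$. The final and most delicate step will be to arrange that this equivalence is a $\dagger$-equivalence.

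Recall from Section~\ref{section-Mukai-HS} that $\tH(\Ku(X),\bZ)$ is a lattice of K3 type of signature $(4,20)$ carrying a weight-$2$ Hodge structure with $h^{2,0}=1$, and that its sublattice of Hodge classes always contains the positive-definite rank-$2$ lattice $A_1^{\oplus 2}$ spanned by $\lambda_1,\lambda_2$. Let $\Omega$ denote the associated period domain and $\Omega_v\subset\Omega$ the Hodge locus for $v$, parametrizing Hodge structures on which the saturation $M_0$ of $\langle A_1^{\oplus 2},v\rangle$ is algebraic; for very general points of $\Omega_v$ the lattice $M_0$ is positive definite, hence has no isotropic vector. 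I would therefore choose a primitive overlattice $M_0\subseteq M\subset\tH(\Ku(X),\bZ)$ of signature $(2,r)$ with $r\ge 1$ which contains a primitive isotropic vector, whose orthogonal complement $M^{\perp}$ still has two positive directions, and whose vanishing part $M\cap\tH(\Ku(X),\bZ)_0$ contains no vector of square $-2$. By the dominance and the known image of the period map for GM fourfolds (Proposition~\ref{proposition-HKu-HX} together with \cite{DebKuz:periodGM,OG}), a very general point of the Noether--Lefschetz locus $\Omega_M\subset\Omega_v$ lying in the same irreducible component as the period point of $X$ is realized by a smooth GM fourfold $X'$, which by the last condition avoids the divisor $\cD_8$ used in Lemma~\ref{lemma-eta-sigma}. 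Since being ordinary and having smooth canonical quadric are open conditions that hold off a divisor in moduli, I may take $X'$ in the dense open locus where they hold, and the connecting family inside the chosen component of $\Omega_v$ provides the deformation equivalence within the Hodge locus for $v$, putting us in the setting of Proposition~\ref{proposition-stability-over-curve}. By construction $\tH^{1,1}(\Ku(X'),\bZ)\supseteq M$ contains a primitive isotropic vector, so Theorem~\ref{theorem-Ku-geometric} yields $(S',\alpha')$ and an equivalence $\Ku(X')\simeq\Db(S',\alpha')$.

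The remaining step, which I expect to be the main obstacle, is to upgrade this equivalence to a $\dagger$-equivalence. Any equivalence $\Phi\colon\Ku(X')\to\Db(S',\alpha')$ induces a Hodge isometry of Mukai lattices, hence a homeomorphism matching $\cP_0(\Ku(X'))$ with $\cP_0(S',\alpha')$, and the associated isomorphism of stability manifolds carries $\Stab^{\dagger}(\Ku(X'))$ onto a connected component of $\eta^{-1}(\cP_0(S',\alpha'))$. This component need not be the distinguished one $\Stab^{\dagger}(S',\alpha')$: the two are separated by an orientation of the positive-definite part of the Hodge lattice, which $\Phi$ may reverse. I would correct this by composing $\Phi$ with an autoequivalence of $\Db(S',\alpha')$ realizing the required component, using that the two candidate components are interchanged by an orientation-reversing autoequivalence and that, by Lemma~\ref{lemma-eta-sigma}, the distinguished component on the $\Ku(X')$ side is sent by $\eta$ into $(A_1^{\oplus 2}\otimes\bC)\cap\cP(\Ku(X'))$, which pins down the correct orientation. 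The delicate point is exactly this bookkeeping of orientations and of the components of $\cP_0$, carried out as in the K3 analysis of \cite{bridgeland-K3} and its application in \cite{stability-families}; granting it, the corrected $\Phi$ is a $\dagger$-equivalence, which completes the proof.
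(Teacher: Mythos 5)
Your proposal has the right overall spirit (specialize within the Hodge locus until a twisted K3 appears), but two of its three steps fail for concrete reasons, and the second failure is precisely the difficulty this proposition exists to solve.

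The first problem is circularity. You produce the equivalence $\Ku(X')\simeq \Db(S',\alpha')$ by invoking Theorem~\ref{theorem-Ku-geometric}, but in the paper that theorem is \emph{deduced from} Proposition~\ref{proposition-specialize} together with Proposition~\ref{proposition-stability-over-curve}: its proof reduces nonemptiness of $M_\sigma(\Ku(X'),v)$ to the twisted K3 case exactly by the specialization you are trying to establish. At this point in the paper the only unconditional source of geometric Kuznetsov components is the result of \cite{KuzPerry:dercatGM} for GM fourfolds containing a quintic del Pezzo surface, which is why the paper's proof specializes into the closure of $\cM_{\dP5}$ (via the $\sigma$-plane fourfolds of \cite{DIM4fold}) rather than into an abstract Noether--Lefschetz locus with an isotropic class. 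Relatedly, your appeal to ``the known image of the period map'' to realize a very general point of $\Omega_M$ by a smooth GM fourfold is not available: the paper explicitly calls determining this image an open problem, and the divisor $\cD_8$ misses the image entirely, so dominance alone does not let you hit the very general point of one chosen divisor. The paper circumvents this by combining dominance with \emph{infinitely many} candidate divisors (Lemma~\ref{lemma-periods}\eqref{Z-divisors}--\eqref{Z-D10}), so that all but finitely many of them must meet the relevant open image.

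The second, independent problem is your final step upgrading an abstract equivalence to a $\dagger$-equivalence. Connected components of $\eta^{-1}(\cP_0)$ are \emph{not} separated by the orientation of the positive part of the Mukai lattice: two distinct components of the stability manifold can lie over the same component $\cP_0^+$, and uniqueness of the component covering $\cP_0^+$ is exactly Bridgeland's open conjecture, for twisted K3 surfaces as much as for $\Ku(X)$. Moreover, the orientation-reversing autoequivalence you propose to compose with does not exist: by the theorem of Huybrechts--Macr\`i--Stellari (extended to the twisted case by Reinecke), every autoequivalence of $\Db(S',\alpha')$ induces an orientation-\emph{preserving} Hodge isometry. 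This $\dagger$-issue is the crux of the statement --- the paper even notes in Remark~\ref{remark-dagger-oops} that \cite{stability-families} has a gap at exactly this point in the cubic fourfold case. The paper's mechanism is entirely different and avoids component bookkeeping: it constructs the twisted K3 \emph{as} a moduli space $M_\sigma(\Ku(X'),v)$ for some $\sigma\in\Stab^{\dagger}(\Ku(X'))$, so the resulting equivalence visibly sends $\sigma$ to a geometric stability condition and hence $\Stab^{\dagger}$ to $\Stab^{\dagger}$ (Lemma~\ref{lemma-Ku-twisted-K3}); and to obtain $\sigma$-stable objects of isotropic class without circularity, it deforms (Lemma~\ref{lemma-modify-Ku-twisted}) to a fourfold whose Hodge lattice contains no $(-2)$-classes, where any simple object with $\Ext^1 \cong \bC^2$ is stable for \emph{every} stability condition by \cite[Lemma A.4]{BLMS}, such objects being propagated along the family by Mukai's theorem for families of CY2 categories \cite{IHC-CY2}, starting from the del Pezzo locus where $\Ku\simeq\Db(S)$ is known geometrically; the conclusion is then transported back by Lemma~\ref{lemma-deform-Ku-twisted-K3}. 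Without this (or some substitute) mechanism, ``granting'' the component identification is granting an open problem.
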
 

We will prove Proposition~\ref{proposition-specialize} at the end of this section, after some preliminary results. 

\begin{lemma}
\label{lemma-Ku-twisted-K3}
Let $X$ be an ordinary GM fourfold with smooth canonical quadric.  
Assume there exists a primitive $v \in \tH^{1,1}(\Ku(X), \bZ)$ with $(v,v) = 0$. 
Let $\sigma \in \Stab^{\dagger}(\Ku(X))$ be a $v$-generic stability condition, and 
assume there exists a $\sigma$-stable object in $\Ku(X)$ of class $v$. 
Then $S = M_{\sigma}(\Ku(X),v)$ is a smooth K3 surface and there is a Brauer class $\alpha \in \Br(S)$ such that there is 
a $\dagger$-equivalence $\Ku(X) \simeq \Db(S, \alpha)$. 
\end{lemma}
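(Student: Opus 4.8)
The plan is to realize $S$ as a two-dimensional moduli space carrying a (twisted) universal family, and to show that the resulting Fourier--Mukai functor is a $\dagger$-equivalence onto $\Ku(X)$.

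First I would check that $S=M_\sigma(\Ku(X),v)$ is a smooth proper surface. By Lemma~\ref{sigma-family-stability}, since $\sigma$ is $v$-generic it is a stability condition on $\Ku(X)$ over $\Spec(\bC)$ and $M_\sigma(\Ku(X),v)$ is a coarse moduli space, smooth and proper over $\bC$; every $\sigma$-semistable object of class $v$ is $\sigma$-stable, and by hypothesis one exists, so $S$ is nonempty. For a $\sigma$-stable $E$ of class $v$ one has $\Hom(E,E)=\bC$, and the Calabi--Yau-$2$ property of $\Ku(X)$ together with $(v,v)=0$ gives $\chi(E,E)=-(v,v)=0$ and hence $\dim\Ext^1(E,E)=2$. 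Thus $\dim S=2$, and the Mukai pairing on $\Ext^1(E,E)$ endows $S$ with a holomorphic symplectic form, so $K_S$ is trivial.

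Next I would produce the universal family and prove fully faithfulness. The moduli stack of $\sigma$-stable objects of class $v$ is a $\mathbf{G}_m$-gerbe over $S$, which yields a Brauer class $\alpha\in\Br(S)$ and a twisted universal object $\cE$ on $S\times X$ with $\cE_s\in\Ku(X)$ for every $s$, as in the moduli theory of \cite{stability-families}. The associated functor $\Phi=\Phi_{\cE}\colon\Db(S,\alpha)\to\Ku(X)$ sends the skyscraper $k(s)$ to $\cE_s$. For $s\neq s'$ the objects $\cE_s,\cE_{s'}$ are non-isomorphic $\sigma$-stable objects of the same phase, so $\Hom(\cE_s,\cE_{s'})=0$ and $\Ext^2(\cE_s,\cE_{s'})\cong\Hom(\cE_{s'},\cE_s)^{\vee}=0$ by Serre duality; since $\Ext^j=0$ for $j\notin\{0,1,2\}$, the vanishing $\chi(\cE_s,\cE_{s'})=-(v,v)=0$ forces $\Ext^1(\cE_s,\cE_{s'})=0$ as well. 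Together with $\Hom(\cE_s,\cE_s)=\bC$ this verifies the point-object criterion (as used in \cite{bayer-macri-projectivity}), so $\Phi$ is fully faithful.

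Now I would upgrade $\Phi$ to a $\dagger$-equivalence. The image of $\Phi$ is admissible, and because the Serre functor of $\Ku(X)$ is the shift $[2]$ both the image and its orthogonal are preserved by it; a short Serre-duality computation then shows the orthogonal is completely orthogonal to the image. As $\Ku(X)$ is a connected noncommutative K3 surface ($\mathrm{HH}^0(\Ku(X))=\bC$, cf.\ \cite{KuzPerry:dercatGM}) it admits no nontrivial completely orthogonal decomposition, whence $\Phi$ is an equivalence. It induces a Hodge isometry $\tH(S,\alpha,\bZ)\cong\tH(\Ku(X),\bZ)$ of rank $24$; since the twisted Mukai lattice of an abelian surface has strictly smaller rank, $S$ is a K3 surface, projective by the general projectivity of these moduli spaces. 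Finally, $\Phi$ induces a homeomorphism $\Phi_*\colon\Stab(\Db(S,\alpha))\xrightarrow{\sim}\Stab(\Ku(X))$ compatible with this isometry, hence carrying $\eta^{-1}(\cP_0(S,\alpha))$ onto $\eta^{-1}(\cP_0(\Ku(X)))$. Writing $\sigma'=\Phi_*^{-1}(\sigma)$, each $k(s)=\Phi^{-1}(\cE_s)$ is $\sigma'$-stable of the same phase, so $\sigma'$ is a geometric stability condition and lies in $\Stab^{\dagger}(S,\alpha)$; since $\Phi_*$ preserves connected components and $\Phi_*(\sigma')=\sigma\in\Stab^{\dagger}(\Ku(X))$, it maps $\Stab^{\dagger}(S,\alpha)$ onto $\Stab^{\dagger}(\Ku(X))$, as required.

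The main obstacle I expect is the combination of the two structural facts in the last paragraph: one must use connectedness of $\Ku(X)$ to pass from fully faithfulness to an equivalence (so that the rank-$24$ Mukai lattice can rule out the abelian case), and one must verify that the given $\sigma$ transports to a genuinely \emph{geometric} stability condition on $\Db(S,\alpha)$, which is precisely what pins down the distinguished $\dagger$-component. Everything else reduces to the general moduli-theoretic machinery of \cite{stability-families}, the smoothness input of \cite{IHC-CY2}, and Bridgeland's description \cite{bridgeland-K3} of the stability manifold of a (twisted) K3 surface.
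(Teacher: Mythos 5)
Your proposal is correct and takes essentially the same route as the paper's proof: smoothness, properness and the symplectic surface structure come from Lemma~\ref{sigma-family-stability} and the CY2 property, a (quasi-)universal family produces the Brauer class $\alpha$ and a Fourier--Mukai equivalence $\Db(S,\alpha) \simeq \Ku(X)$, and the abelian case is excluded by comparing additive invariants. The differences are only in the packaging: where you spell out the point-object criterion, the complete-orthogonality/indecomposability argument, and the explicit transport of $\sigma$ to a geometric stability condition on $\Db(S,\alpha)$, the paper instead invokes ``standard arguments'' (\cite[Lemma 32.3]{stability-families}, and \cite[Proposition A.7]{BLMS} for connectedness of $S$), and it rules out the abelian case via Hochschild homology rather than the rank of the Mukai lattice.
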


\begin{proof}
By Lemma~\ref{sigma-family-stability}, $S$ is a smooth proper algebraic space. 
Because $\Ku(X)$ is a CY2 category, standard arguments (see \cite[Section 2]{KM:symplectic}) 
show that $S$ has dimension $(v,v) + 2 = 2$ and is equipped with a symplectic form. 
Being a smooth proper $2$-dimensional algebraic space, we conclude that $S$ is in fact a smooth projective surface. By \cite[Proposition A.7]{BLMS}, it follows that $S$ is also connected. 
The existence of a symplectic form on $S$ then implies it is either a K3 or abelian surface. 
Let $\cE$ be a quasi-universal family over $S \times X$ and $\alpha \in \Br(S)$ the associated Brauer class. 
Standard arguments (cf. \cite[Lemma 32.3]{stability-families}) then show that the corresponding Fourier--Mukai functor $\Phi_{\cE} \colon \Db(S, \alpha) \to \Db(X)$ factors through an equivalence $\Db(S, \alpha) \simeq \Ku(X)$. 
Then since $\Ku(X)$ has the same Hochschild homology as a K3 surface, so does $\Db(S, \alpha)$. It follows that $S$ is not an abelian surface, and hence is a K3 surface. 
\end{proof}

Next we show that the existence of an $\dagger$-equivalence as in Proposition~\ref{proposition-specialize}\eqref{Ku-twisted-K3-stability} deforms along Hodge loci for square zero classes. 

\begin{lemma}
\label{lemma-deform-Ku-twisted-K3} 
Let $X$ and $X'$ be ordinary GM fourfolds with smooth canonical quadrics, 
which are deformation equivalent within the Hodge locus for a $v \in \tH^{1,1}(\Ku(X), \bZ)$ with $(v,v) = 0$. 
Then $\Ku(X)$ is $\dagger$-equivalent to the derived category of a twisted K3 surface if and only if $\Ku(X')$ is. 
\end{lemma}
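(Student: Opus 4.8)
The plan is to prove the two implications by symmetry, so I assume $\Ku(X)$ is $\dagger$-equivalent to $\Db(S, \alpha)$ for a twisted K3 surface $(S, \alpha)$ and deduce the same for $\Ku(X')$. First I would reduce to the case where $v$ is primitive: if $v = m v_0$ with $v_0 \in \tH(\Ku(X), \bZ)$ primitive, then $v_0$ is again a Hodge class with $(v_0, v_0) = 0$ and the Hodge locus for $v$ coincides with that for $v_0$, so we may replace $v$ by $v_0$. Rather than attempting to transport the equivalence $\Ku(X) \simeq \Db(S, \alpha)$ directly across the deformation, the strategy is to \emph{reconstruct} a twisted K3 surface on the $X'$ side as a moduli space, via Lemma~\ref{lemma-Ku-twisted-K3}. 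Concretely, it suffices to produce a $v$-generic stability condition $\sigma' \in \Stab^{\dagger}(\Ku(X'))$ together with a $\sigma'$-stable object of $\Ku(X')$ of class $v$; Lemma~\ref{lemma-Ku-twisted-K3} then yields a K3 surface $S'$, a Brauer class $\alpha' \in \Br(S')$, and a $\dagger$-equivalence $\Ku(X') \simeq \Db(S', \alpha')$.

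To build the object on $X'$, I would invoke Proposition~\ref{proposition-stability-over-curve} with a $v$-generic $\sigma \in \Stab^{\dagger}(\Ku(X))$. Since $X$ and $X'$ are ordinary GM fourfolds with smooth canonical quadric that are deformation equivalent within the Hodge locus for $v$, this produces a family $\cX \to C$ over a smooth connected curve with $\cX_0 = X$ and $\cX_1 = X'$, a stability condition $\usigma$ on $\Ku(\cX)$ over $C$ with each $\sigma_c \in \Stab^{\dagger}(\Ku(\cX_c))$ being $v$-generic, and a relative moduli space $M_{\usigma}(\Ku(\cX), v)$ that is smooth and proper over $C$. Because $v$ is primitive and each $\sigma_c$ is $v$-generic, there are no strictly semistable objects of class $v$, so the fibers of this relative moduli space parameterize $\sigma_c$-stable objects.

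The key step is a nonemptiness-propagation argument. Over the point $0$, the $\dagger$-equivalence $\Ku(X) \simeq \Db(S, \alpha)$ carries $\sigma_0$ into $\Stab^{\dagger}(S, \alpha)$ and $v$ to a primitive square-zero Hodge class; by the nonemptiness results for moduli of stable objects on (twisted) K3 surfaces in the distinguished component \cite{bayer-macri-projectivity}, the fiber $M_{\sigma_0}(\Ku(X), v)$ is then nonempty, indeed a K3 surface. Hence the total space $M_{\usigma}(\Ku(\cX), v)$ is nonempty; its structure morphism to $C$ is smooth, hence open, and proper, hence closed, so its image is a nonempty clopen subset of the connected curve $C$ and therefore equals $C$. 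In particular the fiber over $1$, namely $M_{\sigma_1}(\Ku(X'), v)$, is nonempty, which by primitivity and $v$-genericity furnishes a $\sigma_1$-stable object of class $v$ in $\Ku(X')$.

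Applying Lemma~\ref{lemma-Ku-twisted-K3} to $X'$ with the $v$-generic $\sigma_1 \in \Stab^{\dagger}(\Ku(X'))$ and this stable object completes the argument. The main obstacle is exactly the nonemptiness propagation: one must know that stable objects persist along the whole family, and the clean way to see this is that a smooth proper morphism over a connected base has clopen image, combined with the single input that the fiber over $0$ is nonempty. That single input is where the assumed $\dagger$-equivalence and the known nonemptiness theory on twisted K3 surfaces enter, and the role of $\Stab^{\dagger}$ (rather than an arbitrary stability component) is essential: it is what guarantees both the transport of $\sigma_0$ into $\Stab^{\dagger}(S, \alpha)$ at the start and the applicability of Lemma~\ref{lemma-Ku-twisted-K3} at the end.
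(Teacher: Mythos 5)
Your proposal is correct and follows essentially the same route as the paper: deform the moduli space $M_{\sigma}(\Ku(X),v)$ along the curve provided by Proposition~\ref{proposition-stability-over-curve}, use the $\dagger$-equivalence together with \cite{bayer-macri-projectivity} to get nonemptiness over the fiber $X$, and conclude with Lemma~\ref{lemma-Ku-twisted-K3} on the fiber $X'$. Your write-up just makes explicit two points the paper leaves implicit, namely the reduction to primitive $v$ and the clopen-image argument propagating nonemptiness along the connected curve.
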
 

\begin{proof}
If $\sigma \in \Stab^{\dagger}(\Ku(X))$ is $v$-generic, then $M_{\sigma}(\Ku(X), v)$ is a smooth K3 surface. 
Indeed, this holds by our assumption and the analogous statement for twisted K3 surfaces \cite{bayer-macri-projectivity}. 
By Proposition~\ref{proposition-stability-over-curve}, it follows that for any $v$-generic $\sigma' \in \Stab^{\dagger}(\Ku(X'))$ the moduli space $M_{\sigma'}(\Ku(X'), v)$ is also a smooth K3 surface. 
Then we conclude by Lemma~\ref{lemma-Ku-twisted-K3}. 
\end{proof} 

The following result roughly says that given an equivalence $\Ku(X) \simeq \Db(S, \alpha)$, we can modify it (possibly by replacing $(S, \alpha)$ with a different twisted K3) to a $\dagger$-equivalence 
provided that $X$ admits deformations with suitable Hodge-theoretic properties. 

\begin{lemma}
\label{lemma-modify-Ku-twisted}
Let $X$ be an ordinary GM fourfold with smooth canonical quadric such that: 
\begin{enumerate}
\item There is an equivalence $\Ku(X) \simeq \Db(S, \alpha)$ for some twisted K3 surface $(S, \alpha)$. 
\item \label{lemma-modify-Ku-twisted-spherical}
There exists a class $v \in \tH^{1,1}(\Ku(X), \bZ)$ with $(v,v) = 0$ such that $X$ is deformation equivalent within the Hodge locus for $v$ to another ordinary GM fourfold $X'$ with smooth canonical quadric, with the property that $\tH^{1,1}(\Ku(X'), \bZ)$ contains no elements $\delta$ with $(\delta, \delta) = -2$. 
\end{enumerate}
Then there exists a twisted K3 surface $(T, \beta)$, possibly different from $(S, \alpha)$, and a $\dagger$-equivalence $\Ku(X) \simeq \Db(T, \beta)$. 
\end{lemma}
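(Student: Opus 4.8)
The plan is to exhibit the desired twisted K3 surface as a moduli space of stable objects in $\Ku(X)$, thereby reducing the statement to the nonemptiness of a single moduli space, and then to settle that nonemptiness by specializing to $X'$, where the absence of $(-2)$-classes rigidifies the distinguished stability component.

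First I would fix a primitive square-zero class $v\in\tH^{1,1}(\Ku(X),\bZ)$ as in hypothesis~(2) and a $v$-generic $\sigma\in\Stab^{\dagger}(\Ku(X))$. By Lemma~\ref{lemma-Ku-twisted-K3} it suffices to produce a single $\sigma$-stable object of class $v$: the lemma then yields a K3 surface $T=M_{\sigma}(\Ku(X),v)$, a Brauer class $\beta\in\Br(T)$, and a $\dagger$-equivalence $\Ku(X)\simeq\Db(T,\beta)$, which is exactly the assertion. Since $v$ is primitive and $\sigma$ is $v$-generic, no strictly semistable objects of class $v$ occur, so the existence of a $\sigma$-stable object is equivalent to $M_{\sigma}(\Ku(X),v)\neq\emptyset$. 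The problem is thus reduced to this nonemptiness.

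I would then transfer nonemptiness to $X'$ by deformation invariance. Proposition~\ref{proposition-stability-over-curve} provides a family $\cX\to C$ of ordinary GM fourfolds with smooth canonical quadric over a smooth connected curve, with $\cX_0=X$ and $\cX_1=X'$, along which $v$ stays a primitive Hodge class, together with a stability condition $\usigma$ on $\Ku(\cX)$ over $C$ whose relative moduli space $M_{\usigma}(\Ku(\cX),v)\to C$ is smooth and proper. Hence $M_{\sigma_0}(\Ku(X),v)$ and $M_{\sigma_1}(\Ku(X'),v)$ are deformation equivalent, and it is enough to show $M_{\sigma'}(\Ku(X'),v)\neq\emptyset$ for $\sigma'=\sigma_1$. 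On $X'$ the hypothesis gives $\Delta=\emptyset$, so $\ccP_0(\Ku(X'))=\ccP(\Ku(X'))$ and its relevant connected component $\ccP_0^{+}(\Ku(X'))$ is simply connected; by Theorem~\ref{theorem-Stab-dagger} the covering $\eta\colon\Stab^{\dagger}(\Ku(X'))\to\ccP_0^{+}(\Ku(X'))$ is then a homeomorphism, and $\Stab^{\dagger}(\Ku(X'))$ is the unique sheet over this base. Propagating the equivalence of hypothesis~(1) along the family, i.e.\ deforming its Fourier--Mukai kernel in the family of CY2 categories $\Ku(\cX)$, produces an equivalence $\Ku(X')\simeq\Db(S',\alpha')$ carrying $v$ to a primitive isotropic Hodge class $w$; by the induced Mukai-lattice isometry $(S',\alpha')$ again has no $(-2)$-classes, so the same simple-connectivity forces this equivalence to identify the two unique distinguished components, i.e.\ to be a $\dagger$-equivalence. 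Consequently $\sigma'$ corresponds to a geometric stability condition on $(S',\alpha')$, and Bayer--Macr\`{i}~\cite{bayer-macri-projectivity} gives $M_{\sigma'}(\Ku(X'),v)\cong M_{\tau'}(\Db(S',\alpha'),w)\neq\emptyset$. Tracing back through the deformation invariance yields $M_{\sigma}(\Ku(X),v)\neq\emptyset$, and Lemma~\ref{lemma-Ku-twisted-K3} completes the proof. One may equivalently establish the $\dagger$-equivalence for $X'$ directly and then transfer it to $X$ via Lemma~\ref{lemma-deform-Ku-twisted-K3}.

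The main obstacle is precisely the passage from a plain equivalence to a $\dagger$-equivalence. An arbitrary Fourier--Mukai equivalence $\Ku(X)\simeq\Db(S,\alpha)$ need not match the distinguished components $\Stab^{\dagger}$, and the ambiguity is measured by the monodromy of $\eta$ around the walls $\delta^{\perp}$ indexed by the $(-2)$-classes $\delta\in\Delta$. Hypothesis~(2) is exactly what annihilates this monodromy: after specializing to $X'$ these walls disappear, $\ccP_0^{+}$ becomes simply connected, the distinguished component becomes canonical, and geometricity --- hence nonemptiness of the moduli space --- can be propagated back to $X$ by Proposition~\ref{proposition-stability-over-curve}. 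The genuinely technical point underlying this is the control of the equivalence, equivalently of geometricity, in the family of CY2 categories $\Ku(\cX)$, which rests on the smoothness and properness of the relative moduli spaces furnished by \cite{stability-families} and \cite{IHC-CY2}.
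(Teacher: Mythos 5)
Your overall skeleton --- reduce to nonemptiness of a moduli space of class $v$, transfer that along the curve of Proposition~\ref{proposition-stability-over-curve} (smooth plus proper over a connected curve does give this), and settle it at the special fiber $X'$ using Bayer--Macr\`{i} and the absence of $(-2)$-classes --- agrees with the paper's. But the step where you actually produce stability at $X'$ rests on two unjustified claims, and this is exactly where the real content lies. First, you assert that the equivalence $\Ku(X)\simeq\Db(S,\alpha)$ can be ``propagated along the family by deforming its Fourier--Mukai kernel.'' Deforming an equivalence along a family of CY2 categories requires simultaneously deforming the twisted K3 surface and the kernel and proving the deformed kernel remains an equivalence; this is an Addington--Thomas-type statement essentially as hard as the lemma itself, and neither the paper nor your proposal supplies it. The paper deliberately avoids it: from $M_\sigma(\Db(S,\alpha),v)\neq\emptyset$ it extracts a single simple object $E\in\Ku(X)$ of class $v$ with $\Ext^{<0}(E,E)=0$, and then deforms only this \emph{object}, via Mukai's theorem for the family of CY2 categories $\Ku(\cX)$ from \cite{IHC-CY2}, to get at a very general fiber (which replaces $X'$, and still has no $(-2)$-classes) a simple object $E'$ with $\Ext^1(E',E')\cong\bC^2$.

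Second, your topological argument is incorrect. Even when $\Delta=\emptyset$, the component $\ccP_0^+=\ccP^+$ is \emph{not} simply connected: it fibers over a contractible period domain with fibers homotopy equivalent to $\mathrm{GL}_2^+(\bR)\simeq S^1$. Moreover $\eta$ is never a homeomorphism, since the even shift $[2]$ is a nontrivial deck transformation fixing the central charge. And even if the base were simply connected, a covering of a simply connected space is a disjoint union of sheets, so the ``unique sheet'' conclusion (i.e.\ uniqueness of the distinguished component, which is what you need for an arbitrary equivalence to be a $\dagger$-equivalence) would still not follow; that uniqueness is precisely an open, Bridgeland-conjecture-type statement, not a consequence of hypothesis~(2). (Your appeal to Theorem~\ref{theorem-Stab-dagger} is also circular: its proof uses the present lemma through Proposition~\ref{proposition-specialize}; only the covering property of $\eta$ over $\eta^{-1}(\ccP_0)$, from \cite{bridgeland-K3}, is available here.) The paper's mechanism for sidestepping all of this is \cite[Lemma A.4]{BLMS}: when $\tH^{1,1}(\Ku(X'),\bZ)$ contains no $(-2)$-classes, the simple object $E'$ above is automatically stable for \emph{every} stability condition on $\Ku(X')$, in particular for a $v$-generic one in $\Stab^{\dagger}(\Ku(X'))$; then Lemma~\ref{lemma-Ku-twisted-K3} produces a (new) twisted K3 surface with a $\dagger$-equivalence at $X'$, and Lemma~\ref{lemma-deform-Ku-twisted-K3} transfers it back to $X$. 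Without that lemma (or some substitute input, such as connectedness of the stability manifold of a twisted K3 surface without spherical classes), your argument does not close.
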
 

\begin{proof}
Let $\cX \to C$ be a smooth family of ordinary GM fourfolds with smooth canonical quadrics over a smooth connected quasi-projective curve $C$, such that $\cX_0 = X$ and $\cX_1 = X'$ for some points $0,1 \in C(\bC)$, and $v$ remains a Hodge class along $C$. 
For a $v$-generic $\sigma \in \Stab^{\dagger}(S, \alpha)$, the space $M_\sigma(\Db(S, \alpha), v)$ is a K3 surface by \cite{bayer-macri-projectivity}. 
In particular, it follows that there exists a simple object $E \in \Ku(X)$ of class $v(E) = v \in \tH^{1,1}(\Ku(X), \bZ)$ with $\Ext^{< 0}(E,E) = 0$. 
By Mukai's theorem for the family of CY2 categories $\Ku(\cX)$ over $C$ \cite{IHC-CY2}, 
it follows that there exists a Zariski open subset $U \subset C$ such that for any $c \in U$ there is a simple object $E_c \in \Ku(\cX_c)$ of class $v(E_c) = v \in \tH^{1,1}(\Ku(\cX_c), \bZ)$ with $\Ext^{< 0}(E_c,E_c) = 0$, 
and in particular $\Ext^1(E_c, E_c) \cong \bC^2$. 

The condition that $\tH^{1,1}(\Ku(\cX_c), \bZ)$ contains no elements $\delta$ with $(\delta, \delta) = -2$ holds for a very general point $c \in C$, since it holds for $c = 1$. 
Therefore, up to replacing $X'$ with a different fiber of $\cX \to C$, 
we may assume there exists an object $E' \in \Ku(X')$ such that $\Ext^1(E',E') \cong \bC^2$. 
By \cite[Lemma A.4]{BLMS}, it follows that $E'$ is $\sigma$-stable for any $\sigma \in \Stab(\Ku(X'))$. 
Thus by Lemma~\ref{lemma-Ku-twisted-K3}, there exists a twisted K3 surface $(S', \alpha')$ and 
a $\dagger$-equivalence $\Ku(X') \simeq \Db(S', \alpha')$. 
From this, the result follows by applying Lemma~\ref{lemma-deform-Ku-twisted-K3}. 
\end{proof} 

Now we can explain the idea of the proof of Proposition~\ref{proposition-specialize}. 
The closure of the locus of GM fourfolds containing a quintic del Pezzo surface forms a divisor in the moduli space of GM fourfolds, such that any GM fourfold can be specialized along any Hodge locus into this divisor \cite{DIM4fold}, and $\Ku(X)$ is equivalent to the derived category of a K3 surface on (a Zariski open subset of) this divisor \cite{KuzPerry:dercatGM}. 
We will use Lemma~\ref{lemma-modify-Ku-twisted} to modify the equivalence to a $\dagger$-equivalence with 
a twisted K3 surface. 
To verify condition~\eqref{lemma-modify-Ku-twisted-spherical} of Lemma~\ref{lemma-modify-Ku-twisted} in this situation, we will need some input from the period morphism for GM fourfolds, which we review now. 

Let $\cM$ denote the moduli stack of GM fourfolds. 
This is a smooth, irreducible Deligne--Mumford stack of dimension $24$ \cite[Proposition A.2]{KuzPerry:dercatGM} (see also~\cite{DebKuz:birGM}).
We denote by 
\begin{equation*}
p \colon \cM \to \cD 
\end{equation*}
the period morphism, where the period domain $\cD$ is the $20$-dimensional quasi-projective variety 
classifying Hodge structures on the middle cohomology $\rH^4(X_0, \bZ)$ of a fixed GM fourfold $X_0$ for which the canonical sublattice $\rH^4(\Gr(2,5),\bZ) \subset \rH^4(X_0, \bZ)$ consists of Hodge classes  
(see \cite{DIM4fold} for details). 
Note that by Section~\ref{section-Mukai-HS}, the period domain $\cD$ can also be thought of as classifying Hodge structures on $\tH(\Ku(X_0),  \bZ)$ for which the canonical sublattice 
$A_1^{\oplus 2} \subset \tH(\Ku(X_0),  \bZ)$ consists of Hodge classes. 
We consider inside $\cD$ the locus parameterizing Hodge structures on $\tH(\Ku(X_0),  \bZ)$ 
for which there are ``extra'' Hodge classes, i.e. nonzero ones orthogonal to $A_1^{\oplus 2}$. 
By \cite{DIM4fold}, this locus is the union of divisors $\cD_d \subset \cD$ over positive integers $d$ satisfying $d \equiv 0, 4, \text{ or } 2 \pmod 8$, where $\cD_d$ is irreducible for $d \equiv 0 \text{ or } 4 \pmod 8$ and $\cD_d = \cD'_d \cup \cD''_d$ is the union of two irreducible divisors $\cD'_d$ and $\cD''_d$ for $d \equiv 2 \pmod 8$. 

Let $\cM^{\ord} \subset \cM$ denote the open subspace parameterizing ordinary GM fourfolds, and let $p^{\ord} \colon \cM^\ord \to \cD$ denote the restriction of the period morphism to this subspace. 

\begin{lemma}
\label{lemma-periods}
\begin{enumerate}
\item \label{p-dominant} 
The morphism $p^\ord \colon \cM^\ord \to \cD$ is smooth and dominant. 

\item \label{preimage-Z-irreducible}
For any irreducible closed subscheme $Z \subset \cD$, the preimage $(p^{\ord})^{-1}(Z)$ is irreducible. 

\item \label{dP5-dominant}
Let $\cM_{\dP5} \subset \cM$ be the locus parameterizing GM fourfolds containing a quintic del Pezzo surface. Then $\cM_{\dP5}$ is irreducible, and the restriction of the period morphism to $\cM_{\dP5}$ factors through a dominant morphism $\cM_{\dP5} \to \cD''_{10}$. 
\item \label{D10} 
There exists a vector $v \in \tH(\Ku(X_0),  \bZ)$ with $(v,v) = 0$ which is a Hodge class for all of the Hodge structures parameterized by $\cD''_{10}$. 
\item \label{Z-divisors} There are infinitely many divisors $Z$ among $\cD_d, \cD'_d$, and $\cD''_d$ for which the following conditions are satisfied: 
\begin{itemize}
\item There exists a $v \in \tH(\Ku(X_0),  \bZ)$ with $(v,v) = 0$ which is a Hodge class for all of the Hodge structures parameterized by $Z$. 
\item For a very general point of $Z$, the corresponding Hodge structure on $\tH(\Ku(X_0), \bZ)$ contains no Hodge classes $\delta$ with $(\delta, \delta) = -2$. 
\end{itemize}
\item \label{Z-D10} 
The intersections $Z \cap \cD''_{10}$ for $Z$ as in~\eqref{Z-divisors} give infinitely many distinct divisors in $\cD''_{10}$. 
\end{enumerate} 
\end{lemma}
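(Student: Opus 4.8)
The plan is to prove the six assertions in order, obtaining the first three from the known geometry of the period morphism together with a formal irreducibility argument, and reducing the last three to lattice arithmetic; the construction of infinitely many suitable divisors in~\eqref{Z-divisors} will be the crux. For~\eqref{p-dominant}, I would invoke the infinitesimal Torelli theorem for GM fourfolds of \cite{DIM4fold} (see also \cite{DebKuz:periodGM}), which shows that on the ordinary locus the differential of the period map is everywhere surjective of rank $\dim \cD = 20$; this gives smoothness, and since a smooth morphism is open its image is a nonempty open, hence dense, subset of the irreducible $\cD$, giving dominance. For~\eqref{preimage-Z-irreducible}, the fibers of $p^{\ord}$ parametrize the ordinary GM fourfolds sharing a fixed period, which by the EPW-sextic description of generalized partners and duals \cite{DebKuz:periodGM, DebKuz:birGM} (the same parametrization used in the proof of Theorem~\ref{theorem-duality}) form an irreducible $4$-dimensional family. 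Since a smooth surjection from an irreducible total space all of whose fibers are irreducible has irreducible preimage for every irreducible closed subscheme of the base, assertion~\eqref{preimage-Z-irreducible} follows.

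For~\eqref{dP5-dominant}, the identification is exactly the result of \cite{DIM4fold}: a GM fourfold contains a quintic del Pezzo surface precisely when its period lies in $\cD''_{10}$, and the restricted period morphism is dominant onto $\cD''_{10}$. Irreducibility of $\cM_{\dP5}$ then follows by applying~\eqref{preimage-Z-irreducible} to $Z = \cD''_{10}$, since on the ordinary locus $\cM_{\dP5}$ coincides with $(p^{\ord})^{-1}(\cD''_{10})$.

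The analytic content lies in~\eqref{D10} and~\eqref{Z-divisors}. For~\eqref{D10}, a very general Hodge structure parametrized by $\cD''_{10}$ has lattice of Hodge classes a rank $3$ overlattice $K''_{10} \supset A_1^{\oplus 2}$ of discriminant $10$ in the distinguished $''$-class, and I would exhibit a primitive vector $v \in K''_{10}$ with $(v,v) = 0$ directly from the gluing data recorded in \cite{DIM4fold}; this is the lattice shadow of the fact that fourfolds on $\cD''_{10}$ carry an associated degree $10$ K3 surface. For~\eqref{Z-divisors}, I would let $d$ range over the admissible discriminants $d \equiv 0, 2, 4 \pmod 8$ and, for each divisor, study the rank $3$ Hodge lattice $K_d$ of signature $(2,1)$ attached to a very general point (which, by a Noether--Lefschetz/Baire argument, has Hodge lattice exactly $K_d$). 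The first bullet asks that $K_d$ be isotropic, a solvability condition on an indefinite ternary form that holds for an infinite set of $d$ by the Hasse principle. The second bullet asks that $K_d$ represent no $-2$; since representing $-2$ together with isotropy would force an integral hyperbolic plane (the untwisted rather than the twisted case), the task is to produce infinitely many isotropic $K_d$ containing no vector of norm $-2$. I expect this to be the main obstacle: one must impose congruence conditions on $d$ that keep $K_d$ isotropic while excluding $-2$ from its represented norms, which I would arrange by analyzing the discriminant form of $K_d$ and selecting an arithmetic progression of $d$ for which the small negative norm $-2$ is not represented.

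Finally,~\eqref{Z-D10} is formal: for any $Z$ as in~\eqref{Z-divisors} distinct from $\cD''_{10}$, the intersection $Z \cap \cD''_{10}$ is the locus where both Noether--Lefschetz conditions hold, hence is of codimension $2$ in $\cD$ and so a divisor in $\cD''_{10}$. Distinct discriminants $d$ produce Hodge lattices of distinct discriminant, so the corresponding divisors in $\cD''_{10}$ are pairwise distinct, and the infinitude established in~\eqref{Z-divisors} descends to infinitely many distinct divisors in $\cD''_{10}$.
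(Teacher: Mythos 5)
Parts \eqref{p-dominant} and \eqref{preimage-Z-irreducible} of your proposal follow the paper's route exactly (smoothness of the period map from \cite[Theorem 4.4]{DIM4fold}, irreducibility of the EPW-sextic fibers, and the topological lemma on open maps with irreducible fibers), and for \eqref{D10} your direct lattice computation in $K''_{10}$ is a legitimate alternative to the paper's citation of \cite[Theorem 1.2]{Pert}. The problems are in the remaining parts. In \eqref{Z-divisors}, which you correctly call the crux, the proposal stops exactly where the proof should begin: ``analyzing the discriminant form of $K_d$ and selecting an arithmetic progression'' is the entire content of the statement, and no congruence condition or argument is produced. What the paper does is concrete: by \cite[Theorem 1.1]{Pert}, the very general point of $\cD_d$ admits an isotropic Hodge class if and only if every prime $p \equiv 3 \pmod 4$ divides $d$ to an even power; imposing in addition $32 \mid d$, the Hodge lattice at a very general point of $\cD_d$ is $A_1^{\oplus 2} \perp \bZ v$ with $(v,v) = -d/4 = -8s$, so a Hodge class $\delta = a\lambda_1 + b\lambda_2 + kv$ with $(\delta,\delta) = -2$ would force $a^2 + b^2 = 4k^2 s - 1 \equiv 3 \pmod 4$, which is impossible for a sum of two squares. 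Without this (or an equivalent) arithmetic input, \eqref{Z-divisors} is unproven.

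Part \eqref{Z-D10} is where your proposal is actually wrong rather than merely incomplete: it is not formal, and ``distinct discriminants $d$ produce Hodge lattices of distinct discriminant, so the corresponding divisors are pairwise distinct'' is invalid. A point of $\cD_d \cap \cD''_{10}$ has Hodge lattice of rank at least $4$, and a rank-$4$ lattice contains saturated rank-$3$ sublattices through $A_1^{\oplus 2}$ of infinitely many distinct discriminants, so a single point lies on infinitely many $\cD_{d'}$ at once. Concretely, in the paper's lattice $L_s$ (with basis $\lambda_1,\lambda_2,\tau_1,\tau_2$) the class $\tau = 2\lambda_2 + \tau_1 - 4\tau_2$ is orthogonal to $A_1^{\oplus 2}$, has square $-8(s+5)$, and spans together with $A_1^{\oplus 2}$ a saturated rank-$3$ sublattice; hence the whole locus $\cL_s \subset \cD_{32s} \cap \cD''_{10}$ also lies inside $\cD_{32(s+5)} \cap \cD''_{10}$. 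Discriminants therefore cannot distinguish the intersections, and nonemptiness of $Z \cap \cD''_{10}$ is not addressed either. The paper's proof supplies both missing ingredients: Nikulin's theorem \cite[Corollary 1.12.3]{Nik} embeds $L_s$ primitively into the Mukai lattice, making the loci $\cL_s$ nonempty, and an explicit orthogonality computation inside $L_s$ shows that a very general point of $\cL_s$ does not lie on $\cL_{s'}$, which is what separates the divisors. Finally, in \eqref{dP5-dominant} your identification $\cM_{\dP5} \cap \cM^{\ord} = (p^{\ord})^{-1}(\cD''_{10})$ asserts that every ordinary GM fourfold with period in $\cD''_{10}$ contains a quintic del Pezzo surface; this effectivity statement is stronger than what \cite[Proposition 7.7]{DIM4fold} provides (irreducibility of $\cM_{\dP5}$ and dominance onto $\cD''_{10}$, but not the converse implication), so you should take irreducibility directly from that citation rather than deriving it from \eqref{preimage-Z-irreducible}.
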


\begin{proof}
\eqref{p-dominant} follows from \cite[Theorem 4.4]{DIM4fold}. 
For~\eqref{preimage-Z-irreducible}, first we observe that the fibers of the morphism $p^{\ord} \colon \cM^{\ord} \to \cD$ over closed points of $\cD$ are irreducible; 
indeed, the combined results of \cite{DebKuz:birGM, DebKuz:periodGM} show the any fiber is the smooth locus of an EPW sextic. 
Thus if $Z \subset \cD$ is an irreducible closed subscheme, then the morphism $(p^{\ord})^{-1}(Z) \to Z$ is smooth and in particular open by~\eqref{p-dominant}, and has irreducible fibers over closed points. 
It follows that $(p^{\ord})^{-1}(Z)$ is irreducible, see \cite[\href{https://stacks.math.columbia.edu/tag/004Z}{Tag 004Z}]{stacks-project}. 

\eqref{dP5-dominant} is \cite[Proposition 7.7]{DIM4fold}. 
\eqref{D10} follows from \eqref{dP5-dominant} combined with either \cite[Theorem 1.2]{Pert} or \cite[Theorem 4.1 and Lemma 4.4]{KuzPerry:dercatGM}. 

In order to prove \eqref{Z-divisors}, recall that by \cite[Theorem 1.1]{Pert}, the existence of an isotropic class in the Mukai lattice of $\Ku(X_0)$ is equivalent to the fact that $X_0$ has period point in a divisor $\cD_d$ with $d$ having prime factorization of the form 
\begin{equation*}\tag{$\ast\ast'$}
d=\prod_{i}p_i^{n_i} \text{ with } n_i \equiv 0 \pmod 2 \text{ for } p_i \equiv 3 \pmod 4.
\end{equation*}
Now assume that $d$ satisfies $(\ast\ast')$ and $32 \mid d$. We claim that very general points of $Z:=\cD_d$ parametrize Hodge structures on the Mukai lattice not containing square $-2$ Hodge classes. The proof of this claim is analogous to that of \cite[Proposition 2.15]{Huy}. Indeed, assume there is a class $\delta \in \wtilde{\rH}^{1,1}(\Ku(X_0),\bZ)$ with $\delta^2=-2$. As $32 \mid d$, we have $d \equiv 0 \pmod 8$. Thus we can write $\delta=w + kv$, with $w \in A_1^{\oplus 2}$ and $v \in (A_1^{\oplus 2})^\perp$ such that $v^2=-8s$. Then $-2=w^2 -8k^2s$, which implies $w^2 \equiv 6 \pmod 8$. But we would have $w^2/2 \equiv 3 \pmod 4$, in contradiction with $(\ast\ast')$. This implies \eqref{Z-divisors}.

It remains to prove \eqref{Z-D10}. Assume $Z:=\cD_d$ with $d=-32s$ for an integer $s>0$. On the other hand, consider the rank $4$ lattice with intersection form given by
\begin{equation*} 
L_s:=\begin{pmatrix}
2 & 0 & 0 & 0\\
0 & 2 & 0 & 1\\
0 & 0 & -8s & 0\\
0 & 1 & 0 & -2
\end{pmatrix}.
\end{equation*} 
By \cite[Corollary 1.12.3]{Nik} the lattice $L_s$ has a primitive embedding in the Mukai lattice of $\Ku(X_0)$. Denote by $\cL_s \subset \cD_d \cap \cD_{10}''$ the locus in $\cD$  
parameterizing Hodge structures on the Mukai lattice for which $L_s$ consists of Hodge classes. 

We claim that a very general element of $\cL_s$ does not belong to $\cL_{s'}$ for every $s' \neq s$. Indeed, denote by $\lambda_1, \lambda_2, \tau_1, \tau_2$ the basis for $L_s$ representing the intersection form as above. Then a simple computation shows that there is not a class $\tau \in L_s$ such that
$$\tau \cdot \lambda_1=\tau \cdot \lambda_2=\tau \cdot \tau_2=0, \quad 
\tau^2=-8s'$$
for every $s' \neq s$. As a consequence, we have $\cL_s \nsubseteq \cL_{s'}$, which implies $\cD_d \cap \cD_{10}'' $ is not contained in $\cD_{d'} \cap \cD_{10}'' $ for every $d'=-32s'\neq d$. In particular, we can consider the union $\cup_{d}(\cD_d \cap \cD_{10}'')$, where $d$ satisfies $(\ast\ast'')$ and $32 \mid d$, of countably many infinite divisors in $\cD_{10}''$ as required. 
\end{proof} 

Finally, we can prove Proposition~\ref{proposition-specialize}. 
\begin{proof}[Proof of Proposition~\ref{proposition-specialize}]
The Hodge class $v \in \tH^{1,1}(\Ku(X), \bZ)$ determines an irreducible closed subscheme 
$Z \subset \cD$ (equal to one of the irreducible divisors discussed above, or the whole period domain $\cD$ in the case $v \in A_{1}^{\oplus 2}$) such that $p(X) \in Z$ and $v$ is a Hodge class for the Hodge structures parameterized by $Z$. 
By Lemma~\ref{lemma-periods}\eqref{preimage-Z-irreducible} the locus $(p^{\ord})^{-1}(Z)$ is irreducible, and thus consists of those ordinary GM fourfolds which are deformation equivalent to $X$ within the Hodge locus for $v$. 
It follows from the construction of GM fourfolds in the proof of \cite[Theorem 8.1]{DIM4fold} 
that $(p^{\ord})^{-1}(Z)$ contains an ordinary GM fourfold $X'$ containing a so-called $\sigma$-plane. 
Further, up to changing $X'$ in the fiber of the period map $p^{\ord} \colon \cM^{\ord} \to \cD$, we may assume that $X'$ has smooth canonical quadric. 
Indeed, the proof of Theorem~\ref{theorem-duality} shows that $X'$ admits a period partner $X''$ 
which is an ordinary GM fourfold with smooth canonical quadric, 
and then the results of \cite{DebKuz:periodGM} show that $X'$ and $X''$ are contained in the same fiber of the 
period map $p^{\ord} \colon \cM^{\ord} \to \cD$. 

By \cite[Proposition 7.7]{DIM4fold}, $X'$ is contained in the closure of $\cM_{\dP5} \subset \cM$. 
Hence by Lemma~\ref{lemma-deform-Ku-twisted-K3} and Lemma~\ref{lemma-periods}\eqref{D10}, to finish the proof it suffices to show there exists an ordinary GM fourfold $Y$ in $\cM_{\dP5}$ with smooth canonical quadric, such that $\Ku(Y)$ is 
$\dagger$-equivalent to the derived category of a twisted K3 surface. 

Let $\cM^{\circ} \subset \cM$ be the open subspace parameterizing ordinary GM fourfolds with smooth canonical quadric. 
By \cite[Theorem 4.1 and Lemma 4.4]{KuzPerry:dercatGM}, the subspace $\cM_{\dP5}^{\circ} = \cM_{\dP5} \cap \cM^{\circ}$ 
parameterizes $Y$ such that $\Ku(Y)$ is equivalent to the derived category of a K3 surface. 
Consider the divisors $Z \subset \cD$ from Lemma~\ref{lemma-periods}\eqref{Z-divisors}. 
By parts \eqref{Z-D10} and \eqref{dP5-dominant} of Lemma~\ref{lemma-periods}, there 
are infinitely many such $Z$ for which $\cM_{\dP5}^{\circ}$ meets $p^{-1}(Z)$. 
Moreover, among these $Z$, by Lemma~\ref{lemma-periods}\eqref{p-dominant}  
there are infinitely many for which $\cM^{\circ} \cap p^{-1}(Z) \to Z$ is dominant. 
Fix such a $Z$, let $Y$ be a GM fourfold in $\cM_{\dP5}^{\circ} \cap p^{-1}(Z)$, and 
choose $Y'$ in $\cM^{\circ} \cap p^{-1}(Z)$ very general so that $\tH^{1,1}(\Ku(Y'), \bZ)$ contains 
no elements $\delta$ with $(\delta, \delta) = -2$. 
By construction, $Y$ and $Y'$ are deformation equivalent within the Hodge locus for a 
$v \in \tH^{1,1}(\Ku(Y), \bZ)$ with $(v,v) = 0$. 
Thus by Lemma~\ref{lemma-modify-Ku-twisted} we conclude that $Y$ is $\dagger$-equivalent to the 
derived category of a twisted K3 surface, finishing the proof.  
\end{proof}

\subsection{Proofs of the applications} 
\label{section-proofs-applications}
Using Propositions~\ref{proposition-stability-over-curve} and~\ref{proposition-specialize}, 
we can now prove the promised applications. 

\begin{proof}[Proof of Theorem~\ref{theorem-Ku-geometric}]
By Theorem~\ref{theorem-duality}, it suffices to consider the case of an ordinary GM fourfold $X$ with smooth canonical quadric. 
In this case, by \cite[Lemma 32.3]{stability-families} and Lemma~\ref{lemma-Ku-twisted-K3}, 
it suffices to show that if there exists a nonzero primitive $v \in \tH^{1,1}(\Ku(X), \bZ)$ with $(v,v) = 0$ 
and $\sigma \in \Stab^{\dagger}(\Ku(X))$ is $v$-generic, then $M_{\sigma}(\Ku(X), v)$ is a K3 surface 
(or even just nonempty).  
By combining Propositions~\ref{proposition-specialize} and~\ref{proposition-stability-over-curve}, 
this reduces to showing the analogous statement for a twisted K3 surface, which holds by \cite{bayer-macri-projectivity}. 
\end{proof}

\begin{remark}
\label{remark-dagger-oops}
The above argument is similar to the proof of the analogous result \cite[Proposition 32.2]{stability-families} for cubic fourfolds. 
In \cite{stability-families}, however, there is a gap in the argument: the equivalences $\Ku(Y) \simeq \Db(S, \alpha)$ for the special cubic fourfolds $Y$ used in the proof are not checked to be $\dagger$-equivalences, which is necessary to invoke \cite{bayer-macri-projectivity}. 
It is easy to see that our arguments in the proof of Proposition~\ref{proposition-stability-over-curve} also apply in the case of cubic fourfolds to fill this gap. 
\end{remark}

\begin{remark}
\label{rmk-HTK3versusHomologicalK3}
In \cite[\S3.3]{Pert} it is proved that there are examples of GM fourfolds $X$ with a hyperbolic plane primitively embedded in $\tH^{1,1}(\Ku(X), \bZ)$, but which cannot have a Hodge-theoretically associated K3 surface in the sense of \eqref{HT-associated-K3}. On the other hand, by Theorem \ref{theorem-Ku-geometric} we have that $X$ has a homological associated K3 surface $S$, i.e.\ $\Ku(X) \simeq \Db(S)$. These examples show a difference of GM fourfolds from cubic fourfolds, where having a Hodge-theoretic associated K3 surface and a homological associated K3 surface are equivalent conditions by \cite{addington-thomas, stability-families}. This makes particularly interesting the question of the relation between the two notions of an associated K3 surface and the rationality of a GM fourfold.
\end{remark}

Theorem~\ref{theorem-moduli-space} also follows by a deformation argument. 
The proof is the same as the analogous result \cite[Theorem 29.2]{stability-families} for cubic fourfolds (with the same caveat as in Remark~\ref{remark-dagger-oops}), so we omit it. 
Using this, Theorem~\ref{theorem-Stab-dagger} then follows as in the proof of the analogous result \cite[Theorem 29.1]{stability-families} for cubic fourfolds. \qed 

\medskip 
Corollary~\ref{corollary-hodge-isom} can be proved similarly to \cite[Theorem 1.2]{addington-thomas}, but using Theorem~\ref{theorem-Ku-geometric} we can give a slightly more direct argument: 

\begin{proof}[Proof of Corollary~\ref{corollary-hodge-isom}]
As in the proof of \cite[Proposition 5.1]{addington-thomas}, we can extend the given Hodge isometry $\varphi \colon K^{\perp} \xrightarrow{\sim} L^{\perp}(1)$ to a Hodge isometry 
$\widetilde{\varphi} \colon \tH(S, \bZ) \xrightarrow{\sim} \tH(\Ku(X), \bZ)$. 
Since $\tH^{1,1}(S, \bZ)$ contains a hyperbolic plane, 
Theorem~\ref{theorem-Ku-geometric} shows $\Ku(X) \simeq \Db(S')$ for a K3 surface $S'$. 
Using the derived Torelli theorem for K3 surfaces as in \cite[Proposition 5.1]{addington-thomas}, 
it follows that there is an equivalence $\Db(S) \simeq \Ku(X)$ which induces the Hodge isometry $\widetilde{\varphi}$. 
This implies that $\widetilde{\varphi}$, and hence also $\varphi$, is algebraic. 
\end{proof}

The proof of Theorem~\ref{theorem-unirational-families} relies on the following general existence result for relative moduli spaces.
To formulate this precisely, note that if $\cX \to S$ is a family of GM fourfolds over a complex variety, then the Mukai lattices of the fibers $\tH(\Ku(\cX_s), \bZ)$, $s \in S(\bC)$, form the fibers of a local system $\tH(\Ku(\cX)/S, \bZ)$ on $S$ (see \cite{IHC-CY2} for the construction of this local system for general families of CY2 categories). 

\begin{theorem}
\label{theorem-relative-moduli}
Let $\cX \to S$ be a family of ordinary GM fourfolds over a connected complex quasi-projective variety $S$, 
whose associated family of canonical quadrics is smooth. 
Let $v$ be a primitive section of the local system $\tH(\Ku(\cX)/S, \bZ)$ whose fibers are Hodge classes. 
Assume that for a very general point $s_0 \in S$, there exists a stability condition 
$\tau_{s_0} \in \Stab^{\dagger}(\Ku(\cX_{s_0}))$ that is generic with respect to $v$, and whose 
central charge $Z_{s_0} \colon \tH^{1,1}(\Ku(\cX_{s_0}), \bZ) \to \bC$ is invariant under the monodromy action. 
\begin{enumerate}
\item 
If $S = C$ is a curve, then there exists an algebraic space $\tM(v)$ and a smooth proper morphism $\tM(v) \to C$ that makes $\tM(v)$ a relative moduli space over $C$, i.e. 
the fiber over any point $c \in C$ is a coarse moduli space $M_{\sigma_c}(\Ku(\cX_c), v_c)$ of stable objects in the Kuznetsov component of the corresponding GM fourfold for some stability condition $\sigma_c$.
\item \label{M0v}
There exist a nonempty open subset $S^\circ \subset S$, a quasi-projective variety $M^\circ(v)$, and a smooth projective morphism $M^\circ(v) \to S^\circ$ making $M^\circ(v)$ a relative moduli space over $S^\circ$.
\item
There exist an algebraic space $M(v)$ and a proper morphism $M(v) \to S$ such that every fiber is a
good moduli space $M_{\sigma_s}(\Ku(\cX_s), v_s)$ of semistable objects.
\end{enumerate}
In all cases, we can choose the stability conditions $(\sigma_s)_{s \in S}$ on the fiber categories $\Ku(\cX_s)$ 
so that $M_{\sigma_{s_0}}(\Ku(\cX_{s_0}), v_{s_0}) = M_{\tau_{s_0}}(\Ku(\cX_{s_0}), v_{s_0})$.
\end{theorem}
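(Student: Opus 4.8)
The plan is to adapt the proof of the corresponding relative moduli existence result for cubic fourfolds in \cite[Part VI]{stability-families}, feeding in the GM-specific inputs established above. The one genuinely new ingredient is the relative embedding $\Psi \colon \Ku(\cX) \hookrightarrow \Db(\cY, \Cl_0)$ supplied by Proposition~\ref{proposition-Ku-Cl-relative}, which allows us to run the construction of stability conditions from Section~\ref{section-stability} in families; everything else is then formal.

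The crux is to produce a relative stability condition $\usigma = (\sigma_s)_{s \in S}$ on $\Ku(\cX)$ over $S$ realizing the prescribed central charge. After passing to the \'{e}tale cover of $S$ that trivializes the rank $2$ Picard local system of the family of canonical quadrics (see the remark following Proposition~\ref{proposition-Ku-Cl-relative}), Proposition~\ref{proposition-Ku-Cl-relative} places $\Ku(\cX)$ inside $\Db(\cY, \Cl_0)$. First I would globalize the double tilt of Section~\ref{section-main-theorem-fourfold}: the Bogomolov inequality Theorem~\ref{thm_BI} and the associated support property are fiberwise conditions, so the weak stability conditions $\sigma^0_{\alpha,\beta}$ on $\Db(\cY,\Cl_0)$ deform in the family, and inducing onto the semiorthogonal component as in Theorem~\ref{theorem-stab-Ku-good-4fold} yields stability conditions $\sigma_s$ on each fiber. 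The hypothesis that $Z_{s_0}$ is monodromy invariant is precisely what lets these fiberwise data descend back to $S$ and glue into a stability condition over $S$ in the sense of \cite{stability-families}. Finally, using the covering-space description of $\Stab^{\dagger}$ from Theorem~\ref{theorem-Stab-dagger}, I would deform $\sigma_{s_0}$ within its $v$-chamber so that $M_{\sigma_{s_0}}(\Ku(\cX_{s_0}), v_{s_0}) = M_{\tau_{s_0}}(\Ku(\cX_{s_0}), v_{s_0})$, which is possible because both stability conditions are $v$-generic and lie in the same connected component.

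Granting such a relative stability condition, the three assertions follow from the families machinery. Applying \cite[Theorem 21.24]{stability-families} produces an algebraic space $M(v)$ with a proper morphism $M(v) \to S$ whose fiber over $s$ is the good moduli space $M_{\sigma_s}(\Ku(\cX_s), v_s)$, giving part (3). Smoothness uses the Mukai theorem for families of CY2 categories from \cite{IHC-CY2}: the relative space $sM_{\pug}(\Ku(\cX)) \to S$ of simple universally gluable objects is smooth, and for $v$-generic $\sigma_s$ the moduli space $M_{\sigma_s}(\Ku(\cX_s), v_s)$ is an open subspace of it. Over a curve this gives the smooth proper $\tM(v) \to C$ of part (1), the relative construction being exactly that prototyped in Proposition~\ref{proposition-stability-over-curve}. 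For part (2), on a nonempty open $S^{\circ} \subset S$ over which $v$-genericity holds fiberwise we have stability $=$ semistability, so the good moduli space is a coarse moduli space; relative projectivity then follows from the Bayer--Macr\`{i} determinantal ample class of \cite{bayer-macri-projectivity}, whose positivity may be verified after specializing to a fiber that is $\dagger$-equivalent to a twisted K3 surface by Proposition~\ref{proposition-specialize}.

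The hard part will be the first step, namely assembling the fiberwise tilting stability conditions into a single relative stability condition over $S$ carrying the correct monodromy-equivariant central charge and matching $\tau_{s_0}$ at the very general point; once this is in place, the construction of $M(v)$, its smoothness, and its projectivity over an open locus are formal consequences of \cite{stability-families} and \cite{IHC-CY2}.
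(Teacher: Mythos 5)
Your proposal is correct and follows essentially the same route as the paper, whose proof is literally a one-line reference: ``the same argument as for \cite[Theorem 29.4]{stability-families} works,'' with the general Mukai theorem of \cite{IHC-CY2} replacing \cite[Theorem 31.1]{stability-families} for smoothness. Your write-up simply unpacks that argument --- the relative embedding of Proposition~\ref{proposition-Ku-Cl-relative}, globalizing the tilted stability conditions with monodromy invariance of the central charge, then \cite[Theorem 21.24]{stability-families} for good moduli spaces and Bayer--Macr\`{i} positivity for projectivity over an open locus --- which matches the intended proof exactly.
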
 

\begin{proof}
Using the results we have already proven, the same argument as for \cite[Theorem 29.4]{stability-families} works, 
except again instead of \cite[Theorem 31.1]{stability-families} we appeal to the general form of Mukai's theorem 
from \cite{IHC-CY2}. 
\end{proof} 

\begin{proof}[Proof of Theorem~\ref{theorem-unirational-families}]
The proof is analogous to that of \cite[Corollary 29.5]{stability-families}. 

Let $\cX \to S$ be a family of ordinary GM fourfolds with smooth canonical quadric 
over an open subset $S$ of the $24$-dimensional coarse moduli space of GM fourfolds constructed in~\cite{DebKuz:moduli}; 
such a family exists because generically GM fourfolds have trivial automorphism group. 
By \cite[Proposition 2.25]{KuzPerry:dercatGM}, for a very general point $s_0 \in S$ 
we have $\cN(\Ku(\cX_{s_0})) = A_1^{\oplus 2}$. 
We observe that this lattice is monodromy invariant. 
Indeed, this follows from the fact that for any GM fourfold $X$, the canonical sublattice $A_1^{\oplus 2} \subset \cN(\Ku(X))$ is identified with the projection into $\cN(\Ku(X))$ of the image of the pullback map $\rK(\Gr(2,5)) \to \rK(X)$ on Grothendieck groups. 

For any pair $(a,b)$ of coprime pair of integers, let $v = a \lambda_1 + b\lambda_2$ where 
$\lambda_1$ and $\lambda_2$ are the generators for $A_1^{\oplus 2}$. 
Let $\tau_{s_0} \in \Stab^{\dagger}(\Ku(\cX_{s_0}))$ be a $v$-generic stability condition. 
By Lemma~\ref{lemma-eta-sigma} we have $\eta(\tau_{s_0}) \in A_1^{\oplus 2} \otimes \bC$, 
which implies the central charge of $\tau_{s_0}$ is monodromy invariant. 

Thus Theorem~\ref{theorem-relative-moduli}\eqref{M0v} gives a smooth projective relative moduli space $g \colon M^\circ(v) \to S^\circ$ over an open subset $S^\circ \subset S$. 
The base $S^\circ$ of this family is unirational, because this is true for 
the moduli space of ordinary GM fourfolds. 
By Theorem~\ref{theorem-moduli-space}\eqref{theorem-moduli-space-dim}, 
the fibers of $g \colon M^\circ(v) \to S^\circ$ are smooth polarized hyperk\"{a}hler varieties of 
dimension $(v,v) + 2 = 2(a^2+ b^2 + 1)$. 
The local completeness of this family follows by combining Lemma~\ref{lemma-periods}\eqref{p-dominant} and 
Theorem~\ref{theorem-moduli-space}\eqref{theorem-moduli-space-H2}. 

By Theorem \ref{theorem-moduli-space}\eqref{theorem-moduli-space-H2}, the polarization class $h$ on a fiber of $g$ is orthogonal to $v$ and, over a very general point, is a combination of $\lambda_1$ and $\lambda_2$. Thus we have $h=b \lambda_1-a\lambda_2$, which has degree $(h,h)=2(a^2+b^2)$. It remains to compute the divisibility of $h$ in $\rH^2(M,\bZ)$, where $M$ is a fiber of $g$, which is the positive generator $\gamma \in \bZ$ of $h \cdot \rH^2(M,\bZ)$. Consider the sequence
$$0 \to \rH^2(M,\bZ)_{\text{prim}} \oplus \bZ h \to \rH^2(M,\bZ) \to \frac{\bZ}{k\bZ} \to 0.$$
Since $\rH^2(M,\bZ)_{\text{prim}} \cong \langle \lambda_1, \lambda_2 \rangle^\perp$ in the Mukai lattice by Theorem \ref{theorem-moduli-space}\eqref{theorem-moduli-space-H2}, the discriminant group of $\rH^2(M,\bZ)_{\text{prim}}$ has order $4$. As the discriminant groups of $\bZ h$ and $\rH^2(M,\bZ)$ have order $2(a^2+b^2)$, it follows that $k=2$. As a consequence, there exists an element of the form $\lambda= \frac{1}{2}h + \frac{1}{2} \tau$, with $\tau \in \rH^2(M,\bZ)_{\text{prim}}$, such that $\lambda \in \rH^2(M,\bZ)$. Since $h^2=2(a^2+b^2)$ and $h \cdot \lambda=a^2 + b^2$, we deduce that $\gamma=a^2 + b^2$.
\end{proof} 

We end this section by observing an interesting property of the families constructed in Theorem \ref{theorem-unirational-families}. 

\begin{proposition}
\label{prop_existinvolution}
Let $M$ be a very general polarized hyperk\"ahler variety in a family as in Theorem \ref{theorem-unirational-families}. Then $\Aut(M)=\emph{Bir}(M) =\bZ /2\bZ$, where $\Aut(M)$ is the group of automorphisms of $M$ and $\emph{Bir}(M)$ is the group of birational automorphisms of $M$. The corresponding involution of $M$ is antisymplectic. 
\end{proposition}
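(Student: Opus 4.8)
The plan is to compute $\mathrm{Bir}(M)$ and $\Aut(M)$ through their action on $\rH^2(M,\bZ)$, using the Torelli and monodromy theorems of Verbitsky and Markman for manifolds of $\mathrm{K3}^{[n]}$ type. Write $M=M_\sigma(\Ku(X),v)$ with $v=a\lambda_1+b\lambda_2$ and $X$ a very general GM fourfold in the family, so that $(v,v)=2(a^2+b^2)>0$ and, by Theorem~\ref{theorem-moduli-space}\eqref{theorem-moduli-space-dim}, $M$ is of $\mathrm{K3}^{[n]}$ type with $n=a^2+b^2+1$. First I would invoke Theorem~\ref{theorem-moduli-space}\eqref{theorem-moduli-space-H2} to identify, as a Hodge isometry, $\rH^2(M,\bZ)\cong v^{\perp}$ inside the Mukai lattice $\tH(\Ku(X),\bZ)$, which is the even unimodular lattice of signature $(4,20)$. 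Since $X$ is very general we have $\tH^{1,1}(\Ku(X),\bZ)=A_1^{\oplus2}=\langle\lambda_1,\lambda_2\rangle$, so $\Pic(M)=v^{\perp}\cap A_1^{\oplus2}=\bZ h$ with $h=b\lambda_1-a\lambda_2$ primitive (as $\gcd(a,b)=1$) and $(h,h)=2(a^2+b^2)$, while the transcendental lattice is $T:=\langle v,h\rangle^{\perp}=(A_1^{\oplus2})^{\perp}$, of signature $(2,20)$ and carrying the nonzero $(2,0)$-class.

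The crucial use of the very general hypothesis is the standard Noether--Lefschetz argument, valid because the period map for GM fourfolds dominates the period domain (Lemma~\ref{lemma-periods}\eqref{p-dominant}): for very general $X$ the Hodge structure on $T$ admits no Hodge isometry other than $\pm\mathrm{id}_T$. Consequently any Hodge isometry of $\rH^2(M,\bZ)$ preserves $T$ and $\Pic(M)=T^{\perp}$, hence has the form $\varepsilon_P\,\mathrm{id}_{\Pic}\oplus\varepsilon_T\,\mathrm{id}_T$ with $\varepsilon_P,\varepsilon_T\in\{\pm1\}$. All four such rational isometries are in fact integral: $-\mathrm{id}$ manifestly is, and the involution $g_0:=\mathrm{id}_{\Pic}\oplus(-\mathrm{id}_T)$ is integral because it acts trivially on the discriminant groups $\disc(\bZ h)\cong\bZ/2(a^2+b^2)$ and $\disc(T)\cong(\bZ/2)^{2}$ (where $-\mathrm{id}=\mathrm{id}$), hence preserves the gluing defining $\rH^2(M,\bZ)$; the remaining two are products of these. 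I would then apply Markman's description of the monodromy group $\mathrm{Mon}^2(M)$ as the orientation-preserving isometries acting by $\pm\mathrm{id}$ on $\disc(\rH^2(M,\bZ))$. On the positive-definite three-space $\bR h\oplus T^{+}$, where $T^{+}\subset T\otimes\bR$ is the positive two-plane, the isometry $-\mathrm{id}$ reverses orientation while $g_0$ (fixing $h$ and negating $T^{+}$) preserves it, so among the four Hodge isometries exactly $\mathrm{id}$ and $g_0$ lie in $\mathrm{Mon}^2(M)$. Thus the group of monodromy Hodge isometries of $M$ is $\{\mathrm{id},g_0\}\cong\bZ/2\bZ$.

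To conclude, I would realize $g_0$ geometrically and bound the two groups. Since $h$ generates $\Pic(M)$ and $M$ is projective, $h$ is ample up to sign; as $g_0$ fixes $h$ it preserves the ample cone, so by the Torelli theorem $g_0$ is induced by an automorphism $\iota\in\Aut(M)$. This $\iota$ is an involution acting by $-\mathrm{id}$ on $T$, hence by $-1$ on $\rH^{2,0}(M)$, i.e. it is antisymplectic. Conversely, every $f\in\mathrm{Bir}(M)$ induces a monodromy Hodge isometry $f^{*}\in\{\mathrm{id},g_0\}$, and since the representation $\mathrm{Bir}(M)\to O(\rH^2(M,\bZ))$ is injective for $\mathrm{K3}^{[n]}$-type manifolds (again by the Torelli theorem), $f\in\{\mathrm{id},\iota\}$. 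As $\iota\in\Aut(M)\subseteq\mathrm{Bir}(M)$, this gives $\Aut(M)=\mathrm{Bir}(M)=\{\mathrm{id},\iota\}\cong\bZ/2\bZ$ with $\iota$ antisymplectic, as claimed.

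The main obstacle is organizing the two genuinely global inputs so that the very general member has exactly the expected, minimal symmetry: the transcendentality statement that $T$ carries no Hodge isometries besides $\pm\mathrm{id}$, which rests on the dominance of the GM period map together with a countable avoidance of Noether--Lefschetz loci, and the precise form of Markman's monodromy and Torelli theorems (including faithfulness of $\mathrm{Bir}(M)\to O(\rH^2(M,\bZ))$) for manifolds of $\mathrm{K3}^{[n]}$ type. By contrast, the lattice bookkeeping --- verifying integrality of $g_0$ through its triviality on the discriminant forms, and computing orientations on the positive three-space --- is routine once the Hodge isometry $\rH^2(M,\bZ)\cong v^{\perp}$ of the first step is in hand.
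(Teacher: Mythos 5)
Your proof is correct in its essentials, but it takes a genuinely different route from the paper. The paper's proof is three lines: it cites the general classification of automorphism groups of Picard-rank-one hyperk\"ahler varieties of $\mathrm{K3}^{[n]}$ type from \cite[Proposition 4.3]{Deb:survey} (which already packages the Markman monodromy computation, Verbitsky's Torelli theorem, and the antisymplectic statement), so that the only thing left to check is the numerical criterion of that proposition, namely that $-1$ is a square modulo $a^2+b^2$; this follows from coprimality since $b$ is invertible mod $a^2+b^2$ and $-1 \equiv a^{-2}b^{2}$. What you do instead is re-derive the relevant case of that classification from scratch: you identify $\Pic(M)=\bZ h$ and $T=(A_1^{\oplus 2})^{\perp}$ via Theorem~\ref{theorem-moduli-space}\eqref{theorem-moduli-space-H2}, use the very general hypothesis to kill Hodge isometries of $T$ other than $\pm\id$, verify integrality of $\id_{\Pic}\oplus(-\id_T)$ through the discriminant gluing, and then apply Markman's description of $\mathrm{Mon}^2$ together with the Torelli theorem. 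Your lattice computation is exactly what makes Debarre's numerical condition automatic here: $\disc(T)\cong(\bZ/2)^{2}$ is $2$-torsion because $T$ is the orthogonal complement of $A_1^{\oplus 2}$ in the unimodular Mukai lattice, which is the structural reason the involution exists for every coprime pair $(a,b)$. The trade-off: the paper's argument is short but leans on a black box and an unmotivated-looking congruence; yours is self-contained and explains the mechanism, at the cost of invoking several deep inputs whose hypotheses need care --- in particular, realizing $g_0$ by an automorphism requires preservation of the K\"ahler cone (not merely "the ample cone"), which holds here because $\Pic(M)=\bZ h$ with $h$ ample of positive square leaves no wall divisors, so the K\"ahler cone is a full component of the positive cone; and the faithfulness of $\mathrm{Bir}(M)\to O(\rH^2(M,\bZ))$ for $\mathrm{K3}^{[n]}$ type is a known but separate result (Beauville, Hassett--Tschinkel, Huybrechts) rather than a literal consequence of the Torelli theorem. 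Neither point is a gap, but both deserve explicit citations if your argument were to replace the paper's.
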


\begin{proof}
By \cite[Proposition 4.3]{Deb:survey}, we just need to check that $-1$ is a square modulo $a^2 + b^2$. For this, note that $a$ and $b$ are coprime, hence they are invertible in the group $\bZ/(a^2 + b^2)\bZ$. So we have that $-1\equiv a^{-2}b^2 \pmod{a^2 + b^2}$.
\end{proof}

\subsection{Examples of Theorem~\ref{theorem-unirational-families} in low dimensions}
\label{subsection_examples}
It is interesting to interpret the hyperk\"{a}hler varieties from Theorem~\ref{theorem-unirational-families} in terms of the geometry of GM fourfolds and classically known hyperk\"{a}hler varieties, in analogy to \cite{LPZ, LPZ2} for cubic fourfolds. 
Here we sketch this relation for some small values of $a$ and $b$, leaving a more detailed treatment to future studies.

\subsubsection{Double EPW sextics ($a^2 + b^2 = 1$)} 
\label{section-EPW}
An EPW sextic is a special type of degree $6$ hypersurface $Y_A \subset \bP(V_6)$ 
constructed from a Lagrangian subspace $A \subset \wedge^3 V_6$, where $V_6$ is a 
$6$-dimensional vector space and $\wedge^3 V_6$ is equipped with the natural $\det(V_6)$-valued symplectic form. 
O'Grady~\cite{OG-EPW} showed that there is a canonical double cover $\wtilde{Y}_{A} \to Y_A$, called a \emph{double EPW sextic}, which when smooth (as is true for generic $A$) is a polarized hyperk\"{a}hler fourfold of degree $2$ and divisibility $1$, deformation equivalent to the Hilbert square of a K3 surface. 
The resulting family of polarized hyperk\"{a}hler varieties is locally complete. 
We note that there is also a natural duality operation on these hyperk\"{a}hlers: the orthogonal $A^{\perp} \subset \wedge^3 V_6^{\vee}$ similarly gives rise to the \emph{dual double EPW sextic} $\wtilde{Y}_{A^\perp} \to Y_{A^{\perp}}$. 

Iliev and Manivel \cite{IM-EPW}, and in a more general setting Debarre and Kuznetsov \cite{DebKuz:birGM}, showed  
that to any GM fourfold $X$ there is a naturally associated Lagrangian $A \subset \wedge^3 V_6$ as above, and that a generic $A \subset \wedge^3 V_6$ arises in this way.  
There is a close relationship between $X$ and the double EPW sextics associated to $A$. 
Indeed, the Hilbert scheme of conics on a general $X$ is birational to a 
$\bP^1$-bundle over $\wtilde{Y}_{A^\perp}$ \cite{IM-EPW}. 
From a derived categorical viewpoint, by \cite[Remark 2.5]{Pert} the Mukai vector of the projection into $\Ku(X)$ of a twist of the structure sheaf of a general conic in $X$ is $\lambda_1$. For this reason, we expect that there are isomorphisms $\wtilde{Y}_{A^\perp} \cong M_\sigma(\Ku(X),\lambda_1)$ and $\wtilde{Y}_{A} \cong M_\sigma(\Ku(X),\lambda_2)$ for generic $X$ and suitable $\sigma \in \Stab^{\dagger}(\Ku(X))$. 
Here we only prove the following weaker statement, based on an identification of period points from \cite{DebKuz:periodGM}. 

\begin{proposition}
\label{proposition-EPW}
If $X$ is a very general GM fourfold with associated Lagrangian $A \subset \wedge^3 V_6$ 
and $\sigma \in \Stab^{\dagger}(\Ku(X))$ is generic with respect to $\lambda_1$ and $\lambda_2$, 
then either $M_\sigma(\Ku(X),\lambda_1) \cong \wtilde{Y}_{A}$ or $M_\sigma(\Ku(X),\lambda_1) \cong \wtilde{Y}_{A^\perp}$. 
\end{proposition}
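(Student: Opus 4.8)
The strategy is to identify $M := M_\sigma(\Ku(X), \lambda_1)$ with one of the double EPW sextics by matching Hodge-theoretic period points and then invoking the global Torelli theorem for hyperk\"ahler manifolds, the period comparison being supplied by \cite{DebKuz:periodGM}.

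First I would record the Hodge-theoretic input. By Theorem~\ref{theorem-moduli-space}, since $(\lambda_1, \lambda_1) = 2$, the space $M$ is a smooth projective hyperk\"ahler fourfold of K3$^{[2]}$-type, and by Theorem~\ref{theorem-moduli-space}\eqref{theorem-moduli-space-H2} there is a Hodge isometry $\rH^2(M, \bZ) \cong \lambda_1^\perp$, with the orthogonal taken inside $\tH(\Ku(X), \bZ)$. As in the proof of Theorem~\ref{theorem-unirational-families} for $(a,b) = (1,0)$, the class $\lambda_2$ restricts to a polarization of degree $2$ and divisibility $1$. Passing to the orthogonal of $\lambda_2$ then yields a Hodge isometry between the primitive cohomology $\rH^2(M, \bZ)_{\mathrm{prim}}$ and $(A_1^{\oplus 2})^\perp = \tH(\Ku(X),\bZ)_0$, which by Proposition~\ref{proposition-HKu-HX} is Hodge-isometric to the vanishing cohomology $\rH^4(X, \bZ)_0(1)$.

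Next I would compare this with the double EPW sextics. By the period computations of \cite{DebKuz:periodGM}, the primitive second cohomology of $\wtilde{Y}_A$, with its Hodge structure, is likewise Hodge-isometric to $\rH^4(X, \bZ)_0(1)$, and the same holds for $\wtilde{Y}_{A^\perp}$. The subtlety responsible for the dichotomy is that the period correspondence between GM fourfolds and double EPW sextics is two-to-one: the single period point of $X$ in $\cD$ lifts to the two distinct period points of $\wtilde{Y}_A$ and $\wtilde{Y}_{A^\perp}$. Thus the period point of $M$, determined by the Hodge structure on $\rH^4(X,\bZ)_0(1)$ computed above, coincides with that of exactly one of $\wtilde{Y}_A$ or $\wtilde{Y}_{A^\perp}$; deciding which would require tracking the geometric meaning of $\lambda_1$ (cf.\ the conic interpretation discussed above), which the statement does not ask for.

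Finally I would apply the global Torelli theorem. Since $X$ is very general, $\cN(\Ku(X)) = A_1^{\oplus 2}$, so $\Pic(M) = \bZ\lambda_2$ has rank one; as every nonzero class $k\lambda_2$ satisfies $(k\lambda_2, k\lambda_2) = 2k^2 \geq 0$, there are no wall divisors, the positive cone equals the ample cone, and $\lambda_2$ is ample. The same applies to the very general double EPW sextics. Hence the matching of polarized period points upgrades, via the global Torelli theorem for hyperk\"ahler manifolds, from a birational equivalence to an isomorphism, giving $M \cong \wtilde{Y}_A$ or $M \cong \wtilde{Y}_{A^\perp}$. The main obstacle will be the precise bookkeeping of lattices and period domains needed to align the Hodge isometry of Theorem~\ref{theorem-moduli-space}\eqref{theorem-moduli-space-H2} with the period computation of \cite{DebKuz:periodGM}, in particular checking compatibility with the polarizations up to the two-to-one ambiguity separating $\wtilde{Y}_A$ from $\wtilde{Y}_{A^\perp}$.
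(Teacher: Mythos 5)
Your proposal follows essentially the same route as the paper's proof: both use Theorem~\ref{theorem-moduli-space}\eqref{theorem-moduli-space-H2} and Proposition~\ref{proposition-HKu-HX} to identify $\rH^2(M_\sigma(\Ku(X),\lambda_1),\bZ)_{0}$ with $\rH^4(X,\bZ)_0(1)$, match period points via the identification in \cite{DebKuz:periodGM}, and conclude with Verbitsky's Torelli theorem \cite{verbitsky} together with very-generality (Picard rank one) to upgrade birationality to isomorphism. The ``two-to-one ambiguity'' whose bookkeeping you defer is handled in the paper exactly as you anticipate: after fixing markings, the induced isometry of $\Lambda = E_8(-1)^{\oplus 2}\oplus U^{\oplus 2}\oplus A_1(-1)^{\oplus 2}$ acts on the discriminant group $(\bZ/2\bZ)^2$ either trivially, giving $M_\sigma(\Ku(X),\lambda_1)\cong\wtilde{Y}_{A}$, or by swapping the two generators, in which case O'Grady's result \cite{OG:dualEPW} on periods of dual EPW sextics gives $M_\sigma(\Ku(X),\lambda_1)\cong\wtilde{Y}_{A^\perp}$.
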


\begin{proof}
We assume the reader is familiar with the notation used in \cite[Section 5.4]{DebKuz:periodGM}. By Theorem \ref{theorem-moduli-space}\eqref{theorem-moduli-space-H2}, there is a Hodge isometry
\begin{equation*}
\rH^2(M_\sigma(\Ku(X),\lambda_1), \bZ) \cong \lambda_1^\perp;
\end{equation*} 
moreover, $\lambda_2$ is identified with the polarization class on $M_\sigma(\Ku(X),\lambda_1)$. In particular, we have a Hodge isometry
\begin{equation*}
\rH^2(M_\sigma(\Ku(X),\lambda_1), \bZ)_0 \cong \langle \lambda_1, \lambda_2 \rangle^\perp.
\end{equation*} 
Therefore, by Proposition~\ref{proposition-HKu-HX} we obtain a Hodge isometry 
\begin{equation*}
\rH^2(M_\sigma(\Ku(X),\lambda_1), \bZ)_0 \cong \rH^4(X,\bZ)_{0}(1).
\end{equation*} 
By \cite[Theorem 5.1, Remark 5.25]{DebKuz:periodGM}, we can assume that $X$ and $\wtilde{Y}_A$ have the same period point. Thus composing with the Hodge isometry $\rH^4(X, \bZ)_{0}(1) \cong \rH^2(\wtilde{Y}_A, \bZ)_0$, we get
$$f: \rH^2(M_\sigma(\Ku(X),\lambda_1), \bZ)_0 \cong \rH^2(\wtilde{Y}_A, \bZ)_0.$$

Set $\Lambda:=E_8(-1)^{\oplus 2} \oplus U^{\oplus 2} \oplus A_1(-1)^{\oplus 2}$ and fix two markings $\phi_1: \rH^2(M_\sigma(\Ku(X),\lambda_1), \bZ)_0 \cong \Lambda$ and $\phi_2: \rH^2(\wtilde{Y}_A, \bZ)_0 \cong \Lambda$. Consider the composition $g:= \phi_2 \circ f \circ \phi_1^{-1}$. If $g$ acts trivially on the discriminant group $d(\Lambda) \cong (\bZ / 2\bZ)^2$ of $\Lambda$, then $M_\sigma(\Ku(X),\lambda_1)$ and $\wtilde{Y}_A$ have the same period point. By Verbitsky's Torelli theorem \cite{verbitsky}, we deduce that $M_\sigma(\Ku(X),\lambda_1)$ and $\wtilde{Y}_A$ are birational. Moreover, since $X$ is very general, we deduce that they are isomorphic.

In the other case, $g$ acts on the discriminant group by exchanging the generators of the two copies of $\bZ /2\bZ$. Then $M_\sigma(\Ku(X),\lambda_1)$ has the same period point of $\wtilde{Y}_{A^\perp}$ by \cite{OG:dualEPW}. Arguing as before, we conclude that $M_\sigma(\Ku(X),\lambda_1) \cong \wtilde{Y}_{A^\perp}$. 
\end{proof}

\subsubsection{EPW cubes ($a^2 + b^2 = 2$)} 
Given a (suitably generic) 
Lagrangian subspace $A \subset \wedge^3V_6$ as in Section~\ref{section-EPW} above, 
Iliev, Kapustka, Kapustka, and Ranestad \cite{IKKR} constructed a polarized hyperk\"{a}hler sixfold $\wtilde{Z}_{A}$ as a double cover of an associated subvariety of the Grassmannian $\Gr(3,V_6)$. 
These hyperk\"{a}hlers, called \emph{EPW cubes}, are deformation equivalent to the Hilbert cube of a K3 surface, have degree $4$ and divisibility $2$, and form a locally complete family. 

If $X$ is a GM fourfold, then the moduli space $M_{\sigma}(\Ku(X), \pm (\lambda_1 \pm \lambda_2))$ is a hyperk\"{a}hler variety with the same numerical invariants as an EPW cube. 
If $X$ is generic with associated Lagrangian $A \subset \wedge^3V_6$, 
then we expect that $\wtilde{Z}_{A}$ can be realized as a moduli space $M_{\sigma}(\Ku(X), \pm (\lambda_1 \pm \lambda_2))$. 
We note that, if an identification of the period points of $X$ and $\wtilde{Z}_A$ were known, then 
this would follow in the very general case as in Proposition~\ref{proposition-EPW}. 

\subsubsection{Projections of points ($a^2 + b^2 = 5$)} 
The Mukai vector of the projection into $\Ku(X)$ of skyscraper sheaves of points in $X$ is $v = \lambda_1+2\lambda_2$. 
We expect these objects (at least for a generic point of $X$) are stable for the stability conditions $\sigma$ we have constructed, 
and give rise to a (possibly only rationally defined) embedding of $X$ into the hyperk\"ahler $12$-fold $M_\sigma(\Ku(X),v)$.


\providecommand{\bysame}{\leavevmode\hbox to3em{\hrulefill}\thinspace}
\providecommand{\MR}{\relax\ifhmode\unskip\space\fi MR }
\providecommand{\MRhref}[2]{%
  \href{http://www.ams.org/mathscinet-getitem?mr=#1}{#2}
}
\providecommand{\href}[2]{#2}


\end{document}